\theoremstyle{plain}
\newtheorem{lemma}{Lemma}[section]
\newtheorem{theorem}[lemma]{Theorem}
\newtheorem{proposition}[lemma]{Proposition}
\theoremstyle{remark}
\definecolor{blue}{rgb}{0.0,0.02,0.67}
\definecolor{brown}{rgb}{0.7,0.3,0.0}
\def\bb{\begin{color}{blue}}
\def\bw{\begin{color}{white}}
\def\bg{\begin{color}{green}}
\def\br{\begin{color}{red}}
\def\blb{\begin{color}{cyan}}
\def\bbr{\begin{color}{brown}}
\def\eg{\end{color}}
\def\ew{\end{color}}
\def\er{\end{color}}
\def\eb{\end{color}}
\def\elb{\end{color}}
\def\ebr{\end{color}}
\def\jj{\j}
\def\jj{}
\def\vfe{\vfill\eject}
\def\vfe{}
\def\PP{\mathbb{P}}
\def\H{\mathbb{H}}
\def\ve{{\varepsilon}}
\def\le{\leqslant}
\def\pd{\partial}
\def\tn{\interleave}
\def\ge{\geqslant}
\def\widebar{\overline}
\def\E{{\mathbb E}}
\def\O{{\Omega}}
\def\R{{\mathbb R}}
\def\C{{\mathbb C}}
\def\N{{\mathbb N}}
\def\Z{{\mathbb Z}}
\def\a{{\alpha}}
\def\b{{\beta}}
\def\d{{\delta}}
\def\t{{\tau}}
\def\k{{\kappa}}
\def\g{{\gamma}}
\def\G{{\Gamma}}
\def\s{{\sigma}}
\def\l{{\lambda}}
\def\L{{\Lambda}}
\def\th{{\theta}}
\def\Th{{\Theta}}
\def\o{{\omega}}
\def\z{{\zeta}}
\def\cS{{\cal S}}
\def\cT{{\cal T}}
\def\cH{{\cal H}}
\def\cV{{\cal V}}
\def\bw{{\text{\boldmath$w$}}}
\def\q{\quad}
\def\re{\operatorname{Re}}
\def\im{\operatorname{Im}}
\def\cp{{\operatorname{cap}}}
\def\supp{\operatorname{supp}}
\def\disc{{\operatorname{disc}}}
\def\slit{{\operatorname{slit}}}
\def\bump{{\operatorname{bump}}}
\def\disk{{\operatorname{disk}}}
\def\<{\langle}
\def\>{\rangle}
\def\ra{\rightarrow}
\def\sse{\subseteq}
\def\sm{\setminus}
\begin{document}
\bibliographystyle{plain}


\begin{center}
\LARGE \textbf{Stability of regularized Hastings-Levitov aggregation in the subcritical regime}

\vspace{0.2in}

\large {\bfseries 
James Norris\footnote{Statistical Laboratory, Centre for Mathematical Sciences, Wilberforce Road, Cambridge, CB3 0WB, UK. 
 Email:  j.r.norris@statslab.cam.ac.uk}, 
Vittoria Silvestri\footnote{University of Rome La Sapienza, Piazzale Aldo Moro 5, 00185, Rome, Italy. \\ 
Email: silvestri@mat.uniroma1.it}, 
Amanda Turner\footnote{Department of Mathematics and Statistics, Lancaster University, Lancaster LA1 4YF, UK. \\
Email: a.g.turner@lancaster.ac.uk.
Research supported by EPSRC grant EP/T027940/1.}
}


\end{center}
\begin{abstract} 
We prove bulk scaling limits and fluctuation scaling limits for a two-parameter class ALE$(\a,\eta)$ 
of continuum planar aggregation models.
The class includes regularized versions of the Hastings--Levitov family HL$(\a)$ 
and continuum versions of the family of dielectric-breakdown models, 
where the local attachment intensity for new particles is specified as a negative power $-\eta$ of the density 
of arc length with respect to harmonic measure.
The limit dynamics follow solutions of a certain Loewner--Kufarev equation, 
where the driving measure is made to depend on the solution and on the parameter $\z=\a+\eta$.
Our results are subject to a subcriticality condition $\z\le1$: this includes HL$(\a)$ for $\a\le1$
and also the case $\a=2,\eta=-1$ corresponding to a continuum Eden model.
Hastings and Levitov predicted a change in behaviour for HL$(\a)$ at $\a=1$,
consistent with our results.
In the regularized regime considered, the fluctuations around the scaling limit are shown to be Gaussian, with
independent Ornstein--Uhlenbeck processes driving each Fourier mode, which are seen to be stable if and only if $\z\le1$.
\end{abstract}

\setcounter{tocdepth}{2} 
\tableofcontents


\section{Introduction}
\subsection{Hastings--Levitov aggregation}
\label{INT}
In many physical contexts there appear clusters whose shape is complex, formed apparently by some mechanism of random growth.
It has long been a challenge to account for the observed variety of complex cluster shapes, starting from
plausible physical principles governing the aggregation of individual microscopic particles.
For clusters which are essentially two-dimensional, 
there is an approach introduced by Carleson and Makarov \cite{C+M} and Hastings and Levitov \cite{HL},
in which clusters are encoded as a composition of conformal maps, one for each particle. 
In this approach, a growing cluster is modelled by an increasing sequence of compact sets $K_n\sse\C$ which are assumed to be simply connected.
We will take the initial set $K_0$ to be the closed unit disk $\{|z|\le1\}$.
The increments $K_n\sm K_{n-1}$ are then thought of as a sequence of particles added to the cluster.
The idea is to study the clusters $K_n$ via the conformal isomorphisms 
$$
\Phi_n:D_0\to D_n
$$
where $D_n$ is the complementary domain $\C\sm K_n$ and $\Phi_n$ is normalized by $\Phi_n(\infty)=\infty$ and $\Phi_n'(\infty)>0$.
Then $\Phi_0(z)=z$ for all $z$ and $K_n$ has logarithmic capacity $\Phi_n'(\infty)>1$ for all $n\ge1$.
This formulation is convenient because the harmonic measure from $\infty$ on the boundary $\pd D_n$,
which provides a natural way to choose the location of the next particle,
is then simply the image under $\Phi_n$ of the uniform distribution on $\pd D_0=\{|z|=1\}$.
Having chosen a random angle $\Th_{n+1}$ to locate the next particle, 
and a model particle $P_{n+1}$ attached to $K_0$ at $e^{i\Th_{n+1}}$, for example a small disk tangent to $K_0$, 
the cluster map is updated to
$$
\Phi_{n+1}=\Phi_n\circ F_{n+1}
$$
where $F_{n+1}$ is the conformal isomorphism $D_0\to D_0\sm P_{n+1}$, normalized similarly to $\Phi_n$.
Then $\Phi_{n+1}$ encodes the cluster
$$
K_{n+1}=K_n\cup\Phi_n(P_{n+1}).
$$
Thus, once we specify distributions for the angles $\Th_n$ and model particles $P_n$,
we have specified a mechanism to grow a random cluster.

We will write
$$
\cp(K_n)=\log\Phi'(\infty),\q c_n=\log F_n'(\infty)
$$
and we will refer to $\cp(K_n)$ as the capacity\footnote{This is an abuse of terminology since it is then $e^{\cp(K_n)}$ which is
the logarithmic capacity.}
of $K_n$ and $c_n$ as the capacity of $P_n$.
Then
$$
\cp(K_n)=c_1+\dots+c_n.
$$
We will be looking for scaling limits where the particle capacities $c_n$ and the associated particles $P_n$ become small,
but where $n$ is chosen sufficiently large that the cluster capacities $\cp(K_n)$ grow macroscopically.

A simple case is to choose $\Th_{n+1}$ uniformly distributed on the unit circle and to take $P_{n+1}=e^{i\Th_{n+1}}P$,
where $P$ is a small disk tangent to the unit disk at $1$, of radius $r(c)$, chosen so that $P$ has capacity $c$.
Then in fact $r(c)/\sqrt c$ has a positive limit as $c\to0$.
The location of the new particle $\Phi_n(P_{n+1})$ is then distributed according to harmonic measure on $\pd K_n$.
However, if we assume that $\pd K_n$ is approximately linear on the scale of $P$, then we would have
\begin{equation}
\label{ADISK}
\Phi_n(P_{n+1})\approx\Phi_n(e^{i\Th_{n+1}})+\Phi_n'(e^{i\Th_{n+1}})P
\end{equation}
so we would add an approximate disk of diameter proportional to $\sqrt c|\Phi_n'(e^{i\Th_{n+1}})|$.

In order to compensate for this distortion, Hastings and Levitov proposed the HL$(\a)$ family of models where,
once $\Th_{n+1}$ is chosen, we choose $P_{n+1}$ to be a particle of capacity
$$
c_{n+1}=|\Phi_n'(e^{i\Th_{n+1}})|^{-\a}c.
$$
Then, in the case $\a=2$, the particles added to the cluster would be approximately of constant size.
The approximation \eqref{ADISK} is in fact misleading, at least on a microscopic level, 
because $\pd K_n$ develops inhomogeneities on the scale of the particles.
Nevertheless, HL$(2)$ has been considered as a variant of diffusion-limited aggregation (DLA) \cite{W+S},
with some justification, see \cite{HL}, derived from numerical experiments.

In general, the HL$(\a)$ model offers a convenient mechanism for such experiments, 
and moves away from the lattice formulation of \cite{W+S} which has been shown to lead to unphysical effects on large scales.
See for example \cite{MR3819427}.
Moreover, it might be hoped that an evolving family of conformal maps would present a more tractable framework for 
the analysis of scaling limits than other growth models, 
while potentially sharing the same bulk scaling limit and fluctuation universality class.
That is the direction explored in this paper.

Besides the mechanism of diffusive aggregation, based on harmonic measure, there is another one-parameter family of models,
conceived originally in the lattice case, called dielectric breakdown models \cite{NPW}, 
which interpolates between DLA and the Eden model \cite{Eden}.
In the Eden model, each boundary site is chosen with equal probability.  
In the continuum setting, for an Eden-type model we would choose an attachment point on the boundary according to normalized arc length, 
which has density proportional to $|\Phi_n'(e^{i\th})|$ with respect to harmonic measure.
We can widen our family of models to include a continuum analogue of dielectric breakdown models by choosing
$$
\PP(\Th_{n+1}\in d\th|\Phi_n)\propto|\Phi_n'(e^{i\th})|^{-\eta}d\th.
$$
The case $\eta=-1$ then provides a continuum variant of the Eden model.

In a law-of-large-numbers regime, 
it might be guessed that bulk characteristics of the cluster for the model incorporating both the $\a$ and $\eta$ modifications 
would depend only on their sum $\z=\a+\eta$ since, 
once this is fixed, up to a global time-scaling,
the growth rate of capacity due to particles attached near $e^{i\th}$ does not depend further on $\a$ or $\eta$. 
We will show, in the regime which we can address, that this is indeed true.

In this paper we investigate the two-parameter family of models just described,
but modified by the introduction of a regularization parameter $\s>0$, which controls the minimum length scale over which
feedback occurs through $c_{n+1}$ and $\Th_{n+1}$.
We will require throughout that $\s\gg\sqrt c$ (and sometimes $\s\gg c^{1/4}$, or more) 
and we will restrict attention to the subcritical regime $\z\le1$.
This includes the Eden case ($\a=2$, $\eta=-1$) but excludes continuum DLA ($\a=2$, $\eta=0$).
In the regularized models, we will show fluctuation behaviour which is universal over all choices of particle family.
Our first main result shows that, in this regime, in the limit $c\to0$, disks are stable, that is, 
an initial disk cluster remains close to a disk as particles are added and its capacity becomes large. 
Our second main result is to prove convergence of the normalized fluctuations of the cluster around its deterministic limit,
to an explicit Gaussian process.
The constraint $\z\le1$ appears sharp for this behaviour: 
we see an explicit dependence of the fluctuations on $\a$ and $\eta$ and, in particular,
an exponential instability of rate $(\z-1)k$ in the $k$th Fourier mode if we formally take $\z>1$.

\subsection{Statement of results}
\label{DESM}
In this section, we define the continuous-time ALE$(\a,\eta)$ model, which is our object of study, 
and we specify our standing assumptions for individual particles.
We then state our main results.

Our model is constructed as a composition of univalent functions on the exterior unit disk $D_0=\{|z|>1\}$.
Each of these functions corresponds to a choice of attachment angle $\th\in[0,2\pi)$ and a basic particle $P$.
Recall that $K_0=\{|z|\le1\}$.
By a basic particle $P$ we mean a non-empty subset of $D_0$ such that $K_0\cup P$ is compact and simply connected.
Set $D=D_0\sm P$.
By the Riemann mapping theorem, there is a $c\in(0,\infty)$ and a conformal isomorphism $F:D_0\to D$ with Laurent expansion of the form
\begin{equation}
\label{CAPACITY}
F(z)=e^c\left(z+\sum_{k=0}^\infty a_kz^{-k}\right).
\end{equation} 
Then $F$ is uniquely determined by $P$, and $P$ has capacity $c$.
Our model depends on three parameters $\a,\eta\in\R$ and $\s\in(0,\infty)$, 
along with the choice of a family of basic particles $(P^{(c)}:c\in(0,\infty))$ with $P^{(c)}$ of capacity $c$.
The associated maps $F_c:D_0\to D^{(c)}$ then have the form \eqref{CAPACITY} with $a_k=a_k(c)$ for all $k$.
We assume throughout that $F_c$ extends continuously to $\{|z|\ge1\}$.
We require that our particle family is nested
\begin{equation}
\label{NESTED}
P^{(c_1)}\sse P^{(c_2)}\q\text{for $c_1<c_2$}
\end{equation} 
and satisfies, for some $\L\in[1,\infty)$,
\begin{equation}
\label{CONCENTRATED}
\d(c)\le\L r_0(c)\q\text{for all $c$}
\end{equation} 
where
$$
r_0(c)=\sup\{|z|-1:z\in P^{(c)}\},\q
\d(c)=\sup\{|z-1|:z\in P^{(c)}\}.
$$
In our results, only small values of $c$ are of interest.
For such $c$, the last condition \eqref{CONCENTRATED} forces our particles $P^{(c)}$ to concentrate near the point $1$ 
while never becoming too flat against the unit circle.

The following are all examples of particle families satisfying both conditions \eqref{NESTED} and \eqref{CONCENTRATED}:
$$
P^{(c)}_\slit=(1,1+\d(c)],\q
P^{(c)}_\bump=\{z\in D_0:|z-1|\le\d(c)\}
$$
and
$$
P^{(c)}_\disk=\{z\in D_0:|z-1-r(c)|\le r(c)\},\q r(c)=\d(c)/2
$$
where in each case $\d$ is a suitable increasing homeomorphism of $(0,\infty)$.

It will be convenient to place our aggregation models from the outset in continuous time.
By a (continuous-time) aggregate Loewner evolution of parameters $\a,\eta\in\R$, or ALE$(\a,\eta)$, we mean 
a finite-rate, continuous-time Markov chain $(\Phi_t)_{t\ge0}$ 
taking values in the set of univalent functions $D_0\to D_0$, starting from $\Phi_0(z)=z$, 
which, when in state $\phi$, jumps to $\phi\circ F_{c(\th,\phi),\th}$ at rate $\l(\th,\phi)d\th/(2\pi)$, where
\begin{equation}
\label{MODEL}
F_{c,\th}(z)=e^{i\th}F_c(e^{-i\th}z),\q
c(\th,\phi)=c|\phi'(e^{\s+i\th})|^{-\a},\q
\l(\th,\phi)=c^{-1}|\phi'(e^{\s+i\th})|^{-\eta}.
\end{equation}
Since $\s>0$, the rate $\l(\th,\phi)$ is continuous in $\th$, so the total jump rate is finite.
The model may be thought of equivalently in term of the random process of compact sets $(K_t)_{t\ge0}$ given by
$$
K_0=\{|z|\le1\},\q K_t=K_0\cup\{z\in D_0:z\not\in\Phi_t(D_0)\}.
$$
The effect of the jump just described is then to add to the current cluster the set $\phi(e^{i\th}P^{(c(\th,\phi))})$
thereby increasing its capacity by $c(\th,\phi)$.

\begin{figure}[h!]
  \centering
    \includegraphics[width=0.7\textwidth]{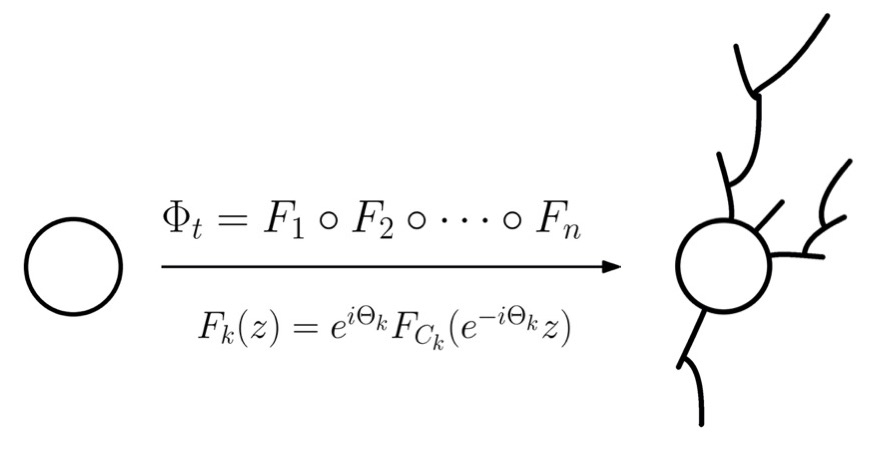}
    \caption{Cluster map with $n$ particles} 
    \label{FIGA}
\end{figure}
In the case where $(\Phi_t)_{t\ge0}$ takes exactly $n$ jumps by time $t$, we have
$$
\Phi_t=F_1\circ\dots\circ F_n,\q
F_n=F_{C_n,\Th_n}
$$
where $C_n$ is the capacity of the $n$th particle and $\Th_n$ is its attachment angle, as in Figure \ref{FIGA}.
Moreover, the capacity $\cT_t$ of the cluster $K_t$ is then given by
$$
\cT_t=\log\Phi_t'(\infty)=C_1+\dots+C_n.
$$
For certain parameter values, the process $(\Phi_t)_{t\ge0}$ may explode, 
that is, may take infinitely many jumps in a finite time interval.
In fact this can happen only if $\cT_t\to\infty$ at the same time, 
and this possibility is excluded (with high probability) over the relevant time interval in the conclusions of our main results.
So we make no attempt to define $\Phi_t$ beyond explosion.%
\footnote{%
The total jump rate $\l(\phi)$ at a state $\phi$ is given by
$$
\l(\phi)=c^{-1}\fint_0^{2\pi}|\phi'(e^{\s+i\th})|^{-\eta}d\th
$$
so, by distortion estimates, there is a constant $C(\eta,\s)<\infty$ such that 
$$
c^{-1}e^{-\eta\t}/C\le\l(\phi)\le Cc^{-1}e^{-\eta\t}
$$
where $\t=\phi'(\infty)$.
Similarly, there is a constant $C(\a,\s)<\infty$ such that the next jump in capacity $\Delta\t$ satisfies
$$
ce^{-\a\t}/C\le\Delta\t\le Cce^{-\a\t}.
$$
These estimates imply by standard arguments that, almost surely, explosion occurs 
if and only if both $\eta<0$ and $\z=\a+\eta<0$, and only if $\cT_t\to\infty$ at the same time.
}%

The discrete-time process $(\Phi_n)_{n\ge0}$ in the introductory discussion is given by the Markov chain
formed of the sequence of distinct values taken by $(\Phi_t)_{t\ge0}$.
We denote this process from now on by $(\Phi^\disc_n)_{n\ge0}$ for clarity.
Prior work on ALE models \cite{NST,STV} was framed in terms of this discrete-time process.
The continuous-time framework allows a more local specification of the dynamics, 
without the need to normalise the distribution of attachment angles. 
It further allows us to organise the computation of martingales in terms of a standard calculus for Poisson random measures.

We can now state our first main result.
Define
$$
t_\z=
\begin{cases}
\infty,&\text{if $\z\ge0$},\\
|\z|^{-1},&\text{if $\z<0$}.
\end{cases}
$$
and for $t<t_\z$ set
$$
\t_t=
\begin{cases}
t,&\text{if $\z=0$},\\
\z^{-1}\log(1+\z t),&\text{otherwise}.
\end{cases}
$$
Note that $\t_t\to\infty$ as $t\to t_\z$ for all $\z$.
\begin{figure}[h!]
  \centering
    \includegraphics[width=0.7\textwidth]{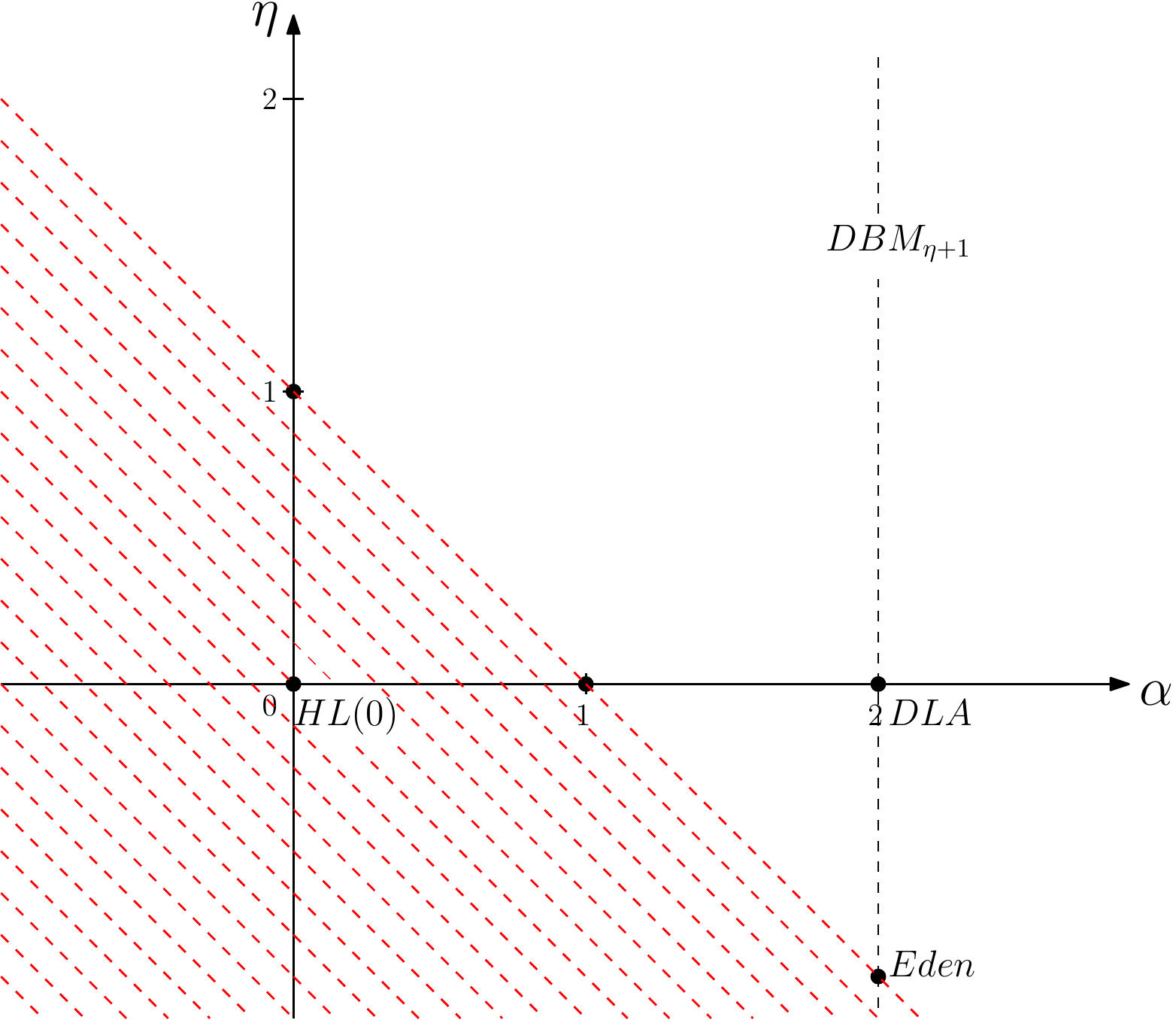}
    \caption{Domain of stability for ALE$(\a,\eta)$} 
    \label{FIG}
\end{figure}
The result identifies the small-particle scaling limit of $K_t$ in the case $\z\le1$
as a disk of radius $e^{\t_t}$, with quantified error estimates.
It is proved in Propositions \ref{UDFA} and \ref{UDFB}.
The range of parameter values to which the result applies is indicated by the region shaded red in Figure \ref{FIG},
with diagonal lines showing parameter pairs $(\a,\eta)$ sharing a common bulk scaling limit.
Recall that $\cT_t=\log\Phi_t'(\infty)$, which is the capacity of $K_t$, and set 
$$
\hat\Phi_t(z)=\Phi_t(z)/e^{\cT_t}.
$$

\begin{theorem}
\label{DISK}
For all $\a,\eta\in\R$ with $\z=\a+\eta\le1$, for all $\ve\in(0,1/2]$ and $\nu\in(0,\ve/4]$, 
for all $m\in\N$ and $T\in[0,t_\z)$, there is a constant $C=C(\a,\eta,\L,\ve,\nu,m,T)<\infty$ with the following property.
In the case $\z<1$, for all $c\le1/C$ and all $\s\ge c^{1/2-\ve}$, with probability exceeding $1-c^m$, for all $t\le T$, 
\begin{equation*}
|\cT_t-\t_t|
\le C\left(c^{1/2-\nu}+c^{1-4\nu}\left(\frac{e^\s}{e^\s-1}\right)^2\right)
\end{equation*}
and, for all $|z|=r\ge1+c^{1/2-\ve}$,
\begin{equation*}
|\hat\Phi_t(z)-z|
\le C\left(c^{1/2-\nu}+c^{1-4\nu}\left(\frac{e^\s}{e^\s-1}\right)^2\right).
\end{equation*}
Moreover, in the case $\z=1$ with $\ve\in(0,1/5]$,
for all $c\le1/C$ and all $\s\ge c^{1/5-\ve}$, with probability exceeding $1-c^m$, for all $t\le T$,
\begin{equation*}
|\cT_t-\t_t|
\le C\left(c^{1/2-\nu}+c^{1-4\nu}\left(\frac{e^\s}{e^\s-1}\right)^3\right)
\end{equation*}
and, for all $|z|=r\ge1+c^{1/5-\ve}$,
$$
|\hat\Phi_t(z)-z|
\le C\left(c^{1/2-\nu}\left(\frac r{r-1}\right)^{1/2}+c^{1-4\nu}\left(\frac{e^\s}{e^\s-1}\right)^3\right).
$$
\end{theorem}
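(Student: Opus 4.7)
The plan is to represent the evolution of $\Phi_t$ as a semimartingale driven by a compensated Poisson random measure, and to bootstrap a priori closeness of the cluster to a disk into quantitative fluctuation control via standard martingale inequalities. Writing the jump measure of $(\Phi_t)_{t\ge0}$ as $N(dt\,d\th)$ with compensator $\l(\th,\Phi_{t-})\,dt\,d\th/(2\pi)$, for any smooth functional $G$ the process $G(\Phi_t)$ is a semimartingale whose drift is the $\l$-average of the jump increment $G(\phi\circ F_{c(\th,\phi),\th})-G(\phi)$, and whose martingale part has predictable quadratic variation estimable via Burkholder--Davis--Gundy.

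Applied to $\cT_t = \log\Phi_t'(\infty)$, the product identity $c(\th,\phi)\l(\th,\phi) = |\phi'(e^{\s+i\th})|^{-\z}$ gives capacity drift $\fint_0^{2\pi}|\Phi_{t-}'(e^{\s+i\th})|^{-\z}\,d\th$. On the event that $\Phi_t$ is close to the disk map $z\mapsto e^{\cT_t}z$, distortion estimates on the regularized circle $|z|=e^\s$ show this drift is approximately $e^{-\z\cT_t}$, so the deterministic ODE $\dot\t_t = e^{-\z\t_t}$ integrates to the $\t_t$ of the statement. Jumps in $\cT_t$ have size $O(c)$, so BDG combined with Doob's $L^{2m}$ inequality yields martingale fluctuations of order $c^{1/2-\nu}$ with probability exceeding $1-c^m$, which is the desired scale.

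For the shape, write $\hat\Phi_t(z) = z + \sum_{k\ge 0} A_t^{(k)}z^{-k}$. The Laurent expansion of $F_{c,\th}$, together with \eqref{CONCENTRATED}, shows that a single jump changes $A_t^{(k)}$ by an $O(c)$ amount carrying the oscillating factor $e^{i(k+1)\th}$, and on a near-disk configuration the zeroth-order compensator vanishes by orthogonality of the $e^{i(k+1)\th}$. Linearizing $|\phi'(e^{\s+i\th})|^{-\z}$ in the Fourier coefficients $A_t^{(j)}$ then produces a drift on $A_t^{(k)}$ which is diagonal in the Fourier basis, with eigenvalue proportional to $k(1-\z)$ modulated by a regularization factor $e^{-(k+1)\s}$ arising from evaluating feedback at $|z|=e^\s$ rather than at the boundary; for $\z\le 1$ each mode is therefore at worst neutrally stable, and an $O(c^{1/2})$ martingale input gives mode-by-mode Ornstein--Uhlenbeck-type bounds on each $A_t^{(k)}$. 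Recombining via the Laurent series at $|z|=r$ in $\ell^2$ yields the pointwise estimate on $|\hat\Phi_t(z)-z|$, with the $(r/(r-1))^{1/2}$ factor in the $\z=1$ bound appearing as the $\ell^2$ norm of $(r^{-k})_{k\ge 0}$.

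A standard bootstrap closes the argument: let $T^*$ be the first time the disk approximation or the capacity estimate fails by a constant multiple of the target tolerance. The estimates above hold up to $T^*$, and combined with the high-moment martingale control they force $T^*>T$ with probability $1-c^m$. The main obstacle is the critical case $\z=1$, in which the leading contracting mode becomes neutrally stable, linearization no longer provides decay, and cubic rather than quadratic feedback errors must be absorbed; this is why the hypothesis strengthens to $\s\ge c^{1/5-\ve}$ (to push these larger feedback errors below the required order) and why the boundary blow-up $(r/(r-1))^{1/2}$ appears when summing Fourier modes near $|z|=1$.
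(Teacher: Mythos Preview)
Your strategy is essentially the paper's: Poisson semimartingale representation, linearized drift diagonal in Fourier modes with multipliers $q(k)=k(1-\z e^{-\s(k+1)})$, a stopping-time bootstrap at $T_0(\d_0)=\inf\{t:\sup_\th|\hat\Psi_t'(e^{\s+i\th})|>\d_0\text{ or }|\Psi_t^{\cp}|>\d_0\}$, and high-moment control. But there is one substantive step you have not supplied.

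The paper does not bound Laurent modes individually and resum. It writes a variation-of-constants (``interpolation'') identity $\hat\Psi_t=\hat M_t+\hat A_t$ in which every integrand carries the linearized propagator $P(\t_t-\t_s)=e^{-(\t_t-\t_s)Q}$, and then estimates directly in $\|\cdot\|_{p,r}$ on circles, controlling $P(\d)$ via the Marcinkiewicz multiplier theorem. The martingale and drift bounds combine to the self-referential inequality (Proposition~\ref{DFA})
\[
\tn D\hat\Psi_t\,1_{\{t\le T_0\}}\tn_{p,r}\le \d(r)+\bar\d(r)\,\sup_{s\le t}\tn D\hat\Psi_{s-}\,1_{\{s\le T_0\}}\tn_{p,\rho},\qquad \rho=\tfrac{1+r}{2},
\]
with $\bar\d(r)$ itself depending on $\d_0$. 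One cannot close this by setting $\rho=r$; the paper instead runs a \emph{spatial} iteration in $r$ (Proposition~\ref{II}), halving the distance to the unit circle at each step from the crude distortion bound $\|D\hat\Psi_t\|_{p,r}\le 1/(r-1)$, and using $\bar\d(r)\le c^{\ve}$ to make the accumulated error negligible after $O(1/\ve)$ steps. Your sketch contains only the temporal stopping-time bootstrap; this radial iteration is a second, independent closure and is where the hypotheses on $\s$ actually enter.

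Your heuristic for $(r/(r-1))^{1/2}$ at $\z=1$ is fine, but the reason for $\s\ge c^{1/5-\ve}$ is not ``cubic versus quadratic feedback''. For $\z<1$ the propagator acts by $z\mapsto e^{(1-\z)(\t_t-\t_s)}z$, so time integrals of $(r_s/(r_s-1))^p$ gain a power of $(r/(r-1))^{-1}$ (Lemma~\ref{PUSHOUT}); at $\z=1$ this push-out is absent, the martingale bound carries $(r/(r-1))^{3/2}$ instead of $(r/(r-1))$, and one is forced to take $\d_0=c^{1/2-\nu}(e^\s/(e^\s-1))^{3/2}$. The condition $\bar\d(e^\s)\le c^{\ve}$ in the radial iteration then contains $\d_0\cdot(e^\s/(e^\s-1))$, i.e.\ a total power $5/2$ of $e^\s/(e^\s-1)$, and this is what produces the exponent $1/5$.
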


We will show a similar result for the discrete-time process $(\Phi_n^\disc)_{n\ge0}$.
Set
$$
\cT^\disc_n=\log(\Phi^\disc_n)'(\infty),\q\hat\Phi^\disc_n(z)=\Phi^\disc_n(z)/e^{\cT^\disc_n}.
$$
Define
$$
n_\a=
\begin{cases}
\infty,&\text{if $\a\ge0$},\\
|\a|^{-1},&\text{if $\a<0$}
\end{cases}
$$
and for $n<n_\a/c$ set
\begin{equation}
\label{TDISC}
\t^\disc_n=
\begin{cases}
cn,&\text{if $\a=0$},\\
\a^{-1}\log(1+\a cn),&\text{otherwise}.
\end{cases}
\end{equation}
The following result is proved at the end of Section \ref{sec:highprob}.
The case $\a=0$ is Theorem 1.1 in \cite{NST}.

\begin{theorem}
\label{DISC}
For all $\a,\eta\in\R$ with $\z=\a+\eta\le1$, for all $\ve\in(0,1/2]$ and $\nu\in(0,\ve/4]$, 
for all $m\in\N$ and $N\in[0,n_\a)$, not necessarily an integer,
there is a constant $C=C(\a,\eta,\L,\ve,\nu,m,N)<\infty$ with the following property.
In the case $\z<1$, for all $c\le1/C$ and all $\s\ge c^{1/2-\ve}$, with probability exceeding $1-c^m$, for all $n\le N/c$, 
\begin{equation*}
|\cT^\disc_n-\t^\disc_n|
\le Cc^{1-4\nu}\left(\frac{e^\s}{e^\s-1}\right)^2
\end{equation*}
and, for all $|z|=r\ge1+c^{1/2-\ve}$,
\begin{equation*}
|\hat\Phi^\disc_n(z)-z|
\le C\left(c^{1/2-\nu}+c^{1-4\nu}\left(\frac{e^\s}{e^\s-1}\right)^2\right).
\end{equation*}
Moreover, in the case $\z=1$ with $\ve\in(0,1/5]$,
for all $c\le1/C$ and all $\s\ge c^{1/5-\ve}$, with probability exceeding $1-c^m$, for all $n\le N/c$,
\begin{equation*}
|\cT^\disc_n-\t^\disc_n|
\le Cc^{1-4\nu}\left(\frac{e^\s}{e^\s-1}\right)^3
\end{equation*}
and, for all $|z|=r\ge1+c^{1/5-\ve}$,
$$
|\hat\Phi^\disc_n(z)-z|
\le C\left(c^{1/2-\nu}\left(\frac r{r-1}\right)^{1/2}+c^{1-4\nu}\left(\frac{e^\s}{e^\s-1}\right)^3\right).
$$
\end{theorem}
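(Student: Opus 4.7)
The plan is to couple the discrete-time chain $(\Phi^\disc_n)$ to the continuous-time chain $(\Phi_t)$ via the identity $\Phi^\disc_n=\Phi_{T_n}$, where $T_n$ denotes the $n$th jump time of $(\Phi_t)$, and to deduce the theorem from Theorem \ref{DISK} combined with a purely discrete martingale estimate for the capacity. Set $t_*(n)=\z^{-1}(e^{\z\t^\disc_n}-1)$, interpreted as $\t^\disc_n$ when $\z=0$; a short calculation gives $\t_{t_*(n)}=\t^\disc_n$, so $t_*$ is the scaling-limit correspondence between the two time parameterisations.

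First I would fix $T\in(t_*(N/c),t_\z)$ and apply Theorem \ref{DISK} on $[0,T]$ to secure, off an event of probability $O(c^m)$, the bounds of that theorem with an error $E$ equal to the right-hand side displayed there, uniformly in $t\le T$. Cauchy's integral formula on a circle of radius comparable to $\s$ about $e^{\s+i\th}$ then promotes the $C^0$-bound on $\hat\Phi_t$ to uniform control of $|\Phi_t'(e^{\s+i\th})|$; integrating the resulting bounds on the jump-rate $\l(\Phi_t)$ and on the capacity-growth rate shows that $T_{N/c}\le T$ on the same event, so every discrete step up to $n=N/c$ corresponds to a continuous time in $[0,T]$.

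The bound on the map is then immediate: since $\hat\Phi^\disc_n(z)=\hat\Phi_{T_n}(z)$, substituting $t=T_n$ in Theorem \ref{DISK} yields the stated bound on $|\hat\Phi^\disc_n(z)-z|$. A direct substitution into the capacity bound of Theorem \ref{DISK} gives only the weaker estimate $c^{1/2-\nu}+c^{1-4\nu}(e^\s/(e^\s-1))^p$, because the $c^{1/2-\nu}$ term there originates from the Poisson-clock fluctuations of $N_t$, which vanish at jump times. To recover the sharper bound one works intrinsically in discrete time and decomposes
\[
\cT^\disc_n-\t^\disc_n=\sum_{k=1}^n\bigl(C_k-\E[C_k\mid\cF_{k-1}]\bigr)+\sum_{k=1}^n\E[C_k\mid\cF_{k-1}]-\t^\disc_n,
\]
where $C_k$ is the capacity gained at the $k$th jump and $\cF_{k-1}$ is the discrete history. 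On the good event above, the distortion estimate forces $|(\Phi^\disc_{k-1})'(e^{\s+i\th})|$ to lie within $E$ of $e^{\cT^\disc_{k-1}}$ uniformly in $\th$, so $\var(C_k\mid\cF_{k-1})=O(c^2E^2)$ and the quadratic variation of the martingale up to $n=N/c$ is $O(cE^2)$. Bernstein's inequality then controls the martingale by a quantity of order $c^{1-4\nu}(e^\s/(e^\s-1))^p$, while the deterministic remainder is matched to the discrete ODE for $\t^\disc_n$ via $\E[C_k\mid\cF_{k-1}]=ce^{-\a\cT^\disc_{k-1}}(1+O(E))$.

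The main obstacle is keeping track of the $\s$-dependence: one must ensure that no extra power of $(e^\s-1)^{-1}$ is introduced either by the Cauchy-formula conversion from $C^0$-control to derivative control at $|z|=e^\s$ or by the martingale concentration, so that the final exponent of $e^\s/(e^\s-1)$ matches the $2$ in the subcritical case and the $3$ at criticality appearing in Theorem \ref{DISK}. As a sanity check, when $\a=0$ the discrete ODE is trivial, $\t^\disc_n=cn$, and the statement reduces to Theorem 1.1 of \cite{NST}.
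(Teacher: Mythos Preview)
Your plan for the map bound is correct and matches the paper: on the high-probability event of Theorem~\ref{DISK}, once you know that the $n$th jump time $T_n$ falls in $[0,T]$, substituting $t=T_n$ gives the estimate on $\hat\Phi^\disc_n$ immediately. The paper secures $T_{N/c}\le T$ via Proposition~\ref{DTSA} (an $L^p$ bound on $|c\cV_t-\nu_t|$) rather than by integrating jump rates, but your route works too.

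The gap is in the capacity estimate, and it is exactly the $\s$-bookkeeping you flag but do not resolve. Theorem~\ref{DISK} gives only $C^0$ control on $\hat\Psi_t$; Cauchy's formula on a circle of radius comparable to $e^\s-1$ therefore yields
\[
\|\hat\Psi_t'\|_{\infty,e^\s}\lesssim \frac{E}{e^\s-1},\qquad E=c^{1/2-\nu}+c^{1-4\nu}\Big(\frac{e^\s}{e^\s-1}\Big)^2,
\]
not $E$. Both your variance claim $\var(C_k\mid\cF_{k-1})=O(c^2E^2)$ and your drift claim $\E[C_k\mid\cF_{k-1}]=ce^{-\a\cT^\disc_{k-1}}(1+O(E))$ actually require the derivative bound, so the honest versions carry $E'=E/(e^\s-1)$ in place of $E$. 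The accumulated drift error after Gronwall is then of order $(E')^2$ (using the mean-zero identity $\fint\re\hat\Psi'(e^{\s+i\th})\,d\th=0$), and at the boundary $\s\sim c^{1/2-\ve}$ with $\ve<1/4$ this contains a term $c^{2-8\nu}(e^\s-1)^{-6}$, which is not dominated by the target $c^{1-4\nu}(e^\s/(e^\s-1))^2$. Indeed for small enough $\ve$ even $E'$ itself is not small, so the whole linearisation breaks down.

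The paper avoids this loss by \emph{not} deriving the derivative control from Theorem~\ref{DISK}. Instead it uses Proposition~\ref{UDFA}, whose good event $\O_0$ is contained in $\{T_0(\d_0)>T\}$ with $\d_0=c^{1/2-\nu}e^\s/(e^\s-1)$; by definition of $T_0$ this gives $\|\hat\Psi_t'\|_{\infty,e^\s}\le\d_0$ directly, without the Cauchy loss. The capacity estimate then comes from a separate continuous-time $L^p$ bound (Proposition~\ref{DTSB}) on $\cT_t-\tilde\cT_t$ where $\tilde\cT_t=\t^\disc_{\cV_t}$, which yields $\|\sup_t|\cT_t-\tilde\cT_t|\|_p\le C(c+\d_0^2)$; since $\d_0^2=c^{1-2\nu}(e^\s/(e^\s-1))^2$ this is within target. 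Your discrete-time martingale decomposition would also close if you fed it $\d_0$ rather than $E/(e^\s-1)$, but that requires invoking the derivative estimate \eqref{SUPE} from inside the proof, not the $C^0$ statement of Theorem~\ref{DISK}.
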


We turn to our second main result, which describes the limiting fluctuations of ALE$(\a,\eta)$.
Denote by $\cH$ the set of all holomorphic functions on $D_0=\{|z|>1\}$ which are bounded at $\infty$.
We equip $\cH$ with the the topology of uniform convergence on $\{|z|\ge r\}$ for all $r>1$.
Define for $t<t_\z$
$$
\hat\Psi_t(z)=\hat\Phi_t(z)-z,\q\Psi^\cp_t=\cT_t-\t_t.
$$
Let $(B_t)_{t\ge0}$ be a (real) Brownian motion.
Let $(B_t(k))_{t\ge0}$ for $k\ge0$ be a sequence of independent complex Brownian motions, independent of $(B_t)_{t\ge0}$.
We can define continuous Gaussian processes $(\G_t(k))_{t<t_\z}$ and $(\G^\cp_t)_{t<t_\z}$ 
by the following Ornstein--Uhlenbeck-type stochastic differential equations
\begin{align*}
d\G_t(k)&=e^{-\a\t_t}\left(\sqrt2e^{-\eta\t_t/2}dB_t(k)-(1+(1-\z)k)\G_t(k)e^{-\eta\t_t}dt\right),\q\G_0(k)=0,\\
d\G^\cp_t&=e^{-\a\t_t}\left(e^{-\eta\t_t/2}dB_t-\z\G^\cp_te^{-\eta\t_t}dt\right),\q\G^\cp_0=0.
\end{align*}
We show in Section \ref{GLP} that the following series converges in $\cH$, uniformly on compacts in $[0,t_\z)$, almost surely
$$
\hat\G_t(z)=\sum_{k=0}^\infty\G_t(k)z^{-k}.
$$
In fact $(\hat\G_t)_{t<t_\z}$ satisfies the following stochastic differential equation in $\cH$
$$
d\hat\G_t=e^{-\a\t_t}\left(\sqrt 2e^{-\eta\t_t/2}d\hat B_t-(Q_0+1)\hat\G_te^{-\eta\t_t}dt\right),\q \hat\G_0=0
$$
where $Q_0f(z)=-(1-\z)zf'(z)$ and
$$
\hat B_t(z)=\sum_{k=0}^\infty B_t(k)z^{-k}.
$$
The following two results are proved in Section \ref{sec:fluctuations}.

\begin{theorem}
\label{FLUCT}
Assume that $\z=\a+\eta\in(-\infty,1]$.
Fix $T\in[0,t_\z)$ and $\ve>0$ and consider the limit $c\to0$ with $\s\to0$ subject to the constraint
$$
\s\ge
\begin{cases}
c^{1/4-\ve},&\text{if $\z<1$},\\
c^{1/6-\ve},&\text{if $\z=1$}.
\end{cases}
$$
Then
$$
c^{-1/2}(\hat\Psi_t,\Psi^\cp_t)_{t\le T}\to(\hat\G_t,\G^\cp_t)_{t\le T}
$$
weakly in the Skorokhod space $D([0,T],\cH\times\R)$.
\end{theorem}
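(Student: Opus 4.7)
The plan is to write the pair $(c^{-1/2}\hat\Psi_t, c^{-1/2}\Psi^\cp_t)$ as a semimartingale driven by a compensated Poisson random measure, identify its limiting compensator and quadratic covariation using Theorem \ref{DISK}, and then appeal to a martingale functional central limit theorem together with a tightness argument to obtain weak convergence to the unique solution of the target Ornstein--Uhlenbeck system.

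First, I would set up the semimartingale representation. Let $N(dt,d\th)$ be the Poisson random measure on $[0,\infty)\times[0,2\pi)$ with predictable intensity $\l(\th,\Phi_{t-})\,d\th/(2\pi)\,dt$ driving the jumps of $(\Phi_t)$. Using the Laurent expansion \eqref{CAPACITY} of $F_c$ together with the nesting and concentration conditions \eqref{NESTED} and \eqref{CONCENTRATED}, I would Taylor-expand the jump increment
\[
\hat\Phi_{s^+}(z)-\hat\Phi_{s^-}(z)=e^{-c(\th,\Phi_{s-})}\hat\Phi_{s-}(F_{c(\th,\Phi_{s-}),\th}(z))-\hat\Phi_{s-}(z)
\]
to second order in the capacity $c(\th,\Phi_{s-})$, and similarly for $\cT$. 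Integrating against $N$ and its compensator yields a martingale-plus-drift decomposition $d\hat\Psi_t=dM^\Psi_t+dA^\Psi_t$, $d\Psi^\cp_t=dM^\cp_t+dA^\cp_t$. Theorem \ref{DISK} then supplies the a priori control $|\hat\Phi_t(z)-z|=O(c^{1/2-\nu})$ and $|\cT_t-\t_t|=O(c^{1/2-\nu})$, which lets me replace $|\Phi'_t(e^{\s+i\th})|^{-\a}$ and $|\Phi'_t(e^{\s+i\th})|^{-\eta}$ by $e^{-\a\t_t}$ and $e^{-\eta\t_t}$ modulo corrections expressed in terms of $\hat\Psi_t$ and its derivative at $|z|=e^\s$. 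After averaging over $\th$, the drift $A^\Psi_t$ reduces to a linear operator acting on $\hat\Psi_t$, diagonal in the Laurent basis $\{z^{-k}\}_{k\ge0}$, whose diagonal entries produce precisely the factor $(1+(1-\z)k)e^{-\z\t_t}$ in the OU equation for $\G_t(k)$. For the capacity, the vanishing of $\fint e^{-ik\th}d\th$ for $k\ge1$ eliminates the coupling to the non-capacity Fourier modes, leaving only the self-drift $-\z\Psi^\cp_t e^{-\z\t_t}$.

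For the martingale parts, orthogonality of the characters $e^{ik\th}$ diagonalises the quadratic covariation across Fourier modes and decouples the capacity from the Laurent modes. A direct computation gives
\[
d\langle c^{-1/2}M^\Psi(j),\overline{c^{-1/2}M^\Psi(k)}\rangle_t\;\xrightarrow[c\to0]{}\;2\,\d_{jk}\,e^{-(2\a+\eta)\t_t}\,dt,
\]
and the analogous identity for $M^\cp$ with coefficient $1$ in place of $2$, matching the squared diffusion coefficients in the SDEs for $\G_t(k)$ and $\G^\cp_t$. Since the individual jumps of $c^{-1/2}M$ are themselves $O(c^{1/2})$ and hence vanish, the martingale CLT of Jacod--Shiryaev identifies the limiting Gaussian drivers as independent Brownian motions, indexed by $k$ and independent of the capacity driver. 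Tightness in $D([0,T],\cH\times\R)$ follows from modewise Aldous-type jump estimates together with uniform (in $k$) $L^2$ bounds obtained from the SDE via Gronwall; weak convergence to the unique solution of the linear OU system then follows.

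The principal technical obstacle is controlling the second-order remainders in the Taylor expansion of the jump, which enter the compensator after squaring and summing. These involve derivatives of $\hat\Psi_t$ evaluated on $|z|=e^\s$, whose Cauchy-type estimates cost factors of $(e^\s-1)^{-p}$. Balancing these with the target fluctuation scale $c^{1/2}$ is precisely what forces the hypotheses $\s\gg c^{1/4-\ve}$, or $\s\gg c^{1/6-\ve}$ at the critical value $\z=1$ where the contractivity of the Fourier-mode drift $(1+(1-\z)k)$ degenerates for large $k$. Demonstrating that these remainders are $o(c^{1/2})$ uniformly in $z$ and across modes, while keeping Theorem \ref{DISK} applicable, is the crux of the estimate.
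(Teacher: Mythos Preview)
Your outline is sound and would lead to a proof, but it is organized differently from the paper's argument. The paper does not run a martingale CLT on the semimartingale $(c^{-1/2}\hat\Psi_t,c^{-1/2}\Psi^\cp_t)$ directly. Instead it uses the interpolation formula \eqref{KID}, which already applies the linearized semigroup $e^{-(\t_t-\t_s)}P(\t_t-\t_s)$ to every increment, thereby absorbing the unbounded drift $-(Q+1)$ before any limit is taken. The crux is then Proposition~\ref{ONLYM}: one shows that $c^{-1/2}(\hat\Psi_t-\hat\Pi_t,\Psi^\cp_t-\Pi^\cp_t)\to0$, where $(\hat\Pi_t,\Pi^\cp_t)$ are the explicit Poisson integrals \eqref{YMPS}--\eqref{YMPT}, which are \emph{deterministic} linear functionals of the compensated Poisson measure with no feedback through $\Phi$. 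The CLT (Proposition~\ref{WEC}) is then a pure Poisson-to-Gaussian statement, proved by the Campbell--Hardy formula mode-by-mode, with Aldous tightness for each $\Pi_t(k)$ and a uniform-in-$k$ $L^2$ tail bound to pass to $\cH$. Your direct route keeps the feedback in the drift and invokes Jacod--Shiryaev; this works, but you must then either run the convergence mode-by-mode (where the drift is bounded) and reassemble, or use a mild formulation---in effect reinventing the interpolation formula. The paper's organization buys a clean separation: all the hard nonlinear estimation (Section~\ref{EST}) is front-loaded into proving $\hat\Psi-\hat\Pi=o(\sqrt c)$, and the remaining limit is linear and elementary. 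Your identification of the origin of the $\s\ge c^{1/4-\ve}$ (resp.\ $c^{1/6-\ve}$) constraint is correct and matches the paper's: it is exactly the condition that the $\d_0^2$-type remainders in \eqref{CPPO}--\eqref{CPPR} are $o(\sqrt c)$.
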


As in the bulk scaling limit, we can deduce an analogous discrete-time fluctuation theorem.
The case $\a=0$ recovers Theorem 1.2 in \cite{NST}.
Define for $t\ge0$
$$
\hat\Psi^\disc_t(z)=\hat\Phi^\disc_{\lfloor t\rfloor}(z)-z.
$$
We have seen already in Theorem \ref{DISC}, for $N<n_\a$, 
that $(\cT^\disc_n-\t^\disc_n)_{n\le N/c}$ does not fluctuate at scale $\sqrt c$.
We can define a continuous Gaussian process $(\hat\G^\disc_t)_{t<n_\a}$ in $\cH$ by 
$$
d\hat\G^\disc_t=\frac{\sqrt 2d\hat B_t-(Q_0+1)\hat\G^\disc_tdt}{1+\a t},\q\hat\G^\disc_0=0.
$$

\begin{theorem}
\label{DISCFLUCT}
Assume that $\z=\a+\eta\in(-\infty,1]$.
Fix $N\in[0,n_\a)$, not necessarily an integer, and fix $\ve>0$.
In the limit $c\to0$ with $\s\to0$ considered in Theorem \ref{FLUCT}, we have
$$
c^{-1/2}(\hat\Psi^\disc_{t/c})_{t\le N}\to(\hat\G^\disc_t)_{t\le N}
$$
weakly in $D([0,N],\cH)$.
\end{theorem}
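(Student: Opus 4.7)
The plan is to deduce Theorem \ref{DISCFLUCT} from Theorem \ref{FLUCT} by a time change. The key observation is that the discrete chain is exactly the continuous chain sampled at its jump times: if $T_n$ denotes the $n$th jump time of $(\Phi_t)_{t\ge0}$, then $\Phi^\disc_n = \Phi_{T_n}$, so
\[
\hat\Psi^\disc_{t/c}(z) = \hat\Psi_{T_{\lfloor t/c \rfloor}}(z)
\]
up to explosion, which is ruled out with overwhelming probability on $[0,N]$ by Theorem \ref{DISK}. Matching the bulk asymptotics from Theorems \ref{DISK} and \ref{DISC} through the identity $\cT^\disc_n = \cT_{T_n}$ identifies the deterministic limit of $T_{\lfloor t/c\rfloor}$ as
\[
s(t) = \z^{-1}\bigl[(1+\a t)^{\z/\a}-1\bigr]
\]
(with the standard limiting conventions when $\a\z=0$), so that $\t_{s(t)}=\t^\disc_{\lfloor t/c\rfloor}$ in the bulk.

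The first substantive step is to quantify the convergence of jump times. Writing $T_n=\sum_{k=1}^n \xi_k$, where $\xi_k\mid\cF_{k-1}$ is exponential with rate $\l(\Phi^\disc_{k-1})$, Theorem \ref{DISC} together with standard distortion estimates yields the deterministic approximation $\l(\Phi^\disc_{k-1}) = c^{-1}(1+\a c(k-1))^{-\eta/\a}(1+O(c^{1/2-\nu}))$ with probability $1-c^m$. Summing the conditional means gives $\E[T_{\lfloor t/c \rfloor} \mid \cF_\infty] - s(t) = O(c^{1/2-\nu})$, while a standard martingale concentration argument bounds the centred part $T_n-\E[T_n\mid\cF_\infty]$ by $O_P(\sqrt c)$, yielding
\[
\sup_{t\le N} \bigl|T_{\lfloor t/c \rfloor} - s(t)\bigr| = O_P(c^{1/2-\nu}).
\]

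With this in hand, Theorem \ref{FLUCT} provides $c^{-1/2}\hat\Psi_s \to \hat\G_s$ weakly in $D([0,s(N)],\cH)$; by the Skorokhod representation we may assume the convergence is almost sure. Splitting
\[
c^{-1/2}\hat\Psi^\disc_{t/c}(z) = c^{-1/2}\hat\Psi_{s(t)}(z) + c^{-1/2}\bigl[\hat\Psi_{T_{\lfloor t/c\rfloor}}(z) - \hat\Psi_{s(t)}(z)\bigr],
\]
the first term converges uniformly on compacts to $\hat\G_{s(t)}(z)$ by continuity of $\hat\G$, while the second is controlled using the $(1/2-\delta)$-H\"older regularity of the Gaussian limit in time together with the jump-time bound, producing a correction of order $O_P(c^{1/4-\delta'})$ which vanishes. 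It remains to identify the law of $Y_t := \hat\G_{s(t)}$. An It\^o time change gives
\[
dY_t = (1+\a t)^{-1}\bigl[\sqrt 2\, d\hat B'_t - (Q_0+1) Y_t\, dt\bigr],
\]
where $\hat B'_t$ is a new cylindrical Brownian motion in $\cH$; the collapse of the $\a$- and $\eta$-dependent exponents to $-1$ comes from the identity $\z - \a - \eta = 0$ and reflects the universality in $\z$ noted in the introduction. Since this SDE defines $\hat\G^\disc$, $Y_t$ and $\hat\G^\disc_t$ have the same law, proving the theorem. The principal obstacle is the uniform jump-time concentration, which requires combining the LLN of Theorem \ref{DISC} with second-order martingale estimates at exactly the right scale: the $O(\sqrt c)$ jump-time fluctuations, once fed through the Brownian-scale regularity of $\hat\G$, contribute only at order $c^{1/4}$ after the $c^{-1/2}$ rescaling, which is precisely the margin needed to recover the Gaussian limit.
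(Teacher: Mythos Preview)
Your approach is essentially the same as the paper's: pass to the continuous-time process via jump times, invoke Theorem \ref{FLUCT} through a Skorokhod coupling, and identify the time-changed limit $\hat\G_{s(t)}$ with $\hat\G^\disc_t$ via the SDE. The paper obtains the jump-time convergence from Proposition \ref{DTSA} on the counting process $\cV_t$ (whose inverse gives the jump times), whereas you derive it directly from the holding times; these are equivalent.

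One imprecision worth flagging: your claimed rate $O_P(c^{1/4-\delta'})$ for the correction term $c^{-1/2}\bigl[\hat\Psi_{T_{\lfloor t/c\rfloor}}-\hat\Psi_{s(t)}\bigr]$ is not justified as stated. H\"older regularity applies to $\hat\G$, not to $c^{-1/2}\hat\Psi$, and the Skorokhod coupling furnishes no quantitative rate for $\sup_s|c^{-1/2}\hat\Psi_s-\hat\G_s|$. The correct (and simpler) argument is qualitative: since the limit $\hat\G$ is continuous, Skorokhod convergence in $D$ upgrades to uniform convergence, and then the standard fact that $f_k\to f$ uniformly with $f$ continuous and $s_k\to s$ implies $f_k(s_k)\to f(s)$ disposes of the correction without any rate on the jump times or any H\"older input. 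This is exactly what the paper does, and it renders your closing remark about the ``$c^{1/4}$ margin'' unnecessary.
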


\subsection{Commentary and review of related work}
Hastings and Levitov \cite{HL} introduced the family of planar aggregation models HL($\a$), 
which are the cases $\eta=\s=0$ of our ALE$(\a,\eta)$ model.
They discovered by numerical experiments that, for small particles, the models underwent a transition at $\a=1$: 
for $\a\le1$ the cluster grows like a disk, while for $\a>1$ it exhibits fractal properties.
The HL$(0)$ model was subsequently investigated rigorously in a series of works 
\cite{RZ}, 
\cite{NT2} (bulk scaling limit),
\cite{JST12},
\cite{Sil} (fluctuation scaling limit).
The $\s$-regularized variant of HL$(\a)$ was proposed in \cite{JST15}, where it was shown for slit maps that,
if $\s\gg(\log(1/c))^{-1/2}$, there is disk-like behaviour for all $\a\ge0$: 
it appeared that the observed fractal properties of HL$(\a)$ for $\a>1$ were suppressed by strong regularization.
In contrast, for the weaker regularization used in the present paper, the phase transition at $\a=1$ (or $\z=1$) becomes visible.
The method of \cite{JST15} used a comparison with an HL$(0)$-type model which breaks down for smaller values of $\s$.
The regularized ALE$(\a,\eta)$ model appeared in \cite{STV}, where it was shown that, for slit maps and for $\eta>1$, a
$\s$-regularized ALE$(0,\eta)$ grows as a line for sufficiently small $\s\le c^\g$, for some $\g<\infty$ depending on $\eta$.

A new approach was begun in \cite{NST}, 
treating regularized ALE$(0,\eta)$ as a Markov chain in univalent functions by martingale arguments:
a bulk scaling limit and fluctuation scaling limit were shown, 
subject to the constraint $\eta\le1$ and to restrictions on $\s$ as a fractional power of $c$. 
These limits turn out not to depend on the details of individual particle shapes.
In this paper, we extend the analysis of \cite{NST} to ALE$(\a,\eta)$, subject now to the constraint $\z=\a+\eta\le1$.
Thus we now include regularized HL$(\a)$ for $\a\le1$.
Hastings and Levitov had argued that there should be a trade-off between $\a$ and $\eta$, 
with only $\z$ affecting the bulk scaling limit, and on this basis proposed HL$(1)$, that is ALE$(1,0)$,
as a continuum variant of the Eden model.
A more direct continuum analogue of the Eden model is ALE$(2,-1)$. 
Our results, in the regularized case, both justify the trade-off argument and show a disk scaling limit whenever $\z\le1$.
On the other hand, we show that ALE$(1,0)$ and ALE$(2,-1)$ have different fluctuation behaviour.
As in \cite{NST}, the behaviour of fluctuations as a function of $\z$ is consistent with the conjectured transition in 
behaviour at $\z=1$.

Hastings and Levitov \cite{HL} identify a Loewner--Kufarev-type equation, 
which they propose as governing the small-particle limit of HL$(\a)$,
citing a discussion of Shraiman and Bensimon \cite{MR763133} for the Hele--Shaw flow, where $\a$ is taken to be $2$.
This is the LK$(\a)$ equation, which is the subject of the next section.
As noted by Sola in a contribution to \cite{MR3817856}, there is a lack of mathematical theory for the LK$(\a)$ equation, 
except in the case $\a=2$ when some special techniques become available.
In this paper, since our focus is on clusters initiated as a disk, we are able to use an explicit solution of the equation, 
along with its linearization around that solution, so we do not rely on a general theory.
However, the particle interpretation established here offers some evidence that for $\a\le1$, 
the LK$(\a)$ equation may have a suitable existence, uniqueness and stability theory, 
and that it may be possible to derive the equation as a limit of particle models.
McEnteggart \cite{MCENT} has shown short-time existence and uniqueness for holomorphic initial data, 
by adapting a classical argument for the case $\a=2$.

Our results depend on constraints on the regularization parameter $\s$, 
though substantially weaker ones than those used in \cite{JST15}.
These constraints limit the interactions of individual particles and place us in the simplest case of Gaussian fluctuations.
At a technical level, for Theorem \ref{DISK}, 
these constraints come from the need to have $\bar\d(e^\s)\le c^\ve$ in Proposition \ref{II},
while for Theorem \ref{FLUCT} they are needed to show that the Poisson integral process $(\Pi_t)_{t\ge0}$ 
is a good approximation to the fluctuations in Proposition \ref{UDFA}.
In the case $\z=1$, the regularizing operator $Q$ obtained by linearization of the LK$(\z)$ equation collapses from
a fixed multiple of the Cauchy operator to $\s$ times the second derivative.
In general, for scaling regimes where $\s\to0$ faster than our fluctuation results allow, 
it remains possible that ALE$(\a,\eta)$ has different universal fluctuation behaviour, 
such as KPZ, as has been conjectured for the lattice Eden model.

\subsection{Structure of the paper}
In the next section, we discuss the Loewner--Kufarev equation for the limit dynamics. 
Then, in Section \ref{IFMC}, we derive an interpolation formula between ALE$(\a,\eta)$ and solutions of the limit equation.
The terms in this formula are estimated in Section \ref{EST}.
Equipped with these estimates, 
we show the bulk scaling limit in Section \ref{BSL} and the fluctuation scaling limit in Section \ref{sec:fluctuations}.
We collect in an Appendix \ref{APP} some further estimates needed in the course of the paper, 
including estimates on the conformal maps which encode single particles and particle families.

\section{Loewner--Kufarev equation}
\label{SBE}
Let $\cS$ denote the set of univalent holomorphic functions $\phi$ on $\{|z|>1\}$ with $\phi(\infty)=\infty$ and $\phi'(\infty)\in(0,\infty)$.
Then each $\phi\in\cS$ has the form
$$
\phi(z)=e^c\left(z+\sum_{k=0}^\infty a_kz^{-k}\right)
$$
for some $c\in\R$ and some sequence $(a_k:k\ge0)$ in $\C$.
Fix parameters $\z\in\R$ and $\s\ge0$.
Given $\phi_0\in\cS$, consider the following Cauchy problem for $(\phi_t)_{t\ge0}$ in $\cS$
\begin{equation}
\label{RSBE}
\dot\phi_t=a(\phi_t)
\end{equation}
where
$$
a(\phi)(z)=z\phi'(z)\fint_0^{2\pi}\frac{z+e^{i\th}}{z-e^{i\th}}\left|\phi'(e^{\s+i\th})\right|^{-\z}d\th.
$$
The case $\s=0$ of this equation is the equation proposed by Hastings and Levitov as scaling limit for HL$(\z)$, 
which we will call the LK$(\z)$ equation. 
When $\z=0$, the value of $\s$ is immaterial and there is a unique solution given by
$$
\phi_t(z)=\phi_0(e^tz).
$$
Where $\z=2$ and $\s=0$, \eqref{RSBE} is the Loewner--Kufarev equation associated to the Hele--Shaw flow.
For $\s>0$, we will refer to \eqref{RSBE} as the $\s$-regularized LK$(\z)$ equation.
We will be interested in the subcritical case $\z\in(-\infty,1]$.

The general form of the Loewner--Kufarev equation is given by
\begin{equation*}
\label{LKE}
\dot\phi_t(z)=z\phi'_t(z)\int_0^{2\pi}\frac{z+e^{i\th}}{z-e^{i\th}}\,\mu_t(d\th)
\end{equation*}
with $(\mu_t:t\ge0)$ a given family of measures on $[0,2\pi)$.
Thus the $\s$-regularized LK$(\z)$ equation is obtained by requiring that the driving measures are given by
$$
\mu_t(d\th)=\left|\phi'_t(e^{\s+i\th})\right|^{-\z}d\th/(2\pi).
$$
Note that the density of these driving measures is the product of the density of the local attachment rate and the local
particle capacity \eqref{MODEL} for ALE$(\a,\eta)$.
By the Loewner--Kufarev theory, for any solution $(\phi_t)_{t\ge0}$ of \eqref{RSBE}, the sets
$$
K_t=\C\sm\{\phi_t(z):|z|>1\}
$$
form an increasing family of simply-connected compacts, with capacities given by
$$
\t_t=\cp(K_t)=\log\phi_t'(\infty)=\log\phi_0'(\infty)+\int_0^t\mu_s([0,2\pi))ds.
$$

\vfe
\subsection{Linearization}
We compute the linearization of \eqref{RSBE} around a solution $(\phi_t)_{t\ge0}$.
For $\psi$ holomorphic in $\{|z|>1\}$, we have
$$
(\nabla a(\phi)\psi)(z)
=\left.\frac d{d\ve}\right|_{\ve=0}a(\phi+\ve\psi)(z)
=z\psi'(z)h(z)-\z z\phi'(z)g(z)
$$
where
$$
h(z)=\fint_0^{2\pi}\frac{z+e^{i\th}}{z-e^{i\th}}|\phi'(e^{\s+i\th})|^{-\z}d\th
$$
and, setting $\rho=\psi'/\phi'$,
\begin{equation}
\label{CTI}
g(z)=\fint_0^{2\pi}\frac{z+e^{i\th}}{z-e^{i\th}}|\phi'(e^{\s+i\th})|^{-\z}\re\rho(e^{\s+i\th})d\th.
\end{equation}
Note that first-order variations in $\cS$ have the form
$$
\psi(z)=\d z+\sum_{k=0}^\infty\psi_kz^{-k},\q\d\in\R,\q\psi_k\in\C.
$$
The process of first-order variations $(\psi_t)_{t\ge0}$ around a solution $(\phi_t)_{t\ge0}$ 
can be expected to satisfy the linearized equation
$$
\dot\psi_t=\nabla a(\phi_t)\psi_t.
$$

\vfe
\subsection{Linear stability of disk solutions in the subcritical case}
\label{DSV}
For all $\s\ge0$, a trial solution $\phi_t(z)=e^{\t_t}z$ for \eqref{RSBE} leads to the equation $\dot\t_t=e^{-\z\t_t}$.
Let us consider first the case where $\z\ge0$.
We can solve to obtain
$$
\t_t=
\begin{cases}
\t_0+t,&\text{if $\z=0$},\\
\z^{-1}\log(e^{\z\t_0}+\z t),&\text{otherwise}.
\end{cases}
$$
We thus find solutions $(\phi_t)_{t\ge0}$ to \eqref{RSBE} in which the sets $K_t$ form a growing family of disks.
For such disk solutions, we have $\phi_t'(z)=e^{\t_t}$ for all $z$, so we can evaluate the integral \eqref{CTI} to obtain
$$
(\nabla a(\phi_t)\psi)(z)
=-Q\psi(z)\dot\t_t
$$
where
\begin{equation}
\label{DEFQ}
Q\psi(z)=-z\psi'(z)+\z z\psi'(e^\s z)=-D\psi(z)+\z e^{-\s}D\psi(e^\s z).
\end{equation}
Here and below, we write $D\psi(z)$ for $z\psi'(z)$.
Note that $Q$ acts as a multiplier on the Laurent coefficients.
For $\psi(z)=\sum_{k=-1}^\infty\psi_kz^{-k}$, we have 
$$
Q\psi(z)=\sum_{k=-1}^\infty q(k)\psi_kz^{-k},\q
q(k)=k(1-\z e^{-\s(k+1)}).
$$
We split $Q$ as a sum of multiplier operators $Q_0+Q_1$ with multipliers given by
\begin{equation}
\label{NRQ}
q_0(k)=(1-\z)k,\q
q_1(k)=\z k(1-e^{-\s(k+1)}).
\end{equation}
Define for $\d\ge0$
\begin{equation}
\label{PDEF}
P(\d)=e^{-\d Q},\q
P_0(\d)=e^{-\d Q_0},\q
P_1(\d)=e^{-\d Q_1}.
\end{equation}
We alert the reader to the similarity of this notation with that used for the particle family $(P^{(c)}:c\in(0,\infty))$.
Then
$$
P(\d)\psi(z)
=P_0(\d)P_1(\d)\psi(z)
=P_1(\d)\psi(e^{(1-\z)\d}z)
$$
and, at least formally, the linearized equation $\dot\psi_t=\nabla a(\phi_t)\psi_t$ has solution given by
\begin{equation}
\label{PSI}
\psi_t(z)=P(\t_t-\t_0)\psi_0(z)=P_1(\t_t-\t_0)\psi_0(e^{(1-\z)(\t_t-\t_0)}z).
\end{equation}
In the case $\z>1$, at least when $\s=0$ so $Q=Q_0$, for example in the Hele--Shaw case,
we see that $\psi_t$ can be holomorphic in $\{|z|>1\}$ 
only if $\psi_0$ extends to a holomorphic function in the larger domain $\{|z|>e^{-(\z-1)(\t_t-\t_0)}\}$.
On the other hand, 
we will show, for all $\s\ge0$, that $P_1(\t_t-\t_0)$ preserves the set of holomorphic first-order variations,
so, when $\z\in[0,1]$, the variation $\psi_t$ as given by \eqref{PSI} remains holomorphic for all $t$.

We turn to the case where $\z<0$.
The differential equation $\dot\t_t=e^{-\z\t_t}$ has now only a local solution, given by
$$
\t_t=z^{-1}\log(e^{\z\t_0}+\z t),\q
t<t_\z=e^{\z\t_0}/|\z|
$$
with $\t_t\to\infty$ as $t\to t_\z$.
It is convenient in this case to split $Q$ differently, setting
\begin{equation}
\label{PSIN}
\tilde q_0(k)=k,\q
\tilde q_1(k)=|\z|ke^{-\s(k+1)}.
\end{equation}
Then, making similar definitions in all other respects, we have 
$$
\psi_t(z)=P(\t_t-\t_0)\psi_0(z)=\tilde P_1(\t_t-\t_0)\psi_0(e^{\t_t-\t_0}z)
$$
and $\psi_t$ remains holomorphic for all $t<t_\z$, as in the case $\z\in[0,1]$.

Define for $r>1$
$$
\|\psi\|_{p,r}=\left(\fint_0^{2\pi}|\psi(re^{i\th})|^pd\th\right)^{1/p}.
$$
The following inequality will be used in Section \ref{EST}.

\begin{lemma}
\label{PBDD}
For all $p\in(1,\infty)$, there is a constant $C(p)<\infty$ such that, 
for all $\z\in\R$ and all $\s\ge0$, for all holomorphic functions $\psi$ on $\{|z|>1\}$ bounded at $\infty$, 
all $t\ge0$ and all $r>1$, we have
\begin{align*}
\|P_1(t)\psi\|_{p,r}\le C(p)\|\psi\|_{p,r},&\q\text{if $\z\ge0$},\\
\|\tilde P_1(t)\psi\|_{p,r}\le C(p)\|\psi\|_{p,r},&\q\text{if $\z<0$}.
\end{align*}
\end{lemma}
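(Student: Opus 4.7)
My plan is to identify $P_1(t)$ and $\tilde P_1(t)$ as Fourier multiplier operators on each circle $\{|z|=r\}$ and reduce the claim to M.~Riesz's theorem on partial Fourier sums, exploiting the fact that the multiplier sequence has total variation bounded by a universal constant independent of $t$, $\z$ and $\s$.

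First I would write $\psi(z)=\sum_{k\ge0}\psi_k z^{-k}$, using that $\psi$ is bounded at $\infty$, so that on the circle $|z|=r$ we have $\psi(re^{i\th})=\sum_{k\ge0}(\psi_k r^{-k})e^{-ik\th}$. By the multiplier description in \eqref{NRQ}, \eqref{PDEF} and \eqref{PSIN}, the operators $P_1(t)$ and $\tilde P_1(t)$ act by multiplying the $k$-th Fourier coefficient by $m_k:=e^{-tq_1(k)}$ and $\tilde m_k:=e^{-t\tilde q_1(k)}$ respectively. I would then verify the monotonicity: for $\z\ge0$, both factors of $q_1(k)=\z k(1-e^{-\s(k+1)})$ are non-negative and non-decreasing on $k\ge 0$, so $m_k$ decreases monotonically from $m_0=1$ to $m_\infty=0$, giving total variation exactly $1$. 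For $\z<0$, $\tilde q_1(k)=|\z|k e^{-\s(k+1)}$ is unimodal on $k\ge 0$ with $\tilde q_1(0)=\tilde q_1(\infty)=0$, so $\tilde m_k$ has $\tilde m_0=\tilde m_\infty=1$ and total variation at most $2$.

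Abel summation, with $\Delta_j:=m_{j-1}-m_j$, then yields
$$
P_1(t)\psi=m_\infty\psi+\sum_{j\ge1}\Delta_j\,S_{j-1}\psi,
$$
and similarly for $\tilde P_1(t)$, where $S_N\psi(z)=\sum_{k=0}^N\psi_k z^{-k}$ is the partial Fourier sum. By M.~Riesz's theorem applied to $\th\mapsto\psi(re^{i\th})$, one has $\|S_N\psi\|_{p,r}\le M_p\|\psi\|_{p,r}$ uniformly in $N\ge0$ and $r>1$, with $M_p<\infty$ depending only on $p\in(1,\infty)$. The triangle inequality and the total variation bounds then give
$$
\|P_1(t)\psi\|_{p,r}\le\left(|m_\infty|+M_p\sum_{j\ge1}|\Delta_j|\right)\|\psi\|_{p,r}\le(1+2M_p)\|\psi\|_{p,r},
$$
so $C(p)=1+2M_p$ suffices in both cases.

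The only point requiring any care is the uniformity in $t$, $\s$ and $\z$; this is ensured by the monotonicity (for $\z\ge0$) or unimodality (for $\z<0$) of the exponent, which forces the total variation of the exponentiated multiplier to be a universal constant regardless of these parameters. Convergence of the Abel series in $L^p(|z|=r)$ is automatic from absolute summability of $\Delta_j$ and the uniform $L^p$ estimate on $S_{j-1}\psi$, so there is no subtlety beyond the multiplier bookkeeping.
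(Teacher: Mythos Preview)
Your proof is correct and rests on the same key observation as the paper's: for $\z\ge0$ the sequence $q_1(k)$ is non-negative and non-decreasing, so $m_k=e^{-tq_1(k)}$ is non-increasing in $[0,1]$ with total variation at most $1$; for $\z<0$ the sequence $\tilde q_1(k)$ is non-negative and unimodal, so $\tilde m_k$ has total variation at most $2$. The only difference is the black box you apply afterwards: the paper feeds these bounds into the Marcinkiewicz multiplier theorem (as stated in Section~\ref{sec:operators}), whereas you use Abel summation together with the M.~Riesz bound $\|S_N\psi\|_{p,r}\le M_p\|\psi\|_{p,r}$ on partial sums. Your route is arguably more elementary here, since the multiplier has bounded \emph{total} variation (not merely dyadic), and Abel summation plus Riesz is all that is needed; Marcinkiewicz is a heavier hammer but has the advantage of being quoted once and reused elsewhere in the paper.

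One small inaccuracy worth cleaning up: you assert $m_\infty=0$ and $\tilde m_\infty=1$, but these fail in degenerate cases (for instance $\s=0$, $\z=0$, or $t=0$). What you actually need, and what holds in all cases, is $|m_\infty|\le1$ together with $\sum_j|\Delta_j|\le1$ (respectively $\le2$), and your final bound $C(p)=1+2M_p$ already accommodates this. The convergence of the Abel series is unproblematic since the Laurent series of $\psi$ converges absolutely on $\{|z|=r\}$ for each $r>1$.
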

\begin{proof}
Consider first the case $\z\ge0$.
The operator $P_1(t)$ acts as multiplication by $p_1(k,t)=e^{-tq_1(k)}$ on the $k$th Laurent coefficient.
We have $0\le q_1(k)\le q_1(k+1)$ so $0\le p_1(k+1,t)\le p_1(k,t)\le1$ for all $k$.
Hence the conditions of the Marcinkiewicz multiplier theorem (as recalled in Section \ref{sec:operators}) 
hold for $P_1(t)$ with $A=1$.
The desired estimate follows.

In the case $\z<0$, we modified the split so that $\tilde q_1(k)\ge0$, so $0\le\tilde p_1(k,t)\le1$ for all $k$.
Now $\tilde q_1(k)$ is no longer increasing but is unimodal in $k$, so $\tilde p_1(k,t)$ is also unimodal in $k$, and so
$$
\sum_{k=0}^\infty|\tilde p_1(k+1,t)-\tilde p_1(k,t)|\le2.
$$
Hence the Marcinkiewicz theorem applies with $A=2$ and we can conclude as before.
\end{proof}

\def\j{
We restrict our main account to the case $\z\ge0$ from now on, removing the need to refer to the time $t_\z$.
The case $\z<0$ is essentially similar, 
except that we obtain estimates uniform on compact subsets of $[0,t_\z)$ rather than compact subsets of $[0,\infty)$.
The details are left to the reader, except for occasional comments in the footnotes.
}

\vfe
\subsection{Transformation to (Schlicht function, capacity) coordinates}
\label{SFC}
Write $\cS_1$ for the set of `Schlicht functions at $\infty$' on $\{|z|>1\}$, given by
$$
\cS_1=\{\phi\in\cS:\phi'(\infty)=1\}.
$$
It will be convenient to use coordinates $(\hat\phi,\t)$ on $\cS$, given by
$$
\hat\phi(z)=e^{-\t}\phi(z),\q\t=\log\phi'(\infty).
$$
Then $\hat\phi\in\cS_1$ and $\t\in\R$.
It is straightforward to show that, for a solution $(\phi_t)_{t\ge0}$ to \eqref{RSBE}, 
the transformed variables $(\hat\phi_t,\t_t)_{t\ge0}$ satisfy
\begin{equation}
\label{SBES}
(\dot{\hat\phi}_t,\dot\t_t)
=b(\hat\phi_t,\t_t)=(\hat b,b^\cp)(\hat\phi_t,\t_t)
\end{equation}
where
\begin{align*}
\hat b(\hat\phi,\t)(z)
&=e^{-\z\t}z\hat\phi'(z)\fint_0^{2\pi}\frac{z+e^{i\th}}{z-e^{i\th}}|\hat\phi'(e^{\s+i\th})|^{-\z}d\th
-e^{-\z\t}\hat\phi(z)\fint_0^{2\pi}|\hat\phi'(e^{\s+i\th})|^{-\z}d\th,\\
b^\cp(\hat\phi,\t)
&=e^{-\z\t}\fint_0^{2\pi}|\hat\phi'(e^{\s+i\th})|^{-\z}d\th.
\end{align*}
On linearizing \eqref{SBES} around a solution $(\hat\phi_t,\t_t)_{t\ge0}$, 
we obtain equations for first-order variations $(\hat\psi_t,\psi^\cp_t)_{t\ge0}$ in the new coordinates,
where now $\hat\psi_t$ is bounded at $\infty$ for all $t$, reflecting the normalization of $\hat\phi_t$.
These are then related to the first-order variations $(\psi_t)_{t\ge0}$ in the old coordinates by
$$
\psi_t(z)=e^{\t_t}(\hat\psi_t(z)+\psi^\cp_t\hat\phi_t(z)).
$$

For a disk solution $(\phi_t)_{t<t_\z}$, we have $\hat\phi_t(z)=z$ and $b(\hat\phi_t,\t)=(0,e^{-\z\t})$.
The equations for first-order variations are then given by
$$
\dot{\hat\psi}_t(z)=-(Q+1)\hat\psi_t(z)\dot\t_t,\q
\dot\psi^\cp_t=-\z\psi^\cp_t\dot\t_t
$$
with solutions
$$
\hat\psi_t(z)
=e^{-(\t_t-\t_0)}P(\t_t-\t_0)\hat\psi_0(z),\q
\psi^\cp_t=e^{-\z(\t_t-\t_0)}\psi^\cp_0.
$$

\vfe
\section{Interpolation formula for Markov chain fluid limits}
\label{IFMC}
We use an interpolation formula between continuous-time Markov chains and differential equations,
which we first review briefly in a general setting.
This formula is then applied to an ALE($\a,\eta$) aggregation process $(\Phi_t)_{t\ge0}$ with capacity parameter $c$, 
regularization parameter $\s$ and particle family $(P^{(c)}:c\in(0,\infty))$,
taking as limit equation the $\s$-regularized LK$(\z)$ equation with $\z=\a+\eta$.
We use (Schlicht function, capacity) coordinates for both the process and the limit equation.

\vfe
\subsection{General form of the interpolation formula}
Let $(X_t)_{t\ge0}$ be a continuous-time Markov chain with state-space $E$ and transition rate kernel $q$, starting from $x_0$ say.
Suppose for this general discussion that $E=\R^d$.
Let $b$ be a vector field on $E$ with continuous bounded derivative $\nabla b$.
Write $(\xi_t(x):t\ge0,x\in E)$ for the flow of $b$.
The compensated jump measure of $(X_t)_{t\ge0}$ is the signed measure $\tilde\mu^X$ on $E\times(0,\infty)$ given by
$$
\tilde\mu^X(dy,dt)=\mu^X(dy,dt)-q(X_{t-},dy)dt,\q\mu^X=\sum_{t:X_t\not=X_{t-}}\d_{(X_t,t)}.
$$
Set $x_t=\xi_t(x_0)$ and define, for $s\in[0,t]$,
$$
Z_s=x_t+\nabla\xi_{t-s}(x_s)(X_s-x_s).
$$
Then $Z_0=x_t$ and $Z_t=X_t$ and, on computing the martingale decomposition of $(Z_s)_{s\le t}$, 
we obtain the interpolation formula
\begin{equation}
\label{HLISR}
X_t-x_t=M_t+A_t
\end{equation}
where 
$$
M_t=\int_{E\times(0,t]}\nabla\xi_{t-s}(x_s)(y-X_{s-})\tilde\mu^X(dy,ds)
$$
and
$$
A_t=\int_0^t\nabla\xi_{t-s}(x_s)(\b(X_s)-b(x_s)-\nabla b(x_s)(X_s-x_s))ds
$$
where $\b$ is the drift of $(X_t)_{t\ge0}$, given by
$$
\b(x)=\int_E(y-x)q(x,dy).
$$
We will use this formula in a case where the state-space $E$ is infinite-dimensional.
Rather than justify its validity generally in such a context, in the next section, 
we will prove directly the special case of the formula which we require.
Note that the integrands in $M_t$ and $A_t$ depend on $t$.
Nevertheless, we will call $M_t$ the martingale term and $A_t$ the drift term.

\vfe
\subsection{Proof of the formula for ALE($\a,\eta$)}
\label{PFH}
Let $(\Phi_t)_{t\ge0}$ be an ALE($\a,\eta$) aggregation process with capacity parameter $c$, 
regularization parameter $\s$ and particle family $(P^{(c)}:c\in(0,\infty))$.
See Section \ref{DESM} and \eqref{MODEL} for the specification of this process.
We use (Schlicht function, capacity) coordinates, as in Section \ref{SFC}, 
to obtain a continuous-time Markov chain $(X_t)_{t\ge0}=(\hat\Phi_t,\cT_t)_{t\ge0}$ in $\cS_1\times[0,\infty)$.
When in state $x=(\hat\phi,\t)$, for all $\th\in[0,2\pi)$,
this process makes a jump of size $(\Delta(\th,z,c(\th),\hat\phi),c(\th))$ at rate $\l(\th)d\th/(2\pi)$, where
\begin{equation*}
\label{HEQQ}
\Delta(\th,z,c,\hat\phi)=e^{-c}\hat\phi(F_c(\th,z))-\hat\phi(z)
\end{equation*}
and
$$
c(\th)=c(\th,\hat\phi,\t)=ce^{-\a\t}|\hat\phi'(e^{\s+i\th})|^{-\a},\q
\l(\th)=\l(\th,\hat\phi,\t)=c^{-1}e^{-\eta\t}|\hat\phi'(e^{\s+i\th})|^{-\eta}.
$$
We can and do assume that the process is constructed from a Poisson random measure 
$\mu$ on $[0,2\pi)\times[0,\infty)\times(0,\infty)$ of intensity $(2\pi)^{-1}d\th dvdt$ 
by the following stochastic differential equation:
$$
\hat\Phi_t(z)=\int_{E(t)}H_s(\th,z)1_{\{v\le\L_s(\th)\}}\mu(d\th,dv,ds),\q
\cT_t=\int_{E(t)}C_s(\th)1_{\{v\le\L_s(\th)\}}\mu(d\th,dv,ds)
$$
where
$$
E(t)=[0,2\pi)\times[0,\infty)\times(0,t]
$$
and
$$
H_s(\th,z)=\Delta(\th,z,C_s(\th),\hat\Phi_{s-}),\q
C_s(\th)=c(\th,\hat\Phi_{s-},\cT_{s-}),\q
\L_s(\th)=\l(\th,\hat\Phi_{s-},\cT_{s-}).
$$
We use the vector field $b=(\hat b,b^\cp)$ of the $\s$-regularized LK$(\z)$ equation \eqref{SBES}, 
written in (Schlicht function, capacity) coordinates.
We consider the disk solution $(x_t)_{t\ge0}=(\hat\phi_t,\t_t)_{t<t_\z}$ with initial capacity $\t_0=0$,
which is given by 
\begin{equation}
\label{DISCSOL}
\hat\phi_t(z)=z,\q 
\t_t
=
\begin{cases}
t,&\text{if $\z=0$},\\
\z^{-1}\log(1+\z t),&\text{if $\z\not=0$},
\end{cases}
\q
t_\z=
\begin{cases}
\infty,&\text{if $\z\ge0$},\\
|\z|^{-1},&\text{if $\z<0$}.
\end{cases}
\end{equation}
We will compute the form of the interpolation formula in this case and then prove directly that it holds.
Note that
$$
b(x_t)=(\hat b,b^\cp)(\hat\phi_t,\t_t)=(0,e^{-\z\t_t})
$$
and, for $y=(\hat\psi,\psi^\cp)$,
$$
\nabla b(x_t)y
=-e^{-\z\t_t}((Q+1)\hat\psi,\z\psi^\cp)
$$
and the first-order variation at time $t$ due to a variation $y$ at time $s\le t$ is given by
$$
\nabla\xi_{t-s}(x_s)y
=(e^{-(\t_t-\t_s)}P(\t_t-\t_s)\hat\psi,e^{-\z(\t_t-\t_s)}\psi^\cp).
$$
Write $\tilde\mu$ for the compensated Poisson random measure
$$
\tilde\mu(d\th,dv,ds)=\mu(d\th,dv,ds)-(d\th/2\pi)dvds.
$$
Fix $t\ge0$ and set $\d_s=\t_t-\t_s$.
We alert the reader to the concealed dependence of $\d_s$ on $t$.
The martingale term $M_t=(\hat M_t,M^\cp_t)$ in the interpolation formula may then be written 
\begin{align*}
\hat M_t(z)
&=\int_{E(t)}e^{-\d_s}P(\d_s)H_s(\th,z)1_{\{v\le\L_s(\th)\}}\tilde\mu(d\th,dv,ds),\\
M^\cp_t
&=\int_{E(t)}e^{-\z\d_s}C_s(\th)1_{\{v\le\L_s(\th)\}}\tilde\mu(d\th,dv,ds).
\end{align*}
The drift $\b=(\hat\b,\b^\cp)$ for $(\hat\Phi,\cT)$ is given by
\begin{align*}
\hat\b(\hat\phi,\t)(z)
&=\fint_0^{2\pi}\Delta(\th,z,c(\th,\hat\phi,\t),\hat\phi)\l(\th,\hat\phi,\t)d\th,\\
\b^\cp(\hat\phi,\t)
&=\fint_0^{2\pi}c(\th,\hat\phi,\t)\l(\th,\hat\phi,\t)d\th.
\end{align*}
Write $\hat\Psi_s(z)=\hat\Phi_s(z)-\hat\phi_s(z)=\hat\Phi_s(z)-z$ and $\Psi^\cp_s=\cT_s-\t_s$.
Then we have formally
$$
\nabla b(x_s)(X_s-x_s)
=-e^{-\z\t_s}((Q+1)\hat\Psi_s,\z\Psi^\cp_s)
$$
and so
$$
\nabla\xi_{t-s}(x_s)\nabla b(x_s)(X_s-x_s)=-e^{-\z\t_s}(e^{-\d_s}P(\d_s)(Q+1)\hat\Psi_s,e^{-\z\d_s}\z\Psi^\cp_s).
$$
The following interpolation identities may then be obtained formally by
splitting equation \eqref{HLISR} into its Schlicht function and capacity components.

\begin{proposition}
For all $t<t_\z$ and all $|z|>1$, we have
\begin{equation}
\label{KID}
\hat\Psi_t(z)=\hat M_t(z)+\hat A_t(z),\q
\Psi^\cp_t=M^\cp_t+A^\cp_t
\end{equation}
where
\begin{align*}
\hat A_t(z)
&=\int_0^te^{-\d_s}P(\d_s)\left(\hat\b(\hat\Phi_s,\cT_s)+e^{-\z\t_s}(Q+1)\hat\Psi_s\right)(z)ds,\\
A^\cp_t
&=\int_0^te^{-\z\d_s}\left(\b^\cp(\hat\Phi_s,\cT_s)-e^{-\z\t_s}+\z e^{-\z\t_s}\Psi^\cp_s\right)ds.
\end{align*}
\end{proposition}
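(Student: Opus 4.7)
The plan is to verify the two identities by direct application of the stochastic calculus for Poisson random measures, working componentwise (first the capacity, then the Schlicht function), and doing the Schlicht function piece coefficient by coefficient in the Laurent expansion so as to avoid any abstract infinite-dimensional Itô theory. This is exactly the specialization to the case at hand of the general interpolation formula \eqref{HLISR}, but we write it out directly since the state space is infinite-dimensional.

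\textbf{Capacity identity.} Set $Z^\cp_s = \t_t + e^{-\z\d_s}\Psi^\cp_s$ with $\d_s=\t_t-\t_s$, so $Z^\cp_0=\t_t$ and $Z^\cp_t=\cT_t$. Between jumps, $\cT_s$ is constant, so $d\Psi^\cp_s/ds=-\dot\t_s=-e^{-\z\t_s}$, and differentiating the prefactor using $\dot\d_s=-e^{-\z\t_s}$ gives the between-jumps drift of $Z^\cp_s$ as $e^{-\z\d_s}(\z e^{-\z\t_s}\Psi^\cp_s-e^{-\z\t_s})$. At a Poisson point $(\th,v,s)$ with $v\le\L_s(\th)$ the jump in $Z^\cp_s$ is $e^{-\z\d_s}C_s(\th)$. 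Integrating in $s$ from $0$ to $t$ and splitting the raw Poisson integral as $\tilde\mu$-part plus compensator $\L_s(\th)d\th ds/(2\pi)$, the compensator contributes $\int_0^te^{-\z\d_s}\b^\cp(\hat\Phi_s,\cT_s)\,ds$ by the definition of $\b^\cp$. Rearranging gives $\Psi^\cp_t=M^\cp_t+A^\cp_t$.

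\textbf{Schlicht function identity.} For the second identity, fix $z$ with $|z|>1$ and write $\hat\Psi_s(z)=\sum_{k\ge0}\hat\Psi_s(k)z^{-k}$ (the $\d z$ coefficient vanishes because both $\hat\Phi_s$ and $\hat\phi_s$ lie in $\cS_1$). Since $Q$ is the multiplier operator with symbol $q(k)=k(1-\z e^{-\s(k+1)})$, the operator $e^{-\d_s}P(\d_s)=e^{-\d_s}e^{-\d_s Q}$ acts on the $k$th Laurent mode by multiplication by the scalar $\pi_k(\d_s):=e^{-(1+q(k))\d_s}$. Working mode-by-mode, define
$$Z_s(k)=\pi_k(\d_s)\hat\Psi_s(k),$$
so $Z_0(k)=\pi_k(\t_t)\cdot 0=0$ and $Z_t(k)=\hat\Psi_t(k)$. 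Between jumps $\hat\Psi_s(k)$ is constant, and since $\dot\d_s=-e^{-\z\t_s}$ we have $d\pi_k(\d_s)/ds=(1+q(k))e^{-\z\t_s}\pi_k(\d_s)$, so the between-jumps drift of $Z_s(k)$ is $e^{-\z\t_s}\pi_k(\d_s)(1+q(k))\hat\Psi_s(k)$, i.e.\ the $k$th Laurent coefficient of $e^{-\z\t_s}e^{-\d_s}P(\d_s)(Q+1)\hat\Psi_s(z)$. At a Poisson point the jump in $\hat\Psi_s$ equals the jump in $\hat\Phi_s$, namely $H_s(\th,\cdot)$, so the jump in $Z_s(k)$ is the $k$th coefficient of $e^{-\d_s}P(\d_s)H_s(\th,z)$. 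Summing the resulting identity over $k$ (which converges since $\pi_k(\d_s)\le 1$ uniformly for $k\ge0$, $\d_s\ge0$, by the argument used in Lemma \ref{PBDD}) and splitting the Poisson integral into its $\tilde\mu$-part plus compensator, the compensator contributes $\int_0^te^{-\d_s}P(\d_s)\hat\b(\hat\Phi_s,\cT_s)(z)\,ds$ by the definition of $\hat\b$, and combining yields $\hat\Psi_t(z)=\hat M_t(z)+\hat A_t(z)$.

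\textbf{What to be careful about.} The main technical point is justifying the mode-by-mode computation and its sum: we need to check that $\pi_k(\d_s)$ is uniformly bounded (which holds on $[0,t_\z)$ because $q(k)\ge -|\z|$, and one uses the split $q=q_0+q_1$ or $\tilde q_0+\tilde q_1$ as in \eqref{NRQ}, \eqref{PSIN} to separate the sign issue when $\z<0$), and that the Poisson stochastic integral converges and satisfies the usual compensation identity. Because the driving Poisson measure $\mu$ has finite total intensity on $E(t)$ thanks to $\s>0$ (so $\L_s(\th)$ is bounded on bounded time intervals up to explosion), the process $(\hat\Phi_s,\cT_s)_{s\le t}$ makes only finitely many jumps before $t_\z$, and all the integrals above are in fact finite sums plus ordinary Lebesgue integrals almost surely, so no delicate stochastic integration theory is required beyond the identity $\E\int f\,d\mu=\E\int f\,d\tilde\mu+\E\int f(d\th/2\pi)dv\,ds$ for suitably bounded integrands, which is routine.
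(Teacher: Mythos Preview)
Your proposal is correct and follows essentially the same strategy as the paper: both fix $t$, introduce the interpolating process $Z_s$ (your $Z^\cp_s$ and $Z_s(k)$ are exactly the paper's $\t_t+\Psi^\cp_{s,t}$ and the Laurent coefficients of $\hat\phi_t+\hat\Psi_{s,t}$), and verify the identity by checking that the jumps and the between-jumps derivatives agree. The only cosmetic difference is that you make the Laurent-mode decomposition explicit, whereas the paper works directly with $P(\d_s)\hat\Psi_s(z)$ and remarks that the needed identity $\frac d{ds}P(\d_s)=e^{-\z\t_s}QP(\d_s)$ is justified by a ``spectral calculation'' --- i.e.\ exactly your mode-by-mode computation.
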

\begin{proof}
Fix $t<t_\z$. 
For $x\in[0,t]$, recall that $\d_x=\t_t-\t_x$ and define for $|z|>1$
$$
\hat\Psi_{x,t}(z)=e^{-\d_x}P(\d_x)(\hat\Phi_x-\hat\phi_x)(z),\q
\Psi^\cp_{x,t}=e^{-\z\d_x}(\cT_x-\t_x).
$$
Set
\begin{align*}
\hat M_{x,t}(z)
&=\int_{E(x)}e^{-\d_s}P(\d_s)H_s(\th,z)1_{\{v\le\L_s(\th)\}}\tilde\mu(d\th,dv,ds),\\
M^\cp_{x,t}
&=\int_{E(x)}e^{-\z\d_s}C_s(\th)1_{\{v\le\L_s(\th)\}}\tilde\mu(d\th,dv,ds)
\end{align*}
and
\begin{align*}
\hat A_{x,t}(z)
&=\int_0^xe^{-\d_s}P(\d_s)\left(\hat\b(\hat\Phi_s,\cT_s)+e^{-\z\t_s}(Q+1)\hat\Psi_s\right)(z)ds,\\
A^\cp_{x,t}
&=\int_0^xe^{-\z\d_s}\left(\b^\cp(\hat\Phi_s,\cT_s)-e^{-\z\t_s}+\z e^{-\z\t_s}\Psi^\cp_s\right)ds.
\end{align*}
We will show that, for all $x\in[0,t]$ and all $|z|>1$,
$$
\hat\Psi_{x,t}(z)=\hat M_{x,t}(z)+\hat A_{x,t}(z),\q
\Psi^\cp_{x,t}=M^\cp_{x,t}+A^\cp_{x,t}.
$$
The case $x=t$ gives the claimed identities.
In the case $x=0$, all terms are $0$.
The left-hand and right-hand sides are piecewise continuously differentiable in $x$,
except for finitely many jumps, at the jump times of $(\Phi_x)_{0\le x\le t}$,
which occur when $\mu$ has an atom at $(\th,v,x)$ with $v\le\L_x(\th)$.
It will suffice to check that the jumps and derivatives agree.
Now $\hat A_{x,t}(z)$ and $A^\cp_{x,t}$ are continuous in $x$ and, at the jump times of $\Phi_x$, 
the jumps in $\hat\Psi_{x,t}(z)$ and $\Psi^\cp_{x,t}$ are given by
\begin{align*}
\Delta\hat\Psi_{x,t}(z)
&=e^{-\d_x}P(\d_x)\Delta\hat\Phi_x(z)\\
&=e^{-\d_x}P(\d_x)(e^{-C_x(\th)}\hat\Phi_{x-}\circ F_{C_x(\th)}(\th,.)-\hat\Phi_{x-})(z)\\
&=e^{-\d_x}P(\d_x)H_x(\th,z)
=\Delta\hat M_{x,t}(z)
\end{align*}
and
$$
\Delta\Psi^\cp_{x,t}
=e^{-\z\d_x}\Delta\cT_x
=e^{-\z\d_x}C_x(\th)
=\Delta M^\cp_{x,t}.
$$
So it remains to check the derivatives.
We will use a spectral calculation for the semigroup of multiplier operators $P(\t)=e^{-\t Q}$,
whose justification is straightforward.
Recall that $\dot\t_t=e^{-\z\t_t}$.
We have
$$
\frac d{dx}\d_x=-e^{-\z\t_x},\q
\frac d{dx}e^{-\d_x}=e^{-\d_x}e^{-\z\t_x},\q
\frac d{dx}e^{-\z\d_x}=\z 
e^{-\z\d_x}e^{-\z\t_x}
$$
and
$$
\frac d{dx}P(\d_x)=e^{-\z\t_x}QP(\d_x).
$$
So, between the jump times, we have
$$
\frac d{dx}\hat\Psi_{x,t}(z)
=e^{-\z\t_x}e^{-\d_x}P(\d_x)(Q+1)\hat\Psi_x(z),\q
\frac d{dx}\Psi^\cp_{x,t}(z)
=-e^{-\z\d_x}e^{-\z\t_x}(1-\z\Psi^\cp_x)
$$
and
\begin{align*}
\frac d{dx}\hat M_{x,t}(z)
&=-\fint_0^{2\pi}e^{-\d_x}P(\d_x)H_x(\th,z)\L_x(\th)d\th
=-e^{-\d_x}P(\d_x)\hat\b(\hat\Phi_x,\cT_x)(z),\\
\frac d{dx}M^\cp_{x,t}
&=-\fint_0^{2\pi}e^{-\z\d_x}C_x(\th)\L_x(\th)d\th
=-e^{-\z\d_x}\b^\cp(\hat\Phi_x,\cT_x)
\end{align*}
and
\begin{align*}
\frac d{dx}\hat A_{x,t}(z)
&=e^{-\d_x}P(\d_x)\left(\hat\b(\hat\Phi_x,\cT_x)+e^{-\z\t_x}(Q+1)\hat\Psi_x\right)(z),\\
\frac d{dx}A^\cp_{x,t}
&=e^{-\z\d_x}\left(\b^\cp(\hat\Phi_x,\cT_x)-e^{-\z\t_x}+\z e^{-\z\t_x}\Psi^\cp_x\right).
\end{align*}
Hence, between the jump times, 
$$
\frac d{dx}\hat\Psi_{x,t}(z)
=\frac d{dx}(\hat M_{x,t}(z)+\hat A_{x,t}(z)),\q
\frac d{dx}\Psi^\cp_{x,t}
=\frac d{dx}(M^\cp_{x,t}+A^\cp_{x,t})
$$
as required.
\end{proof}

\vfe
\section{Estimation of terms in the interpolation formula}
\label{EST}
We obtain some estimates on the terms in the interpolation formula \eqref{KID} for ALE($\a,\eta$)
when it is close to the disk solution \eqref{DISCSOL} of the LK$(\z)$ equation, with $\z=\a+\eta$.
For $\d_0\in(0,1/2]$, define
$$
T_0=T_0(\d_0)=\inf\big\{t\in[0,t_\z):\sup_{\th\in[0,2\pi)}|\hat\Psi_t'(e^{\s+i\th})|>\d_0\text{ or }|\Psi^\cp_t|>\d_0\big\}.
$$
We estimate first the martingale term and then the drift term.

\vfe
\subsection{Estimates for the martingale terms}
Recall that the martingale term $(\hat M_t,M^\cp_t)$ in the interpolation formula is given by
\begin{align*}
\hat M_t(z)
&=\int_{E(t)}e^{-(\t_t-\t_s)}P(\t_t-\t_s)H_s(\th,z)1_{\{v\le\L_s(\th)\}}\tilde\mu(d\th,dv,ds),\\
M^\cp_t
&=\int_{E(t)}e^{-\z(\t_t-\t_s)}C_s(\th)1_{\{v\le\L_s(\th)\}}\tilde\mu(d\th,dv,ds)
\end{align*}
where
\begin{equation}
\label{CDEF}
C_s(\th)=c(\th,\hat\Phi_{s-},\cT_{s-}),\q
\L_s(\th)=\l(\th,\hat\Phi_{s-},\cT_{s-})
\end{equation}
with
$$
c(\th,\hat\phi,\t)=ce^{-\a\t}|\hat\phi'(e^{\s+i\th})|^{-\a},\q
\l(\th,\hat\phi,\t)=c^{-1}e^{-\eta\t}|\hat\phi'(e^{\s+i\th})|^{-\eta}
$$
and
\begin{equation*}
\label{DEL}
H_s(\th,z)=\Delta(\th,z,C_s(\th),\hat\Phi_{s-}),\q
\Delta(\th,z,c,\hat\phi)=e^{-c}\hat\phi(F_c(\th,z))-\hat\phi(z).
\end{equation*}
Consider the following approximations to $\hat M_t(z)$ and $M_t^\cp$, which are obtained by replacing 
$\hat\Phi_{s-}$ by $\hat\phi_s$, $\cT_{s-}$ by $\t_s$ and $e^{-c}F_c(\th,z)-z$ by $2cz/(e^{-i\th}z-1)$.
(Under our assumptions on the particle family, the last approximation becomes good in the limit $c\to0$.
See Section \ref{sec:particle} and in particular equation \eqref{FEST7}.)
Define
\begin{align}
\label{MPS}
\hat\Pi_t(z)
&=\int_{E(t)}e^{-(\t_t-\t_s)}P(\t_t-\t_s)H(\th,z)2c_s1_{\{v\le\l_s\}}\tilde\mu(d\th,dv,ds),\\
\label{MPT}
\Pi^\cp_t
&=\int_{E(t)}e^{-\z(\t_t-\t_s)}c_s1_{\{v\le\l_s\}}\tilde\mu(d\th,dv,ds)
\end{align}
where 
$$
c_s=ce^{-\a\t_s},\q\l_s=c^{-1}e^{-\eta\t_s}
$$
and
\begin{equation}
\label{DEFH}
H(\th,z)=\frac z{e^{-i\th}z-1}=\sum_{k=0}^\infty e^{i(k+1)\th}z^{-k}.
\end{equation}

\begin{lemma}
\label{NEST}
For all $\a,\eta\in\R$, all $p\ge2$ and all $T<t_\z$, 
there is a constant $C(\a,\eta,p,T)<\infty$, such that, for all $c\in(0,1]$, all $\s\ge0$ and all $\d_0\in(0,1/2]$,
$$
\big\|\sup_{t\le T_0(\d_0)\wedge T}|M^\cp_t|\big\|_p\le C\sqrt c
$$
and
$$
\big\|\sup_{t\le T_0(\d_0)\wedge T}|M^\cp_t-\Pi^\cp_t|\big\|_p\le C(c+\sqrt{c\d_0}).
$$
\end{lemma}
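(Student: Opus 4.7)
The key observation is that the apparent $t$-dependence in the integrands of $M^\cp_t$ and $\Pi^\cp_t$ enters only through the deterministic factor $e^{-\z(\t_t-\t_s)} = e^{-\z\t_t}e^{\z\t_s}$, which I can factor out. Setting $N_t = e^{\z\t_t} M^\cp_t$ gives a genuine martingale
$$
N_t = \int_{E(t)} e^{\z\t_s} C_s(\th)\, 1_{\{v\le \L_s(\th)\}}\, \tilde\mu(d\th,dv,ds),
$$
with predictable integrand independent of the upper time limit, so Doob's maximal inequality and the Kunita--It\^o inequality (BDG for compensated Poisson stochastic integrals) both apply. Since $T<t_\z$, the factor $e^{-\z\t_t}$ is bounded on $[0,T]$, so it suffices to control $\sup_{t\le T\wedge T_0}|N_t|$ in $L^p$, and likewise for $N^{\mathrm{diff}}_t = e^{\z\t_t}(M^\cp_t-\Pi^\cp_t)$.

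For the first bound, on $\{s\le T_0(\d_0)\}$ the hypotheses give $|\hat\Phi'_{s-}(e^{\s+i\th})|\in[1/2,3/2]$ and $|\cT_{s-}-\t_s|\le 1/2$, so $C_s(\th)\le Cce^{-\a\t_s}$ and $\L_s(\th)\le Cc^{-1}e^{-\eta\t_s}$ with $C=C(\a,\eta)$. The cancellation enforced by $\z=\a+\eta$ then gives a bounded predictable quadratic variation,
$$
\int_0^{t\wedge T_0}\fint_0^{2\pi}[e^{\z\t_s}C_s(\th)]^2\,\L_s(\th)\,d\th\,ds
\le C\int_0^{t\wedge T_0} e^{\eta\t_s}\,ds \le CcT,
$$
while the corresponding $p$-th jump-moment integral is at most $Cc^{p-1}T$. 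Applying Doob and Kunita to the stopped martingale yields $\|\sup_{t\le T\wedge T_0}|N_t|\|_p\le C\sqrt c$, and the first estimate follows.

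For the difference, I decompose the integrand of $N^{\mathrm{diff}}_t$ as
$$
e^{\z\t_s}\bigl[(C_s(\th)-c_s)\,1_{\{v\le\L_s\}} + c_s\,(1_{\{v\le\L_s\}}-1_{\{v\le\l_s\}})\bigr].
$$
A Taylor expansion of the functions $c(\th,\cdot,\cdot)$ and $\l(\th,\cdot,\cdot)$ around the disk solution, exploiting $|\hat\Psi'_{s-}(e^{\s+i\th})|\le\d_0$ and $|\Psi^\cp_{s-}|\le\d_0$, gives $|C_s(\th)-c_s|\le Cc\d_0 e^{-\a\t_s}$ and $|\L_s(\th)-\l_s|\le Cc^{-1}\d_0 e^{-\eta\t_s}$. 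Using $\int_0^\infty(1_{\{v\le\L_s\}}-1_{\{v\le\l_s\}})^2\,dv=|\L_s-\l_s|$, both pieces contribute at most $Cc\d_0 e^{\eta\t_s}$ to the quadratic-variation density, yielding an $L^p$-control of order $\sqrt{c\d_0}$. The $p$-th jump-moment term produces a contribution of order $c^{1-1/p}\d_0^{1/p}$, which by a case split according to whether $\d_0\le c$ is dominated by $c+\sqrt{c\d_0}$. The main obstacle is simply careful bookkeeping with the exponents, but the factorisation $M^\cp_t=e^{-\z\t_t}N_t$ combined with the identity $\z=\a+\eta$ makes the decisive exponential factors cancel and the stated bounds fall out.
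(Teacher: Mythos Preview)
Your proof is correct and follows essentially the same approach as the paper: factor out $e^{-\z\t_t}$ to reduce to a genuine martingale $N_t=e^{\z\t_t}M^\cp_t$, bound $C_s(\th)$ and $\L_s(\th)$ on $\{s\le T_0(\d_0)\}$ using $\d_0\le1/2$, and exploit the cancellation from $\z=\a+\eta$ to get a quadratic-variation density of order $c$ (resp.\ $c\d_0$ for the difference). The only minor variations are that the paper applies Burkholder's inequality $\|M^*_t\|_p\le C(\|\<M\>_t\|_{p/2}^{1/2}+\|(\Delta M)^*_t\|_p)$ directly, bounding the maximal jump of the difference martingale crudely by $Cc$, whereas you use a Kunita--It\^o form with the $p$-th jump-moment integral and then handle the resulting $c^{1-1/p}\d_0^{1/p}$ by a case split; and the paper treats the difference integrand $C_s(\th)1_{\{v\le\L_s\}}-c_s1_{\{v\le\l_s\}}$ as a single object rather than splitting it into your two pieces. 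Both routes are equally short. (One typographical slip: in your displayed quadratic-variation bound the factor $c$ is missing from the middle expression, though it reappears correctly in the final bound $CcT$.)
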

\begin{proof}
Write $T_0$ for $T_0(\d_0)$ throughout the proofs.
Consider the martingale $(M_t)_{t<t_\z}$ given by
$$
M_t=\int_{E(t)}e^{\z\t_s}C_s(\th)1_{\{v\le\L_s(\th),\,s\le T_0\}}\tilde\mu(d\th,dv,ds).
$$
By an inequality of Burkholder,
for all $p\ge2$, there is a constant $C(p)<\infty$ such that, for all $t\ge0$,
\begin{equation}
\label{BUR}
\|M_t^*\|_p\le C(p)\left(\|\<M\>_t\|_{p/2}^{1/2}+\|(\Delta M)^*_t\|_p\right).
\end{equation}
We write here $M^*_t$ for $\sup_{s\le t}|M_s|$ and similarly for other processes.
See \cite[Theorem 21.1]{MR365692} for the discrete-time case. 
The continuous-time case follows by a standard limit argument.
Now
$$
\<M\>_t=\int_0^{T_0\wedge t}\fint_0^{2\pi}e^{2\z\t_s}C_s(\th)^2\L_s(\th)d\th ds
$$
and
$$
\Delta M_t
=|M_t-M_{t-}|\le e^{\z\t_t}\sup_{\th\in[0,2\pi)}C_t(\th).
$$
For all $t\le T_0\wedge T$ and all $\th\in[0,2\pi)$, we have
\begin{equation}
\label{JKJ}
e^{\z\t_t}\le C,\q
C_t(\th)\le Cc,\q
\L_t(\th)\le C/c
\end{equation}
so $\<M\>_t\le Cc$ and $(\Delta M)^*_t\le Cc$.
Here and below, we write $C$ for a finite constant of the dependence allowed in the statement.
The value of $C$ may vary from one instance to the next.
We remind the reader that $C_t(\th)$ and $\L_t(\th)$ are defined at \eqref{CDEF}.
Hence
$$
\|M^*_t\|_p\le C\sqrt c.
$$
Since $M^\cp_t=e^{-\z\t_t}M_t$ for all $t\le T_0$, the first claimed estimate follows.

For the second estimate, we use instead the martingale $(M_t)_{t\ge0}$ given by
$$
M_t=\int_{E(t)}e^{\z\t_s}\left(C_s(\th)1_{\{v\le\L_s(\th)\}}-c_s1_{\{v\le\l_s\}}\right)1_{\{s\le T_0\}}\tilde\mu(d\th,dv,ds).
$$
Then
$$
\<M\>_t=\int_0^{T_0\wedge t}\int_0^\infty\fint_0^{2\pi}
e^{2\z\t_s}\left(C_s(\th)1_{\{v\le\L_s(\th)\}}-c_s1_{\{v\le\l_s\}}\right)^2d\th dvds.
$$
For $t\le T_0\wedge T$ and $\th\in[0,2\pi)$, we have
\begin{equation}
\label{JKK}
|C_t(\th)-c_t|\le Cc\d_0,\q
|\L_t(\th)-\l_t|\le C\d_0/c
\end{equation}
so
\begin{equation}
\label{JKL}
\int_0^\infty
\left(C_t(\th)1_{\{v\le\L_t(\th)\}}-c_t1_{\{v\le\l_t\}}\right)^2dv\le Cc\d_0.
\end{equation}
Then $\<M\>_t\le Cc\d_0$ and $(\Delta M)_t\le Cc$.
Hence, by Burkholder's inequality, 
$$
\|M_t^*\|_p\le C(c+\sqrt{c\d_0}).
$$
Since $M^\cp_t-\Pi^\cp_t=e^{-\z\t_t}M_t$ for all $t\le T_0$, the second claimed estimate follows.
\end{proof}

Note that, since $\hat\Phi_t$ takes values in $\cS_1$, 
$\hat\Psi_t(z)=\hat\Phi_t(z)-z$ is bounded at $\infty$ and hence has a limiting value $\hat\Psi_t(\infty)$.
The same is true for the terms $\hat M_t$ and $\hat A_t$ in the interpolation formula.
Instead of estimating these terms directly, 
we estimate first their values at $\infty$ and then their derivatives $D\hat M_t$ and $D\hat A_t$,
since this gives the best control of the derivative of $\hat\Phi_t$ near the unit circle, which drives the dynamics of the process.

\begin{lemma}
\label{MESTI}
For all $\a,\eta\in\R$ with $\z=\a+\eta\le1$, all $p\ge2$ and all $T<t_\z$, 
there is a constant $C(\a,\eta,\L,p,T)<\infty$, 
such that, for all $c\in(0,1]$, all $\s\ge0$, all $\d_0\in(0,1/2]$ and all $t\le T$,
$$
\big\|\sup_{s\le T_0(\d_0)\wedge t}|\hat M_s(\infty)|\big\|_p^p
\le Cc^{p/2}\bigg(1+\int_0^t\|\hat\Psi_{s-}(\infty)1_{\{s\le T_0(\d_0)\}}\|_p^pds\bigg)
$$
and
$$
\big\|\sup_{s\le T_0(\d_0)\wedge t}|\hat M_s(\infty)-\hat\Pi_s(\infty)|\big\|_p^p
\le C\bigg(\left(c+\sqrt{c\d_0}\right)^p+c^{p/2}\int_0^t\|\hat\Psi_{s-}(\infty)1_{\{s\le T_0(\d_0)\}}\|_p^pds\bigg).
$$
\end{lemma}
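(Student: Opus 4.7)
The plan is to mirror the approach of Lemma~\ref{NEST} at the single point $z=\infty$, exploiting the fact that the multiplier $q(k)=k(1-\z e^{-\s(k+1)})$ of $Q$ satisfies $q(0)=0$. Hence the semigroup $P(\d)=e^{-\d Q}$ fixes the constant Laurent coefficient, so $[P(\d)\psi](\infty)=\psi(\infty)$ whenever $\psi$ is bounded at $\infty$. Both $H_s(\th,\cdot)$ and $H(\th,\cdot)$ are bounded at $\infty$ (the leading $z$-terms cancel in $\Delta$), and a direct computation from \eqref{CAPACITY} gives
$$
H_s(\th,\infty)=a_0(C_s(\th))e^{i\th}+\bigl(e^{-C_s(\th)}-1\bigr)\hat\Psi_{s-}(\infty),\qquad H(\th,\infty)=e^{i\th}.
$$
This eliminates $P(\t_t-\t_s)$ from the integrands at $z=\infty$ and reduces both claims to scalar stochastic-integral estimates.

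For the first bound, I multiply through by $e^{\t_t}$ to form the genuine martingale
$$
N_t=e^{\t_t}\hat M_t(\infty)=\int_{E(t)}e^{\t_s}H_s(\th,\infty)\,1_{\{v\le\L_s(\th),\,s\le T_0\}}\,\tilde\mu(d\th,dv,ds)
$$
and apply Burkholder's inequality \eqref{BUR} at exponent $p$. The jump bound $|\Delta N_t|\le Cc$ follows from \eqref{JKJ} together with the particle estimate $|a_0(c)|\le Cc$, which holds under \eqref{CONCENTRATED} (see Section~\ref{sec:particle}). For the quadratic variation, combining the same input with $|e^{-c}-1|\le Cc$ gives $|H_s(\th,\infty)|^2\le Cc^2\bigl(1+|\hat\Psi_{s-}(\infty)|^2\bigr)$, so with $\L_s(\th)\le C/c$ and $e^{2\t_s}\le C$ one has
$$
\langle N\rangle_t\le Cc\int_0^{T_0\wedge t}\bigl(1+|\hat\Psi_{s-}(\infty)|^2\bigr)ds.
$$
Taking $L^{p/2}$-norms and applying Jensen's inequality $(\int_0^t f\,ds)^{p/2}\le t^{p/2-1}\int_0^t f^{p/2}\,ds$ commutes the $p/2$-power past the time integral; combining with the jump term via Burkholder and using $e^{-\t_t}\le 1$ to pass from $N_t$ back to $\hat M_t(\infty)$ yields the first claim (the residual $c^p$ from the jump term is absorbed into $c^{p/2}$).

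For the second bound, I split the integrand of $e^{\t_t}(\hat M_t-\hat\Pi_t)(\infty)$ as
$$
\bigl(H_s(\th,\infty)-2C_s(\th)e^{i\th}\bigr)1_{\{v\le\L_s(\th)\}}+2e^{i\th}\bigl(C_s(\th)1_{\{v\le\L_s(\th)\}}-c_s 1_{\{v\le\l_s\}}\bigr).
$$
The second summand is, up to a bounded factor, the integrand of the martingale controlled in the second half of Lemma~\ref{NEST} (use \eqref{JKL}), contributing $\le C(c+\sqrt{c\d_0})$ to the $L^p$-maximum. For the first summand, the refined particle asymptotic $|a_0(c)-2c|\le Cc^{3/2}$ (Appendix~\ref{APP}) together with $|e^{-c}-1|\le Cc$ yields $|H_s(\th,\infty)-2C_s(\th)e^{i\th}|\le C\bigl(c^{3/2}+c|\hat\Psi_{s-}(\infty)|\bigr)$, and the same Burkholder-plus-Jensen argument produces a contribution whose $p$-th power is $\le C\bigl(c^p+c^{p/2}\int_0^t\|\hat\Psi_{s-}(\infty)1_{\{s\le T_0\}}\|_p^p\,ds\bigr)$. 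Summing the two pieces and absorbing $c^p$ into $(c+\sqrt{c\d_0})^p$ gives the second claim.

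The principal obstacle is the book-keeping needed to produce $\|\cdot\|_p^p$ (rather than $\|\cdot\|_p^2$) under the time integral on the right-hand side with the correct $c^{p/2}$ prefactor: this requires taking the $L^{p/2}$-norm of $\langle N\rangle$ and then invoking Jensen to commute the $p/2$-power past the integral, rather than using Burkholder naively. The only other ingredient specific to this lemma, as opposed to Lemma~\ref{NEST}, is the sharp particle-family estimate $|a_0(c)-2c|=O(c^{3/2})$ furnished by the concentration assumption~\eqref{CONCENTRATED}.
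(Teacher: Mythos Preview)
Your approach is essentially the paper's: evaluate at $\infty$ so that $P(\d)$ acts trivially, form the martingale $N_t=e^{\t_t}\hat M_t(\infty)$, and feed Burkholder with the particle bounds $|a_0(c)|\le Cc$ and $|a_0(c)-2c|\le Cc^{3/2}$. The split you use in the second estimate is a harmless rearrangement of the paper's.

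There is one slip. You claim $|\Delta N_t|\le Cc$, but the jump is
\[
\Delta N_s=e^{\t_s}\bigl(a_0(C_s(\th))e^{i\th}+(e^{-C_s(\th)}-1)\hat\Psi_{s-}(\infty)\bigr),
\]
so only $|\Delta N_s|\le Cc\bigl(1+|\hat\Psi_{s-}(\infty)|\bigr)$ holds. The stopping time $T_0(\d_0)$ controls $\hat\Psi_t'(e^{\s+i\th})$ and $\Psi^\cp_t$, not the constant coefficient $\hat\Psi_t(\infty)$, so there is no a priori bound available. The fix is exactly what you already do for $\langle N\rangle_t$ (and what the paper does): bound $\|(\Delta N)^*_t\|_p^p$ by its compensator,
\[
\|(\Delta N)^*_t\|_p^p\le\E\int_0^{T_0\wedge t}\fint_0^{2\pi}|\Delta N_s|^p\L_s(\th)\,d\th\,ds\le Cc^{p-1}\Bigl(1+\int_0^t\|\hat\Psi_{s-}(\infty)1_{\{s\le T_0\}}\|_p^p\,ds\Bigr),
\]
which is absorbed in the stated $c^{p/2}(\dots)$ bound since $p\ge2$ and $c\le1$. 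With this correction the argument is complete and coincides with the paper's.
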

\begin{proof}
By considering the Laurent expansions of $F_c$ and $\hat\phi$, we have
\begin{equation}
\label{DINF}
\Delta(\th,\infty,c,\hat\phi)=a_0(c)e^{i\th}+(e^{-c}-1)\hat\psi(\infty),\q\hat\psi(z)=\hat\phi(z)-z.
\end{equation}
Consider the martingale $(M_t)_{t<t_\z}$ given by
$$
M_t=\int_{E(t)}
e^{\t_s}\left(a_0(C_s(\th))e^{i\th}+(e^{-C_s(\th)}-1)\hat\Psi_{s-}(\infty)\right)
1_{\{v\le\L_s(\th),\,s\le T_0\}}\tilde\mu(d\th,dv,ds).
$$
Then $\hat M_t(\infty)=e^{-\t_t}M_t$ for all $t\le T_0$.
By Proposition \ref{CNC}, $|a_0(c)|\le Cc$ for all $c$.
Hence 
\begin{align*}
\<M\>_t
&=\int_0^{T_0\wedge t}\fint_0^{2\pi}e^{2\t_s}\left|a_0(C_s(\th))e^{i\th}+(e^{-C_s(\th)}-1)\hat\Psi_{s-}(\infty)\right|^2\L_s(\th)d\th ds\\
&\le Cce^{2\t_t}\int_0^{T_0\wedge t}(1+|\hat\Psi_{s-}(\infty)|^2)ds
\end{align*}
and, for $p\ge2$, since
$$
|(\Delta M)^*_t|^p
\le\int_{E(t)}
e^{p\t_s}\left|a_0(C_s(\th))e^{i\th}+(e^{-C_s(\th)}-1)\hat\Psi_{s-}(\infty)\right|^p
1_{\{v\le\L_s(\th),s\le T_0\}}\mu(d\th,dv,ds)
$$
we have
\begin{align*}
\|(\Delta M)^*_t\|^p_p
&\le\E\int_0^{T_0\wedge t}\fint_0^{2\pi}e^{p\t_s}\left|a_0(C_s(\th))e^{i\th}+(e^{-C_s(\th)}-1)\hat\Psi_{s-}(\infty)\right|^p\L_s(\th)d\th ds\\
&\le Cc^{p-1}e^{p\t_t}\E\int_0^{T_0\wedge t}(1+|\hat\Psi_{s-}(\infty)|^p)ds.
\end{align*}
The first claimed estimate then follows from Burkholder's inequality \eqref{BUR}.

For the second estimate, we consider instead the martingale $(M_t)_{t<t_\z}$ given by
\begin{align*}
M_t&=\int_{E(t)}e^{\t_s}\Big(\left(a_0(C_s(\th))1_{\{v\le\L_s(\th)\}}-2c_s1_{\{v\le\l_s\}}\right)e^{i\th}\\
&\q\q\q\q+(e^{-C_s(\th)}-1)\hat\Psi_{s-}(\infty)1_{\{v\le\L_s(\th)\}}\Big)1_{\{s\le T_0\}}\tilde\mu(d\th,dv,ds).
\end{align*}
Then $\hat M_t(\infty)-\hat\Pi_t(\infty)=e^{-\t_t}M_t$ for all $t\le T_0$.
By Proposition \ref{CNC}, we have $|a_0(c)-2c|\le Cc^{3/2}$. 
We combine this with \eqref{JKJ} and \eqref{JKK} to see that
$$
\int_0^\infty\left|a_0(C_t(\th))1_{\{v\le\L_t(\th)\}}-2c_t1_{\{v\le\l_t\}}\right|^pdv
\le C(c^{3p/2-1}+c^{p-1}\d_0)
\le C(c^p+c^{p-1}\d_0).
$$
The second estimate then follows by Burkholder's inequality as above.
\end{proof}

Recall that, for $p\in[1,\infty)$ and $r>1$, we set 
$$
\|\psi\|_{p,r}=\left(\fint_0^{2\pi}|\psi(re^{i\th})|^pd\th\right)^{1/p}.
$$
For a measurable function $\Psi$ on $\O\times\{|z|>1\}$, we set 
$$
\tn\Psi\tn_{p,r}=\left(\E\fint_0^{2\pi}|\Psi(re^{i\th})|^pd\th\right)^{1/p}.
$$

\begin{lemma}
\label{MESTL}
For all $\a,\eta\in\R$ with $\z=\a+\eta\le1$, all $\ve\in(0,1/2)$, 
all $p\ge2$ and all $T<t_\z$, 
there is a constant $C(\a,\eta,\ve,\L,p,T)<\infty$ such that, 
for all $c\in(0,1]$, all $\s\ge0$, all $\d_0\in(0,1/2]$ and all $t\le T$,
for all $r\ge1+c^{1/2-\ve}$, for $\rho=(1+r)/2$, we have, 
in the case $\z<1$,
\begin{equation}
\label{RZW}
\tn D\hat M_t1_{\{t\le T_0(\d_0)\}}\tn_{p,r}
\le\frac{C\sqrt c}r\left(1+r\sup_{s\le t}\tn D\hat\Psi_{s-}1_{\{s\le T_0(\d_0)\}}\tn_{p,\rho}\right)\left(\frac r{r-1}\right)
\end{equation}
and
\begin{align}
\notag
&\tn D(\hat M_t-\hat\Pi_t)1_{\{t\le T_0(\d_0)\}}\tn_{p,r}\\
\label{RTY}
&\q\q\q\q\le\frac{C\sqrt c}r\left(\sqrt{\d_0}+r\sup_{s\le t}\tn D\hat\Psi_{s-}1_{\{s\le T_0(\d_0)\}}\tn_{p,\rho}\right)\left(\frac r{r-1}\right)
+\frac{Cc}r\left(\frac r{r-1}\right)^2
\end{align}
while in the case $\z=1$ the same bounds hold with an additional factor $\left(\frac r{r-1}\right)^{1/2}$ on the right-hand side.
\end{lemma}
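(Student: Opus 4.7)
The plan is to apply an $L^p$ Burkholder--Davis--Gundy inequality to the Poisson integral martingales $D\hat M_t$ and $D(\hat M_t-\hat\Pi_t)$, exploiting the commutation of $D$ with $P(\d)$, the Marcinkiewicz-type bound of Lemma \ref{PBDD} on $P_1(\d)$, and direct $L^p$ estimates for the singular kernel $H(\th,z)=z/(e^{-i\th}z-1)$ near $|z|=1$.

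Since $D$ and $P(\d)$ both act as multipliers on Laurent coefficients they commute, so the integrands for $D\hat M_t$ and $D(\hat M_t-\hat\Pi_t)$ are $e^{-\d_s}P(\d_s)DH_s(\th,z)$ and the corresponding difference. Viewing $\alpha\mapsto D\hat M_t(re^{i\alpha})$ as an $L^p([0,2\pi))$-valued martingale in $t$ (equivalently, scalar BDG pointwise in $\alpha$ followed by Fubini and Minkowski in $L^{p/2}(\Omega)$), one bounds $\tn D\hat M_t 1_{\{t\le T_0\}}\tn_{p,r}^p$ by the $(p/2)$-moment of the predictable quadratic variation
$$
\int_0^{T_0\wedge t}\fint_0^{2\pi}|e^{-\d_s}P(\d_s)DH_s(\th,re^{i\alpha})|^2\L_s(\th)\,d\th\,ds,
$$
integrated in $\alpha$, plus a jump correction of lower order in $c$.

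Next I would split $P(\d_s)=P_0(\d_s)P_1(\d_s)$ as in Section \ref{DSV}. Since $P_0(\d)f(z)=f(e^{(1-\z)\d}z)$ and $\z\le1$, evaluation at $|z|=r$ shifts the radius outwards to $r'=e^{(1-\z)\d_s}r\ge r$; Lemma \ref{PBDD} then bounds $P_1(\d_s)$ on $\|\cdot\|_{p,r'}$ by a constant depending only on $p$. It therefore suffices to estimate $\|DH_s(\th,\cdot)\|_{p,r'}$ at the shifted radius. On the stopped interval $s\le T_0(\d_0)$, where $\hat\Phi_{s-}$ is uniformly close to the identity on $\{|z|\ge\rho\}$, the particle expansions from Appendix \ref{APP} together with Proposition \ref{CNC} yield
$$
DH_s(\th,z)=2c_s\bigl[DH(\th,z)\,\hat\Phi'_{s-}(z)+H(\th,z)\,D\hat\Psi_{s-}(z)\bigr]+R_s(\th,z),
$$
with $R_s$ controlled by the particle approximation error (size $c^{3/2}|DH|$), the $\hat\Phi'$-dependence of $C_s(\th)$ and $\L_s(\th)$ (size $c\d_0|DH|$), and a composition term placing $D\hat\Psi_{s-}$ on a slightly different circle, handled by taking the supremum in the $\rho$-norm.

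Using the elementary evaluations $\fint|H(\th,re^{i\alpha})|^2d\th\sim r^2/(r-1)$ and $\fint|DH(\th,re^{i\alpha})|^2d\th\sim r^2/(r-1)^3$, together with $c_s^2\L_s\sim c$, these contributions assemble into the bound \eqref{RZW}, with the constant $1$ from $\hat\Phi'_{s-}\approx 1$ and the $\tn D\hat\Psi_{s-}\tn_{p,\rho}$-term from the $H\cdot D\hat\Psi_{s-}$ piece. For \eqref{RTY} the same scheme applies to the difference martingale: now the leading $2c_s DH$ contribution cancels, and only $R_s$ and the $D\hat\Psi_{s-}$ piece survive, giving the $Cc(r/(r-1))^2/r$ and $C\sqrt{c\d_0}(r/(r-1))/r$ terms. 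The main obstacle is the critical case $\z=1$, where $P_0$ collapses to the identity so the radius-shift trick fails and every estimate must be performed on the original circle $|z|=r$. One must then exploit the genuine smoothing of $P_1(\d)$, whose multiplier at $\z=1$ is $e^{-\d k(1-e^{-\s(k+1)})}$, in order to absorb a half-derivative of the singular kernel $H$; a careful Laurent-coefficient estimate, or equivalently a Hardy-space/Marcinkiewicz argument, produces precisely the additional $(r/(r-1))^{1/2}$ factor asserted in the statement.
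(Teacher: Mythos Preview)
Your overall framework matches the paper: Burkholder's inequality applied pointwise in $z$, the factorisation $P(\d_s)=P_0(\d_s)P_1(\d_s)$ with the outward radius shift from $P_0$ when $\z<1$, and Lemma~\ref{PBDD} for $P_1$. However there are two genuine gaps.

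\textbf{The first-order expansion of $H_s$ is not enough.} After your proposed decomposition, the ``composition term'' is the Taylor remainder for $e^{-c}\hat\psi(F_c(\th,\cdot))-\hat\psi$ at order $1$, that is, an integral of $D^2\hat\psi\big(F_{c,u}(\th,\cdot)\big)$ against $f_c(\th,\cdot)^2$. Even using subordination to pass from $\|D^2\hat\psi\circ F_{c,u}(\th,\cdot)\|_{p,\rho'}$ back to $\|D^2\hat\psi\|_{p,\rho'}$, the product with $|f_c(\th,z)|^2\le C c^2(r/|e^{-i\th}z-1|)^2$ (bounded only in $L^\infty_\alpha$ for fixed $\th$, not in $L^2_\th$) gives
\[
\|DH_s^{\mathrm{rem}}(\th,\cdot)\|_{p,r}\le Cc^2\Big(\tfrac r{r-1}\Big)^4\|D\hat\Psi_{s-}\|_{p,\rho}.
\]
Feeding this through the quadratic variation and square-rooting leaves a contribution of order $c^{3/2}(r/(r-1))^{7/2}\tn D\hat\Psi\tn_{p,\rho}$ (after push-out for $\z<1$), and $c(r/(r-1))^{5/2}$ is \emph{not} bounded for $r=1+c^{1/2-\ve}$ once $\ve<1/10$. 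The paper avoids this by expanding $e^{-c}\hat\psi(F_c(\th,z))-\hat\psi(z)$ to order $m=\lceil 1/(8\ve)\rceil$: the Taylor \emph{terms} $\Delta_1,\dots,\Delta_m$ involve $D^j\hat\psi(z)$ with no composition, so each contributes the target bound $\sqrt c\,(r/(r-1))\tn D\hat\Psi\tn_{p,\rho}$ after using $c(r/(r-1))^2\le C$; only the remainder $\Delta_{m+1}$ contains the composition, but now carries $c^{m+1}$, which is large enough to absorb $(r/(r-1))^{2(m+1)}$ for this choice of $m$.

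\textbf{The source of the extra factor at $\z=1$ is not smoothing of $P_1$.} The statement must hold uniformly in $\s\ge0$, and at $\s=0$ the multiplier of $P_1(\d)$ is identically $1$, so there is no half-derivative to absorb. In the paper the extra $(r/(r-1))^{1/2}$ arises from the failure of the push-out: for $\z<1$ one has $r_s=e^{(1-\z)\d_s}r$ drifting outward and Lemma~\ref{PUSHOUT} converts $\int_0^t(r_s/(r_s-1))^3\,ds$ into $C(r/(r-1))^2$, whereas for $\z=1$ we have $r_s\equiv r$ and are stuck with $t(r/(r-1))^3$, which after the square root of the quadratic variation gives exactly the stated extra $(r/(r-1))^{1/2}$.
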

\begin{proof}
We restrict our account to the case $\z\in[0,1]$, omitting the minor modifications needed when $\z<0$.
The case $\z<0$ proceeds just as for $\z\in[0,1)$ but only for $T<t_\z$ and using the alternative split $Q=\tilde Q_0+\tilde Q_1$ and taking $r_s=e^{\d_s}r$.

Fix $t\le T<\infty$ and consider for $|z|>1$, the martingale $(M_x(z))_{0\le x\le t}$ given by
$$
M_x(z)=\int_{E(x)}D\tilde H_s(\th,z,\d_s)1_{\{v\le\L_s(\th),\,s\le T_0\}}\tilde\mu(d\th,dv,ds)
$$
where
$$
\tilde H_s(\th,z,\d)=e^{-\d}P(\d)H_s(\th,z),\q \d_s=\t_t-\t_s.
$$
Note that, for $p\ge2$ and $r>1$, 
$$
\|D\tilde H_s(\th,.,\d_s)\|_{p,r}
=\|e^{-\d_s}P_0(\d_s)P_1(\d_s)DH_s(\th,.)\|_{p,r}
\le Ce^{-\d_s}\left(\frac r{r-1}\right)\|H_s(\th,.)\|_{p,\rho_s}
$$
where $\rho_s=(r_s+1)/2$ and $r_s=e^{(1-\z)\d_s}r$.
Here we used Proposition \ref{PBDD} and the inequality \eqref{DEST}.
By Burkholder's inequality, for $p\ge2$ and all $|z|>1$,
\begin{equation}
\label{BUR2}
\|M_t(z)\|_p
\le C(p)\left(\|\<M(z)\>_t\|_{p/2}^{1/2}+\|(\Delta M(z))^*_t\|_p\right).
\end{equation}
For $t\le T_0$, we have $D\hat M_t(z)=M_t(z)$ so, on taking the $\|.\|_{p,r}$-norm in \eqref{BUR2}, we obtain
\begin{equation}
\label{DME}
\tn D\hat M_t1_{\{t\le T_0\}}\tn_{p,r}
\le\tn M_t\tn_{p,r}
\le C(p)\left(\tn\<M(.)\>_t\tn_{p/2,r}^{1/2}+\tn(\Delta M(.))^*_t\tn_{p,r}\right).
\end{equation}
Now
$$
\<M(z)\>_t=\int_0^{T_0\wedge t}\fint_0^{2\pi}|D\tilde H_s(\th,z,\t_t-\t_s)|^2\L_s(\th)d\th ds
$$
and 
\begin{equation}
\label{EB}
(\Delta M(z))^*_t
\le\sup_{s\le T_0\wedge t,\th\in[0,2\pi)}|D\tilde H_s(\th,z,\t_t-\t_s)|.
\end{equation}
Also
$$
|(\Delta M(z))^*_t|^p
\le\int_{E(t)}|D\tilde H_s(\th,z,\t_t-\t_s)|^p1_{\{v\le\L_s(\th),s\le T_0\}}\mu(d\th,dv,ds)
$$
so
$$
\|(\Delta M(z))^*_t\|^p_p
\le\E\int_0^{T_0\wedge t}\fint_0^{2\pi}|D\tilde H_s(\th,z,\t_t-\t_s)|^p\L_s(\th)d\th ds.
$$
We have $\L_s(\th)\le C/c$ for all $s\le T_0$ and $\th\in[0,2\pi)$.
Hence
\begin{equation}
\label{EAT}
\<M(z)\>_t\le\frac Cc\int_0^{T_0\wedge t}\fint_0^{2\pi}|D\tilde H_s(\th,z,\t_t-\t_s)|^2d\th ds
\end{equation}
and
\begin{align}
\label{QVD}
\|\<M(.)\>_t\|_{p/2,r}
&\le\frac Cc\int_0^{T_0\wedge t}\fint_0^{2\pi}\|D\tilde H_s(\th,.,\t_t-\t_s)\|^2_{p,r}d\th ds\\
\label{QVE}
&\le\frac Cc
\int_0^{T_0\wedge t}e^{-2(\t_t-\t_s)}
\left(\frac{r_s}{r_s-1}\right)^2
\fint_0^{2\pi}\|H_s(\th,.)\|^2_{p,\rho_s}d\th ds.
\end{align}
Similarly, 
\begin{align}
\notag
\tn(\Delta M(.))^*_t\tn_{p,r}^p
&\le\frac Cc\E\int_0^{T_0\wedge t}\fint_0^{2\pi}\|D\tilde H_s(\th,.,\t_t-\t_s)\|^p_{p,r}d\th ds\\
\label{MAXP}
&\le\frac Cc
\E\int_0^{T_0\wedge t}e^{-p(\t_t-\t_s)}
\left(\frac{r_s}{r_s-1}\right)^p
\fint_0^{2\pi}\|H_s(\th,.)\|^p_{p,\rho_s}d\th ds.
\end{align}
We will split the jump $\Delta(\th,z,c,\hat\phi)$ as the sum of several terms,
and thereby split $H_s(\th,z)$ and hence $M_t$ also as a sum of terms.
For each of these terms, we will use one of the inequalities \eqref{EAT}, \eqref{QVD}, \eqref{QVE}
and one of \eqref{EB}, \eqref{MAXP} to obtain a suitable upper bound for the right-side of \eqref{DME}.
These bounds will combine to prove the first claimed estimate.

Recall that $\hat\phi(z)=z+\hat\psi(z)$, so
\begin{equation}
\label{DELTA}
\Delta(\th,z,c,\hat\phi)
=\Delta_0(\th,z,c)+\left(e^{-c}\hat\psi(F_c(\th,z))-\hat\psi(z)\right)
\end{equation}
where
$$
\Delta_0(\th,z,c)
=e^{-c}F_c(\th,z)-z.
$$
We further split the second term by expanding in Taylor series, using an interpolation from $z$ to $F_c(\th,z)$.
For $u\in[0,1]$, define
$$
F_{c,u}(\th,z)=e^{uf_c(\th,z)}z,\q
f_c(\th,z)=\log(F_c(\th,z)/z).
$$
Then $F_{c,0}(\th,z)=z$ and $F_{c,1}(\th,z)=F_c(\th,z)$.
Fix $c$, $\th$ and $z$ and set
$$
f(u)=e^{-cu}\hat\psi(F_{c,u}(\th,z))
$$
then
$$
f^{(k)}(u)=e^{-cu}\sum_{j=0}^k\binom kj(-c)^{k-j}f_c(\th,z)^jD^j\hat\psi(F_{c,u}(\th,z)).
$$
Set $m=\lceil1/(8\ve)\rceil$ and recall that our constants $C$ are allowed to depend on $\ve$.
Then
\begin{align}
\notag
e^{-c}\hat\psi(F_c(\th,z))-\hat\psi(z)
&=f(1)-f(0)\\
\label{TAY}
&=\sum_{k=1}^m\frac{f^{(k)}(0)}{k!}+\int_0^1\frac{(1-u)^m}{m!}f^{(m+1)}(u)du
=\sum_{k=1}^{m+1}\Delta_k(\th,z,c,\hat\psi)
\end{align}
where, for $k=1,\dots,m$,
$$
\Delta_k(\th,z,c,\hat\psi)
=\frac1{k!}\sum_{j=0}^k\binom kj(-c)^{k-j}f_c(\th,z)^jD^j\hat\psi(z)
$$
and
$$
\Delta_{m+1}(\th,z,c,\hat\psi)
=\frac1{m!}\int_0^1(1-u)^me^{-cu}\sum_{j=0}^{m+1}\binom{m+1}j(-c)^{m+1-j}f_c(\th,z)^jD^j\hat\psi(F_{c,u}(\th,z))du.
$$
Let us write
$$
H^0_s(\th,z)=\Delta_0(\th,z,C_s(\th)),\q
H^k_s(\th,z)=\Delta_k(\th,z,C_s(\th),\hat\Phi_{s-}),\q k=1,\dots,m+1
$$
and
$$
\tilde H_s^k(\th,z,\d)=e^{-\d}P(\d)H_s^k(\th,z)
$$
and
$$
M^k_x(z)=\int_{E(x)}D\tilde H^k_s(\th,z,\t_t-\t_s)1_{\{v\le\L_s(\th),s\le T_0\}}\tilde\mu(d\th,dv,ds).
$$

We consider first the contribution of 
$$
\Delta_0(\th,z,c)
=e^{-c}F_c(\th,z)-z.
$$
We make the further split $\Delta_0=\Delta_{0,0}+\Delta_{0,1}$, where
$$
\Delta_{0,0}(\th,z,c)
=\frac{a_0(c)z}{e^{-i\th}z-1}
=a_0(c)\sum_{k=0}^\infty e^{i(k+1)\th}z^{-k}
$$
and
$$
\Delta_{0,1}(\th,z,c)
=e^{-c}F_c(\th,z)-z-\frac{a_0(c)z}{e^{-i\th}z-1}.
$$
We will exploit the more explicit form of $\Delta_{0,0}$, 
which is the main term as $c\to0$ under our particle assumptions \eqref{CAPACITY}, \eqref{NESTED} and \eqref{CONCENTRATED}, 
to obtain better estimates.
We have, with obvious notation,
$$
H_s^{0,0}(\th,z)
=a_0(C_s(\th))\sum_{k=0}^\infty e^{i(k+1)\th}z^{-k}
$$
so, for $\d\ge0$, in the case $\z\in[0,1]$,
$$
P_1(\d)DH_s^{0,0}(\th,z)
=a_0(C_s(\th))\sum_{k=1}^\infty e^{i(k+1)\th}(-k)e^{-\d q_1(k)}z^{-k}.
$$
By Proposition \ref{CNC}, $|a_0(c)|\le Cc$ for all $c$.
So, for $|z|=r$ and $\d\ge0$,
\begin{equation}
\label{FBN}
|P_1(\d)DH_s^{0,0}(\th,z)|
\le Cc\sum_{k=1}^\infty kr^{-k}
=\frac{Ccr}{(r-1)^2}
\end{equation}
and
\begin{align*}
\fint_0^{2\pi}|P_1(\d)DH_s^{0,0}(\th,z)|^2d\th
&\le Cc^2\fint_0^{2\pi}\left|\sum_{k=1}^\infty e^{i(k+1)\th}(-k)e^{-\d q_1(k)}z^{-k}\right|^2d\th\\
&\le Cc^2\sum_{k=1}^\infty k^2r^{-2k}
\le\frac{Cc^2r}{(r-1)^3}.
\end{align*}
Write $z_s=e^{(1-\z)(\t_t-\t_s)}z$.
We use \eqref{EAT} to see that
\begin{align*}
\<M^{0,0}(z)\>_t
&\le\frac Cc\int_0^{T_0\wedge t}\fint_0^{2\pi}|D\tilde H^{0,0}_s(\th,z,\t_t-\t_s)|^2d\th ds\\
&=\frac Cc\int_0^{T_0\wedge t}\fint_0^{2\pi}e^{-2(\t_t-\t_s)}|P_1(\t_t-\t_s)DH_s^{0,0}(\th,z_s)|^2d\th ds\\
&\le Cc\int_0^t\frac{e^{-2(\t_t-\t_s)}r_s}{(r_s-1)^3}ds\\
&\le\frac{Cc}{r^2}\int_0^t\left(\frac{r_s}{r_s-1}\right)^3ds
\le\frac{Cct}{r^2}\left(\frac r{r-1}\right)^3.
\end{align*}
For $\z<1$, using Lemma \ref{PUSHOUT}, we have the better bound
$$
\<M^{0,0}(z)\>_t
\le\frac{Cc(1+t)}{r^2}
\left(\frac r{r-1}\right)^2
$$
where we have absorbed the factor $(1-\z)^{-1}$ into the constant $C$.
We use \eqref{EB} and \eqref{FBN} to obtain, for $|z|=r>1$,
$$
|(\Delta M^{0,0}(z))^*_t|
\le\sup_{s\le T_0\wedge t,\th\in[0,2\pi)}e^{-(\t_t-\t_s)}|P_1(\t_t-\t_s)DH_s^{0,0}(\th,z_s)|
\le\frac{Cc}r\left(\frac r{r-1}\right)^2.
$$
On substituting the estimates for $\<M^{0,0}(z)\>_t$ and $(\Delta M^{0,0}(z))^*_t$ into \eqref{DME}, 
we obtain for $r\ge1+\sqrt c$ and $p\ge2$, for $\z<1$,
\begin{equation}
\label{M00N}
\tn M_t^{0,0}\tn_{p,r}
\le\frac{C\sqrt{c(1+t)}}r\left(\frac r{r-1}\right)
\end{equation}
while, for $\z=1$,
\begin{equation}
\label{M001N}
\tn M_t^{0,0}\tn_{p,r}
\le\frac{C\sqrt{c(1+t)}}r\left(\frac r{r-1}\right)^{3/2}.
\end{equation}

We turn to the contribution of $\Delta_{0,1}$.
Then
\begin{equation}
\label{MON}
H^{0,1}_s(\th,z)
=\Delta_{0,1}(\th,z,C_s(\th))
=e^{-C_s(\th)}\int_0^{C_s(\th)}Q_t(\th,z)dt
\end{equation}
where $Q_t(\th,z)=e^{i\th}Q_t(e^{-i\th}z)$ and $Q_t$ is given by \eqref{QTH}.
Hence, for $\d\ge0$ and $s\le T_0$,
$$
|P_1(\d)DH^{0,1}_s(\th,z)|
=e^{-C_s(\th)}\left|\int_0^{C_s(\th)}P_1(\d)DQ_t(\th,z)dt\right|
\le\int_0^{Cc}|P_1(\d)DQ_t(\th,z)|dt
$$
By Proposition \ref{LQP}, for $|z|>1$ and $t$ in the range of the last integral, 
\begin{equation}
\label{MOON}
|Q_t(\th,z)|\le\frac{C\sqrt t|z|}{|e^{-i\th}z-1|^2}.
\end{equation}
Then, for $|z|=r$, 
$$
\left(\fint_0^{2\pi}|Q_t(\th,z)|^2d\th\right)^{1/2}
\le\frac{C\sqrt t}r\left(\frac r{r-1}\right)^{3/2}
$$
so, for $\rho=(r+1)/2$,
\begin{align*}
&\left(\fint_0^{2\pi}|P_1(\d)DQ_t(\th,z)|^2d\th\right)^{1/2}
=\|P_1(\d)DQ_t(0,.)\|_{2,r}\\
&\q\q\le C\left(\frac r{r-1}\right)\|Q_t(0,.)\|_{2,\rho}
=C\left(\frac r{r-1}\right)\left(\fint_0^{2\pi}|Q_t(\th,\rho)|^2d\th\right)^{1/2}
\le\frac{C\sqrt t}r\left(\frac r{r-1}\right)^{5/2}.
\end{align*}
Hence
\begin{align*}
\notag
\left(\fint_0^{2\pi}|P_1(\d)DH_s^{0,1}(\th,z)|^2d\th\right)^{1/2}
&\le\int_0^{Cc}\left(\fint_0^{2\pi}|P_1(\d)DQ_t(\th,z)|^2d\th\right)^{1/2}dt\\
\label{THIN}
&\le\frac Cr\left(\frac r{r-1}\right)^{5/2}\int_0^{Cc}\sqrt tdt
=\frac Cr\left(\frac r{r-1}\right)^{5/2}c^{3/2}.
\end{align*}
Hence, we obtain, for $\z=1$,
\begin{align*}
\<M^{0,1}(z)\>_t
&\le\frac Cc\int_0^{T_0\wedge t}e^{-2(\t_t-\t_s)}\fint_0^{2\pi}|P_1(\t_t-\t_s)DH_s^{0,1}(\th,z_s)|^2d\th ds\\
&\le Cc^2\int_0^{T_0\wedge t}\frac{e^{-2(\t_t-\t_s)}}{r_s^2}\left(\frac{r_s}{r_s-1}\right)^5ds
\le\frac{Cc^2t}{r^2}\left(\frac r{r-1}\right)^5
\end{align*}
while, for $\z<1$, by Lemma \ref{PUSHOUT}, we have the better bound
$$
\<M^{0,1}(z)\>_t
\le\frac{Cc^2(1+t)}{r^2}\left(\frac r{r-1}\right)^4.
$$
From \eqref{MON} and \eqref{MOON}, and for $s\le T_0$, we have
$$
|H_s^{0,1}(\th,z)|\le\frac{Cc^{3/2}|z|}{|e^{-i\th}z-1|^2}
$$
so, for $p\ge2$ and $r>1$, we have, for $\z=1$,
\begin{align}
\notag
\tn(\Delta M^{0,1}(.))^*_t\tn_{p,r}^p
&\le\frac Cc\E\int_0^{T_0\wedge t}e^{-p(\t_t-\t_s)}\left(\frac{r_s}{r_s-1}\right)^p\fint_0^{2\pi}\|H_s^{0,1}(\th,.)\|^p_{p,\rho_s}d\th ds\\
\tn(\Delta M^{0,1}(.))^*_t\tn_{p,r}^p
\notag
&\le Cc^{3p/2-1}\int_0^t\frac{e^{-p(\t_t-\t_s)}}{r_s^p}\left(\frac{r_s}{r_s-1}\right)^{3p-1}ds
\le\frac{Cc^{3p/2-1}t}{r^p}\left(\frac r{r-1}\right)^{3p-1}
\end{align}
while, for $\z<1$,
\begin{equation*}
\label{DMB}
\tn(\Delta M^{0,1}(.))^*_t\tn_{p,r}^p
\le\frac{Cc^{3p/2-1}(1+t)}{r^p}\left(\frac r{r-1}\right)^{3p-2}.
\end{equation*}
On substituting the estimates for $\<M^{0,1}(z)\>_t$ and $(\Delta M^{0,1}(.))^*_t$ into \eqref{DME}, 
we obtain for $r\ge1+\sqrt c$ and $p\ge2$, for $\z<1$,
\begin{equation}
\label{M01}
\tn M_t^{0,1}\tn_{p,r}
\le\frac{Cc\sqrt{1+t}}r\left(\frac r{r-1}\right)^2
\le\frac{C\sqrt{c(1+t)}}r\left(\frac r{r-1}\right)
\end{equation}
while for $\z=1$
\begin{equation}
\label{M011}
\tn M_t^{0,1}\tn_{p,r}
\le\frac{Cc\sqrt{1+t}}r\left(\frac r{r-1}\right)^{5/2}
\le\frac{C\sqrt{c(1+t}}r\left(\frac r{r-1}\right)^{3/2}.
\end{equation}

We consider next, for $k=1,\dots,m$, the contribution of 
$$
\Delta_k(\th,z,c,\hat\psi)
=\frac1{k!}\sum_{j=0}^k\binom kj(-c)^{k-j}f_c(\th,z)^jD^j\hat\psi(z).
$$
We have 
$$
f_c(\th,z)=\int_0^cL_t(\th,z)dt
$$
where $L_t(\th,z)=e^{i\th}L_t(e^{-i\th}z)$ and $L_t$ is given by \eqref{QTH}.
Then
\begin{equation}
\label{LLL}
H^k_s(\th,z)
=\Delta_k(\th,z,C_s(\th),\hat\Psi_{s-})
=\frac1{k!}\sum_{j=0}^k\binom kj(-C_s(\th))^{k-j}\left(\int_0^{C_s(\th)}L_t(\th,z)dt\right)^jD^j\hat\Psi_{s-}(z)
\end{equation}
so
\begin{align*}
&P_1(\d)DH_s^k(\th,z)\\
&\q\q=\frac1{k!}(-C_s(\th))^kP_1(\d)D\hat\Psi_{s-}(z)\\
&\q\q\q\q+\frac1{k!}\sum_{j=1}^k\binom kj(-C_s(\th))^{k-j}\int_0^{C_s(\th)}\dots\int_0^{C_s(\th)}P_1(\d)D(L_{t_1,\dots,t_j}(\th,.)D^j\hat\Psi_{s-})(z)dt_1\dots dt_j
\end{align*}
where
$$
L_{t_1,\dots,t_j}(\th,z)=\prod_{i=1}^jL_{t_i}(\th,z).
$$
Hence, for $s\le T_0$,
\begin{align*}
&|P_1(\d)DH_s^k(\th,z)|\\
&\q\le Cc^k|P_1(\d)D\hat\Psi_{s-}(z)|
+C\sum_{j=1}^kc^{k-j}\int_0^{Cc}\dots\int_0^{Cc}|P_1(\d)D(L_{t_1,\dots,t_j}(\th,.)D^j\hat\Psi_{s-})(z)|dt_1\dots dt_j
\end{align*}
so
\begin{align*}
\left(\fint_0^{2\pi}|P_1(\t_t-\t_s)DH_s^k(\th,z)|^2d\th\right)^{1/2}
\le Cc^kh_s(z)+C\sum_{j=1}^kc^{k-j}\int_0^{Cc}\dots\int_0^{Cc}h_{s,t_1,\dots,t_j}(z)dt_1\dots dt_j
\end{align*}
where
$$
h_s(z)=|P_1(\t_t-\t_s)D\hat\Psi_{s-}(z)|,\q
h_{s,t_1,\dots,t_j}(z)=
\left(\fint_0^{2\pi}|P_1(\t_t-\t_s)D(L_{t_1,\dots,t_j}(\th,.)D^j\hat\Psi_{s-})(z)|^2d\th\right)^{1/2}.
$$
By Proposition \ref{LQP}, for $|z|=r\ge1+\sqrt c$ and $t\le Cc$,
\begin{equation*}
\label{FCE}
|L_t(z)|
\le\frac{C|z|}{|z-1|}
\end{equation*}
so
$$
\|L_{t_1,\dots,t_j}\|_{2,r}
\le C\left(\frac r{r-1}\right)^{j-1/2}
$$
and so by Proposition \ref{LFG},
for $j=1,\dots,k$ and $\rho=(r+1)/2$ and $\rho'=(3r+1)/4$,
\begin{align*}
&\|h_{s,t_1,\dots,t_j}\|_{p,r}
\le\|P_1(\t_t-\t_s)D\|_{p,\rho'\ra r}\|L_{t_1,\dots,t_j}\|_{2,\rho'}\|D^j\hat\Psi_{s-}\|_{p,\rho'}\\
&\q\q\le C\left(\frac r{r-1}\right)\left(\frac r{r-1}\right)^{j-1/2}\left(\frac r{r-1}\right)^{j-1}\|D\hat\Psi_{s-}\|_{p,\rho}
\le C\left(\frac r{r-1}\right)^{2k-1/2}\|D\hat\Psi_{s-}\|_{p,\rho}.
\end{align*}
Now
\begin{align*}
\<M^k(z)\>_t
&\le\frac Cc\int_0^{T_0\wedge t}e^{-2(\t_t-\t_s)}\fint_0^{2\pi}|P_1(\t_t-\t_s)DH_s^k(\th,z_s)|^2d\th ds\\
&\le\frac Cc\int_0^{T_0\wedge t}e^{-2(\t_t-\t_s)}
\left(c^kh_s(z_s)+\sum_{j=1}^kc^{k-j}\int_0^{Cc}\dots\int_0^{Cc}h_{s,t_1,\dots,t_j}(z_s)dt_1\dots dt_j\right)^2ds
\end{align*}
so, for $r\ge1+\sqrt c$,
\begin{align}
\notag
\|\<M^k(.)\>_t\|_{p/2,r}
&\le\frac Cc\int_0^{T_0\wedge t}e^{-2(\t_t-\t_s)}
\left(c^k\|h_s\|_{p,r_s}+\sum_{j=1}^kc^{k-j}\int_0^{Cc}\dots\int_0^{Cc}\|h_{s,t_1,\dots,t_j}\|_{p,r_s}dt_1\dots dt_j\right)^2ds\\
\notag
&\le Cc^{2k-1}\int_0^{T_0\wedge t}e^{-2(\t_t-\t_s)}\left(\frac{r_s}{r_s-1}\right)^{4k-1}\|D\hat\Psi_{s-}\|_{p,\rho_s}^2ds\\
\notag
&\le Cc\int_0^{T_0\wedge t}e^{-2(\t_t-\t_s)}\left(\frac{r_s}{r_s-1}\right)^3\|D\hat\Psi_{s-}\|_{p,\rho_s}^2ds.
\end{align}
For $p\ge2$ and $r>1$, we have
\begin{align*}
\tn(\Delta M^k(.))^*_t\tn_{p,r}^p
&\le\frac Cc\E\int_0^{T_0\wedge t}e^{-p(\t_t-\t_s)}
\fint_0^{2\pi}\|DH_s^k(\th,.)\|^p_{p,r_s}d\th ds\\
\end{align*}
and, from \eqref{LLL}, for $r\ge1+\sqrt c$, estimating as above, we get
$$
\|DH_s^k(\th,.)\|_{p,r}
\le Cc^k\left(\frac r{r-1}\right)^{2k-1/p}\|D\hat\Psi_{s-}\|_{p,\rho}
\le Cc\left(\frac r{r-1}\right)^{2-1/p}\|D\hat\Psi_{s-}\|_{p,\rho}
$$
so, for $r\ge1+\sqrt c$,
\begin{align}
\notag
\tn(\Delta M^k(.))^*_t\tn_{p,r}^p
&\le Cc^{p-1}\E\int_0^{T_0\wedge t}e^{-p(\t_t-\t_s)}\left(\frac{r_s}{r_s-1}\right)^{2p-1}\|D\hat\Psi_s\|^p_{p,\rho_s}ds\\
\label{DMKE}
&\le Cc^{p-1}\sup_{s\le t}\tn D\hat\Psi_s1_{\{s\le T_0\}}\tn^p_{p,\rho}\int_0^te^{-p(\t_t-\t_s)}\left(\frac{r_s}{r_s-1}\right)^{2p-1}ds
\end{align}
On substituting the estimates for $\<M^k(z)\>_t$ and $(\Delta M^k(.))^*_t$ into \eqref{DME}, and using Lemma \ref{PUSHOUT},
we obtain for $r\ge1+\sqrt c$ and $p\ge2$, for $\z<1$,
\begin{equation}
\label{Mk1}
\tn M_t^k\tn_{p,r}
\le C\sqrt{c(1+t)}\left(\frac r{r-1}\right)\sup_{s\le t}\tn D\hat\Psi_{s-}1_{\{s\le T_0\}}\tn_{p,\rho}
\end{equation}
while for $\z=1$
\begin{equation}
\label{Mk11}
\tn M_t^k\tn_{p,r}
\le C\sqrt{c(1+t)}\left(\frac r{r-1}\right)^{3/2}\sup_{s\le t}\tn D\hat\Psi_{s-}1_{\{s\le T_0\}}\tn_{p,\rho}.
\end{equation}

We consider finally the contribution of 
$$
\Delta_{m+1}(\th,z,c,\hat\psi)
=\frac1{m!}\int_0^1(1-u)^me^{-cu}\sum_{j=0}^{m+1}\binom{m+1}j(-c)^{m+1-j}f_c(\th,z)^jD^j\hat\psi(F_{c,u}(\th,z))du.
$$
Then
\begin{align*}
&H_s^{m+1}(\th,z)=\Delta_{m+1}(\th,z,C_s(\th),\hat\Psi_{s-})\\
&\q\q=\frac1{m!}\int_0^1(1-u)^me^{-C_s(\th)u}\sum_{j=0}^{m+1}\binom{m+1}j(-C_s(\th))^{m+1-j}f_{C_s(\th)}(\th,z)^jD^j\hat\Psi_{s-}(F_{C_s(\th),u}(\th,z))du.
\end{align*}
By Proposition \ref{CNC}, we have
$$
|f_c(\th,z)|\le\frac{Cc|z|}{|z-1|}.
$$
Hence, for $s\le T_0$,
\begin{align*}
&\|DH_s^{m+1}(\th,.)\|_{p,r}\\
&\q\q\le Cc^{m+1}\|D\hat\Psi_{s-}\|_{p,r}
+C\left(\frac r{r-1}\right)\sum_{j=1}^{m+1}c^{m+1-j}\|f_{C_s(\th)}(\th,z)^j
D^j\hat\Psi_{s-}(F_{C_s(\th),u}(\th,.))\|_{p,\rho'}\\
&\q\q\le Cc^{m+1}\left(\frac r{r-1}\right)^{2(m+1)}\|D\hat\Psi_{s-}\|_{p,\rho}
\end{align*}
where we have used the fact that $|F_{c,u}(\th,z)|\ge|z|$ to see that
$$
\|D^j\hat\Psi_{s-}(F_{C_s(\th),u}(\th,.))\|_{p,\rho'}
\le\|D^j\hat\Psi_{s-}\|_{p,\rho'}
\le C\left(\frac r{r-1}\right)^{j-1}\|D\hat\Psi_{s-}\|_{p,\rho}.
$$
Then, using \eqref{QVD}, we obtain
$$
\|\<M^{m+1}(.)\>_t\|_{p/2,r}
\le Cc^{2m+1}\int_0^{T_0\wedge t}e^{-2(\t_t-\t_s)}
\left(\frac{r_s}{r_s-1}\right)^{4(m+1)}\|D\hat\Psi_s\|_{p,\rho}^2ds.
$$
Hence we obtain for $\z=1$
\begin{align*}
\tn\<M^{m+1}(.)\>_t\tn_{p/2,r}
&\le Cc^{2m+1}(1+t)\left(\frac r{r-1}\right)^{4(m+1)}\sup_{s\le t}\tn D\hat\Psi_s1_{\{s\le T_0\}}\tn_{p,\rho}^2\\
&\le Cc(1+t)\left(\frac r{r-1}\right)^3\sup_{s\le t}\tn D\hat\Psi_s1_{\{s\le T_0\}}\tn_{p,\rho}^2
\end{align*}
while, using Lemma \ref{PUSHOUT}, for $\z<1$ we have
\begin{align*}
\tn\<M^{m+1}(.)\>_t\tn_{p/2,r}
&\le Cc^{2m+1}(1+t)\left(\frac r{r-1}\right)^{4m+3}\sup_{s\le t}\tn D\hat\Psi_s1_{\{s\le T_0\}}\tn_{p,\rho}^2\\
&\le Cc(1+t)\left(\frac r{r-1}\right)^2\sup_{s\le t}\tn D\hat\Psi_s1_{\{s\le T_0\}}\tn_{p,\rho}^2.
\end{align*}
Here we have used our choice of $m\ge1/(8\ve)$ and the assumption $r\ge1+c^{1/2-\ve}$ to see that
$$
c^{2m}\left(\frac r{r-1}\right)^{4m+1}\le C.
$$
The bound \eqref{DMKE} remains valid with $M^{m+1}$ in place of $M^k$.
Hence for $\z<1$
\begin{equation}
\label{Mm1}
\tn M^{m+1}_t\tn_{p,r}\le C\sqrt{c(1+t)}\left(\frac r{r-1}\right)
\sup_{s\le t}\tn D\hat\Psi_s1_{\{s\le T_0\}}\tn_{p,\rho}
\end{equation}
and for $\z=1$
\begin{equation}
\label{Mm11}
\tn M^{m+1}_t\tn_{p,r}\le C\sqrt{c(1+t)}\left(\frac r{r-1}\right)^{3/2}
\sup_{s\le t}\tn D\hat\Psi_s1_{\{s\le T_0\}}\tn_{p,\rho}.
\end{equation}
Now
$$
M_t=M^{0,0}_t+M^{0,1}_t+\sum_{k=1}^{m+1}M^k_t
$$
and we have shown that all terms on the right-hand side can be bounded by the right-hand side in \eqref{RZW},
so this first estimate is now proved.

It remains to show the second estimate.
Fix $t\ge0$ and consider, for $|z|>1$, the martingale $(\Pi_x(z))_{x\ge0}$ given by
$$
\Pi_x(z)
=\int_{E(x)}e^{-(\t_t-\t_s)}P(\t_t-\t_s)DH(\th,z)2c_s1_{\{v\le\l_s,\,s\le T_0\}}\tilde\mu(d\th,dv,ds).
$$
Set $\tilde M_x(z)=M^{0,0}_x(z)-\Pi_x(z)$.
Then 
\begin{align*}
&\tilde M_x(z)\\
&\q=\int_{E(x)}e^{-(\t_t-\t_s)}\left(a_0(C_s(\th))1_{\{v\le\L_s(\th)\}}-2c_s1_{\{v\le\l_s\}}\right)P(\t_t-\t_s)DH(\th,z)
1_{\{s\le T_0\}}\tilde\mu(d\th,dv,ds)
\end{align*}
and
$$
D(\hat M_t-\hat\Pi_t)=M_t-\Pi_t=\tilde M_t+M^{0,1}_t+\sum_{k=1}^{m+1}M^k_t.
$$
For all but the first term on the right, the bounds 
\eqref{M01},
\eqref{M011},
\eqref{Mk1},
\eqref{Mk11},
\eqref{Mm1},
\eqref{Mm11},
are sufficient for \eqref{RTY}.
It remains to show a suitable bound on $\tilde M_t$.
We use the estimate \eqref{JKL} to see that
\begin{align*}
&\<\tilde M(z)\>_t\\
&\q=\int_0^{T_0\wedge t}\int_0^\infty\fint_0^{2\pi}
e^{-2(\t_t-\t_s)}
\left|\left(a_0(C_s(\th))1_{\{v\le\L_s(\th)\}}-2c_s1_{\{v\le\l_s\}}\right)P(\t_t-\t_s)DH(\th,z)\right|^2d\th dvds\\
&\q\le\frac{Cc\d_0}{r^2}\int_0^t\left(\frac{r_s}{r_s-1}\right)^3ds.
\end{align*}
Otherwise we can proceed as for $M^{0,0}$ to arrive as the following estimates, which suffice for \eqref{RTY}.
For $\z<1$, we have
$$
\tn\tilde M_t\tn_{p,r}
\le\frac{C\sqrt{c\d_0}}r\left(\frac r{r-1}\right)
+\frac{Cc}r\left(\frac r{r-1}\right)^2
$$
while for $\z=1$
$$
\tn\tilde M_t\tn_{p,r}
\le\frac{C\sqrt{c\d_0}}r\left(\frac r{r-1}\right)^{3/2}
+\frac{Cc}r\left(\frac r{r-1}\right)^2.
$$
\end{proof}

\vfe
\subsection{Estimates for the drift terms}
We turn to the drift terms, beginning with estimates for the drift $(\hat\b,\b^\cp)$ of the ALE$(\a,\eta)$ process.
Recall that $(\cT_t)_{t\ge0}$ has drift given by
$$
\b^\cp(\hat\phi,\t)=\fint_0^{2\pi}c(\th,\hat\phi,\t)\l(\th,\hat\phi,\t)d\th
$$
where
$$
c(\th,\hat\phi,\t)=ce^{-\a\t}|\hat\phi'(e^{\s+i\th)}|^{-\a},\q
\l(\th,\hat\phi,\t)=c^{-1}e^{-\eta\t}|\hat\phi'(e^{\s+i\th)}|^{-\eta}.
$$

\begin{lemma}
\label{BEST}
For all $\z\in\R$ and all $T<t_\z$, there is a constant $C(\z,T)<\infty$ such that, 
for all $\d_0\in(0,1/2]$, all $t\le T$, all $\hat\phi\in\cS_1$ and all $\t\ge0$,
we have
$$
|\b^\cp(\hat\phi,\t)-e^{-\z\t_t}+\z e^{-\z\t_t}\psi^\cp_t|\le C\d_0^2
$$
whenever $|\psi^\cp_t|\le\d_0$ and $|\hat\psi'(e^{\s+i\th})|\le\d_0$ for all $\th$,
where $\psi^\cp_t=\t-\t_t$ and $\hat\psi(z)=\hat\phi(z)-z$.
\end{lemma}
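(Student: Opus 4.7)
The plan is to exploit the fact that $c(\th,\hat\phi,\t)\l(\th,\hat\phi,\t) = e^{-\z\t}|\hat\phi'(e^{\s+i\th})|^{-\z}$, which depends on $(\a,\eta)$ only through the sum $\z = \a+\eta$. Thus
\[
\b^\cp(\hat\phi,\t) = e^{-\z\t}\fint_0^{2\pi}|\hat\phi'(e^{\s+i\th})|^{-\z}\,d\th
\]
and the task reduces to expanding each factor to second order around the reference values.

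First, I would expand the prefactor $e^{-\z\t}$ around $\t_t$. Writing $\t = \t_t+\psi^\cp_t$ with $|\psi^\cp_t|\le\d_0\le 1/2$, a second-order Taylor expansion gives $e^{-\z\psi^\cp_t} = 1-\z\psi^\cp_t + O(\d_0^2)$, so
\[
e^{-\z\t} = e^{-\z\t_t}(1-\z\psi^\cp_t) + O(\d_0^2),
\]
where the implicit constant depends on $\z$ and $T$ since $\t_t$ is bounded on $[0,T]$ for $T<t_\z$ (this uses the definition of $t_\z$ precisely to handle the case $\z<0$).

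Second, I would expand the integrand. Writing $\hat\phi'(e^{\s+i\th}) = 1 + \hat\psi'(e^{\s+i\th})$ with $|\hat\psi'(e^{\s+i\th})|\le\d_0\le 1/2$, the uniform Taylor expansion $|1+w|^{-\z} = 1 - \z\re w + O(|w|^2)$ for $|w|\le 1/2$ yields
\[
|\hat\phi'(e^{\s+i\th})|^{-\z} = 1 - \z\re\hat\psi'(e^{\s+i\th}) + O(\d_0^2).
\]
The essential observation is that $\hat\psi = \hat\phi - \id$ has Laurent expansion $\hat\psi(z) = \sum_{k=0}^\infty a_k z^{-k}$, so $\hat\psi'(z) = -\sum_{k=1}^\infty k a_k z^{-k-1}$ has vanishing mean on every circle $|z|=r>1$. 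Therefore $\fint_0^{2\pi}\re\hat\psi'(e^{\s+i\th})\,d\th = 0$ and the integral equals $1 + O(\d_0^2)$.

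Multiplying the two expansions and using that $e^{-\z\t_t}$ is bounded for $t\le T$, I obtain
\[
\b^\cp(\hat\phi,\t) = e^{-\z\t_t}(1-\z\psi^\cp_t) + O(\d_0^2),
\]
which is the claimed bound. There is no real obstacle here: the computation is a routine Taylor expansion. The only subtlety worth flagging is the bookkeeping needed to see that the constant depends only on $\z$ and $T$ (not on $\s$ or on $\a,\eta$ individually), which follows from the fact that $\b^\cp$ depends on $(\a,\eta)$ only through $\z$, that the expansion $|1+w|^{-\z}$ is uniform on $|w|\le 1/2$, and that the mean-value property of $\hat\psi'$ is insensitive to the radius $e^\s$ on which the integral is taken.
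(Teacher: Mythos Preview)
Your proposal is correct and follows essentially the same approach as the paper: both proofs reduce $\b^\cp$ to $e^{-\z\t_t}e^{-\z\psi^\cp_t}\fint|1+\hat\psi'(e^{\s+i\th})|^{-\z}d\th$, Taylor-expand both factors to first order with $O(\d_0^2)$ remainder, and use that $\fint\re\hat\psi'(e^{\s+i\th})\,d\th=0$. The paper packages the combined error into a single term $\g_t(\th,\hat\phi,\t)$ (which it reuses later in Lemma~\ref{AESTL}), whereas you treat the two expansions separately before multiplying, but this is only a cosmetic difference.
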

\begin{proof}
We have
$$
c(\th,\hat\phi,\t)\l(\th,\hat\phi,\t)
=e^{-\z\t}|\hat\phi'(e^{\s+i\th)}|^{-\z}
=e^{-\z\t_t}e^{-\z\psi^\cp_t}|1+\hat\psi'(e^{\s+i\th)}|^{-\z}
$$
and, for $|w|\le1/2$,
$$
|1+w|^{-\z}=1-\z\re w+\ve(w),\q|\ve(w)|\le C|w|^2
$$
so
\begin{equation}
\label{CL}
c(\th,\hat\phi,\t)\l(\th,\hat\phi,\t)
=e^{-\z\t_t}\left(1-\z\psi^\cp_t-\z\re\hat\psi'(e^{\s+i\th})+\g_t(\th,\hat\phi,\t)\right)
\end{equation}
where
\begin{equation}
\label{CLL}
|\g_t(\th,\hat\phi,\t)|\le C\d_0^2
\end{equation}
whenever $|\psi^\cp_t|\le\d_0$ and $|\hat\psi'(e^{\s+i\th})|\le\d_0$ for all $\th$.
For $\hat\phi\in\cS_1$, $\hat\psi$ is holomorphic in $\{|z|>1\}$ and bounded at $\infty$, so
\begin{equation}
\label{PSINTZ}
\fint_0^{2\pi}\re\hat\psi'(e^{\s+i\th})d\th=0.
\end{equation}
The claimed estimate follows on integrating \eqref{CL} in $\th$.
\end{proof}

Recall that the drift of $(\hat\Phi_t)_{t\ge0}$ is given by
$$
\hat\b(\hat\phi,\t)(z)=\fint_0^{2\pi}\Delta(\th,z,c(\th,\hat\phi,\t),\hat\phi)\l(\th,\hat\phi,\t)d\th
$$
where
$$
\Delta(\th,z,c,\hat\phi)
=e^{-c}\hat\phi(F_c(\th,z))-\hat\phi(z),\q
F_c(\th,z)=e^{i\th}F_c(e^{-i\th}z).
$$

\begin{lemma}
\label{AESTL}
For all $\a,\eta\in\R$ and all $T<t_\z$, 
there is a constant $C(\a,\eta,\L,T)<\infty$ with the following property.
For all $c\in(0,1/C]$, all $\s>0$, all $\d_0\in(0,1/2]$, all $t\le T$, all $\hat\phi\in\cS_1$ and all $\t\ge0$, we have
\begin{equation}
\label{JJN}
|\hat\b(\hat\phi,\t)(\infty)+e^{-\z\t_t}(Q+1)\hat\psi(\infty)|
\le C(\d_0\sqrt c+\d_0^2)+C(c+\d_0)|\hat\psi(\infty)|
\end{equation}
whenever $|\psi^\cp_t|\le\d_0$ and $|\hat\psi'(e^{\s+i\th})|\le\d_0$ for all $\th$,
where $\psi^\cp_t=\t-\t_t$ and $\hat\psi(z)=\hat\phi(z)-z$.

Moreover, for all $\a,\eta\in\R$, all $\ve\in(0,1/2]$, all $p\ge2$ and all $T<\t_\z$, 
there is a constant $C(\a,\eta,\L,\ve,p,T)<\infty$ with the following property. 
For all $c\in(0,1/C]$, all $\s>0$, all $\d_0\in(0,1/2]$, all $t\le T$, all $\hat\phi\in\cS_1$ and all $\t\ge0$,
for all $r\ge1+c^{1/2-\ve}$ and $\rho=(1+r)/2$, we have 
\begin{align}
\notag
&\|D\hat\b(\hat\phi,\t)+e^{-\z\t_t}(Q+1)D\hat\psi\|_{p,r}\\
\notag
&\q\q\le\frac{C\d_0^2}r\left(\frac r{r-1}\right)
+\frac{C\d_0}r\left(\frac r{r-1}\right)\left(1+\log\left(\frac r{r-1}\right)\right)r\|D\hat\psi\|_{p,\rho}\\
&\q\q\q\q+\frac{C\d_0\sqrt c}r\left(\frac r{r-1}\right)^2+\frac{Cc}r\left(\frac r{r-1}\right)^2r\|D\hat\psi\|_{p,\rho}
\label{KKN}
\end{align}
whenever $|\psi^\cp_t|\le\d_0$ and $|\hat\psi'(e^{\s+i\th})|\le\d_0$ for all $\th$.
\end{lemma}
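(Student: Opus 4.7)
The plan is to prove both bounds by substituting the Taylor decomposition $\Delta = \Delta_{0,0} + \Delta_{0,1} + \sum_{k=1}^{m+1}\Delta_k$ from the proof of Lemma \ref{MESTL} into the integral defining $\hat\b(\hat\phi,\t)$, together with the expansion \eqref{CL} of $C(\th)\L(\th)$, and then to identify the main-order contribution as $-e^{-\z\t_t}(Q+1)\hat\psi$ (respectively, its derivative). Conceptually the error splits into a discretisation error comparing the Markov chain drift $\hat\b$ to the continuous vector field $\hat b$ of the $\s$-regularised LK$(\z)$ equation from Section \ref{SBE}, which is $O(c)$ times norms of $\hat\psi$; and a linearisation error $\hat b(\hat\phi,\t) - \nabla \hat b(\hat\phi_t,\t_t)(\hat\psi,\psi^\cp)$ of size $O(\d_0^2)$. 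The identification $\nabla \hat b(\hat\phi_t,\t_t)(\hat\psi,\psi^\cp) = -e^{-\z\t_t}(Q+1)\hat\psi$ is exactly the linearisation computation performed in Section \ref{DSV}.

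For the pointwise infinity bound \eqref{JJN}, I would start from the exact formula $\Delta(\th,\infty,c,\hat\phi) = a_0(c)e^{i\th} + (e^{-c}-1)\hat\psi(\infty)$ in \eqref{DINF}. The $(e^{-C(\th)}-1)\hat\psi(\infty)\L(\th)$ piece expands as $-C(\th)\L(\th)\hat\psi(\infty) + O((C(\th))^2\L(\th))|\hat\psi(\infty)|$, whose $\th$-average is $-e^{-\z\t_t}\hat\psi(\infty) + O((c+\d_0)|\hat\psi(\infty)|)$ via \eqref{CL}; since $Q\hat\psi(\infty)=0$ (because $q(0)=0$), this matches the target $-e^{-\z\t_t}(Q+1)\hat\psi(\infty)$. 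The $a_0(C(\th))e^{i\th}\L(\th)$ piece has Fourier factor $e^{i\th}$, which is orthogonal both to the constant term and to $\re\hat\psi'(e^{\s+i\th})$ in \eqref{CL} (because the Laurent expansion of $\hat\psi'$ at infinity contains only $e^{-i(k+1)\th}$-modes with $k\ge 1$); the remainder is then $O(\d_0\sqrt c + \d_0^2)$, using $a_0(c)-2c = O(c^{3/2})$ from Proposition \ref{CNC} and the quadratic bound \eqref{CLL}.

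For \eqref{KKN}, I would differentiate under the integral and analyse each piece of the decomposition. The main contribution comes from pairing $D\Delta_{0,0}$ and $D\Delta_1$ with the leading expansion of $C(\th)\L(\th)$ in \eqref{CL}; using Propositions \ref{CNC} and \ref{LQP} together with the Fourier identity
$$
\fint_0^{2\pi} DH(\th,z)\re\hat\psi'(e^{\s+i\th})\,d\th = \tfrac{1}{2}e^{-\s}D^2\hat\psi(e^\s z),
$$
these main contributions reassemble, using $QD\hat\psi = -D^2\hat\psi + \z e^{-\s}D^2\hat\psi(e^\s\,\cdot)$, into $-e^{-\z\t_t}(Q+1)D\hat\psi(z)$, consistently with the linearisation of Section \ref{DSV}. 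The remaining contributions --- from $\Delta_{0,1}$, from higher-order $\Delta_k$ with $k\ge 2$, from the $O(c^{3/2})$ error in $a_0(c)$, and from the quadratic remainder $\g_t$ of \eqref{CLL} --- are bounded term-by-term using the toolkit of Lemma \ref{MESTL}: Propositions \ref{PBDD} and \ref{LFG} for $L^p$-multiplier and Cauchy-transform bounds, together with the estimate $\|D^j\hat\psi\|_{p,\rho}\le C(r/(r-1))^{j-1}\|D\hat\psi\|_{p,\rho'}$.

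The main obstacle will be the careful bookkeeping needed to extract the specific factor $1+\log(r/(r-1))$ that appears in the $\d_0$-coefficient of \eqref{KKN}. This logarithm is the signature of a sharp $L^p$-bound for a singular-integral pairing of two boundary derivatives of $\hat\psi$, rather than of a pointwise bound (which would produce an extra power of $r/(r-1)$); obtaining it requires the Marcinkiewicz multiplier framework of Section \ref{sec:operators} or an analogous Cauchy-transform $L^p$ inequality, with the resulting product-type term in $\re\hat\psi'$ and $D\hat\psi$ handled via H\"older's inequality. Verifying that all the error contributions fit within the claimed four-term bound is technically delicate but conceptually routine in view of the analogous analysis in Lemma \ref{MESTL}.
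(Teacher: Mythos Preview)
Your overall strategy is correct and closely parallels the paper's: expand $\Delta$ via the Taylor decomposition, expand $c(\th)\l(\th)$ via \eqref{CL}, and identify the leading term as $-e^{-\z\t_t}(Q+1)\hat\psi$. The treatment at $\infty$ using \eqref{DINF} is also essentially the paper's.

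There is, however, a genuine gap in your plan for \eqref{KKN}. You propose to bound the remainder pieces $\Delta_{0,1}$ and $\Delta_k$ ($k\ge2$) ``term-by-term using the toolkit of Lemma \ref{MESTL}''. That will not suffice: the pointwise bound $|\Delta_{0,1}(\th,z,c)|\le Cc^{3/2}|z|/|e^{-i\th}z-1|^2$, multiplied by $\l(\th)\sim c^{-1}$ and integrated in $\th$, gives a contribution of order $\sqrt c(r/(r-1))^2/r$ with no factor of $\d_0$ or $\|D\hat\psi\|$, which is \emph{not} dominated by any term on the right of \eqref{KKN}. The missing ingredient is a cancellation in the $\th$-average: by Cauchy's theorem one has $\fint_0^{2\pi}\hat\phi(F_c(\th,z))\,d\th=\hat\phi(e^cz)$, which the paper uses to evaluate $\fint\tilde\Delta(\th,z,c_t,\hat\psi)\,d\th$ exactly and obtain the clean bound $\|\fint D\tilde\Delta(\th,\cdot,c_t,\hat\psi)\l_t\,d\th\|_{p,r}\le Cc(r/(r-1))^2\|D\hat\psi\|_{p,\rho}$. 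Only \emph{after} this baseline is established does the paper bound the perturbation from $(c(\th),\l(\th))$ to $(c_t,\l_t)$ piece by piece; that perturbation carries the factor $\d_0$ and produces the $\d_0\sqrt c$ and $\d_0(1+\log)$ terms. Your proposal conflates these two stages.

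Two smaller points. The logarithm in the $\d_0$-coefficient does not come from a Marcinkiewicz or singular-integral argument; it arises from the elementary pointwise integral $\fint_0^{2\pi}d\th/|e^{-i\th}z-1|\le C(1+\log(r/(r-1)))$ applied to the $\tilde\Delta_1$ perturbation term. Also, Propositions \ref{PBDD} and \ref{LFG} are tools for the $P(\d)$-smoothed martingale terms and play no role here; the drift estimate uses only \eqref{DEST}, \eqref{PINF}, and direct $\th$-integration.
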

\begin{proof}
We use the split \eqref{DELTA} and the Taylor expansion \eqref{TAY} to write
$$
\Delta(\th,z,c,\hat\phi)
=\Delta_0(\th,z,c)+\sum_{k=1}^{m+1}\Delta_k(\th,z,c,\hat\psi)
$$
where $m=\lceil1/(8\ve)\rceil$.
We further split\footnote{\label{ADD}%
It is convenient to split $\Delta_0$ slightly differently to the split $\Delta_0=\Delta_{0,0}+\Delta_{0,1}$ used for the martingale term: 
where before we had $a_0(c)$ we now approximate by $2c$, putting an additional error into the remainder term $\tilde\Delta_0$.
}%
$$
\Delta_0(\th,z,c)
=\frac{2cz}{e^{-i\th}z-1}+\tilde\Delta_0(\th,z,c)
$$
and
$$
\Delta_1(\th,z,c,\hat\psi)
=c(D\hat\psi(z)-\hat\psi(z))+\tilde\Delta_1(\th,z,c,\hat\psi).
$$
Set
$$
\tilde\Delta(\th,z,c,\hat\phi)=\tilde\Delta_0(\th,z,c)+\tilde\Delta_1(\th,z,c,\hat\psi)+\sum_{k=2}^{m+1}\Delta_k(\th,z,c,\hat\psi)
$$
and note that
\begin{equation}
\label{EECO}
e^{-c}\hat\phi(F_c(\th,z))-\hat\phi(z)
=c\left(\frac{2z}{e^{-i\th}z-1}+D\hat\psi(z)-\hat\psi(z)\right)+\tilde\Delta(\th,z,c,\hat\psi).
\end{equation}
We use equation \eqref{CL} to write
\begin{align*}
\hat\b(\hat\phi,\t)(z)
&=\fint_0^{2\pi}\left(\frac{2z}{e^{-i\th}z-1}+D\hat\psi(z)-\hat\psi(z)\right)
c(\th,\hat\phi,\t)\l(\th,\hat\phi,\t)d\th\\
&\q\q+\fint_0^{2\pi}\tilde\Delta(\th,z,c(\th,\hat\phi,\t),\hat\psi)\l(\th,\hat\phi,\t)d\th\\
&=e^{-\z\t_t}\fint_0^{2\pi}\left(D\hat\psi(z)-\hat\psi(z)+\frac{2z}{e^{-i\th}z-1}\left(1-\z\psi^\cp_t-\z\re\hat\psi'(e^{\s+i\th})\right)\right)d\th\\
&\q\q+e^{-\z\t_t}\fint_0^{2\pi}\frac{2z}{e^{-i\th}z-1}\g_t(\th,\hat\phi,\t)d\th\\
&\q\q\q\q+(D\hat\psi(z)-\hat\psi(z))\fint_0^{2\pi}(c(\th,\hat\phi,\t)\l(\th,\hat\phi,\t)-e^{-\z\t_t})d\th\\
&\q\q\q\q\q\q+\fint_0^{2\pi}\tilde\Delta(\th,z,c(\th,\hat\phi,\t),\hat\psi)\l(\th,\hat\phi,\t)d\th.
\end{align*}
Now $\hat\psi(z)\to0$ as $z\to\infty$, so
\begin{align*}
&e^{-\z\t_t}\fint_0^{2\pi}\left(D\hat\psi(z)-\hat\psi(z)+\frac{2z}{e^{-i\th}z-1}\left(1-\z\psi^\cp_t-\z\re\hat\psi'(e^{\s+i\th})\right)\right)d\th\\
&\q\q\q\q
=e^{-\z\t_t}\left(D\hat\psi(z)-\hat\psi(z)-\z e^{-\s}D\hat\psi(e^\s z)\right)
=-e^{-\z\t_t}(Q+1)\hat\psi(z).
\end{align*}
Hence
\begin{align}
\notag
&\hat\b(\hat\phi,\t)(\infty)+e^{-\z\t_t}(Q+1)\hat\psi(\infty)\\
\notag
&\q\q=2e^{-\z\t_t}\fint_0^{2\pi}e^{i\th}\g_t(\th,\hat\phi,\t)d\th
-\hat\psi(\infty)\fint_0^{2\pi}(c(\th,\hat\phi,\t)\l(\th,\hat\phi,\t)-e^{-\z\t_t})d\th\\
\label{INTT}
&\q\q\q\q+\fint_0^{2\pi}\tilde\Delta(\th,\infty,c(\th,\hat\phi,\t),\hat\psi)\l(\th,\hat\phi,\t)d\th
\end{align}
and 
\begin{align}
\notag
&D\hat\b(\hat\phi,\t)(z)+e^{-\z\t_t}(Q+1)D\hat\psi(z)\\
\notag
&\q\q=-e^{-\z\t_t}\fint_0^{2\pi}\frac{2z}{(e^{-i\th}z-1)^2}\g_t(\th,\hat\phi,\t)d\th\\
\notag
&\q\q\q\q+(D^2\hat\psi(z)-D\hat\psi(z))\fint_0^{2\pi}(c(\th,\hat\phi,\t)\l(\th,\hat\phi,\t)-e^{-\z\t_t})d\th\\
\label{BEQ}
&\q\q\q\q\q\q+\fint_0^{2\pi}D\tilde\Delta(\th,z,c(\th,\hat\phi,\t),\hat\psi)\l(\th,\hat\phi,\t)d\th.
\end{align}
We will estimate the terms on the right-hand sides of \eqref{INTT} and \eqref{BEQ},
assuming from now on that $t$, $\hat\phi$ and $\t$ are chosen so that
$|\psi^\cp_t|\le\d_0$ and $|\hat\psi'(e^{\s+i\th})|\le\d_0$ for all $\th$.

From \eqref{CL} and \eqref{CLL}, we have $|\g_t(\th,\hat\phi,\t)|\le C\d_0^2$ and
$$
\left|\fint_0^{2\pi}(c(\th,\hat\phi,\t)\l(\th,\hat\phi,\t)-e^{-\z\t_t})d\th\right|\le C\d_0.
$$
We use \eqref{DINF} to see that
$$
\tilde\Delta(\th,\infty,c,\hat\psi)
=\Delta(\th,\infty,c,\hat\psi)-2ce^{i\th}+c\hat\psi(\infty)
=(a_0(c)-2c)e^{i\th}+(e^{-c}-1+c)\hat\psi(\infty).
$$
Write $c(\th)$ for $c(\th,\hat\phi,\t)$ and $\l(\th)$ for $\l(\th,\hat\phi,\t)$.
Then
$$
|c(\th)-c_t|\le C\d_0c,\q|\l(\th)-\l_t|\le C\d_0c^{-1}
$$
and, by Proposition \ref{CNC}, we have 
$$
|a_0(c(\th))-2c(\th)|\le Cc^{3/2},\q
|(a_0(c(\th))-2c(\th))-(a_0(c_t)-2c_t)|\le Cc^{3/2}\d_0.
$$
We can now estimate in \eqref{INTT} to obtain \eqref{JJN}.

It remains to prove \eqref{KKN}.
For $|z|=r>1$, we have
\begin{equation}
\label{GAME}
\left|\fint_0^{2\pi}\frac{2z}{(e^{-i\th}z-1)^2}\g_t(\th,\hat\phi,\t)d\th\right|
\le C\d_0^2\fint_0^{2\pi}\frac{|z|}{|e^{-i\th}z-1|^2}d\th
\le\frac{C\d_0^2}{r-1}.
\end{equation}
For $r>1$ and $\rho=(r+1)/2$, we have
$$
\|D^2\hat\psi\|_{p,r}
\le C\left(\frac r{r-1}\right)\|D\hat\psi\|_{p,\rho}
$$
so
\begin{equation}
\label{FAME}
\left\|(D^2\hat\psi(z)-D\hat\psi(z))\fint_0^{2\pi}(c(\th,\hat\phi,\t)\l(\th,\hat\phi,\t)-e^{-\z\t_t})d\th\right\|_{p,r}
\le C\d_0\left(\frac r{r-1}\right)\|D\hat\psi\|_{p,\rho}.
\end{equation}
It remains to deal with the final term in \eqref{BEQ}.
We first estimate the function obtained on replacing $c(\th,\hat\phi,\t)$ and $\l(\th,\hat\phi,\t)$ in that term by $c_t=ce^{-\a\t_t}$ and $\l_t=c^{-1}e^{-\eta\t_t}$.
Note that, in the case $F_c(z)=e^cz$ and $m=1$, the Taylor expansion \eqref{TAY} has the form
$$
e^{-c}\hat\phi(e^cz)-\hat\phi(z)
=c(D\hat\psi(z)-\hat\psi(z))+c^2\int_0^1(1-u)e^{-cu}(D^2\hat\psi(e^{cu}z)-2D\hat\psi(e^{cu}z)+\hat\psi(e^{cu}z))du.
$$
On the other hand, by Cauchy's theorem, 
$$
\fint_0^{2\pi}\hat\phi(F_c(\th,z))d\th=\hat\phi(e^cz).
$$
Hence, on integrating in $\th$ in \eqref{EECO}, we see that
$$
\fint_0^{2\pi}\tilde\Delta(\th,z,c,\hat\psi)d\th
=c^2\int_0^1(1-u)e^{-cu}(D^2\hat\psi(e^{cu}z)-2D\hat\psi(e^{cu}z)+\hat\psi(e^{cu}z))du
$$
so
$$
\fint_0^{2\pi}D\tilde\Delta(\th,z,c,\hat\psi)d\th
=c^2\int_0^1(1-u)e^{-cu}(D^3\hat\psi(e^{cu}z)-2D^2\hat\psi(e^{cu}z)+D\hat\psi(e^{cu}z))du
$$
and so, for $r>1$ and $\rho=(r+1)/2$,
\begin{equation}
\label{CCL}
\left\|\fint_0^{2\pi}D\tilde\Delta(\th,.,c_t,\hat\psi)\l_td\th\right\|_{p,r}
\le Cc\left(\frac r{r-1}\right)^2\|D\hat\psi\|_{p,\rho}.
\end{equation}

It remains to deal with the error made in replacing $c(\th,\hat\phi,\t)$ and $\l(\th,\hat\phi,\t)$ by $c_t$ and $\l_t$.
We make a further split\footnote{%
Thus $\tilde\Delta_{0,0}=\Delta_{0,1}$, as considered in estimating the martingale terms, 
and $\tilde\Delta_{0,1}$ is the additional error referred to in footnote \ref{ADD}.
}%
$$
\tilde\Delta_0(\th,z,c)=\tilde\Delta_{0,0}(\th,z,c)+\tilde\Delta_{0,1}(\th,z,c),\q
\tilde\Delta(\th,z,c)=\bar\Delta(\th,z,c)+\tilde\Delta_{0,1}(\th,z,c)
$$
where
$$
\tilde\Delta_{0,0}(\th,z,c)=e^{-c}F_c(\th,z)-z-\frac{a_0(c)z}{e^{-i\th}z-1},\q
\tilde\Delta_{0,1}(\th,z,c)=\frac{(a_0(c)-2c)z}{e^{-i\th}z-1}.
$$
We first estimate the $\tilde\Delta_{0,1}$ term.
Since $|\psi^\cp_t|\le\d_0$ and $|\hat\psi'(e^{\s+i\th})|\le\d_0$ for all $\th$, we have
$$
|c(\th,\hat\phi,\t)-c_t|\le Cc\d_0,\q |\l(\th,\hat\phi,\t)-\l_t|\le C\d_0/c.
$$
Hence, by Proposition \ref{CNC}, for $c\le1/C$,
$$
|(a_0(c(\th,\hat\phi,\t))-2c(\th,\hat\phi,\t))\l(\th,\hat\phi,\t)-(a_0(c_t)-2c_t)\l_t|\le C\d_0\sqrt c
$$
so
$$
|D\tilde\Delta_{0,1}(\th,z,c(\th,\hat\phi,\t))\l(\th,\hat\phi,\t)-D\tilde\Delta_{0,1}(\th,z,c_t)\l_t|
\le\frac{C\d_0\sqrt c|z|}{|e^{-i\th}z-1|^2}
$$
so, for $|z|=r>1$,
\begin{equation}
\label{GA}
\fint_0^{2\pi}|D\tilde\Delta_{0,1}(\th,z,c(\th,\hat\phi,\t))\l(\th,\hat\phi,\t)-D\tilde\Delta_{0,1}(\th,z,c_t)\l_t|d\th
\le\frac{C\d_0\sqrt c}{r-1}.
\end{equation}
By Proposition \ref{CNC}, 
for $c\le1/C$,
$$
|\tilde\Delta_{0,0}(\th,z,c)|\le\frac{Cc^{3/2}|z|}{|e^{-i\th}z-1|^2}
$$
and, for $c_1,c_2\in(0,c]$ and $|z|\ge1+\sqrt c$,
$$
|\tilde\Delta_{0,0}(\th,z,c_1)-\tilde\Delta_{0,0}(\th,z,c_2)|
\le\frac{C\sqrt c|c_1-c_2||z|}{|e^{-i\th}z-1|^2}
$$
so
$$
|\tilde\Delta_{0,0}(\th,z,c(\th,\hat\phi,\t))\l(\th,\hat\phi,\t)-\tilde\Delta_{0,0}(\th,z,c_t)\l_t|
\le\frac{C\d_0\sqrt c|z|}{|e^{-i\th}z-1|^2}
$$
so, for $|z|=r\ge1+\sqrt c$,
$$
\fint_0^{2\pi}|\tilde\Delta_{0,0}(\th,z,c(\th,\hat\phi,\t))\l(\th,\hat\phi,\t)-\tilde\Delta_{0,0}(\th,z,c_t)\l_t|d\th
\le\frac{C\d_0\sqrt c}{r-1}
$$
and so, for $r\ge1+2\sqrt c$,
\begin{equation}
\label{FA}
\left\|\fint_0^{2\pi}(D\tilde\Delta_{0,0}(\th,.,c(\th,\hat\phi,\t))\l(\th,\hat\phi,\t)-D\tilde\Delta_{0,0}(\th,.,c_t)\l_t)d\th\right\|_{p,r}
\le\frac{C\d_0\sqrt c}r\left(\frac r{r-1}\right)^2.
\end{equation}
We have
$$
\tilde\Delta_1(\th,z,c,\hat\psi)
=\left(\log\left(\frac{F_c(\th,z)}z\right)-c\right)D\hat\psi(z)
$$
so, by Proposition \ref{CNC},
for $c\le1/C$,
$$
|\tilde\Delta_1(\th,z,c,\hat\psi)|
\le\frac{Cc}{|e^{-i\th}z-1|}|D\hat\psi(z)|
$$
and, for $c_1,c_2\in(0,c]$ and $|z|\ge1+\sqrt c$,
$$
|\tilde\Delta_1(\th,z,c_1,\hat\psi)-\tilde\Delta_1(\th,z,c_2,\hat\psi)|
\le\frac{C|c_1-c_2|}{|e^{-i\th}z-1|}|D\hat\psi(z)|
$$
so
$$
|\tilde\Delta_1(\th,z,c(\th,\hat\phi,\t))\l(\th,\hat\phi,\t)-\tilde\Delta_1(\th,z,c_t)\l_t|
\le\frac{C\d_0}{|e^{-i\th}z-1|}|D\hat\psi(z)|
$$
so, for $|z|=r\ge1+\sqrt c$,
$$
\fint_0^{2\pi}|\tilde\Delta_1(\th,z,c(\th,\hat\phi,\t))\l(\th,\hat\phi,\t)-\tilde\Delta_1(\th,z,c_t)\l_t|d\th
\le C\d_0\left(1+\log\left(\frac r{r-1}\right)\right)|D\hat\psi(z)|
$$
and so, for $r\ge1+2\sqrt c$,
\begin{align}
\notag
&\left\|\fint_0^{2\pi}(D\tilde\Delta_1(\th,.,c(\th,\hat\phi,\t))\l(\th,\hat\phi,\t)-D\tilde\Delta_1(\th,.,c_t)\l_t)d\th\right\|_{p,r}\\
\label{EA}
&\q\q\q\q\q\q\q\q\q\q\q\le C\d_0\left(\frac r{r-1}\right)\left(1+\log\left(\frac r{r-1}\right)\right)\|D\hat\psi\|_{p,\rho}.
\end{align}
For $k=2,\dots,m$, we have
$$
\Delta_k(\th,z,c,\hat\psi)
=\frac1{k!}\sum_{j=0}^k\binom kj(-c)^{k-j}f_c(\th,z)^jD^j\hat\psi(z)
$$
where $f_c(\th,z)=\log(F_c(\th,z)/z)$.
By Proposition \ref{CNC}, for $c\le1/C$ and $|z|=r>1$,
$$
|f_c(\th,z)|\le\frac{Ccr}{|e^{-i\th}z-1|}
$$
and, for $c_1,c_2\in(0,c]$ and $|z|=r\ge1+\sqrt c$,
$$
|f_{c_1}(\th,z)-f_{c_2}(\th,z)|\le\frac{C|c_1-c_2|r}{|e^{-i\th}z-1|}
$$
so, for $j=0,1,\dots,k$,
$$
|c_1^{k-j}f_{c_1}(\th,z)^j-c_2^{k-j}f_{c_2}(\th,z)^j|\le\frac{Cc^{k-1}|c_1-c_2|r^j}{|e^{-i\th}z-1|^j}
$$
so
$$
|\Delta_k(\th,z,c,\hat\psi)|
\le Cc^k\sum_{j=0}^k\frac{r^j}{|e^{-i\th}z-1|^j}|D^j\hat\psi(z)|
$$
and
$$
|\Delta_k(\th,z,c_1,\hat\psi)-\Delta_k(\th,z,c_2,\hat\psi) |
\le Cc^{k-1}|c_1-c_2|\sum_{j=0}^k\frac{r^j}{|e^{-i\th}z-1|^j}|D^j\hat\psi(z)|
$$
so
$$
|\Delta_k(\th,z,c(\th,\hat\phi,\t))\l(\th,\hat\phi,\t)-\Delta_k(\th,z,c_t)\l_t|
\le Cc^{k-1}\d_0\sum_{j=0}^k\frac{r^j}{|e^{-i\th}z-1|^j}|D^j\hat\psi(z)|
$$
so
$$
\fint_0^{2\pi}|\Delta_k(\th,z,c(\th,\hat\phi,\t))\l(\th,\hat\phi,\t)-\Delta_k(\th,z,c_t)\l_t|d\th
\le Cc^{k-1}\d_0\left(\frac r{r-1}\right)^{k-1}\sum_{j=0}^k|D^j\hat\psi(z)|
$$
and so, for $r\ge1+2\sqrt c$,
\begin{align}
\notag
&\left\|\fint_0^{2\pi}(D\Delta_k(\th,.,c(\th,\hat\phi,\t))\l(\th,\hat\phi,\t)-D\Delta_k(\th,.,c_t)\l_t)d\th\right\|_{p,r}\\
\label{DA}
&\q\q\q\q\q\q\q\q
\le Cc^{k-1}\d_0\left(\frac r{r-1}\right)^{2k-1}\|D\hat\psi\|_{p,\rho}
\le Cc\d_0\left(\frac r{r-1}\right)^3\|D\hat\psi\|_{p,\rho}
\end{align}
where we used the inequality $\|\hat\psi\|_{p,r}\le C\|D\hat\psi\|_{p,r}$ in the $j=0$ term.

In the final step, we use our assumption that $r\ge1+c^{1/2-\ve}$ and our choice of $m=\lceil1/(8\ve)\rceil$ to see that
$$
c^m\left(\frac r{r-1}\right)^{2m+1+1/p}
\le Cc\left(\frac r{r-1}\right)^2.
$$
Recall that
$$
\Delta_{m+1}(\th,z,c,\hat\psi)
=\frac1{m!}\int_0^1(1-u)^me^{-cu}\sum_{j=0}^{m+1}\binom{m+1}j(-c)^{m+1-j}f_c(\th,z)^jD^j\hat\psi(F_{c,u}(\th,z))du
$$
and, for $|z|=r>1$, since $|F_{c,u}(\th,z)|\ge r$, by \eqref{PINF}, we find, for $\rho'=(3r+1)/4$,
$$
|D^j\hat\psi(F_{c,u}(\th,z))|\le C
\left(\frac r{r-1}\right)^{1/p}
\|D^j\hat\psi\|_{p,\rho'}.
$$
So, for $|z|=r>1$,
$$
|\Delta_{m+1}(\th,z,c,\hat\psi)|
\le Cc^{m+1}
\left(\frac r{r-1}\right)^{1/p}
\left(\frac r{|e^{-i\th}z-1|}\right)^{m+1}
\|D^{m+1}\hat\psi\|_{p,\rho'}
$$
so
\begin{align*}
&\fint_0^{2\pi}|\Delta_{m+1}(\th,z,c(\th,\hat\phi,\t))\l(\th,\hat\phi,\t)-\Delta_{m+1}(\th,z,c_t)\l_t|d\th\\
&\q\q\q\q\q\q\q\q
\q\q\q\q\q\q\q\q\q
\le Cc^m
\left(\frac r{r-1}\right)^{m+1/p}
\|D^{m+1}\hat\psi\|_{p,\rho'}
\end{align*}
and so
\begin{align}
\notag
&\left\|\fint_0^{2\pi}(D\Delta_{m+1}(\th,.,c(\th,\hat\phi,\t))\l(\th,\hat\phi,\t)-D\Delta_{m+1}(\th,.,c_t)\l_t)d\th\right\|_{p,r}\\
\label{CA}
&\q\q\q\q\q\q\q\q\q\q\q
\le Cc^m\left(\frac r{r-1}\right)^{2m+1+1/p}\|D\hat\psi\|_{p,\rho}
\le Cc\left(\frac r{r-1}\right)^2\|D\hat\psi\|_{p,\rho}.
\end{align}
The claimed estimate is obtained by combining \eqref{GAME}, \eqref{FAME}, \eqref{CCL}, \eqref{GA}, \eqref{FA}, \eqref{EA}, \eqref{DA} and \eqref{CA}.
\end{proof}

Recall that the drift term $(\hat A_t,A^\cp_t)$ in the interpolation formula \eqref{KID} is given by
\begin{align*}
\hat A_t(z)
&=\int_0^te^{-(\t_t-\t_s)}P(\t_t-\t_s)\left(\hat\b(\hat\Phi_s,\cT_s)+e^{-\z\t_s}(Q+1)\hat\Psi_s\right)(z)ds,\\
A^\cp_t
&=\int_0^te^{-\z(\t_t-\t_s)}\left(\b^\cp(\hat\Phi_s,\cT_s)-e^{-\z\t_s}+\z e^{-\z\t_s}\Psi^\cp_s\right)ds
\end{align*}
where $\Psi^\cp_s=\cT_s-\t_s$ and $\hat\Psi_s(z)=\hat\Phi_s(z)-z$.
Recall also that
$$
T_0(\d_0)=\inf\big\{t\in[0,t_\z):\sup_{\th\in[0,2\pi)}|\hat\Psi_t'(e^{\s+i\th})|>\d_0\text{ or }|\Psi^\cp_t|>\d_0\big\}.
$$

\begin{lemma}
\label{DRIFTBEST}
For all $\z\in\R$ and all $T<t_\z$, there is a constant $C(\z,T)<\infty$ such that, 
for all $\s>0$, all $\d_0\in(0,1/2]$ and all $t\le T_0(\d_0)\wedge T$, we have
$$
|A^\cp_t|\le C\d_0^2.
$$
\end{lemma}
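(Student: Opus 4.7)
The plan is to reduce the lemma to a pointwise application of Lemma \ref{BEST} at each time $s\le t$, followed by a trivial integration.

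First, I would observe that by the very definition of $T_0(\d_0)$, for every $s\le T_0(\d_0)\wedge T$ we have $|\Psi^\cp_s|\le\d_0$ and $\sup_{\th\in[0,2\pi)}|\hat\Psi_s'(e^{\s+i\th})|\le\d_0$, and $\cT_s\ge0$. Since $s\le T<t_\z$, these are exactly the hypotheses of Lemma \ref{BEST} applied with the identification $(\hat\phi,\t)=(\hat\Phi_s,\cT_s)$, $\psi^\cp_t=\Psi^\cp_s=\cT_s-\t_s$ and $\hat\psi=\hat\Psi_s$ (the time variable of Lemma \ref{BEST} playing the role of $s$, so that its $\t_t$ becomes $\t_s$). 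The lemma therefore yields
$$
\bigl|\b^\cp(\hat\Phi_s,\cT_s)-e^{-\z\t_s}+\z e^{-\z\t_s}\Psi^\cp_s\bigr|\le C\d_0^2
$$
uniformly in $s\le T_0(\d_0)\wedge T$, with a constant depending only on $\z$ and $T$.

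Substituting this pointwise bound into the definition of $A^\cp_t$ gives
$$
|A^\cp_t|\le C\d_0^2\int_0^t e^{-\z(\t_t-\t_s)}\,ds,
$$
so it suffices to show that the remaining integral is bounded by a constant $C(\z,T)$. If $\z\ge0$, the integrand is bounded by $1$ and the integral is at most $T$. If $\z<0$, then $\t$ is increasing in $s$ and $\t_t\le\t_T<\infty$ because $T<t_\z$, so $e^{-\z(\t_t-\t_s)}=e^{|\z|(\t_t-\t_s)}\le e^{|\z|\t_T}$, and the integral is at most $Te^{|\z|\t_T}$. Absorbing these factors into $C(\z,T)$ gives the claimed bound $|A^\cp_t|\le C\d_0^2$.

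There is no real obstacle: the work was already done in establishing the quadratic remainder estimate of Lemma \ref{BEST} (which exploits the cancellation $\fint\re\hat\psi'(e^{\s+i\th})d\th=0$ coming from $\hat\phi\in\cS_1$), and the remainder of the argument is a one-line integration and a case split on the sign of $\z$.
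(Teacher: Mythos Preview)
Your proof is correct and follows essentially the same approach as the paper: apply Lemma \ref{BEST} pointwise to the integrand of $A^\cp_t$ using the bounds guaranteed by the definition of $T_0(\d_0)$, then integrate. Your explicit case split on the sign of $\z$ to bound $\int_0^t e^{-\z(\t_t-\t_s)}\,ds$ is a bit more detailed than the paper, which simply absorbs this into the constant $C(\z,T)$.
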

\begin{proof}
For all $t\le T_0(\d_0)\wedge t_\z$ and all $\th$, 
we have $|\Psi^\cp_t|\le\d_0$ and $|\hat\Psi'_t(e^{\s+i\th})|\le\d_0$ for all $\th$.
Hence, by Lemma \ref{BEST}, for $t\le T_0(\d_0)\wedge T$,
$$
|A^\cp_t|\le e^{-\z\t_t}\int_0^te^{\z\t_s}|\b^\cp(\hat\Phi_s,\cT_s)-e^{-\z\t_s}+\z e^{-\z\t_s}\Psi^\cp_s|ds\le C\d_0^2.
$$
\end{proof}

\begin{lemma}
\label{DRIFTAEST}
For all $\a,\eta\in\R$ with $\z=\a+\eta\le1$ and all $T<t_\z$,
there is a constant $C(\a,\eta,\L,T)<\infty$ with the following property.
For all $c\in(0,1/C]$, all $\s>0$, all $\d_0\in(0,1/2]$ and all $t\le T$,
\begin{equation}
\label{AINF}
\sup_{s\le t\wedge T_0(\d_0)}|\hat A_s(\infty)|
\le C(\d_0\sqrt c+\d_0^2)+C(c+\d_0)\int_0^{t\wedge T_0(\d_0)}|\hat\Psi_s(\infty)|ds.
\end{equation}
Moreover, for all such $\a,\eta$ and $T$, for all $\ve\in(0,1/2]$ and all $p\ge2$,
there is a constant $C(\a,\eta,\L,\ve,p,T)<\infty$ with the following property.
For all $c\in(0,1/C]$, all $\s>0$, all $\d_0\in(0,1/2]$ and all $t\le T$,
for all $r\ge1+c^{1/2-\ve}$, for $\rho=(1+r)/2$, we have in the case $\z<1$ 
\begin{align}
\notag
&\tn D\hat A_t1_{\{t\le T_0(\d_0)\}}\tn_{p,r}\\
\notag
&\q\q\le\frac Cr\bigg(\d_0^2\left(1+\log\left(\frac r{r-1}\right)\right)+\d_0\sqrt c\left(\frac r{r-1}\right)\bigg)\\
\label{RZM}
&\q\q\q\q+C\bigg(\d_0\left(1+\log\left(\frac r{r-1}\right)\right)^2+c\left(\frac r{r-1}\right)\bigg)
\sup_{s\le t}\tn D\hat\Psi_s1_{\{s\le T_0(\d_0)\}}\tn_{p,\rho}
\end{align}
while, in the case $\z=1$,
\begin{align}
\notag
&\tn D\hat A_t1_{\{t\le T_0(\d_0)\}}\tn_{p,r}\\
\notag
&\q\q\le\frac Cr\bigg(\d_0^2\left(\frac r{r-1}\right)+\d_0\sqrt c\left(\frac r{r-1}\right)^2\bigg)\\
\label{RTM}
&\q\q\q\q+C\bigg(\d_0\left(\frac r{r-1}\right)\left(1+\log\left(\frac r{r-1}\right)\right)+c\left(\frac r{r-1}\right)^2\bigg)
\sup_{s\le t}\tn D\hat\Psi_s1_{\{s\le T_0(\d_0)\}}\tn_{p,\rho}.
\end{align}
\end{lemma}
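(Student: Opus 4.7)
The strategy is to bound the drift term by applying Lemma \ref{AESTL} to the integrand pointwise in $s$, and then applying the operator $e^{-(\t_t-\t_s)}P(\t_t-\t_s)D$ (or evaluating at $\infty$). Since the factors in front depend on $t-s$ only through $\t_t-\t_s$, the analysis parallels the treatment of the martingale term in Lemma \ref{MESTL}, but with the deterministic Lebesgue integral in place of the compensated Poisson integral; in particular no Burkholder step is needed.

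For \eqref{AINF}, first observe that the operator $Q$ annihilates constants, i.e.\ $q(0)=0$, so $P(\d)$ acts as the identity on the value at $\infty$ and $(Q+1)\hat\psi(\infty)=\hat\psi(\infty)$. Hence
\[
\hat A_t(\infty) = \int_0^t e^{-(\t_t-\t_s)}\bigl(\hat\b(\hat\Phi_s,\cT_s)(\infty)+e^{-\z\t_s}\hat\Psi_s(\infty)\bigr)\,ds.
\]
On $\{t\le T_0(\d_0)\}$ the hypotheses of \eqref{JJN} are satisfied at every $s\le t$, so inserting that estimate and using $e^{-(\t_t-\t_s)}\le 1$ together with $T<t_\z$ yields \eqref{AINF} directly, after taking the supremum over $t$.

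For the derivative estimates \eqref{RZM}--\eqref{RTM}, I would fix $r\ge1+c^{1/2-\ve}$, set $r_s=e^{(1-\z)(\t_t-\t_s)}r$ and $\rho_s=(r_s+1)/2$, and use the factorization $P(\d)=P_1(\d)P_0(\d)$, where $P_0(\d)$ is the dilation $f(z)\mapsto f(e^{(1-\z)\d}z)$. Applying $D$ and then $P_0(\d_s)$ converts a $\|\cdot\|_{p,r}$ bound on $D(\hat\b+e^{-\z\t_s}(Q+1)\hat\psi_s)$ into a $\|\cdot\|_{p,r_s}$ bound at radius $r_s$, by the dilation identity $P_0(\d)Df(z)=e^{(1-\z)\d}(Df)(e^{(1-\z)\d}z)$. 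The subsequent action of $P_1(\d_s)D$ costs an additional factor $r/(r-1)$ via Lemma \ref{PBDD} and \eqref{DEST}, evaluated at the shifted radius $\rho_s$. Inserting the bound \eqref{KKN} (with $\rho$ replaced by $\rho_s$) and taking the $\|\cdot\|_{p,r}$-norm in $z$ (after dilation) gives
\[
\|e^{-(\t_t-\t_s)}P(\t_t-\t_s)(\hat\b(\hat\Phi_s,\cT_s)+e^{-\z\t_s}(Q+1)\hat\Psi_s)\|_{p,r}
\le e^{-(\t_t-\t_s)}\Theta_s(r_s),
\]
where $\Theta_s(r_s)$ is the right-hand side of \eqref{KKN} at radius $r_s$ with the $\|D\hat\psi\|_{p,\rho}$ factor replaced by the supremum of $\tn D\hat\Psi_s1_{\{s\le T_0(\d_0)\}}\tn_{p,\rho_s}$ and then by its sup over $s\le t$ evaluated at $\rho_s$. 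Integrating in $s$ from $0$ to $t\wedge T_0(\d_0)$ and invoking Lemma \ref{PUSHOUT} to handle integrals of the form $\int_0^t e^{-(\t_t-\t_s)}(r_s/(r_s-1))^m\,ds$ absorbs one power of $r/(r-1)$ when $\z<1$ and none when $\z=1$; the bookkeeping produces exactly the right-hand sides of \eqref{RZM} and \eqref{RTM}, using that $r\le C$ and $r_s\ge r$.

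The main technical obstacle is keeping track of the powers of $r/(r-1)$ at each stage: the drift bound \eqref{KKN} already contains several competing contributions (the $\g_t$-term, the $\hat\psi$-transport term, the $\tilde\Delta_{0,1}$-term, and the $\tilde\Delta_{0,0}$ and Taylor remainders), each with its own power of $r/(r-1)$, and the application of $P_1D$ adds one more. The subcritical gain from the dilation factor $e^{(1-\z)(\t_t-\t_s)}$ and the pushout Lemma \ref{PUSHOUT} is what brings everything back down to the stated powers, and the delicate case is $\z=1$ where this gain is absent, forcing the extra factor of $r/(r-1)$ that appears in \eqref{RTM} relative to \eqref{RZM}.
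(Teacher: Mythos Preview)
Your approach is the same as the paper's, and the bookkeeping you describe is exactly what the paper does: commute $D$ through $P(\d_s)$, use $\|P(\d_s)g\|_{p,r}=\|P_1(\d_s)g\|_{p,r_s}\le C\|g\|_{p,r_s}$ from Lemma~\ref{PBDD} and the dilation identity, then apply \eqref{KKN} at radius $r_s$; for $\z=1$ one has $r_s=r$, while for $\z<1$ Lemma~\ref{PUSHOUT} reduces each time-integral of $(r_s/(r_s-1))^m$ by one power.

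There is one slip in your write-up. You say ``the subsequent action of $P_1(\d_s)D$ costs an additional factor $r/(r-1)$ via Lemma~\ref{PBDD} and \eqref{DEST}''. That is not right: once $D$ has been commuted through, you are applying only $P_1(\d_s)$ to the function $D\hat\b+e^{-\z\t_s}(Q+1)D\hat\Psi_s$, and Lemma~\ref{PBDD} bounds $P_1(\d_s)$ by a constant, not by $r/(r-1)$. The inequality \eqref{DEST} is not used at this step because \eqref{KKN} already bounds the differentiated quantity. If you actually inserted that extra factor, every term would end up one power of $r/(r-1)$ too large, which you would not be able to remove with Lemma~\ref{PUSHOUT}. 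Delete the spurious factor and the argument goes through verbatim; the paper's proof is precisely this.
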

\begin{proof}
The estimate \eqref{AINF} follows immediately from \eqref{JJN}.
We write the argument for the case $\z\in[0,1]$, leaving to the reader the modifications needed for $\z<0$.
By Lemma \ref{PBDD}, for any holomorphic function $\psi$ on $\{|z|>1\}$ bounded at $\infty$, for all $r>1$, 
$$
\|P(\d_s)\psi\|_{p,r}=\|P_1(\d_s)\psi\|_{p,r_s}\le C\|\psi\|_{p,r_s}
$$
where $r_s=re^{(1-\z)\d_s}$.
Hence
\begin{align*}
\|D\hat A_t\|_{p,r}
&=\left\|\int_0^te^{-\d_s}P(\d_s)\left(D\hat\b(\hat\Phi_s,\cT_s)+e^{-\z\t_s}(Q+1)D\hat\Psi_s\right)ds\right\|_{p,r}\\
&\le C\int_0^t\left\|D\hat\b(\hat\Phi_s,\cT_s)+e^{-\z\t_s}(Q+1)D\hat\Psi_s\right\|_{p,r_s}ds.
\end{align*}
On the event $\{t\le T_0\}$, we have $|\Psi^\cp_s|\le\d_0$ and $|\hat\Psi'_s(e^{\s+i\th})|\le\d_0$ for all $\th$ and all $s<t$.
Hence, by Lemma \ref{AESTL},
\begin{align*}
&\tn D\hat A_t1_{\{t\le T_0\}}\tn_{p,r}\\
&\q\q\le\frac{C\d_0^2}r\int_0^t\left(\frac{r_s}{r_s-1}\right)ds
+C\d_0\sup_{s<t}\tn D\hat\Psi_s1_{\{s\le T_0\}}\tn_{p,\rho}
\int_0^t\left(\frac{r_s}{r_s-1}\right)\left(1+\log\left(\frac{r_s}{r_s-1}\right)\right)ds\\
&\q\q\q\q+\frac{C\d_0\sqrt c}r\int_0^t\left(\frac{r_s}{r_s-1}\right)^2ds
+Cc\sup_{s<t}\tn D\hat\Psi_s1_{\{s\le T_0\}}\tn_{p,\rho}\int_0^t\left(\frac{r_s}{r_s-1}\right)^2ds.
\end{align*}
In the case $\z=1$, we have $r_s=r$ for all $s$, so this is the claimed estimate.
For $\z<1$, the claimed estimates follow by Lemma \ref{PUSHOUT}.
\end{proof}

\vfe
\section{Bulk scaling limit for ALE$(\a,\eta)$}
\label{BSL}
Recall that we write our ALE($\a,\eta$) process $(\Phi_t)_{t\ge0}$ in (Schlicht function, capacity) coordinates 
$(\hat\Phi_t,\cT_t)$, and that we set
$$
\hat\Psi_t(z)=\hat\Phi_t(z)-\hat\phi_t(z),\q \Psi^\cp_t=\cT_t-\t_t
$$
where $(\hat\phi_t,\t_t)_{t<t_\z}$ is the disk solution to the LK$(\z)$ equation with initial capacity $\t_0=0$.
We obtained the following interpolation formula \eqref{KID} 
$$
\hat\Psi_t(z)=\hat M_t(z)+\hat A_t(z),\q
\Psi^\cp_t=M^\cp_t+A^\cp_t
$$
and have estimated the terms on the right-hand sides in the preceding section.
We now put these estimates together to obtain first $L^p$-estimates and then pointwise high-probability estimates
which allow us to prove Theorems \ref{DISK} and \ref{DISC}.

\subsection{$L^p$-estimates}
Recall that
$$
T_0(\d_0)=\inf\big\{t\in[0,t_\z):\sup_{\th\in[0,2\pi)}|\hat\Psi_t'(e^{\s+i\th})|>\d_0\text{ or }|\Psi^\cp_t|>\d_0\big\}.
$$

\begin{proposition}
\label{CAPE}
For all $\a,\eta\in\R$, all $p\ge2$ and all $T<t_\z$, 
there is a constant $C(\a,\eta,p,T)<\infty$ such that, for all $c\in(0,1]$ and all $\d_0\in(0,1/2]$,
$$
\big\|\sup_{t\le T\wedge T_0(\d_0)}|\Psi^\cp_t|\big\|_p\le C(\sqrt c+\d_0^2)
$$
and
$$
\big\|\sup_{t\le T\wedge T_0(\d_0)}|\Psi^\cp_t-\Pi^\cp_t|\big\|_p
\le C(c+\sqrt{c\d_0}+\d_0^2)
$$
and
$$
\big\|\sup_{t\le T\wedge T_0(\d_0)}|\hat\Psi_t(\infty)|\big\|_p\le C(\sqrt c+\d_0^2)
$$
and
$$
\big\|\sup_{t\le T\wedge T_0(\d_0)}|\hat\Psi_t(\infty)-\hat\Pi_t(\infty)|\big\|_p
\le C(c+\sqrt{c\d_0}+\d_0^2).
$$
\end{proposition}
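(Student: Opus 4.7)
The strategy is to apply the interpolation identities \eqref{KID} and bound each component using the martingale estimates (Lemmas \ref{NEST} and \ref{MESTI}) and the drift estimates (Lemmas \ref{DRIFTBEST} and \ref{DRIFTAEST}), with a Gr\"onwall step needed only for the two bounds at $z=\infty$.

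The capacity bounds come out immediately. From $\Psi^\cp_t = M^\cp_t + A^\cp_t$, Lemma \ref{DRIFTBEST} provides the deterministic bound $|A^\cp_t|\le C\d_0^2$ on the stopped interval, while Lemma \ref{NEST} controls $\|\sup_{t\le T\wedge T_0(\d_0)}|M^\cp_t|\|_p$ by $C\sqrt c$ and $\|\sup_{t\le T\wedge T_0(\d_0)}|M^\cp_t-\Pi^\cp_t|\|_p$ by $C(c+\sqrt{c\d_0})$. The triangle inequality delivers the first two inequalities.

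For the third bound I would set $f(t)=\|\sup_{s\le t\wedge T_0(\d_0)}|\hat\Psi_s(\infty)|\|_p^p$ and exploit the identity $\hat\Psi_t(\infty)=\hat M_t(\infty)+\hat A_t(\infty)$. Lemma \ref{MESTI} directly provides the integral bound $Cc^{p/2}(1+\int_0^t f(s)\,ds)$ for the martingale. The pointwise drift bound \eqref{AINF}, once raised to the $p$th power and combined with the H\"older inequality $(\int_0^t g\,du)^p\le t^{p-1}\int_0^t g^p\,du$, contributes $C(\d_0\sqrt c+\d_0^2)^p+C(c+\d_0)^p\int_0^t f(s)\,ds$. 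Absorbing $\d_0\sqrt c\le\sqrt c$ and using $c,\d_0\in(0,1]$ and $t\le T$, the sum simplifies to an inequality $f(t)\le C(c^{p/2}+\d_0^{2p})+C\int_0^t f(s)\,ds$, and Gr\"onwall's lemma then gives $f(t)^{1/p}\le C(\sqrt c+\d_0^2)$.

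The fourth bound follows the same scheme applied to $\hat\Psi_t(\infty)-\hat\Pi_t(\infty)=(\hat M_t(\infty)-\hat\Pi_t(\infty))+\hat A_t(\infty)$. The martingale difference is handled by the second conclusion of Lemma \ref{MESTI}; its integral term $\int_0^t f(s)\,ds$ is now bounded using the third estimate already established, producing a contribution of order $C(c+\sqrt{c\d_0})$. The drift is bounded by \eqref{AINF}, and the integral $\int_0^t |\hat\Psi_s(\infty)|\,ds$ therein is again controlled by the third estimate, contributing $C(c+\d_0)(\sqrt c+\d_0^2)\le C(c+\sqrt{c\d_0}+\d_0^2)$. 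The main obstacle I foresee is just the Gr\"onwall step for $f$: one must check that the coefficient of $\int_0^t f(s)\,ds$ stays bounded uniformly in $c$ and $\d_0$ over $t\le T<t_\z$, which is the reason for localising at $T_0(\d_0)$ and for the restriction to the subcritical regime.
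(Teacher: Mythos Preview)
Your proposal is correct and matches the paper's proof essentially line for line: the first two estimates come directly from Lemmas \ref{NEST} and \ref{DRIFTBEST}, the third from Lemmas \ref{MESTI} and \ref{DRIFTAEST} via Gronwall, and the fourth by feeding the third back into those same lemmas. One small remark on your closing comment: the Gronwall coefficient is bounded simply because $c,\d_0\le1$ (the factor in front of the integral is $C(c^{p/2}+(c+\d_0)^p)\le C$), so subcriticality plays no role at this particular step --- indeed Proposition \ref{CAPE} is stated for all $\a,\eta\in\R$, and the constraint $\z\le1$ in Lemma \ref{MESTI} is not actually used when evaluating at $\infty$, where $P(\d)$ acts trivially.
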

\begin{proof}
The first two estimates follow immediately from Lemmas \ref{NEST} and \ref{DRIFTBEST}.
From Lemmas \ref{MESTI} and \ref{DRIFTAEST}, we obtain, for all $t\le T$,
$$
\big\|\sup_{s\le t\wedge T_0(\d_0)}|\hat\Psi_t(\infty)|\big\|_p^p
\le C(\sqrt c+\d_0^2)^p+C\int_0^t\|\hat\Psi_s(\infty)1_{\{s\le T_0(\d_0)\}}\|_p^pds
$$
from which the third estimate follows by Gronwall's lemma.
The fourth estimate follows from the third, together with Lemmas \ref{MESTI} and \ref{DRIFTAEST}.
\end{proof}

The following estimates follow immediately from Lemmas \ref{MESTL} and \ref{DRIFTAEST}.

\begin{proposition}
\label{DFA}
For all $\a,\eta\in\R$ with $\z=\a+\eta\le1$, all $\ve\in(0,1/2]$, all $p\ge2$ and all $T<t_\z$,
there is a constant $C(\a,\eta,\L,\ve,p,T)<\infty$ with the following property.
For all $c\in(0,1]$, all $\d_0\in(0,1/2]$, all $r\ge1+c^{1/2-\ve}$ and all $t\le T$, setting $\rho=(1+r)/2$, we have
\begin{equation}
\label{PDF}
\tn D\hat\Psi_t1_{\{t\le T_0(\d_0)\}}\tn_{p,r}
\le\d(r)+\bar\d(r)\sup_{s\le t}\tn D\hat\Psi_{s-}1_{\{s\le T_0(\d_0)\}}\tn_{p,\rho}
\end{equation}
where, in the case $\z<1$,
\begin{align}
\notag
\d(r)
&=\frac Cr\bigg(\sqrt c\left(\frac r{r-1}\right)+\d_0^2\left(1+\log\left(\frac r{r-1}\right)\right)\bigg),\\
\label{DRE}
\bar\d(r)
&=C\bigg(\sqrt c\left(\frac r{r-1}\right)+\d_0\left(1+\log\left(\frac r{r-1}\right)\right)^2 \bigg)
\end{align}
while, in the case $\z=1$,
\begin{align}
\notag
\d(r)
&=\frac Cr\bigg(\sqrt c\left(\frac r{r-1}\right)^{3/2}+\sqrt c\d_0\left(\frac r{r-1}\right)^2+\d_0^2\left(\frac r{r-1}\right)\bigg),\\
\label{DRF}
\bar\d(r)
&=C\bigg(\sqrt c\left(\frac r{r-1}\right)^{3/2}+\d_0\left(\frac r{r-1}\right)\left(1+\log\left(\frac r{r-1}\right)\right)\bigg).
\end{align}
\end{proposition}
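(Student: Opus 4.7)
The plan is to read off Proposition \ref{DFA} directly from the interpolation formula \eqref{KID} by combining the martingale and drift estimates already established. Applying the operator $D$ to both sides of $\hat\Psi_t = \hat M_t + \hat A_t$, restricting to the event $\{t\le T_0(\d_0)\}$, and taking the $\tn\cdot\tn_{p,r}$ norm, the triangle inequality gives
$$
\tn D\hat\Psi_t 1_{\{t\le T_0(\d_0)\}}\tn_{p,r}
\le \tn D\hat M_t 1_{\{t\le T_0(\d_0)\}}\tn_{p,r}
+ \tn D\hat A_t 1_{\{t\le T_0(\d_0)\}}\tn_{p,r}.
$$
The first term on the right is controlled by \eqref{RZW} of Lemma \ref{MESTL}, and the second by \eqref{RZM}--\eqref{RTM} of Lemma \ref{DRIFTAEST}. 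So there is no new analytic work; the task reduces to bookkeeping.

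In the case $\z<1$, adding \eqref{RZW} and \eqref{RZM} produces a sum of two constant terms
$$
\frac{C\sqrt c}{r}\left(\frac{r}{r-1}\right)
+\frac{C\d_0^2}{r}\left(1+\log\left(\frac{r}{r-1}\right)\right)
+\frac{C\d_0\sqrt c}{r}\left(\frac{r}{r-1}\right)
$$
together with the coefficient
$$
C\sqrt c\left(\frac{r}{r-1}\right)
+C\d_0\left(1+\log\left(\frac{r}{r-1}\right)\right)^{2}
+Cc\left(\frac{r}{r-1}\right)
$$
of $\sup_{s\le t}\tn D\hat\Psi_{s-}1_{\{s\le T_0(\d_0)\}}\tn_{p,\rho}$. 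Since $\d_0\le1/2$ and $c\le1$, the mixed terms $\d_0\sqrt c$ and $c$ are absorbed into $\sqrt c$ (with a change of constant), leaving exactly the $\d(r),\bar\d(r)$ of \eqref{DRE}. The case $\z=1$ is handled identically, now using the versions of \eqref{RZW} and \eqref{RTM} with the extra $(r/(r-1))^{1/2}$ factor on the martingale bound; the three terms in \eqref{DRF} arise as the envelope of the $\sqrt c$, $\sqrt c\d_0$ and $\d_0^2$ contributions from $D\hat M_t$ and the $\d_0^2$, $\d_0\sqrt c$ contributions from $D\hat A_t$, while the $c(r/(r-1))^2$ coefficient is again absorbed by the dominant $\sqrt c (r/(r-1))^{3/2}$.

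There is no real obstacle: the only care needed is to check that in each regime every lesser term can indeed be dominated by the terms retained in $\d(r)$ and $\bar\d(r)$ after enlarging the constant $C$, uniformly in $c\in(0,1]$, $\d_0\in(0,1/2]$ and $r\ge 1+c^{1/2-\ve}$. Since $\rho=(r+1)/2$ matches the radius in the right-hand side of \eqref{RZW} and \eqref{RZM}, the two bounds combine without any additional shrinking of the annulus, and the conclusion \eqref{PDF} follows.
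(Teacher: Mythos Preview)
Your proposal is correct and matches the paper's own proof, which simply states that the estimate ``follows immediately from Lemmas \ref{MESTL} and \ref{DRIFTAEST}.'' You have spelled out the bookkeeping that the paper leaves implicit, including the absorption of the subordinate terms (e.g.\ $c(r/(r-1))^2\le C\sqrt c\,(r/(r-1))^{3/2}$ in the $\z=1$ case, which uses $r\ge 1+c^{1/2-\ve}$), and this is exactly the intended argument.
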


The preceding estimate may be improved by an iterative argument to obtain the following result.

\begin{proposition}
\label{II}
For all $\a,\eta\in\R$ with $\z=\a+\eta\le1$, all $\ve\in(0,1/2]$, all $p\ge2$ and all $T<t_\z$,
there is a constant $C(\a,\eta,\L,\ve,p,T)<\infty$ with the following property.
In the case $\z<1$, for all $c\in(0,1]$, all $r,e^\s\ge1+c^{1/2-\ve}$ and all $t\le T$, 
for all $\nu\in(0,\ve/2]$, setting $\d_0=c^{1/2-\nu}e^\s/(e^\s-1)$, we have
\begin{equation*}
\tn D\hat\Psi_t1_{\{t\le T_0(\d_0)\}}\tn_{p,r}
\le\frac{C\sqrt c}r\left(\frac r{r-1}\right)+\frac{Cc^{1-2\nu}}r\left(\frac{e^\s}{e^\s-1}\right)^2\left(1+\log\left(\frac r{r-1}\right)\right).
\end{equation*}
Moreover, in the case $\z=1$ and $\ve\le1/5$, for all $c\in(0,1]$, all $r,e^\s\ge1+c^{1/5-\ve}$ and all $t\le T$, 
for $\nu\in(0,\ve]$, setting $\d_0=c^{1/2-\nu}(e^\s/(e^\s-1))^{3/2}$, we have
\begin{equation}
\label{Z1}
\tn D\hat\Psi_t1_{\{t\le T_0(\d_0)\}}\tn_{p,r}
\le\frac{C\sqrt c}r\left(\frac r{r-1}\right)^{3/2}+\frac{Cc^{1-2\nu}}r\left(\frac{e^\s}{e^\s-1}\right)^3\left(\frac r{r-1}\right).
\end{equation}
\end{proposition}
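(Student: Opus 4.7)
The plan is to bootstrap the recursive estimate of Proposition \ref{DFA} by iteration. Set $Y_r=\sup_{s\le t}\tn D\hat\Psi_{s-}1_{\{s\le T_0(\d_0)\}}\tn_{p,r}$; since \eqref{PDF} holds at each $s\le t$ with $\d(r),\bar\d(r)$ independent of $s$, taking the supremum on the left yields
$$
Y_r \le \d(r) + \bar\d(r)\,Y_{(1+r)/2}.
$$
I iterate this inequality along $r_k=1+(r-1)/2^k$ for $k=0,1,\dots,N$ to obtain
$$
Y_r \le \sum_{k=0}^{N-1}\Bigg(\prod_{j=0}^{k-1}\bar\d(r_j)\Bigg)\d(r_k) + \Bigg(\prod_{j=0}^{N-1}\bar\d(r_j)\Bigg)Y_{r_N},
$$
choosing $N$ to be the largest integer with $r_N\ge 1+c^{1/2-\ve}$ (resp.\ $1+c^{1/5-\ve}$ when $\z=1$), so that every $r_k$ lies in the range where Proposition \ref{DFA} applies, $r_N-1$ is a fixed positive power of $c$, and $N\le C\log_2(1/c)$.

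The heart of the argument is to verify $\bar\d(r_k)\le M$ uniformly for $k\le N$, where $M$ is a small positive power of $c$ up to logarithmic factors. With the given $\d_0=c^{1/2-\nu}(e^\s/(e^\s-1))^\kappa$ (with $\kappa=1$ for $\z<1$, $\kappa=3/2$ for $\z=1$) and the stated constraints on $\nu,\ve,\s$, direct substitution into \eqref{DRE} or \eqref{DRF} shows that both terms in $\bar\d(r_k)$ are positive powers of $c$ throughout the iteration, yielding something like $M=Cc^\ve(1+\log(1/c))^2$. In particular $2M<1$ for $c$ small enough. Since $\d(r_k)$ is bounded by a constant multiple of $\sqrt c\cdot 2^k/(r-1)$ plus a logarithmic contribution scaled by $\d_0^2$ (with an extra $(r/(r-1))^{1/2}$ factor when $\z=1$), the geometric series $\sum_{k\ge 0}M^k\d(r_k)$ converges and reproduces, up to an absolute constant, exactly the two terms stated in the conclusion; the specific choice of $\d_0$ is precisely what makes the $\d_0^2$-coefficient equal to $c^{1-2\nu}(e^\s/(e^\s-1))^{2\kappa}$ and thus match the target.

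The main obstacle is to show the tail $\prod_{j<N}\bar\d(r_j)\,Y_{r_N}$ is negligible. I plan to control it by combining super-polynomial decay of the product with a crude polynomial a priori bound on $Y_{r_N}$. Because $\hat\Phi_s\in\cS_1$ always, the classical area theorem for Schlicht functions at infinity gives a universal pointwise bound of the form $|D\hat\Psi_s(z)|\le C(|z|/(|z|-1))^{3/2}$, so $Y_{r_N}\le C(r_N/(r_N-1))^{3/2}$, which is only polynomially large in $1/c$ since $r_N-1$ is a positive power of $c$. On the other hand, with $N\ge c_1\log_2(1/c)$ and $M\le c^{c_2}$ for positive constants $c_1,c_2$, the product satisfies $\prod_{j<N}\bar\d(r_j)\le M^N\le\exp(-c_1c_2\log^2(1/c))$, which is super-polynomially small and therefore absorbs the polynomial size of $Y_{r_N}$, contributing an error dominated by the leading $\sqrt c/(r-1)$ term of the stated bound. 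The case $\z=1$ runs in parallel, with the stronger constraints $\s\ge c^{1/5-\ve}$ and $\nu\le\ve$ precisely tailored to keep $M$ a positive power of $c$ in the face of the worse $(r/(r-1))^{3/2}$ scaling in $\bar\d$.
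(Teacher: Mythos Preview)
Your overall scheme is the same as the paper's: iterate the recursive inequality of Proposition~\ref{DFA} and close with the universal distortion bound. But there is a genuine gap in the tail step. You assert $N\ge c_1\log_2(1/c)$, yet with your choice of $N$ as the largest integer with $r_N\ge 1+c^{1/2-\ve}$, this lower bound simply fails when $r$ is near the threshold: if $r=1+c^{1/2-\ve}$ then $r_1=1+\tfrac12 c^{1/2-\ve}<1+c^{1/2-\ve}$ already, so $N=0$, the ``product'' is empty, and your tail term is just $Y_r$ itself. The super-polynomial smallness of $M^N$ is therefore unavailable exactly where you need it, and the argument as written does not cover the full range $r\ge 1+c^{1/2-\ve}$.

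The fix is to give yourself room: invoke Proposition~\ref{DFA} with $\ve$ replaced by $\ve/2$ (its statement allows any $\ve\in(0,1/2]$), so the recursion is valid down to $r\ge 1+c^{1/2-\ve/2}$. Iterating from any $r\ge 1+c^{1/2-\ve}$ to $r_N\approx 1+c^{1/2-\ve/2}$ then guarantees $N\ge(\ve/2)\log_2(1/c)-O(1)$, and $\bar\d(r_k)\le Cc^{\ve/2}(1+\log(1/c))^2$ throughout; the rest of your argument goes through. This is exactly the device the paper uses. In fact the paper organizes the iteration more simply: rather than letting $N$ grow with $1/c$ and invoking super-polynomial decay, it fixes a finite number of steps $k=\lceil 3/\ve\rceil$ independent of $c$, proves by induction on $k$ that for all $r\ge 1+2^k c^{1/2-\ve/2}$ one has $\tn D\hat\Psi_t1_{\{t\le T_0\}}\tn_{p,r}\le C_k(\bar\d(r)^k/(r-1)+\d(r))$, and then observes that for this $k$ the first term is dominated by $\d(r)$ since $\bar\d(r)^k\le c$. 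This avoids both the growing series and the super-polynomial tail argument.
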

\begin{proof}
We begin with a crude estimate which allows us to restrict further consideration to small values of $c$.
The function $\hat\Phi_t(z)$ is univalent on $\{|z|>1\}$, with $\hat\Phi_t(z)\sim z$ as $z\to\infty$. 
So, by a standard distortion estimate, for all $|z|=r>1$,
$$
|\hat\Phi_t'(z)-1|
\le\frac1{r^2-1}
$$
and so
\begin{equation}\label{DIST}
\|D\hat\Psi_t\|_{p,r}
=r\|\hat\Phi_t'-1\|_{p,r}
\le\frac 1{r-1}.
\end{equation}
It is straightforward to check that this implies the claimed estimates in the case where $c>1/C$, for any given constant
$C$ of the allowed dependence.
Hence it will suffice to consider the case where $c\le1/C$.

Consider first the case $\z<1$.
On substituting the chosen value of $\d_0$ in \eqref{DRE}, we obtain
\begin{align*}
\d(r)
&=\frac Cr\bigg(\sqrt c\left(\frac r{r-1}\right)+c^{1-2\nu}
\left(\frac{e^\s}{e^\s-1}\right)^2
\left(1+\log\left(\frac r{r-1}\right)\right)\bigg),\\
\bar\d(r)
&=C\bigg(\sqrt c\left(\frac r{r-1}\right)+c^{1/2-\nu}
\left(\frac{e^\s}{e^\s-1}\right)
\left(1+\log\left(\frac r{r-1}\right)\right)^2\bigg).
\end{align*}
Note that, for $\rho=(1+r)/2$, we have $\d(\rho)\le2\d(r)$ and $\bar\d(\rho)\le2\bar\d(r)$.
Note also that, for $r\ge1+c^{1/2-\ve/2}$ and $e^\s\ge1+c^{1/2-\ve}$, for all sufficiently small $c$, 
$$
\bar\d(r)\le Cc^{\ve/2}+Cc^{\ve/2}(1+\log(1/c))^2\le c^{\ve/3}\le1 .
$$
We restrict to such $c$.
Set $C_0=1$ and for $k\ge0$ define recursively $C_{k+1}=2^{k+1}C_k+1$.
We will show that, for all $k\ge0$, 
all $r\ge1+2^kc^{1/2-\ve/2}$ and all $t\le T$,
\begin{equation}
\label{induchop}
\tn D\hat\Psi_t1_{\{t\le T_0(\d_0)\}}\tn_{p,r}
\le C_k\left(\frac{\bar\d(r)^k}{r-1}+\d(r)\right).
\end{equation}
The case $k=0$ is implied by \eqref{DIST}.
Suppose inductively that \eqref{induchop} holds for $k$, for all $r\ge1+2^kc^{1/2-\ve/2}$ and all $t\le T$.
Take $r\ge1+2^{k+1}c^{1/2-\ve/2}$ and $t\le T$. 
Then $\rho=(r+1)/2\ge1+2^k c^{1/2-\ve/2}$ so, for all $s\le t$,
$$
\tn D\hat\Psi_s1_{\{s\le T_0\}}\tn_{p,\rho}
\le C_k\left(\frac{\bar\d(\rho)^k}{\rho-1}+\d(\rho)\right)\le2^{k+1}C_k\left( \frac{\bar\d(r)^k}{r-1}+\d(r)\right).
$$
Since $r\ge1+c^{1/2-\ve/2}$, we can use Proposition \ref{DFA} with $\ve$ replaced by $\ve/2$ 
and substitute the last inequality into \eqref{PDF} to obtain
$$
\tn D\hat\Psi_t1_{\{t\le T_0(\d_0)\}}\tn_{p,r} 
\le2^{k+1}C_k\left(\frac{\bar\d(r)^{k+1}}{r-1}+\bar\d(r)\d(r)\right)+\d(r)
\le C_{k+1}\left(\frac{\bar\d(r)^{k+1}}{r-1}+\d(r)\right).
$$
Hence \eqref{induchop} holds for $k+1$ and the induction proceeds.
Choose now $k=\lceil3/\ve\rceil$. 
Then
$$
\frac{\bar\d(r)^k}{r-1}\le\frac{c^{\ve k/3}}{r-1}\le\frac c{r-1}\le\d(r).
$$
For $c$ sufficiently small, we have $c^{\ve/2}\le 2^{-k/2}$.
Then, for all $r\ge1+c^{1/2-\ve}$, we have $r\ge1+2^kc^{1/2-\ve/2}$, so we obtain the claimed bound
$$
\tn D\hat\Psi_t1_{\{t\le T_0(\d_0)\}}\tn_{p,r}
\le C_k\left(\frac{\bar\d(r)^k}{r-1}+\d(r)\right)
\le2C_k\d(r).
$$

We turn to the case $\z=1$. 
We substitute the chosen value of $\d_0$ into \eqref{DRF} to obtain
\begin{align*}
\d(r)
&=\frac Cr\bigg(\sqrt c\left(\frac r{r-1}\right)^{3/2}+c^{1-2\nu}
\left(\frac{e^\s}{e^\s-1}\right)^3
\left(\frac r{r-1}\right)\bigg),\\
\bar\d(r)
&=C\bigg(\sqrt c\left(\frac r{r-1}\right)^{3/2}+c^{1/2-\nu}
\left(\frac{e^\s}{e^\s-1}\right)^{3/2}
\left(\frac r{r-1}\right)\left(1+\log\left(\frac r{r-1}\right)\right)\bigg).
\end{align*}
Here we have simplified the expression for $\d(r)$ using the inequality
$$
\sqrt c\d_0\left(\frac r{r-1}\right)^2
=\sqrt c\left(\frac r{r-1}\right)^{3/2}\d_0\left(\frac r{r-1}\right)^{1/2}
\le c\left(\frac r{r-1}\right)^3+\d_0^2\left(\frac r{r-1}\right)
$$
where the first term on the right can be dropped because $r\ge1+c^{1/3}$.
For $\rho=(r+1)/2$, we now have modified inequalities $\d(\rho)\le2^{3/2}\d(r)$ and $\bar\d(\rho)\le2^{3/2}\bar\d(r)$.
Also, for $r\ge1+c^{1/5-\ve/2}$ and $e^\s\ge1+c^{1/5-\ve}$, and all sufficiently small $c$, 
$$
\bar\d(r)\le Cc^{1/5}+Cc^{2\ve-\nu}(1+\log(1/c))\le c^\ve\le1 .
$$
We restrict to such $c$.
Set $C_0=1$ and for $k\ge0$ define now recursively $C_{k+1}=2^{3k/2+1}C_k+1$.
Then, by an analogous inductive argument, we obtain, for all $k\ge0$, all $t\le T$ and all $r\ge1+2^kc^{1/5}$,
\begin{equation*}
\label{induchop1}
\tn D\hat\Psi_t1_{\{t\le T_0(\d_0)\}}\tn_{p,r}
\le C_k\left(\frac{\bar\d(r)^k}{r-1}+\d(r)\right).
\end{equation*}
Choose now $k=\lceil 1/\ve\rceil$ and assume that $r\ge1+c^{1/5-\ve}$.
Then
$$
\frac{\bar\d(r)^k}{r-1}\le\frac{c^{\ve k}}{r-1}\le\frac c{r-1}\le\d(r). 
$$
and, for $c$ sufficiently small, we have $c^\ve\le2^{-k}$, so $r\ge1+2^kc^{1/5}$ and so
$$
\tn D\hat\Psi_t1_{\{t\le T_0(\d_0)\}}\tn_{p,r}\le2C_k\d(r).
$$
\end{proof}

We note also the following estimates, which are deduced from \eqref{RTY}, \eqref{RZM} and \eqref{RTM} using the estimates of Proposition \ref{II}

\begin{proposition}
\label{III}
For all $\a,\eta\in\R$ with $\z=\a+\eta\le1$, all $\ve\in(0,1/2]$, all $p\ge2$ and all $T<t_\z$,
there is a constant $C(\a,\eta,\L,\ve,p,T)<\infty$ with the following property.
In the case $\z<1$, for all $c\in(0,1]$, all $r,e^\s\ge1+c^{1/2-\ve}$ and all $t\le T$, 
for all $\nu\in(0,\ve/2]$, setting $\d_0=c^{1/2-\nu}e^\s/(e^\s-1)$, we have
\begin{equation*}
\tn D(\hat\Psi_t-\hat\Pi_t)1_{\{t\le T_0(\d_0)\}}\tn_{p,r}
\le\frac Cr\left(c\left(\frac r{r-1}\right)^2+\sqrt{c\d_0}\left(\frac r{r-1}\right)+\d_0^2\left(1+\log\left(\frac r{r-1}\right)\right)\right).
\end{equation*}
Moreover, in the case $\z=1$ and $\ve\le1/5$, for all $c\in(0,1]$, all $r,e^\s\ge1+c^{1/5-\ve}$ and all $t\le T$, 
for $\nu\in(0,\ve]$, setting $\d_0=c^{1/2-\nu}(e^\s/(e^\s-1))^{3/2}$, we have
\begin{align*}
\tn D(\hat\Psi_t-\hat\Pi_t)1_{\{t\le T_0(\d_0)\}}\tn_{p,r}
&\le\frac Cr\left(
\d_0^2\left(\frac r{r-1}\right)
+\sqrt{c\d_0}\left(\frac r{r-1}\right)^{3/2}
+c\left(\frac r{r-1}\right)^3\right.\\
&\q+\left.\bigg(
\d_0^3\left(\frac r{r-1}\right)^2
+\d_0\sqrt c\left(\frac r{r-1}\right)^{5/2}
\bigg)
\left(1+\log\left(\frac r{r-1}\right)\right)
\right).
\end{align*}
\end{proposition}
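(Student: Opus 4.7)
The plan is to use the interpolation identity \eqref{KID}, which yields $\hat\Psi_t - \hat\Pi_t = (\hat M_t - \hat\Pi_t) + \hat A_t$. After differentiating and taking $\tn\cdot\tn_{p,r}$-norms (with the indicator $1_{\{t\le T_0(\d_0)\}}$ inserted), the triangle inequality reduces the problem to bounding $\tn D(\hat M_t - \hat\Pi_t)1_{\{t\le T_0(\d_0)\}}\tn_{p,r}$ and $\tn D\hat A_t\,1_{\{t\le T_0(\d_0)\}}\tn_{p,r}$ separately. The first is precisely \eqref{RTY} of Lemma \ref{MESTL}; the second is \eqref{RZM} for $\z<1$ or \eqref{RTM} for $\z=1$, from Lemma \ref{DRIFTAEST}. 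All three bounds contain the same factor $\sup_{s\le t}\tn D\hat\Psi_{s-}1_{\{s\le T_0(\d_0)\}}\tn_{p,\rho}$ at radius $\rho=(r+1)/2$, and this is exactly what Proposition \ref{II} estimates under the specified choices of $\d_0$.

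The second step is mechanical: substitute the Proposition \ref{II} bound into each supremum-factor, then group terms. For $\z<1$, the terms $(c/r)(r/(r-1))^2$ and $(\sqrt{c\d_0}/r)(r/(r-1))$ appear directly in \eqref{RTY}, and $(\d_0^2/r)(1+\log(r/(r-1)))$ appears in \eqref{RZM}; the cross terms arising from substitution are all of these same three types, after using that $\rho/(\rho-1) \le 2\,r/(r-1)$ and that $\d_0 \le 2c^{\ve/2}$ under our choice of $\d_0$. For $\z=1$, the same procedure applies, but \eqref{RTY}, \eqref{RTM} and \eqref{Z1} each carry an extra factor $(r/(r-1))^{1/2}$ or $(r/(r-1))$, and the cross terms involving the $\d_0^2$-part of \eqref{Z1} (made visible by the identity $c^{1-2\nu}(e^\s/(e^\s-1))^{3} = \d_0^2$) are precisely what produce the $(\d_0^3/r)(r/(r-1))^2(1+\log)$ and $(\d_0\sqrt c/r)(r/(r-1))^{5/2}(1+\log)$ terms on the right-hand side of the statement.

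The main, and only, obstacle is the bookkeeping required to verify that every cross term is dominated by one of the explicit terms listed. This amounts to a finite number of elementary inequalities using $r-1\ge c^{1/2-\ve}$ (resp.\ $c^{1/5-\ve}$), $e^\s-1\ge c^{1/2-\ve}$ (resp.\ $c^{1/5-\ve}$), and the standing bound $\d_0\le1$, which allow one to trade residual powers of $c$ against powers of $r/(r-1)$ or $e^\s/(e^\s-1)$. No substantive new idea is required beyond the three results already assembled.
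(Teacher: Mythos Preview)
Your proposal is correct and follows exactly the route the paper indicates: the paper states that Proposition~\ref{III} is ``deduced from \eqref{RTY}, \eqref{RZM} and \eqref{RTM} using the estimates of Proposition~\ref{II}'', and you have spelled out precisely this deduction, including the identification $\d_0^2 = c^{1-2\nu}(e^\s/(e^\s-1))^2$ (resp.\ $(e^\s/(e^\s-1))^3$) that lets the Proposition~\ref{II} bound be rewritten in terms of $\d_0$. The bookkeeping you describe for absorbing cross terms is accurate.
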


We turn now to some estimates needed for the discrete-time results Theorems \ref{DISC} and \ref{DISCFLUCT}.
Write $\cV_t$ for the number of particles added by time $t$ and define for $t<t_\z$
$$
\nu_t=\a^{-1}((1+\z t)^{\a/\z}-1).
$$
It is straightforward to see that, for all $\a,\eta\in\R$, we have $\nu_t\to n_\a$ as $t\to t_\z$.
Also
$$
\cV_t=\int_{E(t)}1_{\{v\le\L_s(\th)\}}\mu(d\th,dv,ds),\q \nu_t=\int_0^te^{-\eta\t_s}ds.
$$
The following may be shown, 
either by a variation of the argument leading to Proposition \ref{CAPE}, or directly by martingale estimates.
The details are left to the reader.

\begin{proposition}
\label{DTSA}
For all $\a,\eta\in\R$, all $p\ge2$ and all $T<t_\z$, 
there is a constant $C(\a,\eta,p,T)<\infty$ such that, for all $c\in(0,1]$ and all $\d_0\in(0,1/2]$,
$$
\big\|\sup_{t\le T\wedge T_0(\d_0)}|c\cV_t-\nu_t|\big\|_p\le C(\sqrt c+\d_0^2).
$$
\end{proposition}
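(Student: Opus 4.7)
The plan is to decompose $c\cV_t-\nu_t$ into a martingale part and a drift part, just as in the proof of the capacity estimate in Proposition \ref{CAPE}. Since $\cV_t$ counts the jumps of the Poisson random measure $\mu$ restricted to $\{v\le\L_s(\th)\}$, with intensity $\L_s(\th)d\th/(2\pi)$, the process
$$
M_t=c\int_{E(t)}1_{\{v\le\L_s(\th),\,s\le T_0(\d_0)\}}\tilde\mu(d\th,dv,ds)
$$
is a martingale, and on $\{t\le T_0(\d_0)\}$ we have
$$
c\cV_t-\nu_t=M_t+A_t,\qquad
A_t=\int_0^{T_0(\d_0)\wedge t}\left(\fint_0^{2\pi}e^{-\eta\cT_s}|\hat\Phi_s'(e^{\s+i\th})|^{-\eta}d\th-e^{-\eta\t_s}\right)ds.
$$

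For the martingale, I would apply the Burkholder inequality \eqref{BUR} exactly as in Lemma \ref{NEST}. Using the bounds \eqref{JKJ} which give $C_s(\th)\le Cc$ and $\L_s(\th)\le C/c$ for $s\le T_0(\d_0)\wedge T$, I obtain
$$
\<M\>_t=c^2\int_0^{T_0(\d_0)\wedge t}\fint_0^{2\pi}\L_s(\th)d\th\,ds\le Cc,\qquad
(\Delta M)_t^*\le c,
$$
which yields $\|M_t^*\|_p\le C\sqrt c$ for all $p\ge2$.

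For the drift term $A_t$, I would use the same expansion already carried out in Lemma \ref{BEST}. Writing $\hat\Psi_s(z)=\hat\Phi_s(z)-z$ and $\Psi^\cp_s=\cT_s-\t_s$, the bound $|1+w|^{-\eta}=1-\eta\re w+O(|w|^2)$ for $|w|\le1/2$, combined with $e^{-\eta\Psi^\cp_s}=1-\eta\Psi^\cp_s+O(|\Psi^\cp_s|^2)$, gives on $\{s\le T_0(\d_0)\}$
$$
e^{-\eta\cT_s}|\hat\Phi_s'(e^{\s+i\th})|^{-\eta}
=e^{-\eta\t_s}\left(1-\eta\Psi^\cp_s-\eta\re\hat\Psi_s'(e^{\s+i\th})+\tilde\g_s(\th)\right)
$$
with $|\tilde\g_s(\th)|\le C\d_0^2$. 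Because $\hat\Psi_s$ is holomorphic on $\{|z|>1\}$ and bounded at $\infty$, the mean-zero identity \eqref{PSINTZ} kills the linear term in $\hat\Psi_s'$ after integration in $\th$, leaving
$$
|A_t|\le C\d_0^2+C\int_0^{T_0(\d_0)\wedge t}|\Psi^\cp_s|ds.
$$
Taking the $L^p$-norm and applying the first estimate of Proposition \ref{CAPE} to control $\|\Psi^\cp_s 1_{\{s\le T_0(\d_0)\}}\|_p\le C(\sqrt c+\d_0^2)$, I obtain $\|\sup_{t\le T\wedge T_0(\d_0)}|A_t|\|_p\le C(\sqrt c+\d_0^2)$. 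Combining with the martingale bound yields the claim. There is no real obstacle here, since every ingredient has already been assembled: the only small subtlety is noticing that the drift difference admits the same cancellation (via holomorphicity and boundedness at $\infty$) that was exploited in Lemma \ref{BEST}, which is why no logarithmic or $\s$-dependent factor appears.
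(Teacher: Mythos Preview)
Your proposal is correct and follows precisely the ``directly by martingale estimates'' route that the paper suggests (the paper itself omits the details). The decomposition $c\cV_t-\nu_t=M_t+A_t$, the Burkholder bound $\|M_t^*\|_p\le C\sqrt c$ from $\<M\>_t\le Cc$ and $|\Delta M_t|\le c$, and the drift estimate via the $\eta$-analogue of the expansion in Lemma~\ref{BEST} together with \eqref{PSINTZ} and Proposition~\ref{CAPE} are all exactly what is needed.
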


We can also improve on the estimate of $\cT_t$ by $\t_t$ in Proposition \ref{CAPE}.
Define, for $c\cV_t<n_\a$,
$$
\tilde\cT_t=\t^\disc_{\cV_t}
$$
where $\t_n^\disc=\a^{-1}\log(1+\a cn)$ as at \eqref{TDISC}.
We leave any modifications needed for the case $\a=0$ to the reader.
By allowing $\tilde\cT_t$ to depend on the random time-scale of particle arrivals, 
we remove the main source of error when estimating $\cT_t$ by $\t_t$.

\begin{proposition}
\label{DTSB}
For all $\a,\eta\in\R$, all $p\ge2$, all $T<t_\z$ and all $N<n_\a$, 
there is a constant $C(\a,\eta,p,T,N)<\infty$
such that, for all $c\le1/C$ and all $\d_0\in(0,1/2]$,
$$
\big\|\sup_{t\le T\wedge T(\d_0),\,c\cV_t\le N}|\cT_t-\tilde\cT_t|\big\|_p\le C(c+\d_0^2).
$$
\end{proposition}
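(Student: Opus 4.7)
The plan is to reduce the claim to a pure discrete-time estimate on the embedded jump chain $(\cT^\disc_n)_{n\ge0}$ of $(\cT_t)_{t\ge0}$ and then to combine a discrete Gronwall-type iteration with a Burkholder estimate for an associated martingale. Since $\cT_t=\cT^\disc_{\cV_t}$ and $\tilde\cT_t=\t^\disc_{\cV_t}$, setting $Y_n=\cT^\disc_n-\t^\disc_n$ it is enough to show that $\bigl\|\sup_{n\le N/c}|Y_n|\,1_{A_n}\bigr\|_p\le C(c+\d_0^2)$, where $A_n$ is the $\cF^\disc_{n-1}$-measurable event that the first $n$ jumps have all occurred by time $T\wedge T_0(\d_0)$.

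The first step is to analyse the one-step conditional drift of $Y_n$. Since under the discrete chain the angle $\Th_n$ of the $n$-th particle has conditional density on $[0,2\pi)$ proportional to $|(\hat\Phi^\disc_{n-1})'(e^{\s+i\th})|^{-\eta}$, and $C_n=ce^{-\a\cT^\disc_{n-1}}|(\hat\Phi^\disc_{n-1})'(e^{\s+i\Th_n})|^{-\a}$, we have
$$
\E[C_n\mid\cF^\disc_{n-1}]
=ce^{-\a\cT^\disc_{n-1}}\,
\frac{\fint|(\hat\Phi^\disc_{n-1})'(e^{\s+i\th})|^{-\z}d\th}{\fint|(\hat\Phi^\disc_{n-1})'(e^{\s+i\th})|^{-\eta}d\th}.
$$
Expanding $|1+w|^{-\g}=1-\g\re w+O(|w|^2)$ and exploiting $\fint\re\hat\Psi'(e^{\s+i\th})d\th=0$ (cf.\ \eqref{PSINTZ}), both integrals equal $1+O(\d_0^2)$ on $A_n$. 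Combined with the Taylor expansions $\t^\disc_n-\t^\disc_{n-1}=ce^{-\a\t^\disc_{n-1}}+O(c^2)$ and $e^{-\a Y_{n-1}}=1-\a Y_{n-1}+O(Y_{n-1}^2)$, this yields the recursion
$$
Y_n=(1-a_n)Y_{n-1}+\ve_n+\Delta M_n,\qquad|\ve_n|\le C(c^2+c\d_0^2),
$$
with $a_n=\a ce^{-\a\t^\disc_{n-1}}(1+O(\d_0))$ and $M_n=\sum_{k\le n}(C_k-\E[C_k\mid\cF^\disc_{k-1}])\,1_{A_k}$.

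The next step is to control the martingale. On $A_k$ we have $|\Delta M_k|\le Cc$, and the conditional variance of $|(\hat\Phi^\disc_{k-1})'(e^{\s+i\Th_k})|^{-\a}$ under the angle law is of order $\d_0^2$, its centred version being $-\a\re\hat\Psi'(e^{\s+i\Th_k})+O(\d_0^2)$. Hence $\<M\>_{N/c}\le CNc\d_0^2$, and Burkholder's inequality combined with Doob's maximal inequality yields
$$
\bigl\|\sup_{k\le N/c}|M_k|\bigr\|_p\le C(p)\bigl(\sqrt{Nc}\,\d_0+c\bigr)\le C(p)(c+\d_0^2),
$$
the last step being AM--GM, $\sqrt c\,\d_0\le(c+\d_0^2)/2$.

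Finally, I would iterate the recursion. Since $\sum_{k\le N/c}|a_k|$ is bounded by a constant $C(\a,N)$, so is the product $\bigl|\prod_{j>k}(1-a_j)\bigr|$ uniformly in $k\le n\le N/c$, and the explicit (variation of constants) solution represents $Y_n$ as a bounded-weight sum against $\ve_k+\Delta M_k$, giving
$$
\bigl\|\sup_n|Y_n|\,1_{A_n}\bigr\|_p
\le C\sum_{k\le N/c}|\ve_k|+C\bigl\|\sup_n|M_n|\bigr\|_p
\le C(c+\d_0^2),
$$
since $\sum_{k\le N/c}|\ve_k|\le(N/c)\cdot C(c^2+c\d_0^2)=C(c+\d_0^2)$. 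The main obstacle I anticipate is the stopping-time bookkeeping: the expansions above are only valid before $T_0(\d_0)$, so every $\cF^\disc_{n-1}$-measurable quantity has to be multiplied by $1_{A_n}$, and one must verify that the resulting truncated increments still assemble into an honest $\cF^\disc$-martingale. The boundary case $\a=0$, for which $a_n=0$ and $\t^\disc_n=cn$, is covered identically, the product now being equal to $1$.
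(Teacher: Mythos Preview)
Your discrete-time reduction via the embedded chain is essentially the same strategy as the paper's, which instead stays in continuous time and writes $\cT_t-\tilde\cT_t$ as a compensated Poisson integral plus a drift, then applies Burkholder and Gronwall on the $L^p$-norm. Both arguments hinge on the same two estimates: the drift increment is $O(c)$ times the current error plus $O(c^2+c\d_0^2)$, and the martingale increments have conditional variance $O(c^2\d_0^2)$.

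There is one genuine gap in your recursion. The bound $|\ve_n|\le C(c^2+c\d_0^2)$ together with $a_n=\a ce^{-\a\t^\disc_{n-1}}(1+O(\d_0))$ silently discards the second-order Taylor remainder $ce^{-\a\t^\disc_{n-1}}O(Y_{n-1}^2)$ from $e^{-\a Y_{n-1}}$. This is not $O(c\d_0^2)$ unless you already know $|Y_{n-1}|=O(\d_0)$, which is what you are trying to prove. The repair is to first record the coarse a~priori bound $|Y_{n-1}|\le C(T,N)$ on the relevant event: for $J_{n-1}<T\wedge T_0(\d_0)$ one has $|\cT^\disc_{n-1}-\t_{J_{n-1}}|=|\Psi^\cp_{J_{n-1}}|\le\d_0$ and $\t_{J_{n-1}}\le\t_T$, while $c(n-1)\le N$ bounds $\t^\disc_{n-1}$. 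With $|Y_{n-1}|$ bounded, the factor $(1-e^{-\a Y_{n-1}})/(\a Y_{n-1})$ is bounded and you get $a_n=O(c)$ uniformly, which is all the product bound $\prod_j|1-a_j|\le e^{\sum|a_j|}$ requires. This is exactly the mechanism behind the paper's estimate $|C_t(\th)-\tilde C_t|\le Cc|\cT_{t-}-\tilde\cT_{t-}|+Cc\d_0$, which also relies on both capacities being bounded rather than on their difference being small.

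A smaller point you already flag: your $A_n$ as written involves $J_n$ and is therefore not $\cF^\disc_{n-1}$-measurable. Use instead the predictable event $\{J_{n-1}<T\wedge T_0(\d_0),\ c(n-1)\le N\}$; the expansions only need this, and with it the truncated increments form an honest martingale. Your Abel-summation step controlling the weighted martingale sum by $C\sup_k|M_k|$ then goes through, since $\sum_j|a_j|\le C(N)$.
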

\begin{proof}
Set 
$$
\tilde C_t=\t^\disc_{\cV_{t-}+1}-\t^\disc_{\cV_{t-}}=\a^{-1}\log\left(1+\frac{\a c}{1+\a c\cV_{t-}}\right).
$$
Then
$$
\tilde\cT_t=\int_{E(t)}\tilde C_s1_{\{v\le\L_s(\th)\}}\mu(d\th,dv,ds)
$$
so
\begin{align*}
\cT_t-\tilde\cT_t
&=\int_{E(t)}(C_s(\th)-\tilde C_s)1_{\{v\le\L_s(\th)\}}\mu(d\th,dv,ds)\\
&=\int_{E(t)}(C_s(\th)-\tilde C_s)1_{\{v\le\L_s(\th)\}}\tilde\mu(d\th,dv,ds)
+\int_0^t\fint_0^{2\pi}(C_s(\th)-\tilde C_s)\L_s(\th)d\th ds.
\end{align*}
We have, for $c\cV_t\le N$,
$$
|\tilde C_t-ce^{-\a\tilde\cT_{t-}}|\le Cc^2
$$
and, for $t\le T_0(\d_0)$, as in the proof of Lemma \ref{BEST},
\begin{align*}
|C_t(\th)-ce^{-\a\cT_{t-}}(1+\a\re\hat\Psi_{t-}'(e^{\s+i\th}))|
&\le Cc\d_0^2,\\
|c\L_t(\th)-e^{-\eta\cT_{t-}}(1+\eta\re\hat\Psi_{t-}'(e^{\s+i\th}))|
&\le C\d_0^2,\\
|C_t(\th)\L_t(\th)-e^{-\z\cT_{t-}}(1+\z\re\hat\Psi_{t-}'(e^{\s+i\th}))|
&\le C\d_0^2
\end{align*}
so
$$
|C_t(\th)-\tilde C_t|\le Cc|\cT_{t-}-\tilde\cT_{t-}|+Cc\d_0
$$
and, using \eqref{PSINTZ},
$$
\left|\fint_0^{2\pi}C_t(\th)\L_t(\th)d\th-e^{-\z\cT_{t-}}\right|\le C\d_0^2
$$
and
$$
\left|\fint_0^{2\pi}\tilde C_t\L_t(\th)d\th-e^{-\a\tilde\cT_{t-}}e^{-\eta\cT_{t-}}\right|\le C(c+\d_0^2)
$$
and so
$$
\left|\fint_0^{2\pi}(C_t(\th)-\tilde C_t)\L_t(\th)d\th\right|\le C|\cT_{t-}-\tilde\cT_{t-}|+C(c+\d_0^2).
$$
Set
$$
f(t)=\E\left(\sup_{s\le t\wedge T_0(\d_0),\,c\cV_s\le N}|\cT_s-\tilde\cT_s|^p\right)
$$
Then, by Burkholder's and Jensen's inequalities, for $p\ge2$, and all $t\le T$,
$$
f(t)\le C(c^p+\d_0^{2p})+C\int_0^tf(s)ds
$$
and the claimed estimate follows by Gronwall's lemma.
\end{proof}
\vfe

\subsection{Spatially-uniform high-probability estimates}
\label{sec:highprob}
We now pass from the $L^p$-estimates of the preceding section to pointwise estimates which hold with high probability
on the function $\hat\Psi_t(z)=\hat\Phi_t(z)-z$, uniformly in $t\in[0,T]$ and $|z|\ge r(c)$ as $c\to0$,
for a suitable function $r(c)$, which is specified in the next result, and tends to $1$ as $c\to0$.
In order to show the desired uniformity, 
we combine the usual $L^p$-tail estimate with suitable dissections of $[0,T]$ and $\{|z|\ge r(c)\}$,
choosing $p$ large to deal with an increasing number of terms as $c\to0$.
We see at the same time that the event $\{T_0(\d_0)>T\}$, to which our previous estimates were restricted,
is in fact an event of high probability as $c\to0$, thus closing the argument for convergence to a disk.

\begin{proposition}
\label{UDFA}
For all $\a,\eta\in\R$ with $\z=\a+\eta\le1$, 
all $\ve\in(0,1/2]$ and all $\nu\in(0,\ve/4]$, 
all $m\in\N$ and all $T<t_\z$, 
there is a constant $C(\a,\eta,\L,\ve,\nu,m,T)<\infty$ with the following property.
In the case $\z<1$, for all $c\le1/C$, for $e^\s\ge1+c^{1/2-\ve}$ and $\d_0=c^{1/2-\nu}e^\s/(e^\s-1)$, 
there is an event ${\O_0\sse\{T_0(\d_0)>T\}}$ of probability exceeding $1-c^m$ on which,
for all $t\le T$ and all $|z|=r\ge1+c^{1/2-\ve}$,
\begin{equation}
\label{SUPD}
|\hat\Psi_t(z)|
\le C\bigg(c^{1/2-\nu}+c^{1-4\nu}\left(\frac{e^\s}{e^\s-1}\right)^2\bigg)
\end{equation}
and
\begin{equation}
\label{SUPE}
|D\hat\Psi_t(z)|
\le\frac Cr\bigg(c^{1/2-\nu}\left(\frac r{r-1}\right)+c^{1-4\nu}\left(\frac{e^\s}{e^\s-1}\right)^2\bigg)
\end{equation}
and
\begin{equation}
\label{CPPO}
|\hat\Psi_t(z)-\hat\Pi_t(z)|
\le Cc^{3/4-2\nu}\left(\frac{e^\s}{e^\s-1}\right)^{1/2}
+Cc^{1-4\nu}\bigg(\left(\frac r{r-1}\right)+\left(\frac{e^\s}{e^\s-1}\right)^2\bigg).
\end{equation}
Moreover, in the case $\z=1$ with $\ve\in(0,1/5]$,
for all $c\le1/C$, for $e^\s\ge1+c^{1/5-\ve}$ and ${\d_0=c^{1/2-\nu}(e^\s/(e^\s-1))^{3/2}}$
there is an event ${\O_0\sse\{T_0(\d_0)>T\}}$ of probability exceeding $1-c^m$ on which,
for all $t\le T$ and all $|z|=r\ge1+c^{1/5-\ve}$,
$$
|\hat\Psi_t(z)|
\le C\bigg(c^{1/2-\nu}\left(\frac r{r-1}\right)^{1/2}+c^{1-4\nu}\left(\frac{e^\s}{e^\s-1}\right)^3\bigg)
$$
and
$$
|D\hat\Psi_t(z)|
\le\frac Cr\bigg(c^{1/2-\nu}\left(\frac r{r-1}\right)^{3/2}+c^{1-4\nu}\left(\frac{e^\s}{e^\s-1}\right)^3\left(\frac r{r-1}\right)\bigg)
$$
and
\begin{align}
\notag
|\hat\Psi_t(z)-\hat\Pi_t(z)|
&\le Cc^{3/4-\nu}\left(\frac{e^\s}{e^\s-1}\right)^{3/4}\left(\frac r{r-1}\right)^{1/2}\\
\label{CPPP}
&\q\q+Cc^{1-4\nu}\bigg(\left(\frac r{r-1}\right)^2
+\left(\frac{e^\s}{e^\s-1}\right)^{3/2}\left(\frac r{r-1}\right)^{3/2}+\left(\frac{e^\s}{e^\s-1}\right)^3\bigg).
\end{align}
\end{proposition}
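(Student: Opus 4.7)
The plan is to convert the $L^p$-estimates from Propositions \ref{CAPE}, \ref{II}, and \ref{III} into pointwise high-probability bounds by combining Markov's inequality (with a large exponent $p$), a discretization in space and time, and a Sobolev-type embedding for holomorphic functions bounded at $\infty$. The event $\O_0$ is defined as the intersection of the high-probability events on which the relevant $L^p$-bounds hold at every point of a suitable grid, and its containment in $\{T_0(\d_0)>T\}$ follows by a bootstrap argument at the end.

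First I fix $p=p(\a,\eta,\L,\ve,\nu,m,T)$ large. For any holomorphic function $\psi$ on $\{|z|>1\}$ bounded at $\infty$, Cauchy's integral formula on the circle of radius $(r+1)/2$ yields a pointwise bound of the form $|\psi(z)| \le C(r/(r-1))^{1/p}\|\psi\|_{p,(r+1)/2}$ for $|z|=r$, in the style of the appendix estimate \eqref{PINF}. Applying Markov's inequality to Propositions \ref{II} and \ref{III} at a fixed pair $(t,r)$, the event on which the $L^p$-norm exceeds $K$ times the stated bound has probability at most $K^{-p}$. Taking $p$ large enough that this probability is at most $c^{m+L}$ for a suitable $L$, we obtain pointwise bounds on $|D\hat\Psi_t(z)|$ and $|D(\hat\Psi_t-\hat\Pi_t)(z)|$ with the required exponents in $c$ and $(r/(r-1))$, with the extra Sobolev factor $(r/(r-1))^{1/p}$ absorbed into $C$.

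To lift to uniformity in $(t,z)$, I take a geometric spatial grid $\{r_k\}$ covering $[1+c^{1/2-\ve},\infty)$ (or $[1+c^{1/5-\ve},\infty)$ in the case $\z=1$) with $O(c^{-L})$ points, and exploit the jump structure of $\hat\Phi_t$: it is piecewise constant between jumps, so it suffices to check at the jump times, of which there are at most $O(c^{-1-\ve})$ in $[0,T]$ with probability at least $1-c^m$ (using $\L_s(\th) \le C/c$ on $\{s\le T_0(\d_0)\}$). A union bound over the $O(c^{-L-1-\ve})$ pairs $(t_j,r_k)$ costs at most $c^m$ in probability, and Cauchy's formula on slightly smaller circles yields a local Lipschitz modulus across spatial grid points that is negligible. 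The pointwise bounds on $|\hat\Psi_t(z)|$ and $|\hat\Psi_t(z)-\hat\Pi_t(z)|$ then follow by combining the sup-in-$t$ bounds on $|\hat\Psi_t(\infty)|$ and $|\hat\Psi_t(\infty)-\hat\Pi_t(\infty)|$ from Proposition \ref{CAPE} with radial integration of the respective derivatives from $|z|=r$ to $\infty$.

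Finally, the containment $\O_0 \sse \{T_0(\d_0)>T\}$ is obtained by a bootstrap: specialising \eqref{SUPE} at $r=e^\s$ gives $|\hat\Psi_t'(e^{\s+i\th})| \le Cc^{1/2-\nu} e^\s/(e^\s-1) \le \d_0/2$ for $c$ sufficiently small (with $\d_0 = c^{1/2-\nu}e^\s/(e^\s-1)$ in the subcritical case), and $|\Psi^\cp_t| \le C(\sqrt c+\d_0^2) \le \d_0/2$ by Proposition \ref{CAPE}, so on $\O_0$ the supremum defining $T_0(\d_0)$ cannot be attained before $T$; the critical case uses \eqref{Z1} at $r=e^\s$ to recover the power $(e^\s/(e^\s-1))^{3/2}$ matching the definition $\d_0=c^{1/2-\nu}(e^\s/(e^\s-1))^{3/2}$. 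The main obstacle is the careful bookkeeping of the geometric factors $(r/(r-1))^\alpha$ through the $L^p$-to-$L^\infty$ conversion and across the various exponents appearing in Propositions \ref{II} and \ref{III}, ensuring that the final exponents match exactly in both regimes $\z<1$ and $\z=1$ and that the Sobolev loss $(r/(r-1))^{1/p}$ is controlled by choosing $p$ large depending on $\ve$ and $\nu$.
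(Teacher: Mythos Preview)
Your proposal follows essentially the same strategy as the paper: large-$p$ Chebyshev on the $L^p$ estimates of Propositions \ref{CAPE}, \ref{II}, \ref{III}, a geometric spatial grid $r(k)=1+2^kc^{1/2-\ve}$, the embedding \eqref{PINF} for the $L^p\to L^\infty$ step, radial integration $\hat\Psi_t(z)=\hat\Psi_t(\infty)-\int_1^\infty D\hat\Psi_t(sz)s^{-1}ds$ to pass from $D\hat\Psi$ to $\hat\Psi$, and the bootstrap at $r=e^\s$ to force $\O_0\sse\{T_0(\d_0)>T\}$.

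One step as you wrote it would not work: you cannot union-bound Markov's inequality over the \emph{random} jump times $t_j$, since the bounds of Propositions \ref{II} and \ref{III} are for deterministic $t$. The paper handles this by taking a deterministic grid $t(n)=n\d$ with $\d=c^{m+3}$ and introducing the event $\O_1$ on which each interval $[t(n-1),t(n)]$ contains at most one jump; then every value $\hat\Psi_t$ for $t\le T_0\wedge T$ coincides with some $\hat\Psi_{t(n)}$, and Markov is applied at the $N\sim T/\d$ deterministic grid times. Your bound on the number of jumps is the right ingredient, but it should be used to control $\PP(\O_1^c)$ rather than to index the union bound directly.
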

\begin{proof}
We will give details for the case $\z\in[0,1)$. 
Some minor modifications are needed for the case $\z=1$ because of the weaker $L^p$-estimate \eqref{Z1} which applies in that case,
and also for the case $\z<0$.
These are left to the reader.

Fix $\a,\eta,\ve,\nu,m$ and $T$ as in the statement.
By adjusting the value of $\ve$, it will suffice to consider the case where  $e^\s\ge1+2c^{1/2-\ve}$, 
and to find an event ${\O_0\sse\{T_0(\d_0)>T\}}$, of probability exceeding $1-c^m$, 
on which \eqref{SUPD}, \eqref{SUPE} and \eqref{CPPO} hold whenever ${r\ge1+2c^{1/2-\ve}}$ and $t\le T$.
There is a constant $C<\infty$ of the desired dependence, such that $\d_0\le1/2$ whenever $c\le1/C$.
We restrict to such $c$.
Set 
$$
\d=c^{m+3},\q 
t(n)=\d n,\q
N=\lfloor T/\d\rfloor,\q
N_0=\lfloor(T_0(\d_0)\wedge T)/\d\rfloor.
$$
Recall that $\cV_t$ denotes the number of particles added to the cluster by time $t$.
Consider the event
$$
\O_1=\{\cV_{t(n)}-\cV_{t(n-1)}\le1\text{ for all $n\le N_0$ and }\cV_{t(N_0)}=\cV_{T_0(\d_0)\wedge T}\}.
$$
Note that, on $\O_1$, for all $t\le T_0(\d_0)\wedge T$, there exists $n\in\{1,\dots,N_0\}$ such that $\hat\Psi_t=\hat\Psi_{t(n)}$.
Since $\d_0\le1/2$, there is a constant $C<\infty$ of the desired dependence such that
the process $(\cV_t)_{t\le T_0(\d_0)}$ is a thinning of a Poisson process of rate $C/c$.
Hence
$$
\PP(\O_1^c)\le N(C/c)^2\d^2+(C/c)/\d\le C\d/c^2=Cc^{m+1}
$$
and hence $\PP(\O_1^c)\le c^m/3$ for all $c\le1/(3C)$.
We restrict to such $c$.

Fix an integer $p\ge2$, to be chosen later, depending on $m$ and $\nu$.
By Proposition \ref{CAPE}, there is a constant $C<\infty$ of the desired dependence such that, 
for $\mu_0=C\left(\sqrt c+\d_0^2\right)$, we have
$$
\big\|\sup_{t\le T_0(\d_0)\wedge T}|\Psi^\cp_t|\big\|_p\le\mu_0,\q
\big\|\sup_{t\le T_0(\d_0)\wedge T}|\hat\Psi_t(\infty)|\big\|_p\le\mu_0.
$$
Set $\l_0=(6c^{-m})^{1/p}$ and consider the event
$$
\O_2=\{|\Psi^\cp_t|\le\l_0\mu_0\text{ and }|\hat\Psi_t(\infty)|\le\l_0\mu_0\text{ for all }t\le T_0(\d_0)\wedge T\}.
$$
Then $\PP(\O_2^c)\le2\l_0^{-p}=c^m/3$.
We choose $p\ge m/\nu$.
Then, since $e^\s\ge1+2c^{1/2-\ve}$ and $\nu\le\ve$, 
there is a constant $C<\infty$ of the desired dependence such that, for $c\le1/C$,
on the event $\O_2$, for all $t\le T_0(\d_0)\wedge T$,
\begin{equation}
\label{RIT}
|\Psi^\cp_t|
\le\l_0\mu_0
\le Cc^{-\nu}(\sqrt c+\d_0^2)
=C\left(c^{1/2-\nu}+c^{1-3\nu}\left(\frac{e^\s}{e^\s-1}\right)^2\right)
\le\d_0.
\end{equation}
We restrict to such $c$.
Set
$$
K=\min\{k\ge1:2^{k}c^{1/2-\ve}\ge1\}.
$$ 
Then $K\le\lfloor\log(1/c)\rfloor+1$. 
For $k=1,\dots,K$, set 
$$
r(k)=1+2^{k}c^{1/2-\ve},\q\rho(k)=\frac{r(k)+1}2.
$$
Then $\rho(k)\ge\rho(1)=1+c^{1/2-\ve}$ for all $k$ and $r(K)\in[2,4]$. 
By Proposition \ref{II}, there is a constant $C<\infty$ of the desired dependence such that, for $k=1,\dots,K$ and all $t\le T$,
$$
\tn D\hat\Psi_t1_{\{t\le T_0(\d_0)\}}\tn_{p,\rho(k)}\le\mu(r(k))
$$
where 
$$
\mu(r)=\frac Cr\left(\sqrt c\left(\frac r{r-1}\right)+c^{1-3\nu}\left(\frac{e^\s}{e^\s-1}\right)^2\right).
$$
Set $\l=\left(3KTc^{-2m-3}\right)^{1/p}$ and consider the event
$$
\O_3=\bigcap_{n=1}^N\bigcap_{k=1}^K\{\|D\hat\Psi_{t(n)}\|_{p,\rho(k)}1_{\{t(n)\le T_0(\d_0)\}}\le\l\mu(r(k))\}.
$$
Then
$$
\PP(\|D\hat\Psi_{t(n)}\|_{p,\rho(k)}1_{\{t(n)\le T_0(\d_0)\}}>\l\mu(r(k)))\le\l^{-p}
$$
so
$$
\PP(\O_3^c)\le KN\l^{-p}\le KT\l^{-p}/\d=c^m/3.
$$ 
Fix $r\ge1+2c^{1/2-\ve}$. 
Then $r(k)\le r<r(k+1)$ for some $k\in\{1,\dots,K\}$, where we set $r(K+1)=\infty$.
Note that $zD\hat\Psi_t(z)$ is a bounded holomorphic function on $\{|z|>\rho(1)\}$.
We use the inequality \eqref{PINF} to see that, on the event $\O_3$, for $n\le N_0$,
\begin{align*}
r\|D\hat\Psi_{t(n)}\|_{\infty,r}
&\le r(k)\|D\hat\Psi_{t(n)}\|_{\infty,r(k)}\\
&\le\left(\frac{r(k)+1}{r(k)-1}\right)^{1/p}r(k)\|D\hat\Psi_{t(n)}\|_{p,\rho(k)}
\le(2c^{-1/2})^{1/p}r(k)\l\mu(k)
\end{align*}
so
$$
\|D\hat\Psi_{t(n)}\|_{\infty,r}
\le(2c^{-1/2})^{1/p}\l\mu(r(k))
\le2(2c^{-1/2})^{1/p}\l\mu(r).
$$
We choose $p\ge(2m+4)/\nu$.
Then there is a constant $C<\infty$ of the desired dependence such that, for $c\le1/C$, on $\O_3$, 
for $n=1,\dots,N_0$ and all $r\ge1+2c^{1/2-\ve}$, we have
$$
\|D\hat\Psi_{t(n)}\|_{\infty,r}
\le\frac Cr\left(c^{1/2-\nu}\left(\frac r{r-1}\right)+c^{1-4\nu}\left(\frac{e^\s}{e^\s-1}\right)^2\right)
$$
and
$$
\|\hat\Psi'_{t(n)}\|_{\infty,e^\s}\le c^{1/2-\nu}\left(\frac{e^\s}{e^\s-1}\right)=\d_0.
$$
We restrict to such $c$.
Set
$$
\O_0=\O_1\cap\O_2\cap\O_3.
$$
Then $\PP(\O_0^c)\le c^m$ and, on the event $\O_0$, for all $t\le T_0(\d_0)\wedge T$ and all $r\ge1+2c^{1/2-\ve}$, 
$$
\|D\hat\Psi_t\|_{\infty,r}
\le\frac Cr\left(c^{1/2-\nu}\left(\frac r{r-1}\right)+c^{1-4\nu}\left(\frac{e^\s}{e^\s-1}\right)^2\right)
$$
and
$$
\|\hat\Psi'_t\|_{\infty,e^\s}\le\d_0.
$$
In conjunction with \eqref{RIT}, this forces $T_0(\d_0)>T$ on $\O_0$ and so concludes the proof of \eqref{SUPE}.

We deduce \eqref{SUPD} using the identity
$$
\psi(z)=\psi(\infty)-\int_1^\infty D\psi(sz)s^{-1}ds.
$$
On the event $\O_2$, for all $t\le T_0(\d_0)\wedge T$,
$$
|\hat\Psi_t(\infty)|\le C\left(c^{1/2-\nu}+c^{1-3\nu}\left(\frac{e^\s}{e^\s-1}\right)^2\right).
$$
On the other hand, $\O_0\sse\O_2$ 
and on $\O_0$ we have $T_0(\d_0)>T$ and, using \eqref{SUPE}, 
for $t\le T$ and $|z|=r\ge1+c^{1/2-\ve}$,
\begin{align*}
\int_1^\infty|D\hat\Psi_t(sz)|s^{-1}ds
&\le\int_1^\infty
\frac C{rs}\left(c^{1/2-\nu}\left(\frac{sr}{sr-1}\right)+c^{1-4\nu}\left(\frac{e^\s}{e^\s-1}\right)^2\right)s^{-1}ds\\
&\le\frac Cr\left(c^{1/2-\nu}\left(1+\log\left(\frac r{r-1}\right)\right)+c^{1-4\nu}\left(\frac{e^\s}{e^\s-1}\right)^2\right).
\end{align*}
Since $r\ge1+c^{1/2}$, the $\log$ factor can be absorbed in $c^{1/2-\nu}$ by adjustment of $\nu$.
Then, on combining the last two estimates, we obtain \eqref{SUPD}.

For the estimate \eqref{CPPO}, define
$$
\tilde\mu_0=C(c+\sqrt{c\d_0}+\d_0^2)
$$
where $C$ is the constant in Proposition \ref{CAPE}, and define
$$
\tilde\mu(r)
=\frac Cr\left(c\left(\frac r{r-1}\right)^2+\sqrt{c\d_0}\left(\frac r{r-1}\right)+\d_0^2\left(1+\log\left(\frac r{r-1}\right)\right)\right).
$$
where $C$ is the constant of Proposition \ref{III}. Set $\tilde\O_0=\O_1\cap\tilde\O_2\cap\tilde\O_3$, where
$$
\tilde\O_2=\O_2\cap\{|\hat\Psi_t(\infty)-\hat\Pi_t(\infty)|\le\l_0\tilde\mu_0\text{ for all }t\le T_0(\d_0)\wedge T\}
$$
and
$$
\tilde\O_3=\O_3\cap\bigcap_{n=1}^N\bigcap_{k=1}^K\{\|D(\hat\Psi_{t(n)}-\hat\Pi_{t(n)})\|_{p,\rho(k)}1_{\{t(n)\le T_0(\d_0)\}}\le\l\tilde\mu(r(k))\}.
$$
We follow a similar argument to above to see that $\PP(\tilde\O_0^c)\le2c^m$ and on $\tilde\O_0$ we have $T_0(\d_0)>T$ and
for $t\le T$ 
$$
|\hat\Psi_t(\infty)-\hat\Pi_t(\infty)|\le C\left(
c^{3/4-3\nu/2}\left(\frac{e^\s}{e^\s-1}\right)^{1/2}
+c^{1-3\nu}\left(\frac{e^\s}{e^\s-1}\right)^2\right)
$$
and for $|z|=r\ge1+2c^{1/2-\ve}$,
$$
\|D(\hat\Psi_t-\hat\Pi_t)\|_{\infty,r}
\le\frac Cr\left(
c^{1-\nu}\left(\frac r{r-1}\right)^2
+c^{3/4-3\nu/2}\left(\frac{e^\s}{e^\s-1}\right)^{1/2}\left(\frac r{r-1}\right)
+c^{1-4\nu}\left(\frac{e^\s}{e^\s-1}\right)^2\right).
$$
Finally we can integrate as above to deduce \eqref{CPPO}.

\def\j{
\bb MODIFICATION FOR $\z=1$.
Fix $\a,\eta,\ve,\nu,m$ and $T$ as in the statement.
By adjusting the value of $\ve$, it will suffice to consider the case where  $e^\s\ge1+2c^{1/5-\ve}$, 
and to find an event ${\O_0\sse\{T_0(\d_0)>T\}}$, of probability exceeding $1-c^m$, 
on which the claimed estimates hold whenever ${r\ge1+2c^{1/5-\ve}}$ and $t\le T$.
There is a constant $C<\infty$ of the desired dependence, such that $\d_0\le1/2$ whenever $c\le1/C$.
We restrict to such $c$.
Set 
$$
\d=c^{m+3},\q 
t(n)=\d n,\q
N=\lfloor T/\d\rfloor,\q
N_0=\lfloor(T_0(\d_0)\wedge T)/\d\rfloor.
$$
Write $\cV_t$ for the number of particles added to the cluster by time $t$.
Consider the event
$$
\O_1=\{\cV_{t(n)}-\cV_{t(n-1)}\le1\text{ for all $n\le N_0$ and }\cV_{t(N_0)}=\cV_{T_0(\d_0)\wedge T}\}.
$$
Note that, on $\O_1$, for all $t\le T_0(\d_0)\wedge T$, there exists $n\in\{1,\dots,N_0\}$ such that $\hat\Psi_t=\hat\Psi_{t(n)}$.
Since $\d_0\le1/2$, there is a constant $C<\infty$ of the desired dependence such that
the process $(\cV_t)_{t\le T_0(\d_0)}$ is a thinning of a Poisson process of rate $C/c$.
Hence
$$
\PP(\O_1^c)\le N(C/c)^2\d^2+(C/c)/\d\le C\d/c^2=Cc^{m+1}
$$
and hence $\PP(\O_1^c)\le c^m/3$ for all $c\le1/(3C)$.
We restrict to such $c$.

Fix an integer $p\ge2$, to be chosen later, depending on $m$ and $\nu$.
By Proposition \ref{CAPE}, there is a constant $C<\infty$ of the desired dependence such that, 
for $\mu_0=C\left(\sqrt c+\d_0^2\right)$, we have
$$
\big\|\sup_{t\le T_0(\d_0)\wedge T}|\Psi^\cp_t|\big\|_p\le\mu_0,\q
\big\|\sup_{t\le T_0(\d_0)\wedge T}|\hat\Psi_t(\infty)|\big\|_p\le\mu_0.
$$
Set $\l_0=(6c^{-m})^{1/p}$ and consider the event
$$
\O_2=\{|\Psi^\cp_t|\le\l_0\mu_0\text{ and }|\hat\Psi_t(\infty)|\le\l_0\mu_0\text{ for all }t\le T_0(\d_0)\wedge T\}.
$$
Then $\PP(\O_2^c)\le2\l_0^{-p}=c^m/3$.
We choose $p\ge m/\nu$.
Then, since $e^\s\ge1+2c^{1/5-\ve}$ and $\nu\le\ve$, 
there is a constant $C<\infty$ of the desired dependence such that, for $c\le1/C$,
on the event $\O_2$, for all $t\le T_0(\d_0)\wedge T$,
\begin{equation}
\label{RITT}
|\Psi^\cp_t|
\le\l_0\mu_0
\le Cc^{-\nu}(\sqrt c+\d_0^2)
=C\left(c^{1/2-\nu}+c^{1-3\nu}\left(\frac{e^\s}{e^\s-1}\right)^3\right)
\le\d_0.
\end{equation}
We restrict to such $c$.
Set
$$
K=\min\{k\ge1:2^{k}c^{1/2-\ve}\ge1\}.
$$ 
Then $K\le\lfloor\log(1/c)\rfloor+1$. 
For $k=1,\dots,K$, set 
$$
r(k)=1+2^{k}c^{1/2-\ve},\q\rho(k)=\frac{r(k)+1}2.
$$
Then $\rho(k)\ge\rho(1)=1+c^{1/5-\ve}$ for all $k$ and $r(K)\in[2,4]$. 
By Proposition \ref{II}, there is a constant $C<\infty$ of the desired dependence such that, for $k=1,\dots,K$ and all $t\le T$,
$$
\tn D\hat\Psi_t1_{\{t\le T_0(\d_0)\}}\tn_{p,\rho(k)}\le\mu(r(k))
$$
where 
$$
\mu(r)=\frac Cr\left(\sqrt c\left(\frac r{r-1}\right)^{3/2}+c^{1-2\nu}\left(\frac{e^\s}{e^\s-1}\right)^3\left(\frac r{r-1}\right)\right).
$$
Set $\l=\left(3KTc^{-2m-3}\right)^{1/p}$ and consider the event
$$
\O_3=\bigcap_{n=1}^N\bigcap_{k=1}^K\{\|D\hat\Psi_{t(n)}\|_{p,\rho(k)}1_{\{t(n)\le T_0(\d_0)\}}\le\l\mu(r(k))\}.
$$
Then
$$
\PP(\|D\hat\Psi_{t(n)}\|_{p,\rho(k)}1_{\{t(n)\le T_0(\d_0)\}}>\l\mu(r(k)))\le\l^{-p}
$$
so
$$
\PP(\O_3^c)\le KN\l^{-p}\le KT\l^{-p}/\d=c^m/3.
$$ 
Fix $r\ge1+2c^{1/5-\ve}$. 
Then $r(k)\le r<r(k+1)$ for some $k\in\{1,\dots,K\}$, where we set $r(K+1)=\infty$.
Note that $zD\hat\Psi_t(z)$ is a bounded holomorphic function on $\{|z|>\rho(1)\}$.
We use the inequality \eqref{PINF} to see that, on the event $\O_3$, for $n\le N_0$,
\begin{align*}
r\|D\hat\Psi_{t(n)}\|_{\infty,r}
&\le r(k)\|D\hat\Psi_{t(n)}\|_{\infty,r(k)}\\
&\le\left(\frac{r(k)+1}{r(k)-1}\right)^{1/p}r(k)\|D\hat\Psi_{t(n)}\|_{p,\rho(k)}
\le(2c^{-1/2})^{1/p}r(k)\l\mu(k)
\end{align*}
so
$$
\|D\hat\Psi_{t(n)}\|_{\infty,r}
\le(2c^{-1/2})^{1/p}\l\mu(r(k))
\le2(2c^{-1/2})^{1/p}\l\mu(r).
$$
We choose $p\ge(2m+4)/\nu$.
Then there is a constant $C<\infty$ of the desired dependence such that, for $c\le1/C$, on $\O_3$, 
for $n=1,\dots,N_0$ and all $r\ge1+2c^{1/5-\ve}$, we have
$$
\|D\hat\Psi_{t(n)}\|_{\infty,r}
\le\frac Cr\left(c^{1/2-\nu}\left(\frac r{r-1}\right)^{3/2}+c^{1-3\nu}\left(\frac{e^\s}{e^\s-1}\right)^3\left(\frac r{r-1}\right)\right)
$$
and
$$
\|\hat\Psi'_{t(n)}\|_{\infty,e^\s}\le c^{1/2-\nu}\left(\frac{e^\s}{e^\s-1}\right)^{3/2}=\d_0.
$$
We restrict to such $c$.
Set
$$
\O_0=\O_1\cap\O_2\cap\O_3.
$$
Then $\PP(\O_0^c)\le c^m$ and, on the event $\O_0$, for all $t\le T_0(\d_0)\wedge T$ and all $r\ge1+2c^{1/5-\ve}$, 
$$
\|D\hat\Psi_t\|_{\infty,r}
\le\frac Cr\left(c^{1/2-\nu}\left(\frac r{r-1}\right)^{3/2}+c^{1-3\nu}\left(\frac{e^\s}{e^\s-1}\right)^3\right)
$$
and
$$
\|\hat\Psi'_t\|_{\infty,e^\s}\le\d_0.
$$
In conjunction with \eqref{RITT}, this forces $T_0(\d_0)>T$ on $\O_0$ and concludes the proof of our estimate for $D\hat\Psi$.
We deduce the estimate for $\hat\Psi$ by integration, as above.

For the estimate on $\hat\Psi-\hat\Pi$, define
$$
\tilde\mu_0=C(c+\sqrt{c\d_0}+\d_0^2)
$$
where $C$ is the constant in Proposition \ref{CAPE}, and define
\begin{align*}
\tilde\mu(r)
&=\frac Cr\left(\d_0^2\left(\frac r{r-1}\right)+\sqrt{c\d_0}\left(\frac r{r-1}\right)^{3/2}+c\left(\frac r{r-1}\right)^3\right.\\
&\q\q+\left.\bigg(\d_0^3\left(\frac r{r-1}\right)^2+\d_0\sqrt c\left(\frac r{r-1}\right)^{5/2}\bigg)\left(1+\log\left(\frac r{r-1}\right)\right)
\right).
\end{align*}
where $C$ is the constant of Proposition \ref{III}. 
Set $\tilde\O_0=\O_1\cap\tilde\O_2\cap\tilde\O_3$, where
$$
\tilde\O_2=\O_2\cap\{|\hat\Psi_t(\infty)-\hat\Pi_t(\infty)|\le\l_0\tilde\mu_0\text{ for all }t\le T_0(\d_0)\wedge T\}
$$
and
$$
\tilde\O_3=\O_3\cap\bigcap_{n=1}^N\bigcap_{k=1}^K\{\|D(\hat\Psi_{t(n)}-\hat\Pi_{t(n)})\|_{p,\rho(k)}1_{\{t(n)\le T_0(\d_0)\}}\le\l\tilde\mu(r(k))\}.
$$
We follow a similar argument to above to see that $\PP(\tilde\O_0^c)\le2c^m$ and on $\tilde\O_0$ we have $T_0(\d_0)>T$ and
for $t\le T$ 
$$
|\hat\Psi_t(\infty)-\hat\Pi_t(\infty)|\le C\left(
c^{3/4-3\nu/2}\left(\frac{e^\s}{e^\s-1}\right)^{3/4}
+c^{1-3\nu}\left(\frac{e^\s}{e^\s-1}\right)^3\right)
$$
and for $|z|=r\ge1+2c^{1/5-\ve}$,
\begin{align*}
&\|D(\hat\Psi_t-\hat\Pi_t)\|_{\infty,r}
=\frac Cr
\left(
c^{1-3\nu}
\left(\frac{e^\s}{e^\s-1}\right)^3
\left(\frac r{r-1}\right)
+c^{3/4-3\nu/2}
\left(\frac{e^\s}{e^\s-1}\right)^{3/4}
\left(\frac r{r-1}\right)^{3/2}
\right.\\
&\q\q
+\left.
c^{1-\nu}\left(\frac r{r-1}\right)^3
+c^{3/2-4\nu}\left(\frac{e^\s}{e^\s-1}\right)^{9/2}\left(\frac r{r-1}\right)^2
+c^{1-2\nu}\left(\frac{e^\s}{e^\s-1}\right)^{3/2}\left(\frac r{r-1}\right)^{5/2}
\right).
\end{align*}
Finally we can integrate as above to deduce the estimate on $\hat\Psi-\hat\Pi$.
\eb
}\jj
\end{proof}

It is now straightforward to deduce the following high-probability estimates from Proposition \ref{CAPE}
using $L^p$-tail estimates and the fact that $\PP(\O_0)\ge1-c^m$ from Proposition \ref{UDFA}.
The details are left to the reader.
Propositions \ref{UDFA} and \ref{UDFB} together imply Theorem \ref{DISK}.

\begin{proposition}
\label{UDFB}
For all $\a,\eta\in\R$ with $\z=\a+\eta\le1$, all $\ve\in(0,1/2]$ and all $\nu\in(0,\ve/4]$, 
all $m\in\N$ and all $T<t_\z$, there is a constant $C(\a,\eta,\L,\ve,\nu,m,T)<\infty$ with the following property.
In the case $\z<1$, for all $c\le1/C$, for $e^\s\ge1+c^{1/2-\ve}$, with probability exceeding $1-c^m$, for all $t\le T$,
\begin{equation*}
|\Psi^\cp_t(z)|
\le C\bigg(c^{1/2-\nu}+c^{1-4\nu}\left(\frac{e^\s}{e^\s-1}\right)^2\bigg)
\end{equation*}
and
\begin{equation}
\label{CPPQ}
|\Psi^\cp_t(z)-\Pi^\cp_t(z)|
\le C\bigg(c^{3/4-2\nu}\left(\frac{e^\s}{e^\s-1}\right)^{1/2}+c^{1-4\nu}\left(\frac{e^\s}{e^\s-1}\right)^2\bigg).
\end{equation}
Moreover, in the case $\z=1$ with $\ve\in(0,1/5]$, for all $c\le1/C$, for $e^\s\ge1+c^{1/5-\ve}$,
with probability exceeding $1-c^m$, for all $t\le T$,
$$
|\Psi^\cp_t(z)|
\le C\bigg(c^{1/2-\nu}+c^{1-4\nu}\left(\frac{e^\s}{e^\s-1}\right)^3\bigg)
$$
and
\begin{equation}
\label{CPPR}
|\Psi^\cp_t(z)-\Pi^\cp_t(z)|
\le C\bigg(c^{3/4-\nu}\left(\frac{e^\s}{e^\s-1}\right)^{3/4}+c^{1-4\nu}\left(\frac{e^\s}{e^\s-1}\right)^3\bigg).
\end{equation}
\end{proposition}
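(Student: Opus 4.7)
The plan is to derive Proposition \ref{UDFB} as a straightforward corollary of Proposition \ref{CAPE} (which controls the $L^p$-norm of the suprema of $\Psi^\cp_t$ and $\Psi^\cp_t-\Pi^\cp_t$ up to the stopping time $T_0(\d_0)\wedge T$) combined with Proposition \ref{UDFA} (which furnishes an event $\O_0$ of probability at least $1-c^m$ on which $T_0(\d_0)>T$). Since $\Psi^\cp_t$ and $\Psi^\cp_t-\Pi^\cp_t$ are scalars and the relevant $L^p$-bounds in Proposition \ref{CAPE} already absorb the supremum over $t$, no spatial dissection (as in the proof of Proposition \ref{UDFA}) is needed: a single application of Markov's inequality suffices.

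First I would fix the parameters $\a,\eta,\ve,\nu,m,T$ and restrict to $c\le1/C$ so that $\d_0\le1/2$, taking $\d_0=c^{1/2-\nu}e^\s/(e^\s-1)$ in the case $\z<1$, and $\d_0=c^{1/2-\nu}(e^\s/(e^\s-1))^{3/2}$ in the case $\z=1$. By Proposition \ref{CAPE}, setting $\mu_0=C(\sqrt c+\d_0^2)$ and $\tilde\mu_0=C(c+\sqrt{c\d_0}+\d_0^2)$, one has
\[
\bigl\|\sup_{t\le T_0(\d_0)\wedge T}|\Psi^\cp_t|\bigr\|_p\le\mu_0,\q
\bigl\|\sup_{t\le T_0(\d_0)\wedge T}|\Psi^\cp_t-\Pi^\cp_t|\bigr\|_p\le\tilde\mu_0.
\]
Setting $\l=(3c^{-m})^{1/p}$, Markov's inequality bounds the probability of each of the events $\{\sup_{t\le T_0(\d_0)\wedge T}|\Psi^\cp_t|>\l\mu_0\}$ and $\{\sup_{t\le T_0(\d_0)\wedge T}|\Psi^\cp_t-\Pi^\cp_t|>\l\tilde\mu_0\}$ by $c^m/3$. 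Intersecting the complements with the event $\O_0$ from Proposition \ref{UDFA} yields an event of probability exceeding $1-c^m$ on which $T_0(\d_0)>T$ (so the suprema may be taken over $t\le T$) and on which $|\Psi^\cp_t|\le\l\mu_0$ and $|\Psi^\cp_t-\Pi^\cp_t|\le\l\tilde\mu_0$ for all $t\le T$.

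It then remains to choose $p=p(m,\nu)$ large enough to match the exponents in the statement. Substituting the chosen value of $\d_0$ and using $\l=c^{-m/p}$ up to a multiplicative constant, taking $p\ge2m/\nu$ suffices in both regimes: for instance in the $\z=1$ case the dominant term $c^{-m/p}\sqrt{c\d_0}=c^{3/4-\nu/2-m/p}(e^\s/(e^\s-1))^{3/4}$ becomes bounded by $Cc^{3/4-\nu}(e^\s/(e^\s-1))^{3/4}$, as required for \eqref{CPPR}, while $c^{-m/p}\d_0^2$ and $c^{-m/p}c$ are absorbed into the $c^{1-4\nu}$ term; the $\z<1$ case is similar and in fact slightly easier, and gives \eqref{CPPQ}. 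There is no serious obstacle: the argument is essentially pure bookkeeping with exponents of $c$ and factors of $e^\s/(e^\s-1)$, and the only mild care needed is in verifying that the slack of $c^{-\nu}$ (respectively $c^{-\nu}$ and $c^{-2\nu}$ in various places) absorbs the Markov factor $c^{-m/p}$ for the chosen $p$.
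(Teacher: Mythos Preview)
Your proposal is correct and follows exactly the approach the paper indicates: the paper's own ``proof'' of Proposition \ref{UDFB} consists of a single sentence stating that the estimates follow from Proposition \ref{CAPE} via $L^p$-tail estimates, together with the fact from Proposition \ref{UDFA} that $\PP(\O_0)\ge1-c^m$ with $\O_0\sse\{T_0(\d_0)>T\}$. Your fleshing-out of this into a Markov-inequality argument with $p$ chosen large in terms of $m/\nu$ is precisely what was intended, and your exponent bookkeeping is in order.
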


\begin{proof}[Proof of Theorem \ref{DISC}]
We will write the argument for the case $\z<1$, omitting the modifications needed for $\z=1$, which are left to the reader.
Since $N<n_\a$, we can choose $\d=\d(\a,\eta,N)>0$ and $T<t_\z$ such that $\nu_T=N+\d$.
Choose $\d_0$ and $\O_0$ as in Proposition \ref{UDFA}, with the choice of $T$ just made.
Write $C$ for the constant appearing in Proposition \ref{UDFA} and set
$$
\Delta=C\left(c^{1/2-\nu}+c^{1-4\nu}\left(\frac{e^\s}{e^\s-1}\right)^2\right).
$$
Then, for all $|z|\ge1+c^{1/2-\ve}$ and all $t\le T$, on the event $\O_0$, we have $|\hat\Phi_t(z)-z|\le\Delta$.
Then, by Propositions \ref{DTSA} and \ref{DTSB}, 
choosing $\d_0$ as in Proposition \ref{UDFA} and using an $L^p$-tail estimate for suitably large $p$, 
there is an event $\O_1\sse\O_0$, of probability exceeding $1-2c^m$, on which, for all $t\le T$, both
$|c\cV_t-\nu_t|\le\Delta$ and, provided $c\cV_t\le N$, also
$$
|\cT_t-\tilde\cT_t|\le Cc^{1-4\nu}\left(\frac{e^\s}{e^\s-1}\right)^2.
$$
We can choose $C$ so that, for $c\le1/C$, we have $\Delta\le\d$, so $c\cV_T\ge N+\d-\Delta\ge N$ always on $\O_1$.
Now, for all $n\le N/c$, we have $\cV_t=n$ for some $t\le T$ with $c\cV_t\le N$, 
so on $\O_1$, for all $|z|\ge1+c^{1/2-\ve}$, we have
$$
|\hat\Phi_n^\disc(z)-z|\le\Delta,\q
|\cT^\disc_n-\t^\disc_n|\le Cc^{1-4\nu}\left(\frac{e^\s}{e^\s-1}\right)^2.
$$
\end{proof}

\section{Fluctuation scaling limit for ALE$(\a,\eta)$} 
\label{sec:fluctuations}
Given an ALE$(\a,\eta)$ process $(\Phi_t)_{t\ge0}$, recall that
$$
\hat\Phi_t(z)=\Phi_t(z)/e^{\cT_t},\q \cT_t=\log\Phi_t'(\infty).
$$
The fluctuations in these coordinates are given by
$$
\hat\Psi_t(z)=\hat\Phi_t(z)-z,\q\Psi^\cp_t=\cT_t-\t_t,\q\t_t=\z^{-1}\log(1+\z t).
$$
Recall that we write $\cH$ for the set of holomorphic functions on $\{|z|>1\}$ which are bounded at $\infty$,
and we use on $\cH$ the topology of uniform convergence on $\{|z|\ge r\}$ for all $r>1$.
In this section we prove Theorem \ref{FLUCT} and then, at the end, we deduce Theorem \ref{DISCFLUCT}.

\subsection{Reduction to Poisson integrals}
Our starting point is the interpolation formula \eqref{KID}
$$
\hat\Psi_t(z)=\hat M_t(z)+\hat A_t(z),\q\Psi^\cp_t=M^\cp_t+A^\cp_t.
$$
As a first step, we study the approximations $\hat\Pi_t(z)$ and $\Pi^\cp_t$ to $\hat M_t(z)$ and $M^\cp_t$ which have a simple form
and which prove to be the dominant terms in the considered limit.
Set
$$
H(\th,z)=\frac z{e^{-i\th}z-1}=\sum_{k=0}^\infty e^{i(k+1)\th}z^{-k}.
$$
Recall the multiplier operator $P(\d)$ defined at \eqref{PDEF}.
Then
$$
P(\d)H(\th,z)=\sum_{k=0}^\infty e^{-q(k)\d}e^{i(k+1)\th}z^{-k}.
$$
Recall that $c_t=ce^{-\a\t_t}$ and $\l_t=c^{-1}e^{-\eta\t_t}$, and that we define for $|z|>1$
\begin{align}
\label{YMPS}
\hat\Pi_t(z)
&=\int_{E(t)}e^{-(\t_t-\t_s)}P(\t_t-\t_s)H(\th,z)2c_s1_{\{v\le\l_s\}}\tilde\mu(d\th,dv,ds),\\
\label{YMPT}
\Pi^\cp_t
&=\int_{E(t)}e^{-\z(\t_t-\t_s)}c_s1_{\{v\le\l_s\}}\tilde\mu(d\th,dv,ds).
\end{align}
The following result allows us to deduce the weak limit of the normalized fluctuations 
from that of the Poisson integrals $(\hat\Pi_t,\Pi^\cp_t)_{t\ge0}$.

\begin{proposition}
\label{ONLYM}
For all $\a,\eta\in\R$ with $\z=\a+\eta\le1$, 
$$
c^{-1/2}(\hat\Psi_t-\hat\Pi_t,\Psi^\cp_t-\Pi^\cp_t)\to0
$$ 
in $\cH\times\R$ uniformly on compacts in $[0,t_\z)$, in probability, 
in the limit $c\to0$ and $\s\to0$ considered in Theorem \ref{FLUCT}.
\end{proposition}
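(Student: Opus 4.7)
\medskip

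\noindent\textbf{Plan.} The plan is to read the proposition off directly from the quantitative, high-probability pointwise bounds already established in Propositions \ref{UDFA} and \ref{UDFB}, by multiplying them by $c^{-1/2}$ and checking, via a short exponent computation, that the resulting powers of $c$ are strictly positive under the regularization regime of Theorem \ref{FLUCT}. No essentially new probabilistic input is needed.

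\medskip

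\noindent Fix $T\in[0,t_\z)$, $r_0>1$ and $m\in\N$. I would first choose $\ve\in(0,1/2]$ (resp.\ $\ve\in(0,1/5]$ in the case $\z=1$) so that the hypothesis of Theorem \ref{FLUCT} gives $\s\ge c^{1/4-\ve}$ (resp.\ $\s\ge c^{1/6-\ve}$) for all small $c$, and then pick $\nu\in(0,\ve/4]$ small enough, to be fixed in the last step. For small $c$ we have $r_0\ge1+c^{1/2-\ve}$ (resp.\ $r_0\ge1+c^{1/5-\ve}$), so Propositions \ref{UDFA} and \ref{UDFB} apply with this $r_0$, yielding an event of probability $\ge1-c^m$ on which the bounds \eqref{CPPO}, \eqref{CPPQ} (or \eqref{CPPP}, \eqref{CPPR} when $\z=1$) hold simultaneously for all $t\le T$ and all $|z|=r_0$. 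Since $\hat\Psi_t-\hat\Pi_t\in\cH$, a standard application of the maximum modulus principle to $w\mapsto(\hat\Psi_t-\hat\Pi_t)(1/w)$ on $\{|w|\le1/r_0\}$ upgrades the bound at $|z|=r_0$ to the same bound on all of $\{|z|\ge r_0\}$, uniformly in $t\le T$.

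\medskip

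\noindent It remains to verify that $c^{-1/2}$ times each term on the right of \eqref{CPPO}--\eqref{CPPR} tends to zero. Using $e^\s/(e^\s-1)\le C/\s$ and the scaling $\s^{-1}\le c^{\ve-1/4}$, the two terms of \eqref{CPPO} produce, for fixed $r_0$, the exponents $\tfrac18+\tfrac\ve2-2\nu$ and $2\ve-4\nu$ (together with a harmless $c^{1/2-4\nu}$ from the $r/(r-1)$ summand); all three are strictly positive for $\nu$ sufficiently small relative to $\ve$. In the $\z=1$ case, $\s^{-1}\le c^{\ve-1/6}$ and the terms of \eqref{CPPP} produce exponents $\tfrac18+\tfrac{3\ve}4-\nu$, $\tfrac12-4\nu$, $\tfrac14+\tfrac{3\ve}2-4\nu$ and $3\ve-4\nu$, again all positive for small $\nu$. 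The capacity bounds \eqref{CPPQ}, \eqref{CPPR} are treated in the same way. Fixing such a $\nu$ and letting $c\to0$ completes the proof, since $m$ is arbitrary and the topology on $\cH\times\R$ is generated by sups on $\{|z|\ge r_0\}$ for $r_0>1$.

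\medskip

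\noindent\textbf{Main obstacle.} The estimates are already in place, so the only delicate point is the exponent arithmetic; the windows $\s\ge c^{1/4-\ve}$ and $\s\ge c^{1/6-\ve}$ are precisely those for which the leading error $c^{3/4-O(\nu)}(e^\s/(e^\s-1))^{1/2}$ (resp.\ $c^{3/4-O(\nu)}(e^\s/(e^\s-1))^{3/4}$) beats $c^{1/2}$. Any weaker regularization would make this error contribute at scale $c^{1/2}$ or larger and spoil the reduction to the Poisson integrals $(\hat\Pi_t,\Pi^\cp_t)$, so the exponent check is really where the constraint on $\s$ is absorbed.
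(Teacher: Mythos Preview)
Your proof is correct and follows essentially the same approach as the paper's: both read the result off from the high-probability bounds \eqref{CPPO}, \eqref{CPPP}, \eqref{CPPQ}, \eqref{CPPR} of Propositions \ref{UDFA} and \ref{UDFB}, checking that each right-hand side is $o(\sqrt c)$ under the regularization regime $\s\ge c^{1/4-\ve}$ (resp.\ $\s\ge c^{1/6-\ve}$). Your version simply makes explicit the exponent arithmetic that the paper leaves to the reader; the maximum-modulus step you add is harmless but unnecessary, since Proposition \ref{UDFA} already gives the bounds for all $|z|=r\ge r_0$.
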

\begin{proof}
In Theorem \ref{FLUCT}, for $\z<1$, we restrict to $\s\ge c^{1/4-\ve}$ and take $\d_0=c^{1/2-\nu}e^\s/(e^\s-1)$ with $\nu\le\ve/4$.
On the other hand, for $\z=1$, we restrict to $\s\ge c^{1/6-\ve}$ and take $\d_0=c^{1/2-\nu}(e^\s/(e^\s-1))^{3/2}$.
In both cases, the right-hand sides in 
\eqref{CPPO}, \eqref{CPPP}, \eqref{CPPQ} and \eqref{CPPR} 
are therefore small compared to $\sqrt c$ in the considered limit.
The claim thus follows from Propositions \ref{UDFA} and \ref{UDFB}.
\end{proof}

Since the integral \eqref{YMPS} converges absolutely for all $\o$, 
we can exchange limits to see that
$$
\hat\Pi_t(z)=\sum_{k=0}^\infty\Pi_t(k)z^{-k}
$$
where
\begin{equation*}
\label{MPR}
\Pi_t(k)=2\int_{E(t)}e^{-(1+q(k))(\t_t-\t_s)}e^{i(k+1)\th}c_s1_{\{v\le\l_s\}}\tilde\mu(d\th,dv,ds).
\end{equation*}
Recall that we set $q_0(k)=(1-\z)k$ and define, for all $\z\in(-\infty,1]$ and $t<t_\z$,
\begin{equation*}
\label{MPRA}
\Pi^0_t(k)=2\int_{E(t)}e^{-(1+q_0(k))(\t_t-\t_s)}e^{i(k+1)\th}c_s1_{\{v\le\l_s\}}\tilde\mu(d\th,dv,ds).
\end{equation*}

\begin{proposition}
\label{TIGHT}
For all $\a,\eta\in\R$ with $\a+\eta=\z\le1$, and all $t<t_\z$, there is a constant $C(\a,\eta,t)<\infty$ such that, 
for all $k\ge0$,
$$
\Big\|\sup_{s\le t}|\Pi_s(k)|\Big\|_2\le C\sqrt c,\q
\Big\|\sup_{s\le t}|\Pi^\cp_s|\Big\|_2\le C\sqrt c
$$
and
$$
\Big\|\sup_{s\le t}|\Pi_s(k)-\Pi^0_s(k)|\Big\|_2\le Ck^2\s\sqrt c.
$$
Moreover, $C$ may be chosen so that, for all $h\in[0,1]$ and all stopping times $T\le t-h$,
$$
\|\Pi^0_{T+h}(k)-\Pi^0_T(k)\|_2\le C\sqrt c(\sqrt h+kh),\q
\|\Pi^\cp_{T+h}-\Pi^\cp_T\|_2\le C\sqrt{ch}.
$$
\end{proposition}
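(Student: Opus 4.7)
The plan is to represent $\Pi_s(k)$ as the solution of an Ornstein--Uhlenbeck-type integral equation driven by an auxiliary martingale whose quadratic variation is uniformly bounded in $k$. Writing $\Pi_s(k) = 2e^{-(1+q(k))\tau_s}N_s(k)$ with $N_s(k) = \int_{E(s)} e^{(1+q(k))\tau_u}e^{i(k+1)\theta}c_u 1_{\{v\le\lambda_u\}}\tilde\mu$, and applying It\^o's product rule together with $\dot\tau_u = e^{-\zeta\tau_u}$, I obtain
$$
\Pi_s(k) + (1+q(k))\int_0^s e^{-\zeta\tau_u}\Pi_u(k)\,du = \tilde L_s(k),
$$
where $\tilde L_s(k) = 2\int_{E(s)}e^{i(k+1)\theta}c_u 1_{\{v\le\lambda_u\}}\tilde\mu$ has deterministic integrand and quadratic variation $\langle\tilde L(k),\overline{\tilde L(k)}\rangle_s = 4c\int_0^s e^{-(\alpha+\zeta)\tau_u}\,du \le C(t)c$ uniformly in $k$ for $s\le t<t_\zeta$. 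Variation-of-constants then yields
$$
\Pi_s(k) = \int_0^s e^{-(1+q(k))(\tau_s-\tau_u)}\,d\tilde L_u(k).
$$

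Integration by parts in this representation, together with the identity $(1+q(k))\int_0^s e^{-(1+q(k))(\tau_s-\tau_u)}\dot\tau_u\,du = 1-e^{-(1+q(k))\tau_s}\le 1$, produces the pathwise bound $\sup_{s\le t}|\Pi_s(k)|\le 2\sup_{s\le t}|\tilde L_s(k)|$. Doob's $L^2$ inequality then controls the right-hand side by $C\sqrt c$ uniformly in $k$. The identical argument applied to $\Pi^\cp$, with $1+q(k)$ replaced by $\zeta$ and driving martingale $L^\cp_s = \int_{E(s)}c_u 1_{\{v\le\lambda_u\}}\tilde\mu$, yields the second bound (for $\zeta<0$ one instead uses $|\Pi^\cp_s| = e^{-\zeta\tau_s}|N^\cp_s|$ directly, noting that the exponential is bounded on $[0,T]$).

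For the difference bound I would subtract the integral equations satisfied by $\Pi_s(k)$ and $\Pi^0_s(k)$; both are driven by the \emph{same} martingale $\tilde L_s(k)$, so $\Pi-\Pi^0$ obeys a linear equation with forcing $-(q(k)-q_0(k))\int_0^s e^{-\zeta\tau_u}\Pi^0_u(k)\,du$. Another variation-of-constants gives
$$
\Pi_s(k)-\Pi^0_s(k) = -(q(k)-q_0(k))\int_0^s e^{-(1+q(k))(\tau_s-\tau_u)}e^{-\zeta\tau_u}\Pi^0_u(k)\,du,
$$
and the inequality $|q(k)-q_0(k)| = |\zeta|k(1-e^{-\sigma(k+1)})\le C\sigma k^2$ combined with the already established uniform $L^2$-bound on $\sup_{u\le t}|\Pi^0_u(k)|$ produces the factor $Ck^2\sigma\sqrt c$. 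For the two moduli-of-continuity estimates, the variation-of-constants representation of $\Pi^0_s(k)$ gives the decomposition
$$
\Pi^0_{T+h}(k)-\Pi^0_T(k) = \int_T^{T+h}e^{-(1+q_0(k))(\tau_{T+h}-\tau_u)}\,d\tilde L_u(k) + \bigl(e^{-(1+q_0(k))(\tau_{T+h}-\tau_T)}-1\bigr)\Pi^0_T(k).
$$
The It\^o isometry conditional on $\cF_T$ bounds the $L^2$-norm of the first term by $C\sqrt{ch}$, while the mean value theorem bounds the prefactor of $\Pi^0_T(k)$ by $C(1+q_0(k))h\le Ckh$; combined with $\|\Pi^0_T(k)\|_2\le C\sqrt c$ this contributes $Ckh\sqrt c$. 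The same decomposition for $\Pi^\cp$ carries no $k$-factor and gives $C\sqrt{ch}$.

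The principal obstacle is the requirement of \emph{$k$-uniform} constants: a direct Burkholder/Doob bound applied to the natural martingale $N_s(k)$ produces a constant growing like $e^{q(k)\tau_t}$, which is far too large. The essential point is the passage to the rescaled martingale $\tilde L_s(k)$, whose integrand $c_u e^{i(k+1)\theta}$ carries no exponential-in-$k$ growth; the price is an extra drift term, but the OU structure together with integration by parts absorbs it without loss. All other steps are routine It\^o calculus once this representation is in place.
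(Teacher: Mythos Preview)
Your argument is correct and, for the first and third estimates, genuinely different from the paper's. The paper obtains $k$-uniformity by time-dissection: it works with the martingale $M_s(k)=e^{\kappa\tau_s}\Pi_s(k)$, $\kappa=1+q(k)$, partitions $[0,t]$ into $n=\lceil\kappa\tau_t\rceil$ subintervals of $\tau$-length at most $1/\kappa$, bounds $\|\sup|\Pi_s(k)|\|_2$ by $C\sqrt{c/\kappa}$ on each piece via Doob, and then combines the pieces through $\|\max_i X_i\|_2^2\le\sum_i\|X_i\|_2^2$. For the difference $\Pi-\Pi^0$ it repeats the dissection after estimating $e^{\kappa\tau_s}-e^{\kappa_0\tau_s}$ pointwise. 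Your approach instead passes from the natural martingale $N_s(k)$ to the driving martingale $\tilde L_s(k)$, whose quadratic variation is $k$-independent, and uses integration by parts on the variation-of-constants representation (legitimate here since the deterministic kernel $u\mapsto e^{-\kappa(\tau_s-\tau_u)}$ is $C^1$ with total variation at most $1$) to get the clean pathwise bound $\sup_{s\le t}|\Pi_s(k)|\le 2\sup_{s\le t}|\tilde L_s(k)|$. For the difference, your Duhamel identity exploits that $D_s=\Pi_s(k)-\Pi^0_s(k)$ is actually continuous (the jumps of $\Pi$ and $\Pi^0$ coincide), so the ODE manipulation is justified, and the bound follows at once from $|q(k)-q_0(k)|\le C\sigma k^2$ together with $\int_0^s e^{-\kappa(\tau_s-\tau_u)}\dot\tau_u\,du\le 1$. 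This route is shorter and avoids the bookkeeping of the dissection; the paper's method has the minor advantage of also yielding the sharper intermediate bound $C\sqrt{c/\kappa}$ on short time windows, which is not needed for the proposition. For the modulus-of-continuity estimates your decomposition and the paper's are essentially identical.
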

\begin{proof}
The estimates for $(\Pi^\cp_t)_{t<t_\z}$ are standard and are left to the reader.
For $(\Pi_t(k))_{t<t_\z}$, we use time-dissection to obtain estimates with good dependence on $k$.
Set $\k=1+q(k)$ and define
$$
M_t(k)
=e^{\k\t_t}\Pi_t(k)
=\int_{E(t)}e^{\k\t_s}e^{i(k+1)\th}2c_s1_{\{v\le\l_s\}}\tilde\mu(d\th,dv,ds).
$$
Set $n=\lceil\k\t_t\rceil$ and $t(n)=t$.
Set $t(i)=i/\k$ for $i=0,1,\dots,n-1$. 
Then $t(i+1)-t(i)\le1/\k$ for all $i$.
We have
$$
\E(|M_t(k)|^2)
=4\int_0^t e^{2\k\t_s}c_s^2\l_sds
\le Cc\int_0^te^{2\k\t_s}\dot\t_sds
\le Cce^{2\k\t_t}/\k
$$
so, by Doob's $L^2$-inequality,
$$
\Big\|\sup_{s\le t}|M_s(k)|\Big\|_2
\le Ce^{\k\t_t}\sqrt{c/\k}.
$$
Now, for $t(i)\le s\le t(i+1)$,
$$
|\Pi_s(k)|\le e^{-\k\t_{t(i)}}|M_s(k)|
$$
so
$$
\Big\|\sup_{t(i)\le s\le t(i+1)}|\Pi_s(k)|\Big\|_2
\le Ce^{-\k\t_{t(i)}}\Big\|\sup_{s\le t(i+1)}|M_s(k)|\Big\|_2
\le C\sqrt{c/\k}
$$
and so
\begin{equation}
\label{PIC}
\Big\|\sup_{s\le t}|\Pi_s(k)|\Big\|_2\le C\sqrt c.
\end{equation}
For the second estimate, set $\k_0=1+(1-\z)k$ and note that
$$
0\le|\k-\k_0|=|\z|k(1-e^{-\s(k+1)})\le|\z|k(k+1)\s.
$$
Restrict for now to the case $\z\ge0$, when $\k\ge\k_0$, and define
$$
M^0_t(k)
=\int_{E(t)}e^{\k_0\t_s}e^{i(k+1)\th}2c_s1_{\{v\le\l_s\}}\tilde\mu(d\th,dv,ds)
$$
and
$$
\tilde M_t(k)
=M_t(k)-M_t^0(k)
=\int_{E(t)}(e^{\k\t_s}-e^{\k_0\t_s})e^{i(k+1)\th}2c_s1_{\{v\le\l_s\}}\tilde\mu(d\th,dv,ds).
$$
Note that
$$
0\le e^{\k\t_s}-e^{\k_0\t_s}\le(\k-\k_0)\t_se^{\k\t_s}
$$
so, by a similar argument,
$$
\Big\|\sup_{s\le t}|e^{-\k\t_s}\tilde M_s(k)|\Big\|_2
\le C(\k-\k_0)\sqrt c.
$$
Now
$$
\Pi_s(k)-\Pi^0_s(k)
=e^{-\k\t_s}\tilde M_s(k)+(e^{-\k\t_s}-e^{-\k_0\t_s})M_s^0(k)
$$
so
$$
|\Pi_s(k)-\Pi^0_s(k)|
\le e^{-\k\t_s}|\tilde M_s(k)|+(\k-\k_0)\t_s|\Pi^0_s(k)|
$$
and so
$$
\Big\|\sup_{s\le t}|\Pi_s(k)-\Pi^0_s(k)|\Big\|_2
\le C(\k-\k_0)\sqrt c\le Ck^2\s\sqrt c.
$$
For $\Pi^0_s(k)$, we used the estimate \eqref{PIC} with $\k$ replaced by $\k_0$, which is the special case $\s=0$.
A similar argument holds in the case $\z<0$, with the roles of $\k$ and $\k_0$ interchanged, which leads to the same estimate.
It remains to show the third estimate, which we will do for general $\s\ge0$.
We have
\begin{align*}
\Pi_{T+h}(k)-\Pi_T(k)
&=e^{-\k\t_{T+h}}M_{T+h}(k)-e^{-\k\t_T}M_T(k)\\
&=e^{-\k\t_{T+h}}\tilde M_h(k)-
(e^{-\k(\t_{T+h}-\t_T)}-1)\Pi_T(k)
\end{align*}
where we redefine
$$
\tilde M_h(k)
=M_{T+h}(k)-M_T(k)
=\int_{E(T+h)\sm E(T)}e^{\k\t_s}e^{i(k+1)\th}2c_s1_{\{v\le\l_s\}}\tilde\mu(d\th,dv,ds).
$$
Now
$$
\E(|\tilde M_h(k)|^2|T)=4\int_T^{T+h}e^{2\k\t_s}c_s^2\l_sds
$$
so
$$
\E(|e^{-\k\t_{T+h}}\tilde M_h(k)|^2)\le Cch.
$$
On the other hand, since $T\le t$,
$$
\|(e^{-\k(\t_{T+h}-\t_T)}-1)\Pi_T(k)\|_2
\le C\k h\Big\|\sup_{s\le t}|\Pi_s(k)|\Big\|_2
\le C(k+1)h\sqrt c.
$$
The claimed estimate follows.
\end{proof}

\subsection{Gaussian limit process}
\label{GLP}
By Proposition \ref{ONLYM}, in order to compute the weak limit of $c^{-1/2}(\hat\Psi_t,\Psi^\cp_t)_{t<t_\z}$,
it suffices to compute the weak limit of $c^{-1/2}(\hat\Pi_t,\Pi^\cp_t)_{t<t_\z}$.
This process is a deterministic linear function of the compensated Poisson random measure $\tilde\mu$.
We are guided to find the weak limit process by replacing $\tilde\mu$ in \eqref{YMPS} and \eqref{YMPT} by a Gaussian white noise 
on $[0,2\pi)\times[0,\infty)\times(0,\infty)$ of the same intensity.
At the same time, we set $\s=0$ in the limit\footnote{%
It is not necessary to pass to the limit $\s\to0$.
Indeed, the best Gaussian approximation for given $\s>0$ would be obtained using $P$ instead of $P_0$.
The limit $c\to0$ with $\s$ fixed then holds uniformly in $\s$, subject to the restrictions stated in Theorem \ref{FLUCT},
and the limit processes for $\s$ fixed converge weakly to the case $\s=0$.
We have stated only the joint limit, since this seems to us of main interest, 
and since the limit fluctuations have in this case a slightly simpler form.}%
, replacing the multiplier operator $P(\d)$ by $P_0(\d)$.
Then, using the scaling properties of white noise, 
we arrive at candidate limit processes $(\hat\G_t(z))_{t<t_\z}$ and $(\G^\cp_t)_{t<t_\z}$ which are defined as follows.
Let $W$ be a Gaussian white noise on $[0,2\pi)\times(0,\infty)$ of intensity $(2\pi)^{-1}d\th dt$.
Define for each $|z|>1$ and $t\in[0,t_\z)$
\begin{align*}
\label{LSBP}
\hat\G_t(z)
&=2\int_0^t\int_0^{2\pi}e^{-(\t_t-\t_s)}P_0(\t_t-\t_s)H(\th,z)e^{-(\a+\eta/2)\t_s}W(d\th,ds),\\
\notag
\G^\cp_t
&=\int_0^t\int_0^{2\pi}e^{-\z(\t_t-\t_s)}e^{-(\a+\eta/2)\t_s}W(d\th,ds)
\end{align*}
where these Gaussian integrals are understood by the usual $L^2$ isometry.
Define for $t\ge0$ and $k\ge0$
$$
B_t(k)=\sqrt 2\int_0^t\int_0^{2\pi}e^{i(k+1)\th}W(d\th,ds),\q
B_t=\int_0^t\int_0^{2\pi}W(d\th,ds).
$$
We can and do choose versions of $(B_t(k))_{t\ge0}$ and $(B_t)_{t\ge0}$ which are continuous in $t$.
Then $(B_t(k))_{t\ge0}$ is a complex Brownian motion for all $k$, 
$(B_t)_{t\ge0}$ is a real Brownian motion, and all these processes are independent.
Note that, almost surely, for all $t<t_\z$,
$$
\G^\cp_t=\int_0^te^{-\z(\t_t-\t_s)}e^{-(\a+\eta/2)\t_s}dB_s.
$$
Define for $t\in[0,t_\z)$ and $k\ge0$
\begin{equation*}
\G_t(k)=\sqrt2\int_0^te^{-(1-\z)k(\t_t-\t_s)}e^{-(\a+\eta/2)\t_s}dB_s(k).
\end{equation*}
The following estimate may be obtained by (a simpler version of) the argument used for Proposition \ref{TIGHT}.
\begin{proposition}
\label{DIB}
For all $\a,\eta\in\R$ with $\a+\eta=\z\le1$, and all $t<t_\z$, 
there is a constant $C(\a,\eta,t)<\infty$ such that, for all $k\ge0$,
$$
\Big\|\sup_{s\le t}|\G_s(k)|\Big\|_2\le C.
$$
\end{proposition}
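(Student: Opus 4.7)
The plan is to repeat the time-dissection argument of Proposition \ref{TIGHT} in the Gaussian setting. It is genuinely simpler because there are no compensated Poisson jumps to control, only a single estimate (rather than three) to establish, and no $c$-dependence to track; the bound $O(1)$ is the natural analogue of the $O(\sqrt c)$ bound on $\Pi^0_t(k)$, obtained morally by setting $c$ to unity.

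For each $k\ge 0$, set $\k=(1-\z)k\ge0$ and define
$$M_t(k) = \sqrt{2}\int_0^t e^{(\k-\a-\eta/2)\t_s}\,dB_s(k),$$
a continuous complex $L^2$-martingale such that $\G_t(k) = e^{-\k\t_t}M_t(k)$; this mirrors the decomposition $\Pi_t(k)=e^{-\k\t_t}M_t(k)$ used in \ref{TIGHT}. When $\k=0$ (i.e.\ $k=0$, or $\z=1$) the process $\G_t(k)=M_t(k)$ is itself a martingale with second moment $2\int_0^t e^{-(2\a+\eta)\t_s}ds$ bounded on $[0,T]\subset[0,t_\z)$, and Doob's $L^2$ inequality closes the case. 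For $\k>0$, I would follow the \ref{TIGHT} dissection verbatim. By It\^o isometry and the change of variables $ds=e^{\z\t_s}d\t_s$, together with $2\a+\eta-\z=\a$,
$$\E|M_t(k)|^2 = 2\int_0^{\t_t}e^{(2\k-\a)\t}d\t \le C(\a,\eta,t)\,\frac{e^{2\k\t_t}}{\k},$$
where the factor $1/\k$ is the correct large-$k$ asymptotic and any deficit for small $k$ (when $2\k-\a$ may be nonpositive) is absorbed into the constant. Partition $[0,t]$ by $t(i)=i/\k$ for $i=0,\ldots,n-1$ and $t(n)=t$, with $n=\lceil\k\t_t\rceil$, so that $t(i+1)-t(i)\le1/\k$. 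Doob on each subinterval gives $\|\sup_{s\le t(i+1)}|M_s(k)|\|_2\le Ce^{\k\t_{t(i+1)}}/\sqrt{\k}$, while monotonicity of $\t_s$ and the Lipschitz bound $\k(\t_{t(i+1)}-\t_{t(i)})\le\sup_{s\le t}e^{-\z\t_s}\le C(\z,t)$ yield
$$\Big\|\sup_{t(i)\le s\le t(i+1)}|\G_s(k)|\Big\|_2 \le e^{-\k\t_{t(i)}}\Big\|\sup_{s\le t(i+1)}|M_s(k)|\Big\|_2 \le C(\a,\eta,t)/\sqrt\k.$$

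Combining the $n$ subintervals via $\|\max_i X_i\|_2\le\bigl(\sum_i\|X_i\|_2^2\bigr)^{1/2}$ then gives $\|\sup_{s\le t}|\G_s(k)|\|_2\le C\sqrt{n/\k}\le C(\a,\eta,t)$, since $n/\k\le\t_t+1/\k$ is bounded uniformly in $k\ge 1$. The main point to verify, though hardly an obstacle since it is already handled in \ref{TIGHT}, is the subcritical case $\z<0$: there $\sup_{s\le t}e^{-\z\t_s}=e^{|\z|\t_t}<\infty$ only because $t<t_\z$, and this is precisely what forces $C$ to depend on $t$ in the statement. Everything else is routine bookkeeping to check that the constants are genuinely uniform in $k$.
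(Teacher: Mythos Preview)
Your argument is correct and is exactly the simplified Gaussian version of the Proposition~\ref{TIGHT} dissection that the paper invokes. One bookkeeping slip, shared with the paper's proof of \ref{TIGHT} as written: with $t(i)=i/\k$ on the $t$-axis and $n=\lceil\k\t_t\rceil$, the final interval $[t(n-1),t]$ need not satisfy $t(n)-t(n-1)\le1/\k$ when $\z>0$, since then $\t_t<t$ and $t-(n-1)/\k$ can be large; the intended partition is of the $\t$-axis, choosing $t(i)$ so that $\t_{t(i)}=i/\k$, after which $\k(\t_{t(i+1)}-\t_{t(i)})\le1$ holds trivially and $n=\lceil\k\t_t\rceil$ is the correct count.
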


The following identity holds in $L^2$ for all $|z|>1$ and $t<t_\z$
\begin{equation}
\label{LSBM}
\hat\G_t(z)=\sum_{k=0}^\infty\G_t(k)z^{-k}.
\end{equation}
By Proposition \ref{DIB}, almost surely, the right-hand side in \eqref{LSBM} converges uniformly
on compacts in $[0,t_\z)$, uniformly on $\{|z|\ge r\}$, for all $r>1$.
So we can and do use \eqref{LSBM} to choose a version of $\hat\G_t(z)$ for each $t<t_\z$
and $|z|>1$ such that $(\hat\G_t)_{t<t_\z}$ is a continuous process in $\cH$ and
\eqref{LSBM} holds for all $\o$.

The processes $(\G_t(k))_{t<t_\z}$ and $(\G^\cp_t)_{t<t_\z}$ are also characterized by the following Ornstein--Uhlenbeck-type 
stochastic differential equations
\begin{align*}
d\G_t(k)&=e^{-\a\t_t}\left(\sqrt2e^{-\eta\t_t/2}dB_t(k)-(1+(1-\z)k)\G_t(k)e^{-\eta\t_t}dt\right),\q\G_0(k)=0,\\
d\G^\cp_t&=e^{-\a\t_t}\left(e^{-\eta\t_t/2}dB_t-\z\G^\cp_te^{-\eta\t_t}dt\right),\q\G^\cp_0=0.
\end{align*}
These equations can be put in a simpler form by switching to the time-scale
$$
\nu_t=\int_0^te^{-\eta\t_s}ds
$$
which arises as the limit as $c\to0$ of a time-scale where particles arrive at a constant rate.
Write $\nu\mapsto t(\nu):[0,n_\a)\to[0,t_\z)$ for the inverse map and set
$$
\tilde\G_\nu(k)=\G_{t(\nu)}(k),\q
\tilde\G^\cp_\nu=\G^\cp_{t(\nu)}
$$
and
$$
\tilde\t_\nu=\t_{t(\nu)},\q
\tilde B_\nu(k)=\int_0^{t(\nu)}e^{-\eta\t_s/2}dB_s(k),\q
\tilde B_\nu=\int_0^{t(\nu)}e^{-\eta\t_s/2}dB_s.
$$
Then $e^{-\a\tilde\t_\nu}=(1+\a\nu)^{-1}$.
Also $(\tilde B_\nu(k))_{\nu<n_\a}$ is a complex Brownian motion for all $k$, 
$(\tilde B_\nu)_{\nu<n_\a}$ is a real Brownian motion, and these processes are independent.
Then we have
\begin{align}
\label{GKE}
d\tilde\G_\nu(k)&=(1+\a\nu)^{-1}\left(\sqrt2d\tilde B_\nu(k)-(1+(1-\z)k)\tilde\G_\nu(k)d\nu\right),\q\tilde\G_0(k)=0,\\
\notag
d\tilde\G^\cp_\nu&=(1+\a\nu)^{-1}\left(d\tilde B_\nu-\z\tilde\G^\cp_\nu d\nu\right),\q\tilde\G^\cp_0=0.
\end{align}
We can define a Brownian motion $(\tilde B_\nu)_{\nu<n_\a}$ in $\cH$ by
$$
\tilde B_\nu(z)=\sum_{k=0}^\infty\tilde B_\nu(k)z^{-k}.
$$
Set
$$
\tilde\G_\nu(z)=\sum_{k=0}^\infty\tilde\G_\nu(k)z^{-k}=\hat\G_{t(\nu)}(z).
$$
On summing the equations \eqref{GKE}, we see that $(\tilde\G_\nu)_{\nu<n_\a}$ satisfies
the following stochastic integral equation in $\cH$
$$
\tilde\G_\nu(z)=\int_0^\nu\frac{\sqrt2d\tilde B_s(z)-\tilde\G_s(z)ds+(1-\z)D\tilde\G_s(z)ds}{1+\a s}.
$$

\subsection{Convergence}
Given Proposition \ref{ONLYM}, the following result will complete the proof of Theorem \ref{FLUCT}.

\begin{proposition}
\label{WEC}
For all $\a,\eta\in\R$ with $\a+\eta=\z\le1$ and all $T<t_\z$, we have
$$
c^{-1/2}(\hat\Pi_t,\Pi^\cp_t)_{t\ge0}\to(\hat\G_t,\G^\cp_t)_{t\le T}
$$
weakly in ${{D([0,T],\cH\times\R)}}$ as $c\to0$ and $\s\to0$ as in Theorem \ref{FLUCT}. 
\end{proposition}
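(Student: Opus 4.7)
The plan is to work through the Laurent expansion $\hat\Pi_t(z) = \sum_{k\ge 0}\Pi_t(k)z^{-k}$, reducing Proposition \ref{WEC} to convergence of scalar Laurent coefficients. First I would replace the multiplier $P$ in \eqref{YMPS} by $P_0$; writing $\hat\Pi^0_t(z)=\sum_{k\ge 0}\Pi^0_t(k)z^{-k}$, Proposition \ref{TIGHT} gives, for any $r>1$,
$$
c^{-1/2}\Big\|\sup_{s\le T}\sup_{|z|\ge r}|\hat\Pi_s(z)-\hat\Pi^0_s(z)|\Big\|_2
\le c^{-1/2}\sum_{k\ge 0}r^{-k}\big\|\sup_{s\le T}|\Pi_s(k)-\Pi^0_s(k)|\big\|_2
\le C\s\sum_{k\ge 0}k^2r^{-k},
$$
which tends to $0$ as $\s\to 0$. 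So it suffices to show convergence of $c^{-1/2}(\hat\Pi^0_t,\Pi^\cp_t)$ to $(\hat\G_t,\G^\cp_t)$.

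Second, for each fixed $N$, I would establish finite-dimensional convergence of $c^{-1/2}(\Pi^0_t(k),\Pi^\cp_t)_{0\le k\le N,\,t\le T}$ to $(\G_t(k),\G^\cp_t)_{0\le k\le N,\,t\le T}$ via a martingale central limit theorem for compensated Poisson stochastic integrals. After rescaling by $c^{-1/2}$ the jump sizes are $O(\sqrt c)$ and so vanish in the limit, giving a Lindeberg-type condition; the predictable quadratic variations converge to deterministic functions of $t$, and the identity of limit covariances is confirmed by substituting Gaussian white noise of matching intensity $(2\pi)^{-1}d\th\,dv\,ds$ for $\tilde\mu$ in \eqref{YMPS}--\eqref{YMPT}, which reproduces the covariance of $(\G_t(k),\G^\cp_t)$ given in Section \ref{GLP}. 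Joint tightness of $(c^{-1/2}\Pi^0_t(k),c^{-1/2}\Pi^\cp_t)_{0\le k\le N,\,t\le T}$ in $D([0,T],\C^{N+1}\times\R)$ then follows from Aldous' criterion applied coordinate-wise, with the increment estimates $\|\Pi^0_{T+h}(k)-\Pi^0_T(k)\|_2\le C\sqrt c(\sqrt h+kh)$ and $\|\Pi^\cp_{T+h}-\Pi^\cp_T\|_2\le C\sqrt{ch}$ supplied by Proposition \ref{TIGHT}.

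The final step is to upgrade from convergence in the product topology on Laurent coefficients to convergence in $D([0,T],\cH\times\R)$. The bound $\|\sup_{s\le T}|\Pi^0_s(k)|\|_2\le C\sqrt c$ from Proposition \ref{TIGHT}, uniform in $k$, yields for each $r>1$ and $K\in\N$
$$
c^{-1/2}\Big\|\sup_{s\le T}\sup_{|z|\ge r}\Big|\sum_{k>K}\Pi^0_s(k)z^{-k}\Big|\Big\|_2\le C\sum_{k>K}r^{-k}\to 0
$$
as $K\to\infty$, and Proposition \ref{DIB} gives the analogous tail bound for the limit process. Consequently the truncated sums converge in $D([0,T],\cH\times\R)$ and the remainders are uniformly small, so the truncation argument closes. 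The principal obstacle is the martingale CLT step: one needs joint convergence across the modes $k$, with the independence structure of the limiting OU processes arising from the orthogonality of the Fourier phases $e^{i(k+1)\th}$ under the angular average, together with the correct coupling between $\hat\G_t$ and $\G^\cp_t$ which both derive from the same underlying white noise, uniform in the regime $\s\to 0$ permitted by Theorem \ref{FLUCT}.
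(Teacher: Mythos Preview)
Your proposal is correct and follows essentially the same three-step architecture as the paper: reduce $\hat\Pi$ to $\hat\Pi^0$ via Proposition \ref{TIGHT}, establish convergence of finitely many Laurent modes, then close the truncation using the uniform-in-$k$ bound from Proposition \ref{TIGHT}. The only minor difference is in the middle step: where you invoke a martingale CLT with a Lindeberg condition, the paper computes characteristic functions directly via the Campbell--Hardy formula, which is available here because the integrands $c_s1_{\{v\le\l_s\}}$ in \eqref{YMPS}--\eqref{YMPT} are deterministic; this makes the finite-dimensional convergence a one-line calculation and sidesteps any need to discuss predictable quadratic variations or the $t$-dependence of the semigroup factor.
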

\begin{proof}
By Proposition \ref{TIGHT}, 
it will suffice to show the claimed limit with $(\hat\Pi_t)_{t\le T}$ replaced by $(\hat\Pi^0_t)_{t\le T}$.
We first show that
$$
c^{-1/2}((\Pi^0_t(k):k\ge0),\Pi^\cp_t)_{t\le T}\to((\G_t(k):k\ge0),\G^\cp_t)_{t\le T}
$$
in the sense of finite-dimensional distributions.
For all $n\ge1$, all $k_1,\dots,k_n\ge0$ and all $t_1,\dots,t_n\le T$, 
any real-linear function of $c^{-1/2}(\Pi^0_{t_j}(k_j),\Pi^\cp_{t_j}:j=1,\dots,n)$ can be written in the form
$$
F=\int_{E(T)}\tilde f_t(\th)1_{\{v\le\l_t\}}\tilde\mu(d\th,dv,dt)
$$
where
$$
\tilde f_t(\th)=c^{-1/2}f_t(\th)c_t
$$
and $(\th,t)\mapsto f_t(\th):[0,2\pi)\times(0,T]\to\R$ is bounded, measurable and independent of $c$.
Set
$$
\s_t^2=\fint_0^{2\pi}f_t(\th)^2d\th.
$$
The same linear function applied to $(\G_{t_j}(k_j),\G^\cp_{t_j}:j=1,\dots,n)$ gives the random variable
$$
G=\int_0^T\int_0^{2\pi}\tilde f_t(\th)\l_t^{1/2}W(d\th,dt)=\int_0^T\int_0^{2\pi}f_t(\th)e^{-(\a+\eta/2)\t_t}W(d\th,dt).
$$
Then
$$
\E(F^2)
=\E(G^2)
=\int_0^T\fint_0^{2\pi}\tilde f_t(\th)^2\l_td\th dt
=\int_0^Te^{-(2\a+\eta)\t_t}\s_t^2dt
$$
and, using the Campbell--Hardy formula, as $c\to0$,
\begin{align*}
\E(e^{iuF})
&=\exp\left(\int_0^T\fint_0^{2\pi}(e^{iu\tilde f_t(\th)}-1-iu\tilde f_t(\th))\l_td\th dt\right)\\
&\to\exp\left(-\frac{u^2}2\int_0^Te^{-(2\a+\eta)\t_t}\s_t^2dt\right)=
\E(e^{iuG}).
\end{align*}
The claimed convergence of finite-dimensional distributions follows, by convergence of characteristic functions.

Now, Proposition \ref{TIGHT} shows that the processes $(\Pi^0_t(k))_{t\le T}$ and $(\Pi^\cp_t)_{t\le T}$ 
all satisfy Aldous's tightness criterion in $D([0,T],\C)$.
Hence
$$
c^{-1/2}((\Pi^0_t(k):k\ge0),\Pi^\cp_t)_{t\le T}\to((\G_t(k):k\ge0),\G^\cp_t)_{t\le T}
$$
weakly in $D([0,T],\C^{\Z^+}\times\R)$ as $c\to0$.
Hence, for all $K\ge0$, 
$$
c^{-1/2}(p_K(\hat\Pi^0_t),\Pi^\cp_t)_{t\le T}\to(p_K(\hat\G_t),\G^\cp_t)_{t\le T}
$$
weakly in $D([0,T],\cH\times\R)$ as $c\to0$, where, for $f(z)=\sum_{k=0}^\infty a_kz^{-k}$,
$$
p_K(f)(z)=\sum_{k=0}^Ka_kz^{-k}.
$$
For $|z|=r$, we have
$$
|(f-p_K(f))(z)|\le\sum_{k=K+1}^\infty|a_k|r^{-k}.
$$
Hence, it will suffice to show, for $r>1$ and all $\ve>0$, that
$$
\lim_{K\to\infty}\limsup_{c\to0}\PP\left(c^{-1/2}\sup_{t\le T}\sum_{k=K+1}^\infty|\Pi^0_t(k)|r^{-k}>\ve\right)=0.
$$
But, since $\a+\eta=\z\le1$, by Proposition \ref{TIGHT}, there is a constant $C(\a,\eta,T)<\infty$ such that, for all $c>0$ and all $r>1$,
$$
\left\|c^{-1/2}\sup_{t\le T}\sum_{k=K}^\infty|\Pi^0_t(k)|r^{-k}\right\|_2\le\frac{Cr^{-K}}{r-1}.
$$
The desired limit follows.
\end{proof}

\begin{proof}[Proof of Theorem \ref{DISCFLUCT}]
We will argue via the Skorokhod representation theorem.
It will suffice to show the claimed convergence for all sequences $c_k\to0$ and $\s_k\to0$ subject to the 
constraint assumed in Theorem \ref{FLUCT}.
Given $N<n_\a$, choose $\d>0$ and $T<t_\z$ such that $\nu_T=N+\d$, as in the proof of Theorem \ref{DISC}.
By Theorem \ref{FLUCT} and Propositions \ref{DTSA} and \ref{UDFA}, and since $D([0,T],\cH)$ is a complete separable metric space, 
there is a probability space on which are defined a sequence of ALE$(\a,\eta)$ processes $(\Phi_t^{(k)})_{t\ge0}$, 
with common particle family $(P^{(c)}:c\in(0,\infty))$, and a Gaussian process $(\hat\G_t)_{t<t_\z}$ with the following properties:
\begin{enumerate}
\item[(a)]$(\Phi_t^{(k)})_{t\ge0}$ has capacity parameter $c_k$ and regularization parameter $\s_k$,
\item[(b)]$(\hat\G_t)_{t<t_\z}$ has the distribution of the limit Gaussian process in Theorem \ref{FLUCT},
\item[(c)]almost surely, as $k\to\infty$,
$$
\sup_{t\le T}|c\cV^{(k)}_t-\nu_t|\to0
$$
and, for all $r>1$, 
$$
\sup_{t\le T}\sup_{|z|\ge r}\left|c^{-1/2}\hat\Psi^{(k)}_t(z)-\hat\G_t(z)\right|\to0.
$$
\end{enumerate}
Here, $\cV^{(k)}_t$ denotes the number of particles added in $(\Phi_t^{(k)})_{t\ge0}$ by time $t$.
Define for $n\ge0$ and $\nu<n_\a$
$$
J^{(k)}_n=\inf\{t\ge0:\cV^{(k)}_t=n\},\q
t(\nu)=\z^{-1}((1+\a\nu)^{\z/\a}-1).
$$
From (c), we deduce that, almost surely, as $k\to\infty$,
$$
\sup_{n\le N/c}|J^{(k)}_n-t(cn)|\to0
$$
and, for $\nu\in[0,N]$ and $n=\lfloor\nu/c\rfloor$, the following limit holds in $\cH$
$$
c^{-1/2}\hat\Psi^{(k),\disc}_{\nu/c}
=c^{-1/2}\hat\Psi^{(k),\disc}_n
=c^{-1/2}\hat\Psi^{(k)}_{J^{(k)}_n}\to\hat\G_{t(\nu)}.
$$
But $(\hat\G_{t(\nu)})_{\nu<n_\a}$ has the same distribution as $(\hat\G^\disc_\nu)_{\nu<n_\a}$. 
Hence
$$
c^{-1/2}(\hat\Psi^{(k),\disc}_\nu)_{\nu\le N}\to(\hat\G^\disc_\nu)_{\nu\le N}
$$
weakly in $D([0,N],\cH)$. 
\end{proof}

\vfe
\section{Appendix}
\label{APP}
\subsection{Estimates for single-particle maps} 
\label{sec:particle}
Let $P$ be a basic particle and let 
$$
F(z)=e^c\left(z+\sum_{k=0}^\infty a_kz^{-k}\right)
$$
be the associated conformal map $D_0\to D_0\sm P$.
We assume that $F$ extends continuously to $\{|z|\ge1\}$.
Set
\begin{align*}
r_0&=r_0(P)=\sup\{|z|-1:z\in P\},\\
\d&=\d(P)=\inf\{r\ge0:|z-1|\le r\text{ for all }z\in P\}.
\end{align*}
We assume throughout that $\d\le1$.
We use the following well known estimates on the capacity $c$.
There is an absolute constant $C<\infty$ such that 
\begin{equation}
\label{FEST0}
r_0^2/C\le c\le C\d^2.
\end{equation}
The lower bound relies on Beurling's projection theorem and a comparison with the case of a slit particle.
The upper bound follows from a comparison with the case $P_\d=S_\d\cap D_0$,
where $S_\d$ is the closed disk whose boundary intersects the unit circle orthogonally at $e^{\pm i\th_\d}$
with $\th_\d\in[0,\pi]$ is determined by $|e^{i\th_\d}-1|=\d$.
See Pommerenke \cite{MR1217706}.

Write
$$
\log\left(\frac{F(z)}z\right)=u(z)+iv(z)
$$
where we understand the argument to be determined for each $z\in D_0$ so that the left-hand side is holomorphic in $D_0$
and such that $v(z)\to0$ as $z\to\infty$.
Then $u$ and $v$ are bounded and harmonic in $D_0$, with continuous extensions to $\{|z|\le1\}$, and $u(z)\to c$ as $z\to\infty$.
Note also that 
\begin{equation}
\label{USCQ}
0\le u(e^{i\th})\le\log(1+r_0)\le r_0\q\text{ for all $\th$}.
\end{equation}

\begin{lemma}
\label{VB}
Assume that $16\d\le\pi$. 
Then
\begin{equation}
\label{USCO}
u(e^{i\th})=0\q\text{whenever $|\th|\in[16\d,\pi]$}
\end{equation}
and
\begin{equation}
\label{USCP}
|v(e^{i\th})|\le16\d\q\text{ for all $\th$}.
\end{equation}
\end{lemma}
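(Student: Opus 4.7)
The plan is to analyze the boundary values of $F$ on $\{|z|=1\}$, using its continuous extension. For each $\th\in(-\pi,\pi]$, $F(e^{i\th})$ lies either on $\{|z|=1\}$ (in which case $u(e^{i\th})=0$) or on $\pd P\setminus\{|z|=1\}\sse\overline{B(1,\d)}\cap D_0$. Let $I$ denote the set of $\th$ of the latter type; then \eqref{USCO} amounts to $I\sse(-16\d,16\d)$, while \eqref{USCP} is a pointwise bound on the angular twist $\im\log(F(e^{i\th})/e^{i\th})$.

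For \eqref{USCO} I would first bound $|I|$ by harmonic measure. By conformal invariance of $F$, $|I|/(2\pi)=\o_\infty(\pd P,D_0\setminus P)$, and since $\O:=D_0\setminus\overline{B(1,\d)}\sse D_0\setminus P$, the maximum principle gives $\o_\infty(\pd P,D_0\setminus P)\le\o_\infty(\pd B(1,\d)\cap D_0,\O)$. The Möbius map $w=(z+1)/(z-1)$ sends $D_0$ onto $\{\re w>0\}$ with $\infty\mapsto 1$ and $\pd B(1,\d)$ onto $\pd B(1,2/\d)$, so $\O$ becomes a half-disk of radius $2/\d$ based at $1$; a direct computation (using that the harmonic measure on the right half-plane from $w=1$ is Cauchy-distributed on the imaginary axis) bounds the harmonic measure of the outer circular arc by a small constant multiple of $\d$. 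Combined with the observation that both endpoints of $\overline I$ are mapped by $F$ into $\{|z|=1\}\cap\overline{B(1,\d)}\sse\{e^{i\phi}:|\phi|\le 2\arcsin(\d/2)\}$, so that $\overline I$ is essentially centred at $0$, tracking constants gives $I\sse(-16\d,16\d)$.

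For \eqref{USCP} the function $v$ is bounded harmonic in $D_0$ with $v(\infty)=0$, so by the maximum principle it suffices to bound $|v|$ on $\{|z|=1\}$. For $\th\in I$, $F(e^{i\th})\in\overline{B(1,\d)}$ gives $|\im\log F(e^{i\th})|\le 2\d$ (via $|\im\log(1+\z)|\le 2|\z|$ for $|\z|\le 1/2$), which combined with $|\th|\le 16\d$ handles this case once constants from the harmonic-measure step are sharpened. For $\th\notin I$, $v(e^{i\th})=\phi(\th)-\th$ with $e^{i\phi(\th)}=F(e^{i\th})$, and $\phi$ is an orientation-preserving homeomorphism from the long arc $\{|z|=1\}\setminus I$ onto $\{|z|=1\}\setminus J$ with $J\sse\{e^{i\phi}:|\phi|\le 2\arcsin(\d/2)\}$. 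The endpoint values $|\phi(\th_\pm)-\th_\pm|$ are already controlled by the step-one estimates, and the uniform pointwise bound along the whole arc should follow by comparing $v$ against an auxiliary harmonic function whose boundary data interpolates linearly between these endpoint values, or by a direct argument exploiting the monotonicity of $\phi$ and the zero-mean condition $\int v(e^{i\th})d\th=2\pi v(\infty)=0$.

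The main obstacle will be the pointwise bound on $v$ for $\th$ outside $I$: a naive Hilbert-transform estimate starting from $\|u\|_\infty\le\d$ fails, since the harmonic conjugate of an $L^\infty$ function need not be $L^\infty$ (the issue is a logarithmic divergence near the endpoints of $\operatorname{supp}u$). The argument must therefore exploit either the monotonicity and circle-homeomorphism structure of $\phi$, or introduce a tailored auxiliary harmonic function (for instance a multiple of $\o_{D_0}(\cdot,I)$) so that the maximum principle applied to $v\pm(\text{auxiliary})$ propagates the endpoint bounds into a uniform estimate. Carrying the precise constant $16$ through the harmonic-measure computation on the Möbius-transformed half-disk and the boundary correspondence will be delicate but essentially routine.
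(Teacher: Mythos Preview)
Your proposal is essentially the paper's approach: bound the length of the bad arc $I$ by harmonic measure (the paper uses the orthogonal disk $S_\d$ and the Möbius map $f(z)=i(z-1)/(z+1)$ to $\H$, you use $\overline{B(1,\d)}$ and $w=(z+1)/(z-1)$; these are equivalent up to constants), then use monotonicity of the boundary correspondence together with the zero-mean condition $\int v(e^{i\th})d\th=0$ to get the pointwise bound on $v$.

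The one logical misstep is your sequencing for \eqref{USCO}. Knowing that $|I|$ is small and that $F$ maps the endpoints of $I$ near $1$ does \emph{not} by itself tell you $I$ is centred near $0$: the endpoint images constrain $\phi(\th_\pm)$, not $\th_\pm$. The centring is exactly what the monotonicity plus zero-mean argument delivers, and it delivers \eqref{USCO} and \eqref{USCP} \emph{simultaneously}, not sequentially. The paper makes this explicit by introducing a reference angle $\a$ (via $e^{i\pi}=F(e^{i(\pi+\a)})$ with $\a+v(e^{i(\pi+\a)})=0$), showing $v$ is monotone on the complementary arc, bounding $\th^\pm$ in terms of $\a$, bounding $v(e^{i\th})$ on $I$ via the argument constraint $|\th+v(e^{i\th})|\le\th_\d$, and finally using the zero mean to bound $\a$ itself. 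Once you reorganise your argument this way, the constant $16$ falls out from $4\th_\d+4\pi p_\d\le 44\pi\d/9$; your proposed order would give a slightly worse constant. Your identification of the obstacle (the Hilbert transform of $u$ is not $L^\infty$) and its resolution (monotonicity of $\phi$, zero mean of $v$) is correct.
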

\begin{proof}
Set
$$
p_\d=\PP_\infty(B\text{ hits }S_\d\text{ before leaving }D_0)
$$
where $B$ is a complex Brownian motion.
Consider the conformal map $f$ of $D_0$ to the upper half-plane $\H$ given by 
$$
f(z)=i\frac{z-1}{z+1}.
$$
Set $b=f(e^{-i\th_\d})=\sin\th_\d/(1+\cos\th_\d)$.
Since $\d\le1$, we have 
\begin{equation}
\label{EEA}
\th_\d\le\d\pi/3
\end{equation}
and then $b\le2\pi\d/9$.
By conformal invariance,
$$
p_\d
=\PP_i(B\text{ hits $f(S_\d)$ before leaving $\H$})
=2\int_0^{2b/(1-b^2)}\frac{dx}{\pi(1+x^2)}.
$$
Hence 
\begin{equation}
\label{EEB}
p_\d\le4b/\pi\le8\d/9.
\end{equation}

Now $e^{i\pi}$ is not a limit point of $P$ so $e^{i\pi}=F(e^{i(\pi+\a)})$ for some $\a\in\R$.
Then $u(e^{i(\pi+\a)})=0$ and we can and do choose $\a$ so that $\a+v(e^{i(\pi+\a)})=0$.
Set
$$
\th^+=\sup\{\th\le\pi+\a:u(e^{i\th})>0\},\q
\th^-=\inf\{\th\ge\pi+\a:u(e^{i\th})>0\}-2\pi.
$$
Then $\th^-\le\th^+$.
We will show that $|\th^\pm|\le16\d$, which then implies \eqref{USCO}.
For $\th\in[\th^-,\th^+]$, we have $F(e^{i\th})\in S_\d$ so $|\th+v(e^{i\th})|\le\th_\d$.
Set $P^*=\{F(e^{i\th}):\th\in[\th^-,\th^+]\}$. 
Then $P^*\sse S_\d$ so, by conformal invariance,
$$
\frac{\th^+-\th^-}{2\pi}
=\PP_\infty(B\text{ hits $P^*$ on leaving $D_0\sm P$})
\le p_\d.
$$
On the other hand, for $\th,\th'\in[\th^+,\th^-+2\pi]$ with $\th\le\th'$, by conformal invariance,
$$
\frac{\th'-\th}{2\pi}
=\PP_\infty\big(B\text{ hits $\big[e^{i(\th+v(e^{i\th}))},e^{i(\th'+v(e^{i\th'}))}\big]$ on leaving $D_0\sm P$}\big)
\le\frac{\th'+v(e^{i\th'})-\th-v(e^{i\th})}{2\pi}
$$
so $v$ is non-decreasing on $[\th^+,\th^-+2\pi]$, and so
$$
\a+v(e^{i\th^+})
\le\a+v(e^{i(\pi+\a)})=0
\le\a+v(e^{i\th^-}).
$$
Hence
\begin{equation}
\label{EEC}
\th^+-\a
\le2\pi p_\d+\th^--\a
\le2\pi p_\d+\th_\d-v(e^{i\th^-})-\a
\le2\pi p_\d+\th_\d
\end{equation}
and similarly 
\begin{equation}
\label{EED}
\th^--\a\ge-2\pi p_\d-\th_\d.
\end{equation}
So we obtain, for all $\th\in[\th^-,\th^+]$,
\begin{equation}
\label{EEE}
|\a+v(e^{i\th})|\le2\th_\d+2\pi p_\d.
\end{equation}
Since $v$ is continuous and is non-decreasing on the complementary interval, this inequality then holds for all $\th$.
Now $v$ is bounded and harmonic in $D_0$ with limit $0$ at $\infty$,
so
$$
\int_0^{2\pi}v(e^{i\th})d\th=0.
$$
Hence
$$
|\a|
=\left|\fint_0^{2\pi}(\a+v(e^{i\th}))d\th\right|
\le2\th_\d+2\pi p_\d.
$$
On combining this with \eqref{EEC}, \eqref{EED} and \eqref{EEE}, we see that
$$
|\th^\pm|\le4\th_\d+4\pi p_\d,\q
|v(e^{i\th})|\le4\th_\d+4\pi p_\d\q\text{for all $\th$}.
$$
But $4\th_\d+4\pi p_\d\le44\pi\d/9\le16\d$ by \eqref{EEA} and \eqref{EEB}, so we have shown the claimed inequalities.
\end{proof}

\begin{proposition}
\label{PEST}
There is an absolute constant $C<\infty$ with the following properties.
In the case where $\d=\d(P)\le1/C$, for all $|z|>1$,
\begin{equation}
\label{FEST1}
\left|\log\left(\frac{F(z)}z\right)-c\right|\le\frac{C\d}{|z|}
\end{equation}
and, for all $|z|>1$ with $|z-1|\ge C\d$,
\begin{equation}
\label{FEST3}
\left|\log\left(\frac{F(z)}z\right)-c-\frac{2c}{z-1}\right|\le\frac{C\d c|z|}{|z-1|^2}
\end{equation}
and 
\begin{equation}
\label{FEST4}
|a_0-2c|\le C\d c
\end{equation}
and 
\begin{equation}
\label{FEST5}
\left|\log\left(\frac{F(z)}z\right)-c-\frac{a_0}{z-1}\right|\le\frac{C\d c}{|z-1|^2}.
\end{equation}
\end{proposition}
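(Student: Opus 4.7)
The plan is to derive all four estimates from a single Schwarz-type integral representation of $h(z) := \log(F(z)/z)$ on $D_0$, exploiting the concentration properties of $u = \operatorname{Re} h$ established in Lemma \ref{VB}. Since $F(z)/z$ is nowhere zero on $D_0$ and tends to $e^c\in(0,\infty)$ at $\infty$, it has winding number zero around loops in $D_0$, so $h$ is a single-valued holomorphic function on $D_0$; it extends continuously to $|z|=1$ by hypothesis, and satisfies $h(\infty)=c\in\mathbb{R}$, whence $v(\infty)=0$. Composing with the inversion $w=1/z$ and applying the standard Schwarz formula on the open unit disk yields
\begin{equation*}
h(z) = \frac{1}{2\pi}\int_0^{2\pi} u(e^{i\theta})\,\frac{z + e^{i\theta}}{z - e^{i\theta}}\,d\theta, \qquad
h(z) - c = \frac{1}{\pi}\int_0^{2\pi} u(e^{i\theta})\,\frac{e^{i\theta}}{z - e^{i\theta}}\,d\theta,
\end{equation*}
the second identity using $(2\pi)^{-1}\int u\,d\theta = u(\infty) = c$. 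The formula involves only $u$, which by Lemma \ref{VB} is supported in $|\theta|\le 16\delta$, has total mass $2\pi c$, and satisfies $u\le r_0\le C\sqrt{c}\le C\delta$ on its support (by \eqref{USCQ} and \eqref{FEST0}).

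Reading off the $z^{-1}$-coefficient of the Laurent expansion of $h(z)-c$ from the integral gives $a_0 = (1/\pi)\int u(e^{i\theta})e^{i\theta}\,d\theta$, so
\begin{equation*}
a_0 - 2c = \frac{1}{\pi}\int u(e^{i\theta})(e^{i\theta}-1)\,d\theta,
\end{equation*}
and the bound $|e^{i\theta}-1|\le 16\delta$ on the support of $u$ yields $|a_0-2c|\le 32\delta c$, which is \eqref{FEST4}. For \eqref{FEST1}, on $|z|=1$ the boundary bounds $|u-c|\le C\delta$ and $|v|\le 16\delta$ from Lemma \ref{VB} and \eqref{FEST0} force $|h(z)-c|\le C\delta$ on $\partial D_0$; since $h-c$ is continuous on $\bar D_0\cup\{\infty\}$, holomorphic in $D_0$ and vanishes at $\infty$, the maximum principle extends this to $D_0$. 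For $|z|\ge 2$ the direct estimate $|z-e^{i\theta}|\ge|z|/2$ in the integral representation gives $|h(z)-c|\le 4c/|z|\le C\delta^2/|z|$, and combining the two ranges gives $|h(z)-c|\le C\delta/|z|$ throughout.

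For \eqref{FEST5}, using $a_0 = (1/\pi)\int u\,e^{i\theta}\,d\theta$,
\begin{equation*}
h(z) - c - \frac{a_0}{z-1} = \frac{1}{\pi}\int u(e^{i\theta})\,\frac{e^{i\theta}(e^{i\theta}-1)}{(z-e^{i\theta})(z-1)}\,d\theta.
\end{equation*}
Choosing $C$ large enough that $|z-1|\ge C\delta$ implies $16\delta\le|z-1|/2$, one has $|z-e^{i\theta}|\ge|z-1|/2$ for $\theta$ in the support of $u$; bounding $|e^{i\theta}-1|\le 16\delta$ and using $\int u\,d\theta=2\pi c$ then gives $|h(z)-c-a_0/(z-1)|\le C\delta c/|z-1|^2$. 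Estimate \eqref{FEST3} follows by the triangle inequality from \eqref{FEST5} and \eqref{FEST4}, using $1+|z-1|\le 3|z|$ for $|z|\ge 1$ to absorb the extra $1/|z-1|$ term into $|z|/|z-1|^2$.

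The main technical point is setting up the Schwarz representation cleanly on the non-simply connected domain $D_0$, that is, verifying single-valuedness of $\log(F/z)$ and justifying the inversion-based derivation of the formula; once this is in place the four estimates reduce to elementary bounds on an integral whose weight $u$ is concentrated in a $\delta$-arc near $\theta=0$ with total mass $2\pi c$.
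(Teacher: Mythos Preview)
Your proof is correct and uses the same underlying ingredients as the paper (the concentration of $u$ from Lemma~\ref{VB}, the Poisson/Schwarz representation, and the total mass $\int u\,d\th=2\pi c$), but you organise the argument differently in a way that is genuinely tidier. The paper works with the real Poisson formula for $u$ alone, obtains the bound \eqref{FEST3} on the real part first, then extends $F$ by Schwarz reflection and invokes a ``standard argument'' to transfer the estimate to the imaginary part $v$; from \eqref{FEST3} it then deduces \eqref{FEST4} by letting $z\to\infty$ and \eqref{FEST5} by a maximum-principle step on $(z-1)^2(h-c)-a_0z$. You instead write the full complex Schwarz formula for $h$ at the outset, which gives the holomorphic function directly and makes the reflection step unnecessary; you then read off \eqref{FEST4} as a Fourier coefficient, obtain \eqref{FEST5} from a single kernel identity, and recover \eqref{FEST3} by the triangle inequality. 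Your route is shorter and avoids the somewhat implicit conjugate-function step in the paper; the paper's order has the minor advantage that its maximum-principle argument for \eqref{FEST5} could in principle upgrade the domain of validity, but as stated both arguments deliver the estimates on $\{|z-1|\ge C\d\}$ as required.
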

\begin{proof}
Since $z\log(F(z)/z)$ is bounded and holomorphic in $\{|z|>1\}$, 
\eqref{FEST1} follows from \eqref{USCQ} and \eqref{USCP} by the maximum principle.
The inequality \eqref{FEST4} follows from \eqref{FEST3} on letting $z\to\infty$, since $z(\log(F(z)/z)-c)\to a_0$.
Moreover, since $(z-1)^2(\log(F(z)/z)-c)-a_0z$ is bounded and holomorphic on $\{|z|>1\}$, 
\eqref{FEST5} follows from \eqref{FEST3} by the maximum principle, at the cost of replacing $C$ by $6C$, say.
We will show \eqref{FEST3} holds whenever $|z-1|\ge 3a$, where $a=16\d$.

Since $u$ is bounded and harmonic with $u(z)\to c$ as $z\to\infty$, we have
$$
\fint_0^{2\pi}u(e^{i\th})d\th=c
$$
and, for all $|z|>1$,
$$
u(z)
=\fint_0^{2\pi}u(e^{i\th})\re\left(\frac{z+e^{i\th}}{z-e^{i\th}}\right)d\th
=c+\fint_0^{2\pi}u(e^{i\th})\re\left(\frac{2e^{i\th}}{z-e^{i\th}}\right)d\th.
$$
Let $\a\in(-\pi,\pi]$ and $\rho>0$ be defined by 
$$
\fint_0^{2\pi}u(e^{i\th})e^{i\th}d\th=c\rho e^{i\a}.
$$
We use \eqref{USCO} to see that $|\a|\le a$ and $\rho\in[\cos a,1)$.
Now
$$
u(z)-c-\re\left(\frac{2\rho ce^{i\a}}{z-e^{i\a}}\right)
=\fint_0^{2\pi}u(e^{i\th})\re\left(\frac{2e^{i\th}}{z-e^{i\th}}-\frac{2e^{i\th}}{z-e^{i\a}}\right)d\th.
$$
For $|z-1|\ge2a$ and any $\th$ such that $u(e^{i\th})>0$, we have
$$
|z-e^{i\a}|\ge|z-1|/2,\q 
|z-e^{i\th}|\ge|z-1|/2,\q
|e^{i\th}-e^{i\a}|\le2a.
$$
Hence, for $|z|>1$ with $|z-1|\ge2a$,
$$
\left|u(z)-c-\re\left(\frac{2\rho ce^{i\a}}{z-e^{i\a}}\right)\right|
\le2\fint_0^{2\pi}u(e^{i\th})\frac{|e^{i\th}-e^{i\a}|}{|z-e^{i\th}||z-e^{i\a}|}d\th
\le\frac{16ac}{|z-1|^2}
$$
and
$$
\left|\frac2{z-1}-\frac{2\rho e^{i\a}}{z-e^{i\a}}\right|
\le\frac{2(1-\rho+|\rho e^{i\a}-1||z|)}{|z-1||z-e^{i\a}|}
\le\frac{12a|z|}{|z-1|^2}
$$
and hence
\begin{equation}
\label{UEST}
\left|u(z)-c-\re\left(\frac{2c}{z-1}\right)\right|
\le\frac{Cac|z|}{|z-1|^2}.
\end{equation}
We can extend $F$ to a holomorphic function in $\{|z-1|>a\}$ by setting $F(\bar z^{-1})=\widebar{F(z)}^{-1}$.
Then $u$ and $v$ also extend and it is straightforward to check that the estimate \eqref{UEST} remains valid 
for all $|z-1|\ge2a$.
Since $v(z)\to0$ as $z\to\infty$, a standard argument allows us to deduce from \eqref{UEST} that, for $|z-1|\ge3a$,
\begin{equation*}
\label{VEST}
\left|v(z)-\im\left(\frac{2c}{z-1}\right)\right|
\le\frac{Cac|z|}{|z-1|^2}
\end{equation*}
and hence 
$$
\left|\log\left(\frac{F(z)}z\right)-c-\frac{2c}{z-1}\right|
\le\frac{Cac|z|}{|z-1|^2}.
$$
\end{proof}

We sometimes use exponentiated versions of the inequalities just proved, 
which are straightforward to deduce and are noted here for easy reference.
There is an absolute constant $C<\infty$ with the following properties.
Suppose that $\d\le1/C$.
Then, for all $|z|>1$,
\begin{equation*}
\label{FEST6}
|e^{-c}F(z)-z|\le C\d
\end{equation*}
and, in the case $|z-1|\ge C\d$, 
\begin{equation}
\label{FEST7}
\left|e^{-c}F(z)-z-\frac{2cz}{z-1}\right|\le\frac{C\d c|z|^2}{|z-1|^2}
\end{equation}
and
\begin{equation*}
\label{FEST8}
\left|e^{-c}F(z)-z-\frac{a_0z}{z-1}\right|\le\frac{C\d c|z|}{|z-1|^2}.
\end{equation*}

\begin{proposition}
\label{PESTC}
There is an absolute constant $C<\infty$ with the following properties.
Let $P_1,P_2$ be basic particles with $P_1\sse P_2$.
For $i=1,2$, write $F_i$ for the associated conformal map $D_0\to D_0\sm P_i$ 
and write $c_i$ for the capacity of $P_i$.
Set $\d_i=\d(P_i)$ and $a_{0,i}=a_0(P_i)$ and set
$$
\ve_i(z)=\log\left(\frac{F_i(z)}z\right)-c_i-\frac{2c_i}{z-1},\q
\ve_{0,i}(z)=\log\left(\frac{F_i(z)}z\right)-c_i-\frac{a_{0,i}}{z-1}.
$$
Assume that $\d_2\le1/C$.
Then
\begin{equation}
\label{FEST9}
|a_{0,2}-a_{0,1}-2(c_2-c_1)|\le C\d_2(c_2-c_1)
\end{equation}
and, for all $|z|>1$ with $|z-1|\ge C\d_2$, 
\begin{equation}
\label{FEST10}
\left|\ve_1(z)-\ve_2(z)\right|\le\frac{C\d_2(c_2-c_1)|z|}{|z-1|^2}
\end{equation}
and
\begin{equation}
\label{FEST11}
\left|\ve_{0,1}(z)-\ve_{0,2}(z)\right|\le\frac{C\d_2(c_2-c_1)}{|z-1|^2}.
\end{equation}
\end{proposition}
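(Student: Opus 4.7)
The plan is to exploit the nestedness $P_1\subseteq P_2$ via the composition $G:=F_1^{-1}\circ F_2$, which is the conformal isomorphism $D_0\to D_0\setminus P'$ associated to the ``difference particle'' $P':=F_1^{-1}(\overline{P_2\setminus P_1})$. By multiplicativity of the derivative at infinity, $P'$ has capacity exactly $\tilde c:=c_2-c_1$, so in particular $r_0(P')\le\sqrt{C\tilde c}\le C\delta_2$ by \eqref{FEST0}. Coupled with the observation, from the proof of Lemma~\ref{VB}, that $F_1^{-1}$ carries the non-trivial part of $\partial P_1$ onto an arc of $\partial D_0$ of length at most $32\delta_1\le 32\delta_2$, this yields $\delta(P')\le C\delta_2$, so Proposition~\ref{PEST} applies to both $F_1$ and $G$ with uniform constants.

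Writing $F_2=F_1\circ G$ and rearranging, a direct computation gives the identity
$$
\ve_1(z)-\ve_2(z)=\bigl[\ve_1(z)-\ve_1(G(z))\bigr]-\ve_G(z)-2c_1\cdot\frac{z-G(z)}{(G(z)-1)(z-1)},
$$
where $\ve_G(z):=\log(G(z)/z)-\tilde c-2\tilde c/(z-1)$. The estimate \eqref{FEST10} will follow by bounding each of the three right-hand pieces by $C\delta_2\tilde c|z|/|z-1|^2$. For $\ve_G$, this is immediate from \eqref{FEST3} applied to $G$. For the third piece, the exponentiated bound \eqref{FEST7} applied to $G$ controls $|G(z)-z|$, and choosing the threshold constant in $|z-1|\ge C\delta_2$ large enough forces $|G(z)-1|\ge|z-1|/2$; the factor $c_1\le C\delta_2^2$ from \eqref{FEST0} then absorbs the needed $\delta_2$. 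For the first piece, Cauchy's estimate on a circle of radius $|w-1|/2$ applied to \eqref{FEST3} gives $|\ve_1'(w)|\le C\delta_1c_1|w|/|w-1|^3$, and a mean-value inequality along a path from $z$ to $G(z)$, combined with the same bound on $|G(z)-z|$ and the inequality $\delta_1c_1\le C\delta_2^3$, delivers the required estimate.

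Once \eqref{FEST10} is in hand, \eqref{FEST9} follows by letting $|z|\to\infty$: the Laurent expansion $\ve_i(z)=(a_{0,i}-2c_i)/z+O(1/z^2)$ converts \eqref{FEST10} into $|a_{0,2}-a_{0,1}-2\tilde c|\le C\delta_2\tilde c$. For \eqref{FEST11} I would apply the same composition strategy in the $\ve_0$-coordinates: the identity
$$
\ve_{0,1}(z)-\ve_{0,2}(z)=\bigl[\ve_{0,1}(z)-\ve_{0,1}(G(z))\bigr]-\ve_{0,G}(z)-a_{0,1}\cdot\frac{z-G(z)}{(G(z)-1)(z-1)}+\frac{a_{0,1}+a_0(P')-a_{0,2}}{z-1}
$$
splits the difference into four pieces of size $C\delta_2\tilde c/|z-1|^2$. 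The first three are handled exactly as above, with \eqref{FEST5} applied to $G$ in place of \eqref{FEST3}; the new arithmetic piece is controlled by matching Laurent coefficients on both sides of $F_2=F_1\circ G$, which yields the relation $a_{0,2}=a_0(P')+e^{-\tilde c}a_{0,1}$, hence $|a_{0,1}+a_0(P')-a_{0,2}|=|(1-e^{-\tilde c})a_{0,1}|\le C\tilde c c_1\le C\delta_2^2\tilde c$.

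The main obstacle will be the geometric input of the first paragraph, namely $\delta(P')\le C\delta_2$ despite the possible $\sqrt{\,\cdot\,}$-type behavior of $F_1^{-1}$ near tips of $P_1$; I would resolve this by relying on the capacity identity $c(P')=\tilde c$ rather than any direct boundary-distortion bound. A secondary technical concern is that $G(z)-z$ grows linearly in $|z|$ at infinity (since $G'(\infty)=e^{\tilde c}\ne 1$), so the mean-value estimates for $\ve_1(z)-\ve_1(G(z))$ and $\ve_{0,1}(z)-\ve_{0,1}(G(z))$ must be checked in the large-$|z|$ regime, where they succeed because $|\ve_1'(w)|\sim\delta_1c_1/|w|^2$ decays fast enough to absorb the growth of $|G(z)-z|$.
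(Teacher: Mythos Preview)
Your route to \eqref{FEST10} via the three-term identity is correct and is a valid alternative to the paper's argument. The paper instead writes $\log(F_2(z)/F_1(z))$ as an integral along the interpolation $z_t=z\exp\bigl(t\log(G(z)/z)\bigr)$, obtaining
\[
\ve_2(z)-\ve_1(z)=\ve_G(z)+\log\Bigl(\frac{G(z)}z\Bigr)\int_0^1\Bigl(\frac{z_tF_1'(z_t)}{F_1(z_t)}-1\Bigr)\,dt,
\]
which bypasses your mean-value estimate on $\ve_1(z)-\ve_1(G(z))$ and the separate $2c_1$-term; it then needs only the crude bounds $|e^{-c_1}F_1(w)-w|\le C\d_1$ and its Cauchy consequence for $F_1'$, whereas yours calls on \eqref{FEST3} for $F_1$ plus a derivative estimate. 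Both work. Your argument for $\d(P')\le C\d_2$ is more roundabout than necessary: for $z\in P'$ one has $F_1(z)\in P_2$, so $|F_1(z)-1|\le\d_2$, and the exponentiated form of \eqref{FEST1}, namely $|e^{-c_1}F_1(z)-z|\le C\d_1$, gives $|z-1|\le C\d_2$ directly; this is how the paper proceeds.

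There is, however, a genuine gap in your treatment of \eqref{FEST11}. The target bound there has \emph{no} factor of $|z|$ in the numerator, and your third and fourth pieces are not individually of size $C\d_2\tilde c/|z-1|^2$ for large $|z|$: since $|G(z)-z|\sim\tilde c|z|$ and $|a_{0,1}|\le Cc_1$, the third piece is of order $c_1\tilde c/|z|$, which does not beat $\d_2\tilde c/|z|^2$; the fourth piece is likewise of order $c_1\tilde c/|z|$. (There is also a sign error in your identity: the last term should be $(a_{0,2}-a_0(P')-a_{0,1})/(z-1)=-(1-e^{-\tilde c})a_{0,1}/(z-1)$. With the correct sign the leading $1/z$ contributions of the third and fourth pieces cancel, as they must since $\ve_{0,1}-\ve_{0,2}=O(1/z^2)$.) You could repair this by combining the two pieces and extracting the cancellation explicitly, but it is simpler to do as the paper does and deduce \eqref{FEST11} from \eqref{FEST10} by the maximum principle, exactly as \eqref{FEST5} was deduced from \eqref{FEST3}: the function $(z-1)^2(\ve_{0,1}(z)-\ve_{0,2}(z))$ is bounded holomorphic on $\{|z|>1\}$, and its boundary values are controlled by \eqref{FEST10} together with \eqref{FEST9}.
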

\begin{proof}
The inequalities \eqref{FEST10} and \eqref{FEST11} follow from \eqref{FEST9} by the same argument used to deduce
\eqref{FEST4} and \eqref{FEST5} from \eqref{FEST3}.
Set $\tilde P=F_1^{-1}(P_2\sm P_1)$.
Write $\tilde F$ for the associated conformal map $D_0\to D_0\sm\tilde P$ and write $\tilde c$ for the capacity of $\tilde P$.
Then 
$$
F_2=F_1\circ\tilde F,\q c_2=c_1+\tilde c.
$$
Note that, for $z\in\tilde P$, we have $F_1(z)\in P_2$, so $|F_1(z)-1|\le\d_2$.
But $|e^{-c_1}F_1(z)-z|\le C\d_1$ for all $|z|>1$ and $c_1\le C\d_1^2$.
Hence $|z-1|\le C\d_2$ for all $z\in\tilde P$ and so 
$$
\tilde\d=\d(\tilde P)\le C\d_2.
$$
Hence, for $C$ sufficiently large and $\d_2\le1/C$, for all $|z|>1$ with $|z-1|\ge C\d_2$,
\begin{equation}
\label{TILDEF}
\left|\log\left(\frac{\tilde F(z)}z\right)-\tilde c-\frac{2\tilde c}{z-1}\right|\le\frac{C\d_2\tilde c|z|}{|z-1|^2}
\end{equation}
and in particular
\begin{equation}
\label{TILDEF2}
\left|\log\left(\frac{\tilde F(z)}z\right)\right|\le\frac{C\tilde c|z|}{|z-1|}.
\end{equation}
Set $z_t=z\exp(t\log(\tilde F(z)/z))$ and $f(t)=\log(F_1(z_t)/F_1(z))$.
Then
$$
\log\left(\frac{F_2(z)}{F_1(z)}\right)
=f(1)-f(0)
=\int_0^1\dot f(t)dt
=\log\left(\frac{\tilde F(z)}z\right)\int_0^1F_1'(z_t)\left(\frac{F_1(z_t)}{z_t}\right)^{-1}dt
$$
so
\begin{align*}
\ve_2(z)-\ve_1(z)
&=\log\left(\frac{F_2(z)}{F_1(z)}\right)
-\tilde c-\frac{2\tilde c}{z-1}\\
&=\log\left(\frac{\tilde F(z)}z\right)\int_0^1F_1'(z_t)\left(\frac{F_1(z_t)}{z_t}\right)^{-1}dt
-\tilde c-\frac{2\tilde c}{z-1}\\
&=\log\left(\frac{\tilde F(z)}z\right)-\tilde c-\frac{2\tilde c}{z-1}
+\log\left(\frac{\tilde F(z)}z\right)\int_0^1\left(F_1'(z_t)\left(\frac{F_1(z_t)}{z_t}\right)^{-1}-1\right)dt.
\end{align*}
Now $|\log(\tilde F(z)/z)|\le C\d_2$, so $|z_t-z|\le C\d_2$ for all $t$.
Hence, for $C$ sufficiently large and $|z-1|\ge C\d_2$, we have $|z_t-1|\ge C_0\d_1$ for all $t$, 
where $C_0$ is the constant from Proposition \ref{PEST}.
Then 
$$
|e^{-c_1}F_1(z_t)-z_t|\le C\d_1,\q
|e^{-c_1}F_1'(z_t)-1|\le\frac{C\d_1}{|z-1|}
$$
where we used Cauchy's integral formula for the second inequality, adjusting the value of $C$ if necessary.
On combining these estimates with \eqref{TILDEF} and \eqref{TILDEF2}, we see that
$$
|\ve_2(z)-\ve_1(z)|\le\frac{C\d_2(c_2-c_1)|z|}{|z-1|^2}
$$
as claimed.
\end{proof}

The following is a straightforward consequence of \eqref{FEST0} and Propositions \ref{PEST} and \ref{PESTC}.

\begin{proposition}
\label{CNC}
Let $(P^{(c)}:c\in(0,1])$ be a family of basic particles and suppose that the associated conformal maps $F_c$ are given by
$$
F_c(z)=e^c\left(z+\sum_{k=0}^\infty a_k(c)z^{-k}\right).
$$
Fix $\L\in[1,\infty)$ and assume that $\d(c)\le\L r_0(c)$ for all $c$. 
Then there is a constant $C(\L)<\infty$ such that, for all $c\le1/C$,
$$
|a_0(c)-2c|\le Cc^{3/2}
$$
and, for all $|z|>1$,
$$
\left|\log\left(\frac{F_c(z)}z\right)-c\right|
\le\frac{Cc}{|z-1|}
$$
and
$$
\left|\log\left(\frac{F_c(z)}z\right)-c-\frac{a_0(c)}{z-1}\right|\le\frac{Cc^{3/2}}{|z-1|^2}
$$
and
$$
\left|e^{-c}F_c(z)-z-\frac{a_0(c)z}{z-1}\right|\le\frac{Cc^{3/2}|z|}{|z-1|^2}.
$$
Moreover, if $(P^{(c)}:c\in(0,1])$ is nested, then $C$ may be chosen so that, for all $c_1,c_2\in(0,c]$,
$$
|(a_0(c_1)-2c_1)-(a_0(c_2)-2c_2)|\le Cc^{1/2}|c_1-c_2|
$$
and, for all $|z-1|\ge C\sqrt c$,
$$
\left|\left(\log\left(\frac{F_{c_1}(z)}z\right)-c_1\right)-\left(\log\left(\frac{F_{c_2}(z)}z\right)-c_2\right)\right|
\le\frac{C|c_1-c_2|}{|z-1|}
$$
and
$$
\left|
\left(\log\left(\frac{F_{c_1}(z)}z\right)-c_1-\frac{a_0(c_1)}{z-1}\right)
-\left(\log\left(\frac{F_{c_2}(z)}z\right)-c_2-\frac{a_0(c_2)}{z-1}\right)
\right|
\le\frac{C\sqrt c|c_1-c_2|}{|z-1|^2}
$$
and
$$
\left|\left(e^{-c_1}F_{c_1}(z)-z-\frac{a_0(c_1)z}{z-1}\right)-\left(e^{-c_2}F_{c_2}(z)-z-\frac{a_0(c_2)z}{z-1}\right)\right|
\le\frac{C\sqrt c|c_1-c_2||z|}{|z-1|^2}.
$$
\end{proposition}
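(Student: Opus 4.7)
\medskip

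\noindent\textbf{Proof plan for Proposition \ref{CNC}.} The key observation is that the assumption $\d(c)\le\L r_0(c)$, combined with the lower bound $r_0(c)^2\le Cc$ from \eqref{FEST0}, gives $\d(c)\le C_1\sqrt c$ for a constant $C_1=C_1(\L)$. Choosing the overall constant $C$ large enough, we can ensure that for $c\le1/C$ we lie in the regime $\d(c)\le1/C_0$ in which Propositions \ref{PEST} and \ref{PESTC} apply, with $\d$ replaceable by $C_1\sqrt c$ throughout. The proposition is then essentially a repackaging of the earlier particle estimates in terms of $c$ in place of $\d$.

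\medskip

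\noindent The single-particle estimates proceed as follows. The bound $|a_0(c)-2c|\le Cc^{3/2}$ is immediate from \eqref{FEST4}. The bound on $\log(F_c(z)/z)-c-a_0(c)/(z-1)$ is immediate from \eqref{FEST5}, as is the analogous statement for $e^{-c}F_c(z)-z-a_0(c)z/(z-1)$ from its exponentiated version displayed after Proposition \ref{PEST}. The bound $|\log(F_c(z)/z)-c|\le Cc/|z-1|$, claimed for all $|z|>1$, requires a two-region argument: when $|z-1|\ge\sqrt c$ I would combine \eqref{FEST5} with $|a_0(c)|\le 2c+Cc^{3/2}$ to get
$$\Big|\log(F_c(z)/z)-c\Big|\le\frac{|a_0(c)|}{|z-1|}+\frac{Cc^{3/2}}{|z-1|^2}\le\frac{Cc}{|z-1|},$$
absorbing the second term using $|z-1|^{-1}\le c^{-1/2}$; when $|z-1|<\sqrt c$ I would invoke the maximum principle for the bounded holomorphic function $\log(F_c(z)/z)-c$ on $\{|z|>1\}$, whose boundary values are controlled by $C\sqrt c$ via \eqref{USCQ} and \eqref{USCP} (together with $\d\le C_1\sqrt c$), so that $|\log(F_c/z)-c|\le C\sqrt c\le Cc/|z-1|$ in this range.

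\medskip

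\noindent For the nested family, applying Proposition \ref{PESTC} with $\d_2\le C_1\sqrt c$ and exchanging the roles $(c_1,c_2)$ if necessary, the bound \eqref{FEST9} immediately yields $|(a_0(c_1)-2c_1)-(a_0(c_2)-2c_2)|\le Cc^{1/2}|c_1-c_2|$. For the comparison of $\log(F_{c_i}(z)/z)-c_i$, writing $\varepsilon_i(z)=\log(F_{c_i}(z)/z)-c_i-2c_i/(z-1)$ as in Proposition \ref{PESTC}, one has
$$\Big(\log(F_{c_1}/z)-c_1\Big)-\Big(\log(F_{c_2}/z)-c_2\Big)=(\varepsilon_1(z)-\varepsilon_2(z))+\frac{2(c_1-c_2)}{z-1}.$$
Applying \eqref{FEST10} with $\d_2\le C_1\sqrt c$, using $|z|\le|z-1|+1$ and the range assumption $|z-1|\ge C\sqrt c$ to convert the $|z|/|z-1|^2$ factor into a $1/|z-1|$ bound, gives the desired estimate. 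The analogues for $\log(F_{c_i}/z)-c_i-a_0(c_i)/(z-1)$ and $e^{-c_i}F_{c_i}(z)-z-a_0(c_i)z/(z-1)$ come directly from \eqref{FEST11} and its exponentiated version.

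\medskip

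\noindent The only real content is the substitution $\d(c)\asymp\sqrt c$; no step is genuinely hard. The main obstacle, if any, is simply the bookkeeping of constants and the case-split in the second single-particle estimate, where the statement is claimed uniformly on $\{|z|>1\}$ whereas \eqref{FEST3} applies only for $|z-1|\ge C\d$, forcing the maximum-principle argument on the complementary range.
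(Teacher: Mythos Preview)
Your approach is correct and matches the paper's: the entire content is the substitution $\d(c)\le C_1\sqrt c$ (from \eqref{FEST0} and the hypothesis $\d\le\L r_0$), after which everything is read off Propositions~\ref{PEST} and~\ref{PESTC}. One small inconsistency: you carefully do the two-region argument for the second single-particle estimate, but then say the third and fourth are ``immediate from \eqref{FEST5}'' and its exponentiated form --- yet those are stated in Proposition~\ref{PEST} only for $|z-1|\ge C\d$, whereas Proposition~\ref{CNC} claims them for all $|z|>1$. The fix is the same trick you already used: for $|z-1|\le C\sqrt c$ one has $|\log(F_c/z)-c|\le C\sqrt c$ from \eqref{FEST1} and $|a_0(c)|/|z-1|\le Cc/|z-1|$, and both terms are dominated by $Cc^{3/2}/|z-1|^2$ in this range (after adjusting constants). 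Also, for the near region in the second estimate you can simply cite \eqref{FEST1} with $\d\le C_1\sqrt c$ rather than redo the maximum-principle argument.
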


For our final particle estimates, we use the following integral representation for the family of particle maps
\begin{equation*}
F_c(z)=z+\int_0^cDF_t(z)\int_0^{2\pi}\frac{z+e^{i\th}}{z-e^{i\th}}\mu_t(d\th)dt
\end{equation*}
for some measurable family of probability measures $(\mu_t:t\in(0,\infty))$, 
with $\mu_t$ supported on $\{\th:|e^{i\th}-1|\le \d(t)\}$ for all $t$.
This follows from our requirements that the particles $P^{(c)}$ have capacity $c$, 
are contained in $\{|z-1|\le\d(c)\}$ and are nested, by the Loewner--Kufarev theory.
Our condition \eqref{CONCENTRATED} and the inequality \eqref{FEST0} 
then give a constant $C(\L)<\infty$ such that 
\begin{equation}
\label{MUSP}
\supp\mu_t\sse\{\th:|e^{i\th}-1|\le C\sqrt t\}.
\end{equation}
Define holomorphic functions $L_t$ and $Q_t$ on $\{|z|>1\}$ by
\begin{equation}
\label{QTH}
L_t(z)=\int_0^{2\pi}l_t(\th,z)\mu_t(d\th),\q
Q_t(z)=\int_0^{2\pi}q_t(\th,z)\mu_t(d\th)
\end{equation}
where
$$
l_t(\th,z)=\left(D\log\left(\frac{F_t(z)}z\right)+1\right)\frac{z+e^{i\th}}{z-e^{i\th}},\q
q_t(\th,z)=DF_t(z)\frac{z+e^{i\th}}{z-e^{i\th}}-e^tz-\frac{2e^te^{i\th}z}{z-1}.
$$
Note that $l_t(\th,z)\to1$ and $q_t(\th,z)\to0$ as $z\to\infty$, uniformly in $\th$.
It is then straightforward to show the integral representations
$$
\log\left(\frac{F_c(z)}z\right)=\int_0^cL_t(z)dt,\q
e^c\left(e^{-c}F_c(z)-z-\frac{a_0(c)z}{z-1}\right)=\int_0^cQ_t(z)dt.
$$

\begin{proposition}
\label{LQP}
There is a constant $C(\L)<\infty$ with the following property.
For all $t\le1/C$ and all $|z|>1$,
$$
|L_t(z)|\le\frac{C|z|}{|z-1|}+\frac{C\sqrt t|z|}{|z-1|^2},\q
|Q_t(z)|\le\frac{C\sqrt t|z|}{|z-1|^2}.
$$
\end{proposition}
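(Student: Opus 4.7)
The plan is to split the analysis into an \emph{outer regime} $|z-1| \ge C_1\sqrt{t}$ and an \emph{inner regime} $|z-1| < C_1\sqrt{t}$, where $C_1=C_1(\L)$ is chosen large enough that $\supp\mu_t \subset \{|e^{i\th}-1| \le C_1\sqrt{t}/4\}$ by \eqref{MUSP}, and so that $\sqrt{t}|z|/|z-1| \le 2/C_1$ uniformly in the outer regime (for $t$ small; this uses the case split $|z|\le2$ vs $|z|\ge2$).

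In the outer regime, the algebraic identity
\[
\frac{z+e^{i\th}}{z-e^{i\th}} = \frac{z+1}{z-1} + \frac{2z(e^{i\th}-1)}{(z-e^{i\th})(z-1)},
\]
combined with $|z-e^{i\th}| \ge |z-1|/2$, gives an integration-error of order $\sqrt{t}|z|/|z-1|^2$ when integrated against $\mu_t$. For $Q_t$ I will couple this with the rearrangement
\[
q_t(\th,z) = (DF_t(z)-e^tz)\frac{z+e^{i\th}}{z-e^{i\th}} + \frac{2e^t z e^{i\th}(e^{i\th}-1)}{(z-e^{i\th})(z-1)},
\]
and bound $|DF_t(z)-e^tz|$ by applying Cauchy's formula to \eqref{FEST7} on a disc of radius $|z-1|/4$ centred at $z$. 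When this disc strays below $|z|=1$ (which happens when $|z|-1 \ll |z-1|$), first extend $F_t$ holomorphically by Schwarz reflection across the ``clean'' part of the unit circle, valid by Lemma~\ref{VB}, since the preimage of $P^{(t)}$ in $\pd D_0$ is an arc of length $O(\sqrt{t})$ near $1$. For $L_t$ I will write $L_t(z) = (DF_t/F_t)(z)\, I_t(z)$ and estimate $DF_t/F_t$ by Cauchy-differentiating \eqref{FEST5}, yielding $|DF_t/F_t - 1| = O(\sqrt{t}|z|/|z-1|^2)$; multiplying by $I_t(z) = (z+1)/(z-1) + O(\sqrt{t}|z|/|z-1|^2)$ produces $|L_t(z)| \le 2|z|/|z-1| + O(\sqrt{t}|z|/|z-1|^2)$. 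The cross terms $O(\sqrt t|z|^2/|z-1|^3)$ and $O(t^{3/2}|z|^3/|z-1|^4)$ are absorbed into the desired bounds via the inequality $\sqrt{t}|z|/|z-1|\le 2/C_1$.

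In the inner regime I will apply the maximum-modulus principle to appropriately normalised auxiliary holomorphic functions. Both $L_t$ and $Q_t$ are holomorphic on $\{|z|>1\}$ (with $L_t(\infty)=1$ and $Q_t(z)=O(1/z)$ at infinity), and by the Schwarz-reflection extension of $F_t$ they extend holomorphically across the clean part of the unit circle. The functions $(z-1)^2[L_t(z) - (z+1)/(z-1)]/z$ and $(z-1)^2 Q_t(z)/z$ decay at infinity and are bounded by $O(\sqrt{t})$ on the boundary circle $|z-1|=C_1\sqrt{t}$ by the outer estimate just proved; maximum modulus on the extended domain with the disc $\{|z-1|\le C_1\sqrt{t}\}$ excised then transports this $O(\sqrt{t})$ bound across the excised disc and yields the required estimate for $|z-1|<C_1\sqrt{t}$.

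The main obstacle is the inner regime: the natural product decomposition $L_t = (DF_t/F_t)\,I_t$ involves factors which individually can be very large near $\supp\mu_t$ on the unit circle (the critical points of $F_t$ on the boundary nearly coincide with the poles of $I_t$), but whose product is controlled by a subtle cancellation reflecting the Loewner-flow structure. Rather than track this cancellation pointwise, the Schwarz-reflection extension sidesteps it by avoiding evaluation near the bad arc; the bookkeeping needed to verify that the chosen auxiliary functions decay properly at infinity and precisely match the outer-estimate bound on the boundary of the excised disc is the main technical task.
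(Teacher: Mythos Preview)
Your outer-regime argument is essentially the paper's: the same split $q_t=g_t+h_t$ for $Q_t$, the same Schwarz-reflection extension of $F_t$ across the clean arc, and the same Cauchy estimate on $DF_t-e^tz$. The treatment of $L_t$ via $L_t=(DF_t/F_t)\,I_t$ is a natural variant.

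The inner-regime argument, however, does not work. Maximum modulus on the exterior $\{|z-1|>C_1\sqrt t\}$ yields a bound \emph{on that exterior}; it cannot ``transport'' anything into the excised disc. To bound $G(z)=(z-1)^2Q_t(z)/z$ on $\{|z|>1,\,|z-1|<C_1\sqrt t\}$ by maximum modulus you would need control of $G$ on the full boundary of that region, and that boundary contains the bad arc on $|z|=1$, where nothing is known. In fact the inner-regime bound is \emph{false}: for the slit family $\mu_t$ is the point mass at $\th=0$, so $Q_t(z)=(DF_t(z)-e^tz)\tfrac{z+1}{z-1}$; near the base-preimage $e^{i\b_t}$ (with $\b_t\approx2\sqrt t$) the map has a square-root corner, giving $|DF_t(z)|\asymp t^{1/4}|z-e^{i\b_t}|^{-1/2}$ and hence $|Q_t(z)|\asymp t^{-1/4}|z-e^{i\b_t}|^{-1/2}$. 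For $z\in\{|z|>1\}$ with $|z-e^{i\b_t}|\ll\sqrt t$ this exceeds the stated bound $C\sqrt t/|z-1|^2\asymp t^{-1/2}$. The same blow-up afflicts $L_t$.

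The paper's own proof also stops at $|z-1|\ge C\sqrt t$ (read its last line), and that is all that is ever used: at every invocation in Section~\ref{EST} one has $|z|\ge1+c^{1/2-\ve}$ while $t\le Cc$, so $|z-1|\ge C'\sqrt t$. The proposition as stated is thus slightly overclaimed; the outer-regime bound is the real content, and your proof of that part is correct and matches the paper's.
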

\begin{proof}
We give the details for the second estimate, leaving the first which is similar but simpler to the reader.
We split $q_t(\th,z)=g_t(\th,z)+h_t(\th,z)$, where
$$
g_t(\th,z)=(DF_t(z)-e^tz)\frac{z+e^{i\th}}{z-e^{i\th}},\q
h_t(\th,z)=e^tz\left(\frac{z+e^{i\th}}{z-e^{i\th}}-1-\frac{2e^{i\th}}{z-1}\right).
$$
Now
$$
\frac{z+e^{i\th}}{z-e^{i\th}}-1-\frac{2e^{i\th}}{z-1}=\frac{2e^{i\th}(e^{i\th}-1)}{(z-e^{i\th})(z-1)}
$$
so, on the support of $\mu_t$, we have, for $|z-1|\ge2C\sqrt t$,
$$
|h_t(\th,z)|\le\frac{2C e^t\sqrt t|z|}{|z-1|^2}
$$
where $C$ is the constant in \eqref{MUSP}.
On the other hand, we showed above that, for all $|z|>1$,
$$
|F_t(z)-e^tz|\le C\sqrt t
$$
and $F_t$ extends by reflection to a holomorphic function on $\{|z-1|>C\sqrt t\}$ satisfying the same inequality.
Hence, by Cauchy's integral formula, for $|z-1|\ge2C\sqrt t$,
$$
|DF_t(z)-e^tz|\le\frac{C\sqrt t|z|}{|z-1|}
$$
and so, for $\th$ in the support of $\mu_t$, 
$$
|g_t(\th,z)|\le\frac{C\sqrt t|z|}{|z-1|^2}.
$$
We have shown that, for all $|z-1|\ge C\sqrt t$,
\begin{equation*}
|Q_t(z)|\le\frac{C\sqrt t|z|}{|z-1|^2}.
\end{equation*}
\end{proof}

\subsection{Operator inequalities}
\label{sec:operators}
Recall that, for a measurable function $f$ on $\{|z|>1\}$, for $p\in[1,\infty)$ and $r>1$, we set
\begin{equation*} \label{prnorm}
\|f\|_{p,r}=\left(\fint_0^{2\pi}|f(re^{i\th})|^pd\th\right)^{1/p},\q
\|f\|_{\infty,r}=\sup_{\th\in[0,2\pi)}|f(re^{i\th})|.
\end{equation*}
Suppose that $f$ is holomorphic and is bounded at $\infty$.
It is standard that, for $\rho\in(1,r)$,
\begin{equation}\label{PINF}
\|f\|_{p,r}\le\|f\|_{p,\rho},\q 
\|f\|_{\infty,r}\le\left(\frac\rho{r-\rho}\right)^{1/p}\|f\|_{p,\rho}.
\end{equation}
Moreover, there is an absolute constant $C<\infty$ such that
\begin{equation}
\label{DEST}
\|Df\|_{p,r}\le\frac{C\rho}{r-\rho}\|f\|_{p,\rho}
\end{equation}
where $Df(z)=zf'(z)$. 
The function $f$ has a Laurent expansion
$$
f(z)=\sum_{k=0}^\infty f_kz^{-k}.
$$
Let $M$ be an operator which acts as multiplication by $m_k$ on the the $k$th Laurent coefficient.
Thus
$$
Mf(z)=\sum_{k=0}^\infty m_k f_kz^{-k}.
$$
Assume that there exists a finite constant $A>0$ such that, for all $k\ge0$, 
$$
|m_k|\le A
$$
and, for all integers $K\ge0$, 
$$
\sum_{k=2^K}^{2^{K+1}-1}|m_{k+1}-m_k|\le A.
$$
Then, by the Marcinkiewicz multiplier theorem \cite[Vol. II, Theorem 4.14]{MR933759}, for all $p\in(1,\infty)$, 
there is a constant $C=C(p)<\infty$ such that, for all $r>1$,
\begin{equation}
\label{MEST}
\|Mf\|_{p,r}\le CA\|f\|_{p,r}.
\end{equation}
We will use also the following estimate.

\begin{proposition}
\label{LFG}
Let $f$ and $g$ be holomorphic in $\{|z|>1\}$ and bounded at $\infty$.
Set 
$$
f_\th(z)=f(e^{-i\th}z).
$$
Let $M$ be a multiplier operator and set
$$
h(z)=\left(\fint_0^{2\pi}|M(f_\th.g)|^2(z)d\th\right)^{1/2}.
$$
Then, for all $r,\rho>1$, we have
\begin{equation*}
\|h\|_{p,r}\le\|M\|_{p,\rho\to r}\|g\|_{p,\rho}\|f\|_{2,\rho}
\end{equation*}
where
$$
\|M\|_{p,\rho\to r}=\sup\{\|Mf\|_{p,r}:\|f\|_{p,\rho}\le1\}.
$$
\end{proposition}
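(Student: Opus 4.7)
The plan is to compute $h(z)^2$ by expanding everything in Laurent series and exploiting orthogonality in the angular variable $\theta$, thereby reducing the whole estimate to a single application of the multiplier norm bound on $g$ combined with Parseval's identity for $f$. Writing $f(z)=\sum_{j\ge 0}f_jz^{-j}$ and $g(z)=\sum_{k\ge 0}g_kz^{-k}$, the product $f_\theta\cdot g$ has Laurent coefficients $a_n(\theta)=\sum_{j=0}^n f_jg_{n-j}e^{ij\theta}$, so $M(f_\theta g)(z)=\sum_n m_n a_n(\theta)z^{-n}$. Using the orthogonality relation $\fint_0^{2\pi}e^{i(j-j')\theta}d\theta=\delta_{jj'}$ to integrate $|M(f_\theta g)(z)|^2$ in $\theta$, and then reindexing by $k=n-j$, I expect to arrive at the identity
$$
h(z)^2=\sum_{j\ge 0}|f_j|^2 r^{-2j}|M^{(j)}g(z)|^2,\qquad M^{(j)}g(z):=\sum_{k\ge 0}m_{k+j}g_k z^{-k},
$$
valid for all $|z|=r$.

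The key observation I would then establish is the intertwining identity $M^{(j)}g(z)=z^j M(z^{-j}g)(z)$, obtained by noting that $z^{-j}g$ has Laurent coefficients $g_{n-j}$ at $z^{-n}$ (for $n\ge j$), so that applying $M$ and multiplying back by $z^j$ precisely reproduces $M^{(j)}g$. On the circle $|z|=r$ this intertwining absorbs the factor $r^{-2j}$, yielding the clean pointwise identity
$$
h(z)^2=\sum_{j\ge 0}|f_j|^2 |M(z^{-j}g)(z)|^2,\qquad |z|=r.
$$

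Once this identity is in hand, the proof finishes quickly in the case $p\ge 2$: the quantity $\|h\|_{p,r}^2=\|h^2\|_{p/2,r}$ can be bounded by the triangle inequality in the Banach space $L^{p/2}$ on the circle of radius $r$, giving $\|h\|_{p,r}^2\le \sum_j|f_j|^2\|M(z^{-j}g)\|_{p,r}^2$. Applying the definition of the operator norm, $\|M(z^{-j}g)\|_{p,r}\le\|M\|_{p,\rho\to r}\|z^{-j}g\|_{p,\rho}=\|M\|_{p,\rho\to r}\rho^{-j}\|g\|_{p,\rho}$, and combining with Parseval's identity $\sum_j|f_j|^2\rho^{-2j}=\|f\|_{2,\rho}^2$ yields the stated bound. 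The main obstacle—and the reason a crude Hölder-type argument fails to reach the sharper $L^2$-norm of $f$—is precisely the appearance of the shifted multiplier $M^{(j)}$: a uniform-in-$j$ bound on $\|M^{(j)}\|_{p,\rho\to r}$ is not available in general, but the intertwining identity $M^{(j)}(\,\cdot\,)=z^j M(z^{-j}\,\cdot\,)$ sidesteps this difficulty by converting the shift into a rescaling of the argument of $M$.
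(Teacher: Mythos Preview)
Your proposal is correct and follows essentially the same approach as the paper. The paper's proof goes directly to the identity $h(z)^2=\sum_k|f_k|^2|M(\tau_kg)(z)|^2$ with $\tau_kg(z)=z^{-k}g(z)$, by regrouping the double sum for $M(f_\th\cdot g)(z)$ before applying orthogonality, whereas you apply orthogonality first and then use the intertwining $M^{(j)}g(z)=z^jM(z^{-j}g)(z)$; these are two presentations of the same computation. Your explicit remark that the triangle inequality step requires $p\ge2$ is a useful clarification that the paper leaves implicit.
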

\begin{proof}
We can write
$$
f(z)=\sum_{k=0}^\infty f_kz^{-k},\q
g(z)=\sum_{k=0}^\infty g_kz^{-k},\q
Mf(z)=\sum_{k=0}^\infty m_k f_kz^{-k}.
$$
Then
$$
M(f_\th.g)(z)=\sum_{k=0}^\infty\sum_{j=0}^\infty m_{j+k}f_kg_je^{i\th k}z^{-(k+j)}
$$
so
$$
h(z)^2=\sum_{k=0}^\infty|f_k|^2|M(\tau_kg)(z)|^2
$$
where $\tau_kg(z)=z^{-k}g(z)$.
Hence
\begin{align*}
\|h\|_{p,r}
\|h^2\|_{p/2,r}
&\le\sum_{k=0}^\infty|f_k|^2\|M(\tau_kg)\|^2_{p,r}
\le\sum_{k=0}^\infty|f_k|^2\|M\|^2_{p,\rho\ra r}\|\tau_kg\|^2_{p,\rho}\\
&=\sum_{k=0}^\infty|f_k|^2\rho^{-2k}\|M\|^2_{p,\rho\ra r}\|g\|^2_{p,\rho}
=\|M\|^2_{p,\rho\ra r}\|f\|^2_{2,\rho}\|g\|^2_{p,\rho}.
\end{align*}
\end{proof}

\subsection{Push-out estimates}
Recall that we write $(\t_t)_{t<t_\z}$ for the capacity of the disk solution to the LK$(\z)$ equation of initial capacity $\t_0=0$,
and that $\dot\t_t=e^{-\z\t_t}$.
We have found several times an integral such as
$$
\int_0^t\left(\frac{r_s}{r_s-1}\right)^pds
$$
where, for $r>1$ and $p\ge1$, we set
$$
r_s=
\begin{cases}
e^{(1-\z)(\t_t-\t_s)}r,&\text{ if $\z\ge0$},\\
e^{\t_t-\t_s}r,&\text{ if $\z<0$}.
\end{cases}
$$
We record here an estimate on such integrals which improves on the obvious maximum bound $t(r/(r-1))^p$
by using the fact that $r_s$ pushes out as $s$ decreases from $t$, away from the singularity at $r=1$.

\begin{lemma}
\label{PUSHOUT}
For all $\z\in[0,1)$, all $r>1$ and all $p\ge2$, we have
$$
\int_0^t\left(\frac{r_s}{r_s-1}\right)^pds
\le\frac{2(1+t)}{1-\z}\left(\frac r{r-1}\right)^{p-1}
$$
and
$$
\int_0^t\left(\frac{r_s}{r_s-1}\right)ds
\le\frac{2(1+t)}{1-\z}\left(1+\log\left(\frac r{r-1}\right)\right).
$$
Moreover, the inequalities remain valid for $\z<0$ and $t<t_\z$ if the denominator $1-\z$ is removed from the right-hand sides.
\end{lemma}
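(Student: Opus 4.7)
The plan is to use integration by parts, exploiting the fact that the most singular part of each integrand is a total derivative in $s$. For the first inequality, since $x\mapsto x/(x-1)$ is decreasing on $(1,\infty)$ and $r_s\ge r$, we have $(r_s/(r_s-1))^{p-2}\le (r/(r-1))^{p-2}$ for $p\ge 2$, which lets us factor
$$
\int_0^t \left(\frac{r_s}{r_s-1}\right)^p ds \le \left(\frac{r}{r-1}\right)^{p-2} \int_0^t \left(\frac{r_s}{r_s-1}\right)^2 ds,
$$
reducing the problem to the case $p=2$.

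For $p=2$ in the range $\z\in[0,1)$, I would write $(r_s/(r_s-1))^2 = r_s\cdot r_s/(r_s-1)^2$ and use the identity
$$
\frac{r_s}{(r_s-1)^2} = \frac{e^{\z\t_s}}{1-\z}\,\frac{d}{ds}\frac{1}{r_s-1},
$$
which follows from $\dot r_s = -(1-\z)e^{-\z\t_s}r_s$. Integration by parts produces three contributions: a boundary term $r(1+\z t)/(r-1)$ at $s=t$ (using $e^{\z\t_t}=1+\z t$), a non-positive boundary term at $s=0$ that is dropped, and a residual $-(2\z-1)\int_0^t r_s/(r_s-1)\,ds$ arising from the computation $(r_s e^{\z\t_s})'=(2\z-1)r_s$. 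When $\z\ge 1/2$ this residual is non-positive and drops; when $\z<1/2$ it is bounded by $tr/(r-1)$ using $r_s/(r_s-1)\le r/(r-1)$. In either case the arithmetic collapses to
$$
\int_0^t \left(\frac{r_s}{r_s-1}\right)^2 ds \le \frac{(1+t)\,r}{(1-\z)(r-1)},
$$
which combined with the reduction above yields the first inequality (with a factor of $2$ to spare).

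For the second inequality, write $r_s/(r_s-1)=1+1/(r_s-1)$ and apply the analogous identity
$$
\frac{1}{r_s-1} = -\frac{e^{\z\t_s}}{1-\z}\,\frac{d}{ds}\log\frac{r_s-1}{r_s}.
$$
Integration by parts gives a boundary term $(1+\z t)\log(r/(r-1))/(1-\z)$ at $s=t$, together with other boundary and residual contributions that are all non-positive (since $\log(r_s/(r_s-1))\ge 0$ and $\z\ge 0$) and hence dropped. This produces $\int_0^t r_s/(r_s-1)\,ds\le t+(1+t)\log(r/(r-1))/(1-\z)$, and the claim follows from $t(1-\z)\le 2(1+t)$. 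For $\z<0$ the same strategy works with $r_s=e^{\t_t-\t_s}r$ and $\dot r_s=-e^{-\z\t_s}r_s$; one computes $(r_s e^{\z\t_s})'=(\z-1)r_s$, and the residual in the $p=2$ identity is always of the correct sign to be handled by the crude bound $r_s/(r_s-1)\le r/(r-1)$. Since the factor $1-\z$ no longer appears in the differential identities, it is absent from the final bounds, matching the statement.

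The main obstacle is keeping the sign bookkeeping straight when integrating by parts, particularly in verifying that the residual integrals produced by differentiating $r_s e^{\z\t_s}$ have the correct sign either to be dropped outright or to be controlled by the trivial $L^\infty$ bound $r_s/(r_s-1)\le r/(r-1)$. I expect a direct substitution $u=r_s$ to be a dead end here: it converts the problem to bounding $\int_r^{r_0} u^{p-1-\z/(1-\z)}/(u-1)^p\,du$ by $(r/(r-1))^{p-1}$, which picks up spurious factors of $\log(r_0/r)$ and $r^{\z/(1-\z)}$ that do not collapse cleanly. The integration-by-parts approach avoids these by producing the right boundary terms directly.
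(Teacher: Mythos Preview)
Your argument is correct and rests on the same differential identity as the paper's proof, namely
\[
\frac{d}{ds}\frac{1}{r_s-1}=(1-\z)e^{-\z\t_s}\frac{r_s}{(r_s-1)^2},
\]
together with the reduction from general $p\ge2$ to $p=2$ via $(r_s/(r_s-1))^{p-2}\le(r/(r-1))^{p-2}$. The execution differs, however. The paper handles the extra factor of $r_s$ in $(r_s/(r_s-1))^2=r_s\cdot r_s/(r_s-1)^2$ by splitting the time interval at the first time $t_0$ with $r_{t_0}=2$: on $[0,t_0]$ the trivial bound suffices, while on $[t_0,t]$ one has $r_s\le2$ and the identity can be integrated directly without any parts. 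For $\z<0$ the paper simply observes that the integrand is increasing in $\z$, reducing immediately to $\z=0$. Your integration-by-parts route avoids the domain split and the case $r\ge2$ versus $r<2$, at the cost of tracking the residual $-(2\z-1)\int r_s/(r_s-1)\,ds$ by cases on $\z$; it also recovers the constant $1$ rather than $2$ in front of $(1+t)$. Both approaches are short; the paper's is marginally simpler to check, while yours is a touch sharper and more uniform in its treatment of the two regimes.
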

\begin{proof}
The left-hand sides are increasing in $\z$, so the claim for $\z<0$ follows from the case $\z=0$. 
For $p>2$, we have
$$
\int_0^t\left(\frac{r_s}{r_s-1}\right)^pds
\le\left(\frac r{r-1}\right)^{p-2}\int_0^t\left(\frac{r_s}{r_s-1}\right)^2ds
$$
so, for the first inequality, it will suffice to consider the case $p=2$.
For $r\ge2$ it suffices to use the obvious maximum bound.
Suppose then that $r=r_t\in(1,2)$.
Choose $t_0\in[0,t]$ so that $r_{t_0}=2$ if possible, setting $t_0=0$ otherwise.
Then $r_{t_0}\le2$ and
$$
\int_0^{t_0}\left(\frac{r_s}{r_s-1}\right)^2ds\le2t\left(\frac r{r-1}\right).
$$
Note that
$$
\frac d{ds}\left(\frac1{r_s-1}\right)=(1-\z)e^{-\z\t_s}\frac{r_s}{(r_s-1)^2}
$$
so
$$
(1-\z)\int_{t_0}^t\left(\frac{r_s}{r_s-1}\right)^2ds
\le2e^{\z\t_t}\int_{t_0}^t(1-\z)e^{-\z\t_s}\frac{r_s}{(r_s-1)^2}ds
=2(1+\z t)\left(\frac1{r-1}-\frac1{r_{t_0}-1}\right).
$$
The first claimed inequality follows. 
A similar argument leads to the inequalities
$$
\int_0^{t_0}\left(\frac{r_s}{r_s-1}\right)ds\le2t,\q
(1-\z)\int_{t_0}^t\left(\frac{r_s}{r_s-1}\right)ds
\le(1+\z t)\log\left(\frac{r_{t_0}-1}{r-1}\right)
$$
from which the second claimed inequality follows.
\end{proof}

\bibliography{p}

\def\cprime{$'$}
\begin{thebibliography}{10}

\bibitem{MR365692}
D.~L. Burkholder.
\newblock Distribution function inequalities for martingales.
\newblock {\em Ann. Probability}, 1:19--42, 1973.

\bibitem{C+M}
L.~Carleson and N.~Makarov.
\newblock Aggregation in the plane and {L}oewner's equation.
\newblock {\em Comm. Math. Phys.}, 216(3):583--607, 2001.

\bibitem{Eden}
Murray Eden.
\newblock A two-dimensional growth process.
\newblock In {\em Proc. 4th Berkeley Sympos. Math. Statist. and Prob., Vol.
  IV}, pages 223--239. Univ. California Press, Berkeley, Calif., 1961.

\bibitem{HL}
M.~B. Hastings and L.~S. Levitov.
\newblock {L}aplacian growth as one-dimensional turbulence.
\newblock {\em Physica D}, 116 (1-2):244, 1998.

\bibitem{JST12}
Fredrik Johansson~Viklund, Alan Sola, and Amanda Turner.
\newblock Scaling limits of anisotropic {H}astings-{L}evitov clusters.
\newblock {\em Ann. Inst. Henri Poincar\'{e} Probab. Stat.}, 48(1):235--257,
  2012.

\bibitem{JST15}
Fredrik Johansson~Viklund, Alan Sola, and Amanda Turner.
\newblock Small-particle limits in a regularized {L}aplacian random growth
  model.
\newblock {\em Comm. Math. Phys.}, 334(1):331--366, 2015.

\bibitem{MR3817856}
Erik Lundberg and Razvan Teodorescu.
\newblock Dima {K}havinson's 60th: a summary of open problems in analysis and
  mathematical physics.
\newblock {\em Anal. Math. Phys.}, 8(2):309--324, 2018.

\bibitem{MCENT}
Oliver McEnteggart.
\newblock PhD thesis, University of Cambridge, 2021.

\bibitem{NPW}
L.~Niemeyer, L.~Pietronero, and H.~J. Wiesmann.
\newblock Fractal dimension of dielectric breakdown.
\newblock {\em Phys. Rev. Lett.}, 52:1033--1036, 1984.

\bibitem{NST}
James Norris, Vittoria Silvestri, and Amanda Turner.
\newblock Scaling limits for planar aggregation with subcritical fluctuations.
\newblock \texttt{arXiv:1902.01376}, 2019.

\bibitem{NT2}
James Norris and Amanda Turner.
\newblock {H}astings-{L}evitov aggregation in the small-particle limit.
\newblock {\em Comm. Math. Phys.}, 316:809--841, 2012.

\bibitem{MR1217706}
Ch. Pommerenke.
\newblock {\em Boundary behaviour of conformal maps}, volume 299 of {\em
  Grundlehren der Mathematischen Wissenschaften [Fundamental Principles of
  Mathematical Sciences]}.
\newblock Springer-Verlag, Berlin, 1992.

\bibitem{RZ}
Steffen Rohde and Michel Zinsmeister.
\newblock Some remarks on {L}aplacian growth.
\newblock {\em Topology Appl.}, 152(1-2):26--43, 2005.

\bibitem{MR763133}
B.~Shraiman and D.~Bensimon.
\newblock Singularities in nonlocal interface dynamics.
\newblock {\em Phys. Rev. A (3)}, 30(5):2840--2842, 1984.

\bibitem{Sil}
Vittoria Silvestri.
\newblock Fluctuation results for {H}astings-{L}evitov planar growth.
\newblock {\em Probab. Theory Related Fields}, 167(1-2):417--460, 2017.

\bibitem{STV}
Alan Sola, Amanda Turner, and Fredrik Viklund.
\newblock One-dimensional scaling limits in a planar {L}aplacian random growth
  model.
\newblock {\em Comm. Math. Phys.}, 371(1):285--329, 2019.

\bibitem{W+S}
T.~A. Witten and L.~M. Sander.
\newblock Diffusion-limited aggregation, a kinetic critical phenomenon.
\newblock {\em Phys. Rev. Lett.}, 47(19):1400--1403, 1981.

\bibitem{MR933759}
A.~Zygmund.
\newblock {\em Trigonometric series. {V}ol. {I}, {II}}.
\newblock Cambridge Mathematical Library. Cambridge University Press,
  Cambridge, 1988.
\newblock Reprint of the 1979 edition.

\end{thebibliography}

\end{document}